\newtheorem{theorem}{Theorem}
\newtheorem{remark}[theorem]{Remark}
\newtheorem{lemma}[theorem]{Lemma}
\newtheorem{proposition}[theorem]{Proposition}
\newtheorem{definition}[theorem]{Definition}
\newcommand{\Lp}[1]{L^{#1}(\Omega)}
\newcommand{\Wpzero}[1]{W^{1,#1}_0(\Omega)}
\newcommand{\N}{\mathbb{N}}
\newcommand{\R}{\mathbb{R}}
\renewcommand{\l}{\left}
\renewcommand{\r}{\right}
\def\abs#1{\left|{#1}\right|}
\numberwithin{theorem}{section}
\numberwithin{equation}{section}
\title[Logarithmic double phase problems with generalized critical growth]{Logarithmic double phase problems with generalized critical growth}
\author[R. Arora]{Rakesh Arora}
\address[R. Arora]{ Department of Mathematical Sciences, Indian Institute of Technology Varanasi (IIT-BHU), Uttar Pradesh 221005, India}
\email{rakesh.mat@iitbhu.ac.in, arora.npde@gmail.com}
\author[\'{A}. Crespo-Blanco]{\'{A}ngel Crespo-Blanco}
\address[\'{A}. Crespo-Blanco]{Technische Universit\"{a}t Berlin, Institut f\"{u}r Mathematik, Stra\ss e des 17.\,Juni 136, 10623 Berlin, Germany}
\email{crespo@math.tu-berlin.de}
\author[P. Winkert]{Patrick Winkert}
\address[P. Winkert]{Technische Universit\"{a}t Berlin, Institut f\"{u}r Mathematik, Stra\ss e des 17.\,Juni 136, 10623 Berlin, Germany}
\email{winkert@math.tu-berlin.de}
\subjclass{35A01, 35J20, 35J25, 35J62, 35J92, 35Q74}
\keywords{Concentration compactness principle, critical growth, genus theory, logarithmic double phase operator, logarithmic Musielak-Orlicz spaces, Sobolev embeddings}
\begin{document}

\begin{abstract}
	In this paper we study logarithmic double phase problems with variable exponents involving nonlinearities that have generalized critical growth. We first prove new continuous and compact embedding results in order to guarantee the well-definedness by studying the Sobolev conjugate function of our generalized $N$-function. In the second part we prove the concentration compactness principle for Musielak-Orlicz Sobolev spaces having logarithmic double phase modular function structure. Based on this we are going to show multiplicity results for the problem under consideration for superlinear and sublinear growth, respectively.
\end{abstract}

\maketitle

%********************************************************************
\section{Introduction}%\label{Introduction}
%********************************************************************

Recently, the authors \cite{Arora-Crespo-Blanco-Winkert-2023} introduced a new class of logarithmic double phase operators given by
\begin{align}\label{log-double-phase-operator}
	\operatorname{div}\left( \abs{ \nabla u }^{p(x) - 2} \nabla u + \mu(x) \left[ \log ( e + \abs{\nabla u} ) + \frac{ \abs{\nabla u} }{q(x) (e + \abs{\nabla u})} \right] \abs{ \nabla u }^{q(x) - 2} \nabla u  \right),
\end{align}
whose energy functional is given by
\begin{align}\label{log-functional}
	u \mapsto \int_\Omega \left(  \frac{ \abs{\nabla u}^{p(x)} }{p(x)} + \mu(x) \frac{ \abs{\nabla u}^{q(x)} }{q(x)} \log (e + \abs{\nabla u}) \right)  \mathrm{d}x,
\end{align}
where $\Omega \subseteq \R^N$, $N \geq 2$, is a bounded domain with Lipschitz boundary $\partial \Omega$, $e$ stands for Euler's number, $p,q \in C(\overline{\Omega})$ with $1<p(x) \leq q(x)$ for all $x \in \overline{\Omega}$ and $\mu \in \Lp{1}$. The related Musielak-Orlicz Sobolev space is generated by the function
\begin{align*}
	\mathcal{H}_{\log}(x,t)=t^{p(x)} + \mu(x) t^{q(x)} \log (e + t)\quad\text{for all }(x,t)\in \overline{\Omega} \times [0,\infty).
\end{align*}
In \cite{Arora-Crespo-Blanco-Winkert-2023} it is shown that the spaces $W^{1, \mathcal{H}_{\log}}(\Omega)$ and $W^{1, \mathcal{H}_{\log}}_0(\Omega)$ are separable, reflexive Banach spaces and the operator \eqref{log-double-phase-operator} turns out to be bounded, continuous, strictly monotone and of type \textnormal{(S$_+$)}. Note that similar functionals as in \eqref{log-functional} for constant exponents have been studied by Baroni--Colombo--Mingione \cite{Baroni-Colombo-Mingione-2016} in order to prove the local H\"older continuity of the gradient of local minimizers of the functional
\begin{align} \label{log-functional-Mingione1}
	w \mapsto \int_\Omega \left[  \abs{D w}^p  + a(x) \abs{D w}^p \log (e + \abs{D w}) \right]\,\mathrm{d} x
\end{align}
whenever $1 < p < \infty$ and $0 \leq a(\cdot) \in C^{0,\alpha} (\overline{\Omega})$. We point out that the functionals in \eqref{log-functional} and \eqref{log-functional-Mingione1} coincide in case  $p=q$ are constant. In this direction we also mention the paper by De Filippis--Mingione \cite{DeFilippis-Mingione-2023} who proved the local H\"{o}lder continuity of the gradients of local minimizers of the functional
\begin{align}\label{log-functional-Mingione2}
	w \mapsto \int_\Omega \big[|D w|\log(1+|D w|)+a(x)|D w|^q\big]\,\mathrm{d} x
\end{align}
provided $0 \leq a(\cdot)\in C^{0,\alpha}(\overline{\Omega})$ and $1<q<1+\frac{\alpha}{n}$. Note that  \eqref{log-functional-Mingione2} originates from functionals with nearly linear growth given by
\begin{align}\label{log-functional-Mingione3}
	w \mapsto \int_\Omega |D w|\log(1+|D w|)\,\mathrm{d} x,
\end{align}
see the works by Fuchs--Mingione \cite{Fuchs-Mingione-2000} and Marcellini--Papi \cite{Marcellini-Papi-2006}. Functionals as given in \eqref{log-functional-Mingione3} appear, for example, in the theory of plasticity with logarithmic hardening, see, Seregin--Frehse \cite{Seregin-Frehse-1999} and Fuchs--Seregin \cite{Fuchs-Seregin-2000}.

In this paper we are going to deepen the study of the operator \eqref{log-double-phase-operator} and consider generalized $N$-functions given by
\begin{align}\label{N-function}
	\mathcal{S}(x,t)=a(x) t^{p(x)}+b(x) t^{q(x)}\log^{s(x)}(1 + t) \quad \text{for } (x, t) \in \Omega \times (0, \infty),
\end{align}
where $p, q \in C(\overline{\Omega})$ with $1 < p(x), q(x)< N$ for a.a.\,$x \in \overline{\Omega}$, $s \in L^\infty(\Omega)$ such that $q(x)+ s(x) \geq r> 1$ and $0 \leq a, b\in L^1(\Omega)$ such that $a(x) + b(x) \geq d >0$ for all $x \in \overline{\Omega}$, see \eqref{main:assump} for the precise assumptions. First we are interested in continuous and compact embeddings from $W^{1,\mathcal{S}}(\Omega)$ into suitable Musielak-Orlicz Lebesgue spaces. In particular, we prove that $W^{1,\mathcal{S}}(\Omega)$ is continuously embedded into $L^{\mathcal{S}^{\ast}}(\Omega)$, where $\mathcal{S}^\ast\colon \overline{\Omega} \times [0, \infty)\to  [0, \infty)$ is defined as
\begin{align*}
	\mathcal{S}^\ast(x,t):= \left( \left(a(x)\right)^\frac{1}{p(x)} t\right)^{p^\ast(x)}  + \left(\left(b(x) \log^{s(x)}(1+t)\right)^\frac{1}{q(x)} t  \right)^{q^\ast(x)},
\end{align*}
see Proposition \ref{imp:embedding}. Such type of embedding is sharp in the sense that, for each fixed $x$, it coincides with the sharp Sobolev conjugate in classical Orlicz spaces. Here we use ideas from the papers by Arora--Crespo-Blanco--Winkert \cite{Arora-Crespo-Blanco-Winkert-2023}, Cianchi--Diening \cite{Cianchi-Diening-2024}, Colasuonno--Squassina \cite{Colasuonno-Squassina-2016}, Crespo-Blanco--Gasi\'{n}ski--Harjulehto--Winkert, \cite{Crespo-Blanco-Gasinski-Harjulehto-Winkert-2022}, Fan \cite{Fan-2012}, and Ho--Winkert \cite{Ho-Winkert-2023}. Furthermore, we point out that the new logarithmic double phase operator generated by $\mathrm{S}$ given in \eqref{N-function} is defined by
\begin{equation}\label{main:operator}
	\begin{aligned}
		&\operatorname{div}\bigg (a(x) |\nabla u|^{p(x)-2} \nabla u\\
		&\qquad\qquad+ b(x) |\nabla u|^{q(x)-2} \log^{s(x)-1 }(1+ |\nabla u|) \left(\log(1+ |\nabla u|) + \frac{s(x)}{q(x)} \frac{|\nabla u|}{1+ |\nabla u|} \right) \nabla u\bigg).
	\end{aligned}
\end{equation}
The study of problems involving the new logarithmic double phase operator \eqref{main:operator} (see problem \eqref{main:prob} below) has two different features in contrast to the known works. The first different feature is the degeneracy or singularity of the operator at $\nabla u=0$ which is of polynomial type perturbed with a logarithmic term, which again has degeneracy or singularity at $\nabla u=0$ depending upon the exponent $s(\cdot)$. The second different feature is the exponent $s(\cdot)$ itself on the logarithmic term which may change sign depending upon the space variable $x \in \Omega$. This sign-changing nature of the exponent $s(\cdot)$ creates several challenges and technical difficulties in our analysis. In particular, it leads to losing the convexity of the corresponding generalized $N$-function \eqref{N-function}, which is essential in setting up the corresponding Banach spaces. Another difficulty arises in finding the suitable structure of the generalized (critical) $N$-function for continuous and compact embeddings which takes the form of non-trivial estimates. To handle these issues, we introduce a balance condition between the sign-changing exponent $s(\cdot)$ and $q(\cdot)$, which together preserves the convexity of the generalized $N$-function \eqref{N-function} and several other properties of the new logarithmic double phase operator \eqref{main:operator}. This makes the study of the new logarithmic double phase operator \eqref{main:operator} more challenging and interesting.
In general, there are only few works for the double phase operator with a logarithmic perturbation, see the papers by the authors \cite{Arora-Crespo-Blanco-Winkert-2023}, Lu--Vetro--Zeng \cite{Lu-Vetro-Zeng-2024}, Vetro--Winkert \cite{Vetro-Winkert-2024} and Vetro--Zeng \cite{Vetro-Zeng-2024}. In particular, in \cite{Lu-Vetro-Zeng-2024} the authors introduced the operator
\begin{align}\label{div}
	u\mapsto\Delta_{\mathcal {H}_{L}} u= \operatorname{div} \left(\frac{\mathcal {H}'_{L}(x,|\nabla u|)}{|\nabla u|}\nabla u \right), \quad u\in W^{1,\mathcal {H}_{L}}(\Omega)
\end{align}
where $\mathcal {H}_{L} \colon \Omega \times \times [0, \infty)\to  [0, \infty)$ is given by
\begin{align*}%\label{h-l}
	\mathcal {H}_{L}(x,t) =[t^{p(x)}+\mu(x)t^{q(x)}]\log(e+\alpha t)
\end{align*}
with $\alpha \geq 0$. Note that \eqref{div} is a different operator than the one in this paper and also the one in \eqref{log-double-phase-operator}. Moreover, the work in \cite{Lu-Vetro-Zeng-2024} can be seen as the extension of Vetro--Zeng \cite{Vetro-Zeng-2024} from the constant exponent case to the variable one while the paper by Vetro--Winkert \cite{Vetro-Winkert-2024} uses the same operator as in \eqref{log-double-phase-operator}.

In the second part of this paper we prove a concentration compactness principle of Lions type for the Musielak-Orlicz Sobolev space $W_0^{1, \mathcal{S}}(\Omega)$ having logarithmic double phase modular function structure. Such result is of independent interest and can be also used for other types of logarithmic double phase problems. Our result extends the works by Fern\'{a}ndez Bonder--Silva \cite{Fernandez-Bonder-Silva-2010, Fernandez-Bonder-Silva-2024} for variable exponent spaces and Orlicz spaces, respectively and Ha--Ho \cite{Ha-Ho-2024} (see also \cite{Ha-Ho-2025} by the same authors for the entire space) for Musielak-Orlicz Sobolev spaces having double phase modular function structure without logarithmic perturbation who proved a concentration compactness principle by extending the classical work of Lions \cite{Lions-1985}. Other versions of the concentration compactness principle have been proved, among others, by Fern\'{a}ndez Bonder--Saintier--Silva \cite{Fernandez-Bonder-Saintier-Silva-2018}, Fu--Zhang \cite{Fu-Zhang-2010}, Ho--Kim \cite{Ho-Kim-2021} and Palatucci--Pisante \cite{Palatucci-Pisante-2014}. In contrast to the cases just mentioned, for the logarithmic double phase operator, the same ideas cannot be extended easily due to the non-uniform limit (with respect to the space variable) of the Matuszewska index of the generalized $N$-function $S$ defined in \eqref{N-function} (compared to Fern\'{a}ndez Bonder--Silva \cite{Fernandez-Bonder-Silva-2024}) and the lack of appropriate scaling and homogeneity because of the logarithmic perturbation (compared to Ho--Kim \cite{Ho-Kim-2021}). In order to handle this issue, we introduce suitable sub-homogeneous functions perturbed with logarithmic growth, which plays a crucial role in our analysis. This approach can also be extended to prove the concentration compactness principle for a larger class of Musielak-Orlicz Sobolev spaces. In addition, we also prove a Br\'{e}zis-Lieb lemma and a reverse H\"older type inequality.

In the last part of the paper, based on the continuous embedding  $W^{1,\mathcal{S}}(\Omega) \hookrightarrow L^{\mathcal{S}^{\ast}}(\Omega)$ and the concentration compactness principle both developed before, we are going to study quasilinear elliptic equations driven by the logarithmic double phase operator and with right-hand sides having the new critical growth perturbed with superlinear and sublinear growth nonlinearities
\begin{equation}\label{main:prob}
	\begin{aligned}
		- \operatorname{div}\left(\mathbb{M}(x,\nabla u)\right) & = \Lambda \mathbb{M}^\ast(x,u) + \lambda \mathbb{M}_\star(x,u)\quad &  & \text{in } \Omega,\\
		u& = 0 &  & \text{on } \partial\Omega,
	\end{aligned}
\end{equation}
where $\Omega \subset \mathbb{R}^N$, $N\geq 2$, is a bounded domain with Lipschitz boundary $\partial\Omega$, $\lambda, \Lambda >0$ are parameters to be specified and $\mathbb{M}\colon \Omega \times \mathbb{R}^N \to \mathbb{R}$ is given by
\begin{align*}
	\mathbb{M}(x, \xi) = \partial_{\xi} (\mathcal{M}(x,\xi)), \quad \mathcal{M}(x, \xi):= \left( \frac{a(x)}{p(x)} |\xi|^{p(x)}  + \frac{b(x)}{q(x)} |\xi|^{q(x)} \log^{s(x)}(1+|\xi|)\right)
\end{align*}
while $\mathbb{M}^\ast, \mathbb{M}_\ast$ are Carath\'eodory functions on $\Omega \times \mathbb{R}$ defined by
\begin{align*}
	\mathbb{M}^\ast(x,t)= \partial_{t} (\mathcal{M}^\ast(x,t)), \quad \mathbb{M}_\star(x,t) = \partial_{t} (\mathcal{M}_\star(x,t)),
\end{align*}
where
\begin{align*}
	\mathcal{M}^\ast(x,t) &= \frac{1}{p^\ast(x)} \left(\left(a(x)\right)^\frac{1}{p(x)} |t|\right)^{p^\ast(x)} + \frac{1}{q^\ast(x)} \left(\left(b(x) \log^{s(x)}(1+|t|)\right)^\frac{1}{q(x)} |t|\right)^{q^\ast(x)},\\
	\mathcal{M}_\star(x, t)&= \frac{1}{p_\star(x)} \left(\left(a(x)\right)^\frac{1}{p(x)} |t|\right)^{p_\star(x)} + \frac{1}{q_\star(x)} \left(\left(b(x) \log^{s_\star(x)}(1+|t|)\right)^\frac{1}{q(x)} |t|\right)^{q_\star(x)},
\end{align*}
with $p_\star(\cdot)$, $q_\star(\cdot)$ and $p^\ast(\cdot)$, $q^\ast(\cdot)$ being the subcritical and critical Sobolev variable exponents of $p(\cdot)$ and $q(\cdot)$, respectively, that is, $p_\star (\cdot) < p^\ast(\cdot)$ and $q_\star (\cdot) < q^\ast(\cdot)$, see \eqref{main:assump-1-2} below. We consider the cases of superlinear and sublinear growth separately and get multiplicity results in the Theorems \ref{theo-1} and \ref{theo-2}. In the case of superlinear growth, by extending the ideas of Komiya--Kajikiya \cite[Theorem 2.2]{Komiya-Kajikiya-2016} via genus theory and the deformation lemma, for each $n \in \mathbb{N}$, we show the existence of at least $n$-pairs of solutions for $\Lambda \in (0, \Lambda_n)$. In case of the sublinear growth, using Krasnosel'skii genus theory (see Krasnosel'skii \cite{Krasnoselskii-1964})  along with appropriate truncation technique following ideas by Garc\'{\i}a Azorero--Peral Alonso \cite{Garcia-Azorero-Peral-Alonso-1991} and Farkas--Fiscella--Winkert \cite{Farkas-Fiscella-Winkert-2022}, the existence of infinitely many weak solutions with negative energy sign has been shown. Note that the appearance of the logarithmic term in our operator makes the treatment much more complicated than in the known works.

In general, there are only few existence results for critical double phase problems without logarithmic perturbation in case of constant or variable exponents, see the papers by Arora--Fiscella--Mukherjee--Winkert \cite{Arora-Fiscella-Mukherjee-Winkert-2022}, Farkas--Fiscella--Winkert \cite{Farkas-Fiscella-Winkert-2022}, Ho--Kim--Zhang \cite{Ho-Kim-Zhang-2024}, Liu--Papageorgiou \cite{Liu-Papageorgiou-2021} and Papageorgiou--Vetro--Winkert
\cite{Papageorgiou-Vetro-Winkert-2024,Papageorgiou-Vetro-Winkert-2023}. All these works have just terms like $|u|^{p^*-2}u$ or $|u|^{p^*(x)-2}u$ on the right-hand side of their problems without any term likes $\mu(x)|u|^{q^*-2}u$. In the recent paper by Colasuonno--Perera \cite{Colasuonno-Perera-2025} the authors study double phase problems in the local and in the nonlocal Kirchhoff case by allowing a growth on the right-hand side given by
\begin{align}\label{general-growth-1}
	|u|^{p^*-2}u+b(x)|u|^{q^*-2}
\end{align}
with a suitable weight function $b(\cdot)$. Moreover, Ha--Ho \cite{Ha-Ho-2024} studied Kirchhoff double phase problems with variable exponents via a new concentration compactness principle (we already mentioned this fact above) and with a growth like
\begin{align}\label{general-growth-2}
	|u|^{p^*(x)-2}u+a(x)^{\frac{q^*(x)}{q(x)}}|u|^{q^*(x)-2}u,
\end{align}
see also a paper by the same authors \cite{Ha-Ho-2025} for the case in $\mathbb{R}^N$. For the use of a growth as in \eqref{general-growth-2}, the authors in \cite{Ha-Ho-2024} have used general embedding results for variable exponent double phase problem proved by Ho--Winkert \cite{Ho-Winkert-2023}, see also the work by Cianchi--Diening \cite{Cianchi-Diening-2024} for a general Sobolev embedding theorem in Musielak-Orlicz Sobolev spaces. In contrast to \eqref{general-growth-1} and \eqref{general-growth-2} we allow a growth of the form
\begin{align*}
	\frac{1}{p^\ast(x)} \left(\left(a(x)\right)^\frac{1}{p(x)} |t|\right)^{p^\ast(x)} + \frac{1}{q^\ast(x)} \left(\left(b(x) \log^{s(x)}(1+|t|)\right)^\frac{1}{q(x)} |t|\right)^{q^\ast(x)}.
\end{align*}
The study of the above generalized critical growth nonlinearity is motivated by embeddings of the corresponding Musielak-Orlicz-Sobolev space $W_0^{1, \mathcal{S}}(\Omega)$ and the above concentration compactness principle. Such types of nonlinearities have not yet been investigated in the literature. In this direction, the current work presents new multiplicity results for quasilinear elliptic problems involving the logarithmic double phase operator given in \eqref{main:operator} and generalized critical growth with logarithmic perturbation.

The paper is organized as follows. In Section \ref{Section-2} we present the general theory about Musielak-Orlicz spaces and introduce our special $N$-function $\mathcal{S}$ including the proofs of some properties of it. Section \ref{Section-3} is devoted to the new continuous and compact embedding results (see Proposition \ref{imp:embedding}) while in Section \ref{Section-4} we are going to develop and prove a concentration compactness principle adapted to logarithmic double phase structures as presented before in this paper, see Theorem \ref{concentration:compactness}. In Section \ref{Section-5} we discuss the properties of the energy functional and the logarithmic double phase operator in \eqref{main:prob}. Finally, in Section \ref{Section-6} we prove our main multiplicity results for problem \eqref{main:prob} in the case of superlinear and sublinear growth, respectively, Theorems \ref{theo-1} and \ref{theo-2}.

%********************************************************************
\section{The function spaces}\label{Section-2}
%********************************************************************

We begin with a brief description of Lebesgue and Sobolev spaces with variable exponents and introduce then the necessary definitions for introducing the required Musielak-Orlicz Sobolev spaces. We refer to the monographs of Chlebicka--Gwiazda--\'{S}wierczewska-Gwiazda--Wr\'{o}blewska-Kami\'{n}ska \cite{Chlebicka-Gwiazda-Swierczewska-Gwiazda-Wroblewska-Kaminska-2021}, Diening--Harjulehto--H\"{a}st\"{o}--R$\mathring{\text{u}}$\v{z}i\v{c}ka \cite{Diening-Harjulehto-Hasto-Ruzicka-2011}, Harjulehto--H\"{a}st\"{o} \cite{Harjulehto-Hasto-2019}, Musielak \cite{Musielak-1983}, Papageorgiou--Winkert \cite{Papageorgiou-Winkert-2024} and the paper Fan--Zhao \cite{Fan-Zhao-2001}.

Given a bounded domain $\Omega\subseteq \R^N$ with $N\geq 2$ and Lipschitz boundary $\partial\Omega$, we denote by $L^r(\Omega)$ the usual Lebesgue space for $1 \leq r\leq \infty$ equipped with the norm $\|\cdot\|_r$. Moreover, $W^{1,r}(\Omega)$ and $W^{1,r}_0(\Omega)$ stand for the related Sobolev spaces with the norm $\|\cdot\|_{1,r}=\|\cdot\|_r +\|\nabla \cdot\|_r$ and if $1 \leq r < \infty$, then $W^{1,r}_0(\Omega)$ can be endowed with the equivalent norm $\|\nabla \cdot\|_r$.

For any function $f\colon \Omega \to \R$ we write
\begin{align*}
	\lceil f \rceil(x):= \max\{f(x), 0\}
	\quad \text{and} \quad
	\lfloor f \rfloor(x):= \min\{f(x), 0\}.
\end{align*}
For $r \in C(\overline{\Omega})$, we set $r^- = \min_{x \in \overline{\Omega}} r(x)$ and $r^+ = \max_{x \in \overline{\Omega}} r(x)$ and we define
\begin{align*}
	C_+ (\Omega) = \{ r \in C(\overline{\Omega})\colon  1 < r^- \}.
\end{align*}
Note that for any bounded function $g\colon \Omega \to \mathbb{R}$ we use the same notation, that is, we set
\begin{align*}
	g^+ := \max_{x \in \Omega} g(x)
	\quad \text{and} \quad
	g^- := \min_{x \in \Omega} g(x).
\end{align*}

Now, for $r \in C_+ (\Omega)$ and $M(\Omega)$ being the set of all measurable functions $u\colon\Omega\to\R$, we denote by $\Lp{r(\cdot)}$ the Lebesgue space with variable exponent given by
\begin{align*}
	\Lp{r(\cdot)} = \left\{ u \in M(\Omega) \colon \varrho_{r(\cdot)} (u) < \infty \right\},
\end{align*}
equipped with the norm
\begin{align*}
	\|u\|_{r(\cdot)} = \inf \left\{ \lambda > 0 \colon \varrho_{r(\cdot)} \left( \frac{u}{\lambda} \right)  \leq 1\right\},
\end{align*}
whereby the modular associated with $r$ is
\begin{align*}
	\varrho_{r(\cdot)} (u) = \int_\Omega |u|^{r(x)}\,\mathrm{d}x.
\end{align*}
It is well known that $\Lp{r(\cdot)}$ is a separable and reflexive Banach space and its norm is uniformly convex. Also, it holds $\left[ \Lp{r(\cdot)} \right] ^*=\Lp{r'(\cdot)}$, where $r' \in C_+(\overline{\Omega})$ is the conjugate variable exponent of $r$ given by $r'(x) = r(x) / [r(x) - 1]$ for all $x \in \overline{\Omega}$. Furthermore, we also have a H\"older type inequality of the form
\begin{align*}
	\int_\Omega |uv| \,\mathrm{d}x \leq \left[\frac{1}{r^-}+\frac{1}{r'^-}\right] \|u\|_{r(\cdot)}\|v\|_{r'(\cdot)} \leq 2 \|u\|_{r(\cdot)}\|v\|_{r'(\cdot)} \quad \text{for all } u,v\in \Lp{r(\cdot)}.
\end{align*}
If $r_1, r_2\in C_+(\overline{\Omega})$ and $r_1(x) \leq r_2(x)$ for all $x\in \overline{\Omega}$, we have the continuous embedding $\Lp{r_2(\cdot)} \hookrightarrow \Lp{r_1(\cdot)}$.

The next proposition shows the relation between the norm and its modular, see Fan-Zhao \cite[Theorems 1.2 and 1.3]{Fan-Zhao-2001}.

\begin{proposition}%\label{Prop:ModularNormVarExp}
	Let $r\in C_+(\overline{\Omega})$, $\lambda>0$, and $u\in \Lp{r(\cdot)}$, then
	\begin{enumerate}
		\item[\textnormal{(i)}]
			$\|u\|_{r(\cdot)}=\lambda$  if and only if $ \varrho_{r(\cdot)}\left(\frac{u}{\lambda}\right)=1$ with $u \neq 0$;
		\item[\textnormal{(ii)}]
			$\|u\|_{r(\cdot)}<1$ (resp. $=1$, $>1$)  if and only if $\varrho_{r(\cdot)}(u)<1$ (resp. $=1$, $>1$);
		\item[\textnormal{(iii)}]
			if $\|u\|_{r(\cdot)}<1$, then $\|u\|_{r(\cdot)}^{r^+} \leq \varrho_{r(\cdot)}(u) \leq \|u\|_{r(\cdot)}^{r^-}$;
		\item[\textnormal{(iv)}]
			if $\|u\|_{r(\cdot)}>1$, then $\|u\|_{r(\cdot)}^{r^-} \leq \varrho_{r(\cdot)}(u) \leq \|u\|_{r(\cdot)}^{r^+}$;
		\item[\textnormal{(v)}]
			$\|u\|_{r(\cdot)} \to 0$  if and only if  $\varrho_{r(\cdot)}(u)\to 0$;
		\item[\textnormal{(vi)}]
			$\|u\|_{r(\cdot)}\to +\infty$  if and only if  $\varrho_{r(\cdot)}(u)\to +\infty$.
	\end{enumerate}
\end{proposition}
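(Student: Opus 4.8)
The plan is to reduce all six items to the behaviour of the one-variable function
\[
	\phi_u(\lambda) := \varrho_{r(\cdot)}\!\left(\frac{u}{\lambda}\right) = \int_\Omega \left|\frac{u(x)}{\lambda}\right|^{r(x)}\,\mathrm{d}x, \qquad \lambda > 0,
\]
attached to a fixed $u \in \Lp{r(\cdot)}$ with $u \ne 0$ (the case $u = 0$ being trivial throughout). The first and only genuinely technical step is to record the properties of $\phi_u$. Since $\overline{\Omega}$ is compact and $r$ is continuous we have $r^+ < \infty$, so from $\varrho_{r(\cdot)}(u) < \infty$ and $|u/\lambda|^{r(x)} \le \max\{\lambda^{-r^-},\lambda^{-r^+}\}\,|u|^{r(x)}$ we obtain that $\phi_u(\lambda)$ is finite for every $\lambda > 0$. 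Dominated convergence then shows that $\phi_u$ is continuous on $(0,\infty)$ with $\phi_u(\lambda) \to 0$ as $\lambda \to \infty$, while monotone convergence (after restricting the integral to a positive-measure set $\{|u| \ge \varepsilon\}$) gives $\phi_u(\lambda) \to \infty$ as $\lambda \to 0^+$; and $\phi_u$ is strictly decreasing because for $0 < \lambda_1 < \lambda_2$ its (finite) integrand at $\lambda_1$ strictly exceeds that at $\lambda_2$ on the positive-measure set $\{u \ne 0\}$.

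Granting this, item (i) follows at once: by the intermediate value theorem there is a unique $\lambda_0 > 0$ with $\phi_u(\lambda_0) = 1$, and the definition of the Luxemburg norm as an infimum together with the monotonicity of $\phi_u$ forces $\|u\|_{r(\cdot)} = \lambda_0$; conversely $\phi_u(\lambda) = 1$ can hold only for $\lambda = \lambda_0$. Item (ii) is then obtained by comparing $\phi_u(1) = \varrho_{r(\cdot)}(u)$ with $\phi_u(\|u\|_{r(\cdot)}) = 1$ and invoking the strict monotonicity of $\phi_u$: $\varrho_{r(\cdot)}(u) < 1$ (resp.\ $=1$, $>1$) if and only if $\|u\|_{r(\cdot)} < 1$ (resp.\ $=1$, $>1$).

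For items (iii) and (iv), set $\lambda := \|u\|_{r(\cdot)}$, so that $\int_\Omega |u/\lambda|^{r(x)}\,\mathrm{d}x = 1$ by (i), and use the identity $|u(x)|^{r(x)} = \lambda^{r(x)}\,|u(x)/\lambda|^{r(x)}$. When $\lambda < 1$ one has $\lambda^{r^+} \le \lambda^{r(x)} \le \lambda^{r^-}$ pointwise, and integrating yields $\lambda^{r^+} \le \varrho_{r(\cdot)}(u) \le \lambda^{r^-}$, which is (iii); when $\lambda > 1$ the chain of inequalities reverses and gives $\lambda^{r^-} \le \varrho_{r(\cdot)}(u) \le \lambda^{r^+}$, which is (iv). Finally, items (v) and (vi) follow by combining (ii)--(iv): if $\|u_n\|_{r(\cdot)} \to 0$ then eventually $\|u_n\|_{r(\cdot)} < 1$ and $\varrho_{r(\cdot)}(u_n) \le \|u_n\|_{r(\cdot)}^{r^-} \to 0$, while if $\varrho_{r(\cdot)}(u_n) \to 0$ then eventually $\varrho_{r(\cdot)}(u_n) < 1$, hence $\|u_n\|_{r(\cdot)} < 1$ by (ii) and $\|u_n\|_{r(\cdot)} \le \varrho_{r(\cdot)}(u_n)^{1/r^+} \to 0$ by (iii); the statement (vi) follows by the symmetric argument using $\|u_n\|_{r(\cdot)} > 1$ eventually together with (iv). The main obstacle, as indicated, is entirely the first step --- establishing finiteness, continuity, strict monotonicity, and the two boundary limits of $\phi_u$ --- and this is precisely where the assumption $r \in C_+(\overline{\Omega})$, in particular $1 < r^- \le r^+ < \infty$, enters.
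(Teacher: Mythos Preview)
Your proof is correct and follows the standard route; the paper itself does not prove this proposition but simply cites Fan--Zhao \cite[Theorems 1.2 and 1.3]{Fan-Zhao-2001}, so there is no in-paper argument to compare against. Your argument via the auxiliary map $\phi_u(\lambda)=\varrho_{r(\cdot)}(u/\lambda)$ and its continuity, strict monotonicity, and boundary limits is exactly the approach taken in that reference (and in the later Proposition~\ref{pro:norm-mod:relation} for the function $\mathcal{S}$), so nothing further is needed.
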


Next, we introduce the corresponding variable exponent Sobolev space $W^{1,r(\cdot)}(\Omega)$ which is defined by
\begin{align*}
	W^{1,r(\cdot)}(\Omega)=\l\{ u \in \Lp{r(\cdot)}\colon  |\nabla u| \in \Lp{r(\cdot)}\r\}
\end{align*}
endowed with the norm
\begin{align*}
	\|u\|_{1, r(\cdot)} = \inf \left\lbrace \lambda > 0\colon \varrho_{1, r(\cdot)} \left( \frac{u}{\lambda} \right)  \leq 1 \right\rbrace,
\end{align*}
where
\begin{align*}
	\varrho_{1, r(\cdot)} (u) = \varrho_{ r(\cdot)} (u) + \varrho_{ r(\cdot)} ( \nabla u ),
\end{align*}
with $\varrho_{ r(\cdot)} ( \nabla u ) = \varrho_{ r(\cdot)} ( |\nabla u|)$. Moreover, we denote
\begin{align*}
	\Wpzero{r(\cdot)}= \overline{C^\infty _0(\Omega)}^{\|\cdot\|_{1,r(\cdot)}}.
\end{align*}
We know that the spaces $W^{1,r(\cdot)}$ and $\Wpzero{r(\cdot)}$ are both separable and reflexive Banach spaces and the norm $\|\cdot\|_{1,r}$ is uniformly convex. Furthermore, we have the Poincar\'{e} inequality, that is
\begin{align*}%\label{eq:Poincare}
	\|u\|_{r(\cdot)} \leq c_0 \|\nabla u\|_{r(\cdot)}
	\quad\text{for all } u \in \Wpzero{r(\cdot)}.
\end{align*}
for some $c_0>0$. Therefore, we can equip the space $\Wpzero{r(\cdot)}$ with the equivalent norm $\|u\|_{W^{1,r(\cdot)}_0(\Omega)}=\|\nabla u\|_{r(\cdot)}$ which turns out to be uniformly convex as well.

For $r \in C_+ (\overline{\Omega})$ we introduce the critical Sobolev variable exponents $r^*$ and $r_*$ with the following expression
\begin{align*}
	r^*(x) & =
	\begin{cases}
		\frac{N r(x)}{N - r(x) } & \text{if } r(x) < N    \\
		\text{any number }s \in (r(x),\infty) & \text{if } r(x) \geq N
	\end{cases}
	, \quad\text{for all } x \in \overline{\Omega}, \\[1ex]
	r_*(x) & =
	\begin{cases}
		\frac{(N -1) r(x)}{N - r(x) } & \text{if } r(x) < N    \\
		\text{any number }s \in (r(x),\infty) & \text{if } r(x) \geq N
	\end{cases}
	, \quad\text{for all } x \in \overline{\Omega}.
\end{align*}

% For $\kappa>0$, let us denote by $\mathcal{C}_{{\log}}^\kappa(\overline{\Omega})$ the set of all continuous functions on $\overline{\Omega}$ with the $\kappa$-logarithmic modulus of continuity:
% \begin{align*}
% 	p \in \mathcal{C}^\kappa_{{\log}}(\overline{\Omega}) \quad \text{if and only if}\quad |p(x)-p(y)|\leq \Theta(|x-y|)\quad \text{for all }x,y\in \overline{\Omega}\text{ with }|x-y|<\frac{1}{2},
% \end{align*}
% where $\Theta$ is a nonnegative function such that
% \begin{align*}
% 	\limsup_{s\to 0^+}\Theta(s)\log^\kappa \left(\frac{1}{s}\right)=C\quad\text{with a constant }C.
% \end{align*}

Let us denote the space $C^{0, \frac{1}{|\log t|}}(\overline{\Omega})$ by the set of all functions $h\colon \overline{\Omega} \to \R$ which are log-H\"{o}lder continuous, i.e.\,there exists $C>0$ such that
\begin{align*}
	|h(x)-h(y)| \leq \frac{C}{|\log |x-y||}\quad\text{for all } x,y\in \overline{\Omega} \text{ with } |x-y|<\frac{1}{2}.
\end{align*}

If $r \in C^{0, \frac{1}{|\log t|}}(\overline{\Omega})$, then the set $C_{c}^\infty(\Omega)$ of smooth functions with finite support is dense in $W^{1,r(\cdot)}_0(\Omega)$. Given a function $u\in W^{1,r(\cdot)}_0(\Omega)$ with {$r \in C^{0, \frac{1}{|\log t|}}(\overline{\Omega})$}, the smooth approximations of $u$ in $W^{1,r(\cdot)}_0(\Omega)$ can be obtained by means of the Friedrichs mollifiers.

The following analog of the Sobolev embedding theorem holds. Given $p,q \in C_+(\overline{\Omega})$ with  $\inf_{\Omega}(p^\ast(x)-q(x))>0$, then for every $u\in W^{1,p(\cdot)}_0(\Omega)$
\begin{align*}
	\|u\|_{q(\cdot),\Omega}\leq C\|u\|_{W^{1,p(\cdot)}_0(\Omega)},\quad\text{with } C=C(p^\pm,q^\pm,|\Omega|,N),
\end{align*}
and the embedding $W^{1,p(\cdot)}_0(\Omega)\subset L^{q(\cdot)}(\Omega)$ is compact.

Next, we begin by recalling some definitions and preliminary results from Fan \cite{Fan-2012}, Harjulehto--H\"{a}st\"{o} \cite{Harjulehto-Hasto-2019} and Musielak \cite{Musielak-1983} in order to introduce Musielak-Orlicz Sobolev spaces and its properties. First, let us denote by $(X,\Sigma,\mu)$ a $\sigma$-finite, complete measure space with $\mu \not \equiv 0$.

\begin{definition}
	Let  $\varphi \colon X \times (0,+\infty) \to \R$. We say that
	\begin{enumerate}
		\item[\textnormal{(i)}]
			$\varphi$ is almost increasing in the second variable if there exists $a \geq 1$ such that $\varphi(x,s) \leq a \varphi(s,t)$ for all $0 < s < t$ and for a.a.\,$x \in X$;
		\item[\textnormal{(ii)}]
			$\varphi$ is almost decreasing in the second variable if there exists $a \geq 1$ such that $a \varphi(x,s) \geq \varphi(x,t)$ for all $0 < s < t$ and for a.a.\,$x \in X$.
	\end{enumerate}
\end{definition}

\begin{definition}
	Let $\varphi \colon X \times (0,+\infty) \to \R$ and $p,q>0$. We say that $\varphi$ satisfies the property
	\begin{enumerate}[leftmargin=2cm]
		\item[\textnormal{(Inc)}$_p$]
			if $t^{-p}\varphi(x,t)$ is increasing in the second variable;
		\item[\textnormal{(aInc)}$_p$]
			if $t^{-p}\varphi(x,t)$ is almost increasing in the second variable;
		\item[\textnormal{(Dec)}$_q$]
			if $t^{-q}\varphi(x,t)$ is decreasing in the second variable;
		\item[\textnormal{(aDec)}$_q$]
			if $t^{-q}\varphi(x,t)$ is almost decreasing in the second variable.
	\end{enumerate}
	Without subindex, that is \textnormal{(Inc)}, \textnormal{(aInc)}, \textnormal{(Dec)} and \textnormal{(aDec)}, it indicates that there exists some $p>1$ or $q<\infty$ such that the condition holds.
\end{definition}

Next, we give the definition of a generalized $\Phi$-function.

\begin{definition}
	A function $\varphi \colon X \times [0,+\infty) \to [0,+\infty]$ is said to be a generalized $\varphi$-function if $\varphi$ is measurable in the first variable, increasing in the second variable and satisfies $\varphi(x,0)=0$, $\lim_{t\to 0^+} \varphi(x,t) = 0$ and $\lim_{t \to +\infty} \varphi(x,t) = +\infty$ for a.a.\,$x \in X$. Moreover, we say that
	\begin{enumerate}
		\item[\textnormal{(i)}]
			$\varphi$ is a generalized weak $\varphi$-function if it satisfies \textnormal{(aInc)}$_1$ on $X \times (0,+\infty)$;
		\item[\textnormal{(ii)}]
			$\varphi$ is a generalized convex $\varphi$-function if $\varphi(x,\cdot)$ is left-continuous and convex for a.a.\,$x \in X$;
		\item[\textnormal{(iii)}]
			$\varphi$ is a generalized strong $\varphi$-function if $\varphi(x,\cdot)$ is continuous in the topology of $[0,\infty]$ and convex for a.a.\,$x \in X$.
	\end{enumerate}
	The set of all generalized strong ${\bf \Phi}$-function is denoted by ${\bf \Phi}(\Omega)$.
\end{definition}

Now we define the conjugate of a generalized $\varphi$-function and its left-inverse.

\begin{definition}\label{def:conjugate}
	Let $\varphi \colon X \times [0,+\infty) \to [0,+\infty]$. We denote by $\varphi^\sharp$ the conjugate function of $\varphi$ which is defined for $x \in X$ and $s \geq 0$ by
	\begin{align*}
		\varphi^\sharp(x,s) = \sup_{t \geq 0} (ts - \varphi(x,t)).
	\end{align*}
	We denote by $\varphi^{-1}$ the left-continuous inverse of $\varphi$, defined for $x \in X$ and $s \geq 0$ by
	\begin{align*}
		\varphi^{-1}(x,s) = \inf \{t \geq 0\colon \varphi(x,t) \geq s\}.
	\end{align*}
\end{definition}

\begin{definition}
	Let $\varphi \colon X \times [0,+\infty) \to [0,+\infty]$, we say that
	\begin{enumerate}
		\item[\textnormal{(i)}]
			$\varphi$ is doubling (or satisfies the $\Delta_2$-condition) if there exists a constant $K \geq 2$ such that
			\begin{align*}
				\varphi(x,2t) \leq K \varphi(x,t)
			\end{align*}
			for all $t \in (0,+\infty]$ and for a.a.\,$x \in X$;
		\item[\textnormal{(ii)}]
			$\varphi$ satisfies the $\nabla_2$ condition if $\varphi^\sharp$ satisfies the $\Delta_2$-condition.
		\item[\textnormal{(iii)}]
			Let $\phi_1, \phi_2 \in {\bf \Phi}(\Omega)$. We say $\phi_1$ is weaker than $\phi_2$, denoted by $ \phi_1 \prec \phi_2 $, if there exist constants $C_1, C_2>0$ and $h \in L^1(\Omega)$, $h \geq 0$ such that
			\begin{align*}
				\phi_1(x,t) \leq C_1 \phi_2(x,C_2 t) + h(x) \quad \text{for a.a.\,}x \in \Omega \text{ and for all } t \geq 0.
			\end{align*}
			We say that the functions $\phi_1, \phi_2$ are equivalent denoted by $\phi_1 \simeq \phi_2$, if there exists $L \geq 1$ such that
			\begin{align*}
				\phi_1(x, tL^{-1}) \leq \phi_2(x,t) \leq \phi_1(x, tL) \quad \text{for a.a.\,}x \in \Omega \text{ and for all } t \geq 0,
			\end{align*}
			or weakly equivalent denoted by $\phi_1 \sim \phi_2$ if there exists $L \geq 1$ and $h \in L^1(\Omega)$ such that
			\begin{align*}
				\phi_2(x,t) \leq \phi_1(x, tL) + h(x)
			\end{align*}
			and
			\begin{align*}
				\phi_1(x,t) \leq \phi_2(x, tL) + h(x)  \quad \text{for a.a.\,} x \in \Omega \text{ and for all } t \geq 0.
			\end{align*}
		\item[\textnormal{(iv)}]
			A function  $\psi \colon [0, \infty) \to [0, \infty)$ is said to be a $N$-function ($N$ stands for Nice function) if $\psi$ is a ${\bf \Phi}(\Omega)$ function and
			\begin{align*}
				\lim_{t \to 0^+} \frac{\psi(t)}{t}=0
				\quad\text{ and }\quad
				\lim_{t \to \infty} \frac{\psi(t)}{t}=\infty.
			\end{align*}
			A function $\psi\colon \Omega \times [0, \infty) \to [0, \infty)$ is said to be a generalized $N$-function if $\psi(\cdot, t)$ is measurable for all $t \geq 0$ and $\psi(x, \cdot)$ is a $N$-function for a.a.\,$x \in \Omega$. We denote by ${\bf N_\Phi}(\Omega)$ the class of all generalized $N$-function on $\Omega$.
		\item[\textnormal{(v)}]
			Given $\phi_1, \phi_2 \in {\bf N_\Phi}(\Omega)$, we say $\phi_1$ increases essentially slower than $\phi_2$ near infinity, denoted by $\phi_1 \ll \phi_2$, if for any $\ell >0$
			\begin{align*}
				\lim_{t \to \infty } \frac{\phi_1(x, \ell t)}{\phi_2(x,t)} = 0 \quad \text{uniformly for a.a.\,} x \in \Omega.
			\end{align*}
	\end{enumerate}
\end{definition}

\begin{definition}
	Let $\varphi \colon \Omega \times [0,+\infty) \to [0,+\infty]$ be a generalized $\varphi$-function, we say that it satisfies the condition
	\begin{enumerate}[leftmargin=1.5cm]
		\item[\textnormal{(A0)}]
			if there exists $0 < \beta \leq 1$ such that $\beta \leq \varphi^{-1} (x,1) \leq \beta^{-1}$ for a.a.\,$x \in \Omega$;
		\item[\textnormal{(A0)'}]
			if there exists $0 < \beta \leq 1$ such that $\varphi (x,\beta) \leq 1 \leq \varphi(x,\beta^{-1})$ for a.a.\,$x \in \Omega$;
		\item[\textnormal{(A1)}]
			if there exists $0 < \beta < 1$ such that $\beta \varphi^{-1}(x,t) \leq \varphi^{-1}(y,t)$ for every $t \in [1 , 1/|B|]$ and  for a.a.\,$x,y \in B \cap \Omega$ with every ball $B$ such that $|B| \leq 1$;
		\item[\textnormal{(A1)'}]
			if there exists $0 < \beta < 1$ such that $\varphi( x,\beta t) \leq \varphi (y,t)$ for every $t \geq 0$ such that $\varphi( y,t) \in [1 , 1/|B|]$ and  for a.a.\,$x,y \in B \cap \Omega$ with every ball $B$ such that $|B| \leq 1$;
		\item[\textnormal{(A2)}]
			if for every $s>0$ there exists $0 < \beta \leq 1$ and $h \in \Lp{1} \cap \Lp{\infty}$ such that $\beta \varphi^{-1}(x,t) \leq \varphi^{-1}(y,t)$ for every $t \in [h(x) + h(y) , s]$ and for a.a.\,$x,y \in \Omega$;
		\item[\textnormal{(A2)'}]
			if there exists $s>0$, $0 < \beta \leq 1$, $\varphi_\infty$ weak $\varphi$-function (that is, constant in the first variable) and $h \in \Lp{1} \cap \Lp{\infty}$ such that $\varphi( x,\beta t) \leq \varphi_\infty(t) + h(x)$ and $\varphi_\infty (\beta t) \leq \varphi(x,t) + h(x)$ for a.a.\,$x \in \Omega$ and for all $t\geq0$ such that $\varphi_\infty(t) \leq s$ and $\varphi( x,t) \leq s$.
	\end{enumerate}
\end{definition}

The following result can be found in Harjulehto--H\"{a}st\"{o} \cite{Harjulehto-Hasto-2019}.

\begin{lemma}
	Let $\varphi \colon \Omega \times [0,+\infty) \to [0,+\infty]$ be a generalized weak $\varphi$-function, then
	\begin{enumerate}
		\item[\textnormal{(i)}]
			$\varphi$ satisfies the $\Delta_2$-condition if and only if $\varphi$ satisfies \textnormal{(aDec)}$_q$ for some $q>1$;
		\item[\textnormal{(ii)}]
			if $\varphi$ is a generalized convex $\varphi$-function, $\varphi$ satisfies the $\Delta_2$-condition if and only if $\varphi$ satisfies \textnormal{(Dec)$_q$} for some $q>1$;
		\item[\textnormal{(iii)}]
			$\varphi$ satisfies the $\nabla_2$ condition if and only if $\varphi$ satisfies \textnormal{(aInc)$_q$} for some $q>1$;
		\item[\textnormal{(iv)}]
			$\varphi$ satisfies the \textnormal{(A0)} condition if and only if $\varphi$ satisfies the \textnormal{(A0)'} condition;
		\item[\textnormal{(v)}]
			if $\varphi$ satisfies the \textnormal{(A0)} condition, the \textnormal{(A1)} condition holds if and only if the \textnormal{(A1)'} condition holds;
		\item[\textnormal{(vi)}]
			$\varphi$ satisfies the \textnormal{(A2)} condition if and only if $\varphi$ satisfies the \textnormal{(A2)'} condition.
	\end{enumerate}
\end{lemma}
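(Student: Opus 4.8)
The plan is to observe that each of the six assertions is a standard structural equivalence from the theory of generalized $\Phi$-functions, so that a proof amounts to locating the corresponding statements in Harjulehto--H\"{a}st\"{o} \cite{Harjulehto-Hasto-2019} and, where helpful, recording the short self-contained arguments. The common engine is the interplay between a generalized weak $\varphi$-function, its left-continuous inverse $\varphi^{-1}$ from Definition \ref{def:conjugate}, its conjugate $\varphi^\sharp$, and the elementary fact that an almost-monotonicity property of $t \mapsto t^{-p}\varphi(x,t)$ can be iterated to produce a power-type bound.

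For \textnormal{(i)}, the forward implication follows by iterating the doubling inequality, $\varphi(x,2^n t) \le K^n \varphi(x,t)$: given $0<s<t$, pick $n$ with $2^n s \le t < 2^{n+1}s$, use monotonicity of $\varphi(x,\cdot)$ to get $\varphi(x,t) \le K^{n+1}\varphi(x,s)$, and compare with $(t/s)^q \ge 2^{nq} = K^n$ for $q := \log_2 K$; this yields \textnormal{(aDec)}$_q$ with constant $K$. Conversely, \textnormal{(aDec)}$_q$ applied to the pair $s,2s$ gives $\varphi(x,2s) \le a\, 2^q \varphi(x,s)$, that is, the $\Delta_2$-condition. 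For \textnormal{(ii)} one runs the same scheme, but now the convexity of $\varphi(x,\cdot)$ (equivalently, that $t \mapsto \varphi(x,t)/t$ is nondecreasing for a convex function vanishing at $0$) removes the ``almost'', upgrading \textnormal{(aDec)}$_q$ to \textnormal{(Dec)}$_q$.

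Assertion \textnormal{(iii)} is obtained by combining \textnormal{(i)}, applied to $\varphi^\sharp$, with the conjugation duality between the increasing and decreasing scales: $\varphi$ satisfies \textnormal{(aInc)}$_q$ if and only if $\varphi^\sharp$ satisfies \textnormal{(aDec)}$_{q'}$ with $q' = q/(q-1)$, which is the Orlicz-conjugate counterpart of Young's inequality and is again available in \cite{Harjulehto-Hasto-2019}. Since $\nabla_2$ for $\varphi$ means $\Delta_2$ for $\varphi^\sharp$, part \textnormal{(i)} turns the latter into \textnormal{(aDec)}$_{q'}$ for $\varphi^\sharp$, hence \textnormal{(aInc)}$_q$ for $\varphi$, with $q>1$ corresponding to $q'<\infty$.

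Finally, the equivalences \textnormal{(iv)}--\textnormal{(vi)} between \textnormal{(A0)}, \textnormal{(A1)}, \textnormal{(A2)} and their primed versions are proved by unwinding the definition of $\varphi^{-1}$ via the ``Galois'' relations linking $\varphi$ and its left-continuous inverse (so that $s \le \varphi(x,t)$ implies $\varphi^{-1}(x,s) \le t$, and $\varphi^{-1}(x,s) < t$ implies $s \le \varphi(x,t)$): an inequality $\beta\,\varphi^{-1}(x,t) \le \varphi^{-1}(y,t)$ then translates into $\varphi(x,\beta\,\cdot) \le \varphi(y,\cdot)$ on the relevant range of arguments, and conversely; for \textnormal{(A0)} one additionally uses that $\varphi$ is a weak $\varphi$-function, which makes $\varphi^{-1}(x,1)$ comparable to the level where $\varphi(x,\cdot)$ crosses $1$. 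I expect the only genuinely delicate point — and the reason these are recorded as lemmas rather than trivialities — to be the careful bookkeeping of the ranges of $t$ (the intervals $[1,1/|B|]$, $[h(x)+h(y),s]$, and so on) together with the endpoint and left-continuity subtleties of $\varphi^{-1}$; since all of this is carried out in full in \cite{Harjulehto-Hasto-2019}, the proof I would give is simply to cite the corresponding results there.
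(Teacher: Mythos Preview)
Your proposal is correct and, in its bottom line, matches the paper exactly: the paper gives no proof at all for this lemma and simply states that the result can be found in Harjulehto--H\"{a}st\"{o} \cite{Harjulehto-Hasto-2019}, which is precisely where you end up. The sketches you supply for \textnormal{(i)}--\textnormal{(iii)} and the Galois-connection outline for \textnormal{(iv)}--\textnormal{(vi)} are accurate and go beyond what the paper records, so your write-up is strictly more informative while remaining consistent with the paper's approach.
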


For the next result we refer again to Harjulehto--H\"{a}st\"{o} \cite{Harjulehto-Hasto-2019}.

\begin{proposition}\label{Prop:AbstractBanach}
	Let $\varphi \colon X \times [0,+\infty) \to [0,+\infty]$ be a generalized weak $\varphi$-function and let its associated modular be
	\begin{align*}
		\varrho_\varphi (u) = \int_X \varphi(x,|u(x)|) \,\mathrm{d} \mu (x).
	\end{align*}
	Then, the set
	\begin{align*}
		L^\varphi(X) = \{ u \in M(X)\colon \varrho_\varphi (\lambda u) < \infty \text{ for some } \lambda > 0 \}
	\end{align*}
	equipped with the associated Luxemburg quasi-norm
	\begin{align*}
		\|u\|_\varphi = \inf \left\lbrace \lambda > 0 \colon \varrho_\varphi \left( \frac{u}{\lambda} \right)  \leq 1 \right\rbrace
	\end{align*}
	is a quasi Banach space. Furthermore, if $\varphi$ is a generalized convex $\varphi$-function, it is a Banach space; if $\varphi$ satisfies \textnormal{(aDec)$_q$} for some $q > 1$, it holds that
	\begin{align*}
		L^\varphi(X) = \{ u\in M(X) \colon \varrho_\varphi (u) < \infty \};
	\end{align*}
	if $\varphi$ satisfies \textnormal{(aDec)$_q$} for some $q >1$ and $\mu$ is separable, then $L^\varphi(X)$ is separable; and if $\varphi$ satisfies \textnormal{(aInc)$_p$} and \textnormal{(aDec)$_q$} for some $p$, $q > 1$ it possesses an equivalent, uniformly convex norm, hence it is reflexive.
\end{proposition}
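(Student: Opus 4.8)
I would establish the five assertions in the order stated, each reducing to a now-standard technique in the theory of Musielak--Orlicz (generalized Orlicz) spaces; only the final one, uniform convexity, carries any real difficulty.

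\textbf{Quasi-Banach structure.} That $\|\cdot\|_\varphi$ is a quasi-norm on $L^\varphi(X)$ is checked first: $\|u\|_\varphi=0$ if and only if $u=0$ follows from $\varphi(x,0)=0$, $\lim_{t\to\infty}\varphi(x,t)=+\infty$ and the monotonicity of $\varphi$ in the second variable, while absolute homogeneity is immediate from the definition. For the quasi-triangle inequality I would use \textnormal{(aInc)}$_1$ with constant $a\geq1$: the near-convexity it provides (together with the monotonicity bound $\varphi(x,\max\{s,t\})\leq\varphi(x,s)+\varphi(x,t)$) yields that $\varrho_\varphi(u),\varrho_\varphi(v)\leq1$ implies $\varrho_\varphi\big(\tfrac{u+v}{C}\big)\leq1$ for some $C=C(a)\geq1$, whence $\|u+v\|_\varphi\leq C\big(\|u\|_\varphi+\|v\|_\varphi\big)$; alternatively one may invoke that every weak $\varphi$-function is $\simeq$-equivalent to a convex one and that equivalent $\varphi$-functions produce comparable Luxemburg functionals. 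Completeness is then the classical argument: from a Cauchy sequence extract $(u_{n_k})$ with $\|u_{n_{k+1}}-u_{n_k}\|_\varphi\leq2^{-k}$, use the unit-ball relation $\|w\|_\varphi\leq1$ if and only if $\varrho_\varphi(w)\leq1$ (valid after passing to an equivalent left-continuous representative) together with a Borel--Cantelli argument to produce an almost-everywhere pointwise limit $u$, and invoke the lower semicontinuity of $\varrho_\varphi$ under pointwise a.e.\ convergence (a Fatou argument using monotonicity of $\varphi$) to conclude $u\in L^\varphi(X)$ and $\|u_{n_k}-u\|_\varphi\to0$; the full sequence then converges as well. Since $(X,\Sigma,\mu)$ is merely $\sigma$-finite, the truncation and approximation steps are performed on finite-measure sets and then exhausted.

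\textbf{Banach case, modular identification, separability.} If $\varphi(x,\cdot)$ is convex then $\varrho_\varphi$ is a convex modular, and the Minkowski-type estimate $\varrho_\varphi\big(\tfrac{u+v}{\lambda+\nu}\big)\leq\tfrac{\lambda}{\lambda+\nu}\varrho_\varphi\big(\tfrac{u}{\lambda}\big)+\tfrac{\nu}{\lambda+\nu}\varrho_\varphi\big(\tfrac{v}{\nu}\big)\leq1$ (valid whenever $\|u\|_\varphi<\lambda$ and $\|v\|_\varphi<\nu$) upgrades the quasi-norm to a norm; with completeness, $L^\varphi(X)$ is a Banach space. If $\varphi$ satisfies \textnormal{(aDec)}$_q$ for some $q>1$ (equivalently the $\Delta_2$-condition, by the preceding lemma), then $t^{-q}\varphi(x,t)$ being almost decreasing gives $\varphi(x,\lambda t)\leq a\lambda^q\varphi(x,t)$ for $\lambda\geq1$, hence $\varrho_\varphi(\lambda u)\leq a\lambda^q\varrho_\varphi(u)$; therefore $\varrho_\varphi(u)<\infty$ already forces $\varrho_\varphi(\lambda u)<\infty$ for every $\lambda>0$, which is precisely the identity $L^\varphi(X)=\{u\in M(X)\colon\varrho_\varphi(u)<\infty\}$, and combined with \textnormal{(aInc)}$_1$ it also yields $\|u_n\|_\varphi\to0$ if and only if $\varrho_\varphi(u_n)\to0$. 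Using the latter, separability follows from density of the simple functions: for $u$ with $\varrho_\varphi(u)<\infty$ pick simple $u_n\to u$ with $|u_n|\leq|u|$, so $\varrho_\varphi(u-u_n)\to0$ by dominated convergence and hence $\|u-u_n\|_\varphi\to0$; taking the level sets from a fixed countable algebra of finite-measure sets (available since $\mu$ is separable) and the values rational produces a countable dense subset.

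\textbf{Uniform convexity and reflexivity --- the main obstacle.} Under \textnormal{(aInc)}$_p$ and \textnormal{(aDec)}$_q$ with $1<p\leq q<\infty$ (equivalently $\nabla_2\cap\Delta_2$) I would first replace $\varphi$ by an $\simeq$-equivalent convex $\varphi$-function, which changes neither the space nor its topology, and then establish a quantitative uniform convexity of $\varphi$ itself, namely an inequality of the form
\[
	\varphi\!\left(x,\tfrac{s+t}{2}\right)\leq\tfrac12\big(\varphi(x,s)+\varphi(x,t)\big)-c\,\varphi\big(x,|s-t|\big)
\]
for a.a.\,$x\in X$, all $s,t\geq0$, and some $c\in(0,1)$. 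Extracting this from the $\nabla_2\cap\Delta_2$ data --- essentially the two-sided control $t^{p}\lesssim\varphi(x,t)\lesssim t^{q}$ at all scales, up to multiplicative constants and an $L^1$ perturbation --- is the technical heart of the argument and the step I expect to be hardest. Once it is available, a Clarkson-type computation on the unit sphere transfers it to a modulus of uniform convexity for $\|\cdot\|_\varphi$ (after an equivalent renorming if needed), and the reflexivity of $L^\varphi(X)$ then follows from the Milman--Pettis theorem.
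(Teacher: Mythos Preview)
The paper does not prove this proposition at all: it is stated as a known result and attributed to Harjulehto--H\"{a}st\"{o} \cite{Harjulehto-Hasto-2019} without any argument. Your sketch is essentially correct and follows precisely the route taken in that reference (quasi-norm via \textnormal{(aInc)}$_1$ and equivalence with a convex $\Phi$-function, completeness by the standard Fatou/Cauchy extraction, $\Delta_2$ giving the modular identification and density of simple functions for separability, and uniform convexity transferred from a pointwise inequality under \textnormal{(aInc)}$_p\cap$\textnormal{(aDec)}$_q$ followed by Milman--Pettis). One small caveat: the uniform convexity inequality you wrote, $\varphi(x,\tfrac{s+t}{2})\leq\tfrac12(\varphi(x,s)+\varphi(x,t))-c\,\varphi(x,|s-t|)$, is a bit stronger than what is actually used in \cite{Harjulehto-Hasto-2019}; there the working definition is that for every $\varepsilon>0$ there is $\delta\in(0,1)$ with $\varphi(x,\tfrac{s+t}{2})\leq(1-\delta)\tfrac{\varphi(x,s)+\varphi(x,t)}{2}$ whenever $|s-t|\geq\varepsilon\max\{s,t\}$, and this weaker form is what one extracts from $\nabla_2\cap\Delta_2$ and what suffices to push through to the norm.
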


The relation of the modular and the norm is stated in the following proposition.

\begin{proposition}\label{Prop:AbstractNormModular}
	Let $\varphi \colon X \times [0,+\infty) \to [0,+\infty]$ be a generalized weak $\varphi$-function that satisfies \textnormal{(aInc)}$_p$ and \textnormal{(aDec)}$_q$, with $1 \leq p \leq q < \infty$. Then
	\begin{align*}
		\frac{1}{a} \min \left\lbrace \|u\|_\varphi^p ,  \|u\|_\varphi^q \right\rbrace
		\leq \varrho_\varphi (u)
		\leq a \max \left\lbrace \|u\|_\varphi^p ,  \|u\|_\varphi^q \right\rbrace
	\end{align*}
	for all measurable functions $u \colon X \to \R$, where $a$ is the maximum of the constants of \textnormal{(aInc)}$_p$ and \textnormal{(aDec)}$_q$.
\end{proposition}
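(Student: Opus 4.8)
The plan is to reduce the statement to two pointwise scaling inequalities for $\varphi$ and then play them against the definition of the Luxemburg quasi-norm; this follows the lines of Harjulehto--H\"{a}st\"{o} \cite{Harjulehto-Hasto-2019}. Write $a_1 \geq 1$ and $a_2 \geq 1$ for the constants appearing in \textnormal{(aInc)}$_p$ and \textnormal{(aDec)}$_q$, so that $a = \max\{a_1,a_2\}$. First I would record the scaling inequalities: applying \textnormal{(aInc)}$_p$ with $s=\gamma t<t$ gives $\varphi(x,\gamma t)\leq a_1\gamma^{p}\varphi(x,t)$ for all $0<\gamma\leq 1$ and $t\geq 0$ (the case $\gamma=1$ being trivial since $a_1\geq1$, and $t=0$ since $\varphi(x,0)=0$), and \textnormal{(aDec)}$_q$ applied with $t<\gamma t$ gives $\varphi(x,\gamma t)\leq a_2\gamma^{q}\varphi(x,t)$ for $\gamma\geq1$ and $t\geq 0$; both hold for a.a.\,$x$. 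Integrating over $X$ with $t=|u(x)|$ turns these into modular estimates: $\varrho_\varphi(\gamma u)\leq a_1\gamma^{p}\varrho_\varphi(u)$ when $0<\gamma\leq1$, and $\varrho_\varphi(\gamma u)\leq a_2\gamma^{q}\varrho_\varphi(u)$ when $\gamma\geq1$.

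For the main case $0<\|u\|_\varphi<\infty$ set $\lambda:=\|u\|_\varphi$. The map $\tau\mapsto\varrho_\varphi(u/\tau)$ is nonincreasing, so $\{\tau>0:\varrho_\varphi(u/\tau)\leq1\}$ is an up-set with infimum $\lambda$; hence $\varrho_\varphi(u/\tau)\leq1$ for every $\tau>\lambda$ and $\varrho_\varphi(u/\tau)\geq1$ for every $0<\tau<\lambda$. For the upper bound I would write $u=\tau\,(u/\tau)$ with $\tau>\lambda$ and apply the modular estimates with $\gamma=\tau$: choosing $\tau\geq1$ (possible when $\lambda\geq1$) gives $\varrho_\varphi(u)\leq a_2\tau^{q}\varrho_\varphi(u/\tau)\leq a_2\tau^{q}$, while choosing $\tau\leq1$ (possible when $\lambda\leq1$) gives $\varrho_\varphi(u)\leq a_1\tau^{p}$; letting $\tau\downarrow\lambda$ and keeping track of whether $\lambda\leq1$ or $\lambda\geq1$ yields $\varrho_\varphi(u)\leq a\max\{\lambda^{p},\lambda^{q}\}$. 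For the lower bound I would instead use $\varrho_\varphi(u/\tau)\geq1$ for $0<\tau<\lambda$ together with the modular estimates applied with $\gamma=\tau^{-1}$: if $\tau\leq1$ then $1\leq\varrho_\varphi(\tau^{-1}u)\leq a_2\tau^{-q}\varrho_\varphi(u)$, hence $\varrho_\varphi(u)\geq a_2^{-1}\tau^{q}$, and if $\tau\geq1$ then analogously $\varrho_\varphi(u)\geq a_1^{-1}\tau^{p}$; letting $\tau\uparrow\lambda$, again with the branch selected by $\lambda\lessgtr1$, gives $\varrho_\varphi(u)\geq a^{-1}\min\{\lambda^{p},\lambda^{q}\}$.

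It remains to dispose of the degenerate cases. If $\|u\|_\varphi=0$ then $\varrho_\varphi(u/\tau)\leq1$ for all $\tau>0$; since $\varphi(x,|u(x)|/\tau)\uparrow+\infty$ as $\tau\downarrow0$ on $\{u\neq0\}$ (by $\lim_{t\to\infty}\varphi(x,t)=+\infty$), monotone convergence forces the set $\{u\neq0\}$ to be null, so $\varrho_\varphi(u)=0$ and all three quantities vanish. If $\|u\|_\varphi=+\infty$ then no $\tau>0$ satisfies $\varrho_\varphi(u/\tau)\leq1$; were $\varrho_\varphi(u)$ finite, dominated convergence (dominating $\varphi(x,|u(x)|/\tau)$ by $\varphi(x,|u(x)|)$ for $\tau\geq1$) together with $\lim_{t\to0^+}\varphi(x,t)=0$ would give $\varrho_\varphi(u/\tau)\to0$ as $\tau\to\infty$, a contradiction; hence $\varrho_\varphi(u)=+\infty$ and both inequalities hold trivially.

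The only genuinely delicate point is the bookkeeping in the main case: one has to split simultaneously on $\tau$ versus $1$ (to pick the applicable branch of the scaling inequalities) and on $\|u\|_\varphi$ versus $1$ (so that the power that survives the passage to the limit is precisely the one matching the $\min$, resp.\ the $\max$, and so that the surviving constant is exactly $a=\max\{a_1,a_2\}$ rather than some product).
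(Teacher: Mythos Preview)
Your argument is correct; the scaling inequalities you extract from \textnormal{(aInc)}$_p$ and \textnormal{(aDec)}$_q$ and the subsequent limiting procedure in $\tau$ are exactly the standard route, and the degenerate cases are handled cleanly. The paper does not supply its own proof of this proposition --- it is stated in the preliminaries as a known result from Harjulehto--H\"{a}st\"{o} \cite{Harjulehto-Hasto-2019}, which is precisely the source you are following --- so there is nothing further to compare.
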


The following characterization of suitable embeddings has been proven by Musielak \cite[Theorems 8.4 and 8.5]{Musielak-1983}.

\begin{proposition}\label{Prop:AbstractEmbedding}
	Let $\varphi, \psi \colon X \times [0,+\infty) \to [0,+\infty]$ be generalized weak $\varphi$-functions and let $\mu$ be atomless. Then $L^\varphi(X) \hookrightarrow L^\psi (X)$ if and only if there exists $K>0$ and a non-negative integrable function $h$ such that for all $t \geq 0$ and for a.a.\,$x \in X$
	\begin{align*}
		\psi\left(x, t \right) \leq K \varphi (x,t) + h(x).
	\end{align*}
\end{proposition}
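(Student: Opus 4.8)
The statement is an equivalence, and the two directions are of genuinely different nature, so the plan is to treat them separately. The implication ``$\Leftarrow$'' is a short modular estimate that uses only that $\psi$ is a generalized weak $\varphi$-function, i.e.\ satisfies \textnormal{(aInc)}$_1$; the implication ``$\Rightarrow$'' is the classical construction of a counterexample, for which the atomlessness of $\mu$ is the essential ingredient.

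\emph{Sufficiency.} Assume $\psi(x,t)\le K\varphi(x,t)+h(x)$ for all $t\ge 0$ and a.a.\ $x$, with $K>0$ and $0\le h\in L^1(X)$, and let $u\in L^\varphi(X)$, $u\ne 0$; by Proposition~\ref{Prop:AbstractBanach} we have $\|u\|_\varphi\in(0,\infty)$. Fix $\lambda>\|u\|_\varphi$. By definition of the Luxemburg quasi-norm together with the monotonicity of $\varphi(x,\cdot)$ one has $\varrho_\varphi(u/\lambda)\le 1$, hence
\[
\varrho_\psi\!\left(\frac{u}{\lambda}\right)
=\int_X\psi\!\left(x,\frac{|u|}{\lambda}\right)\mathrm{d}\mu
\le K\int_X\varphi\!\left(x,\frac{|u|}{\lambda}\right)\mathrm{d}\mu+\int_X h\,\mathrm{d}\mu
\le K+\|h\|_{L^1}.
\]
Let $a\ge 1$ be the constant of \textnormal{(aInc)}$_1$ for $\psi$ and put $M:=\max\{2,\,a(K+\|h\|_{L^1})\}$. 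Since $M>1$, applying \textnormal{(aInc)}$_1$ to the pair $|u(x)|/(M\lambda)<|u(x)|/\lambda$ gives $\psi(x,|u|/(M\lambda))\le (a/M)\,\psi(x,|u|/\lambda)$ pointwise, so that $\varrho_\psi(u/(M\lambda))\le (a/M)(K+\|h\|_{L^1})\le 1$, i.e.\ $\|u\|_\psi\le M\lambda$. Letting $\lambda\downarrow\|u\|_\varphi$ yields $\|u\|_\psi\le M\|u\|_\varphi$; in particular $u\in L^\psi(X)$ and $L^\varphi(X)\hookrightarrow L^\psi(X)$. (This is precisely the direction used later, e.g.\ to establish the embeddings in Proposition~\ref{imp:embedding}.)

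\emph{Necessity.} Assume $L^\varphi(X)\hookrightarrow L^\psi(X)$, so $\|u\|_\psi\le C\|u\|_\varphi$ for all $u$ and some $C>0$, and suppose no admissible pair $(K,h)$ exists; equivalently, for every $K>0$ the nonnegative function $N_K(x):=\operatorname*{ess\,sup}_{t\ge 0}\big(\psi(x,t)-K\varphi(x,t)\big)$ (measurable in $x$) has $\int_X N_K\,\mathrm{d}\mu=\infty$. Taking $K_n=2^n$, the idea is to build inductively pairwise disjoint measurable sets $F_n\subseteq X$ and measurable functions $v_n$ on $F_n$ with $\psi(x,v_n(x))>K_n\varphi(x,v_n(x))$ and nearly realizing $N_{K_n}(x)$, using the intermediate-value property of the non-atomic measure $A\mapsto\int_A\psi(x,v_n(x))\,\mathrm{d}\mu$ to carve $F_n$ — disjointly from $F_1,\dots,F_{n-1}$, which is possible because $N_{K_n}\le N_{K_j}$ and $\int_{F_j}N_{K_j}\,\mathrm{d}\mu<\infty$ for $j<n$, so $N_{K_n}$ still has infinite integral off $\bigcup_{j<n}F_j$ — so that $\int_{F_n}\psi(x,v_n(x))\,\mathrm{d}\mu=1$. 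Then $u:=\sum_n v_n\mathbf{1}_{F_n}$ satisfies $\int_{F_n}\varphi(x,v_n(x))\,\mathrm{d}\mu<K_n^{-1}=2^{-n}$, hence $\varrho_\varphi(u)<1$ so $u\in L^\varphi(X)$, while $\varrho_\psi(u)=\sum_n 1=\infty$; pushing the construction so that $u\notin L^\psi(X)$ contradicts the embedding. I expect essentially the whole difficulty to lie here: the measurable selection of near-maximizers $v_n$ of $t\mapsto\psi(x,t)-K_n\varphi(x,t)$ (delicate because generalized $\varphi$-functions need not be continuous in $t$), the inductive atomless carving with prescribed integral (which genuinely requires $\mu$ atomless — on an atom the equivalence breaks down), and the passage from $\varrho_\psi(u)=\infty$ to $u\notin L^\psi(X)$, i.e.\ controlling every rescaling $\varrho_\psi(u/\lambda)$ simultaneously. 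Absent a $\Delta_2$-type growth condition this last step only produces the weaker relation $\psi\prec\varphi$, which matches the stated pointwise inequality exactly when the inner constant can be absorbed, e.g.\ when $\varphi$ is doubling — the situation in which the proposition is applied. The full construction is carried out in Musielak~\cite[Theorems 8.4 and 8.5]{Musielak-1983}.
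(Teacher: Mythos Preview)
The paper does not prove this proposition; it simply records it as a known result and cites Musielak \cite[Theorems 8.4 and 8.5]{Musielak-1983}. Your sufficiency argument is correct and complete, and your necessity sketch---while honestly flagging the technical points (measurable selection of near-maximizers, atomless carving, and the passage from $\varrho_\psi(u)=\infty$ to $u\notin L^\psi$)---defers the full construction to the very same reference the paper invokes. So your proposal is consistent with, and in fact more detailed than, the paper's own treatment.
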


We also have the following H\"older inequality in Musielak-Orlicz spaces, see Harjulehto--H\"{a}st\"{o} \cite{Harjulehto-Hasto-2019}.

\begin{proposition}\label{holder:ineq}
	Let $\varphi \colon X \times [0,+\infty) \to [0,+\infty]$ be a generalized weak $\varphi$-function, then
	\begin{align*}
		\int_X \abs{u} \abs{v} \,\mathrm{d} \mu (x) \leq 2 \|u\|_{\varphi} \|v\|_{\varphi^\sharp} \quad \text{for all } u \in L^{\varphi}(X),\ v \in L^{\varphi^\sharp}(X).
	\end{align*}
	Moreover, the constant $2$ is sharp.
\end{proposition}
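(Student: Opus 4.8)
The plan is to derive the inequality from Young's inequality for the conjugate pair $(\varphi,\varphi^\sharp)$ together with the unit-ball property of the Luxemburg quasi-norm. The starting observation is that the very definition $\varphi^\sharp(x,s)=\sup_{t\geq 0}(ts-\varphi(x,t))$ gives, for all $s,t\geq 0$ and a.a.\,$x\in X$, the pointwise Young inequality $st\leq \varphi(x,t)+\varphi^\sharp(x,s)$. Observe also that $\varphi^\sharp(x,\cdot)$, being a supremum of the affine functions $s\mapsto ts-\varphi(x,t)$, is convex (and increasing, with value $0$ at $s=0$), so that $\varphi^\sharp$ is a generalized convex $\varphi$-function; hence $\|\cdot\|_{\varphi^\sharp}$ is an honest norm and Proposition~\ref{Prop:AbstractBanach} applies to $L^{\varphi^\sharp}(X)$.

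Next I would argue as follows. If $u=0$ or $v=0$, or if one of the two quasi-norms is infinite, there is nothing to prove, so assume $0<\|u\|_\varphi<\infty$ and $0<\|v\|_{\varphi^\sharp}<\infty$. Fix any $\lambda>\|u\|_\varphi$ and $\eta>\|v\|_{\varphi^\sharp}$. By the definition of the Luxemburg quasi-norm there exist $\lambda'\in[\|u\|_\varphi,\lambda)$ and $\eta'\in[\|v\|_{\varphi^\sharp},\eta)$ with $\varrho_\varphi(u/\lambda')\leq 1$ and $\varrho_{\varphi^\sharp}(v/\eta')\leq 1$, and since $\varphi$ and $\varphi^\sharp$ are increasing in the second variable this forces $\varrho_\varphi(u/\lambda)\leq 1$ and $\varrho_{\varphi^\sharp}(v/\eta)\leq 1$. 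Applying Young's inequality to the non-negative functions $|u(x)|/\lambda$ and $|v(x)|/\eta$ and integrating over $X$ yields
\[
\frac{1}{\lambda\eta}\int_X |u|\,|v|\,\mathrm{d}\mu(x)\leq \varrho_\varphi\!\left(\frac{u}{\lambda}\right)+\varrho_{\varphi^\sharp}\!\left(\frac{v}{\eta}\right)\leq 2 .
\]
Multiplying through by $\lambda\eta$ and then letting $\lambda\downarrow\|u\|_\varphi$ and $\eta\downarrow\|v\|_{\varphi^\sharp}$ gives the asserted inequality. The only mildly delicate point, and the one I would regard as the main (if minor) obstacle, is that since $\varphi$ is merely a weak—hence possibly not left-continuous—$\varphi$-function one cannot in general substitute $\lambda=\|u\|_\varphi$ directly into the modular; the strict-inequality-plus-limit argument above is precisely what circumvents this. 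Everything else is the textbook Young-inequality computation.

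For the sharpness of the constant $2$, I would exhibit a concrete pair for which equality is attained. Take $\Omega\subseteq\R^N$ equipped with the Lebesgue measure and the self-conjugate generalized convex $\varphi$-function $\varphi(x,t)=\varphi^\sharp(x,t)=\tfrac12 t^2$, which is independent of $x$. A direct computation of the modular gives $\|u\|_\varphi=\|u\|_2/\sqrt2$ and $\|v\|_{\varphi^\sharp}=\|v\|_2/\sqrt2$, so that the inequality of the proposition becomes exactly the Cauchy–Schwarz inequality $\int_\Omega|u|\,|v|\,\mathrm{d}x\leq\|u\|_2\|v\|_2$, which is an equality whenever $|u|=|v|$ a.e. Hence no constant strictly smaller than $2$ can work, which establishes the sharpness claim.
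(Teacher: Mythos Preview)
Your proof is correct and follows the standard route via Young's inequality and the unit-ball characterisation of the Luxemburg quasi-norm, with the care you take over strict inequalities in $\lambda,\eta$ being exactly the right way to handle the possible lack of left-continuity. The paper itself does not prove this proposition but merely cites Harjulehto--H\"{a}st\"{o} \cite{Harjulehto-Hasto-2019}; your argument is essentially the classical one found there, and your sharpness example with $\varphi(x,t)=\tfrac12 t^2$ is the canonical choice.
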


\begin{proposition}\label{modular-conjugate:relation}
	Let $\varphi \colon X \times [0,+\infty) \to [0,+\infty]$ be a generalized weak $\varphi$-function such that $\varphi \in \Delta_2 \cap \nabla_2$, then
	\begin{align*}
		\varphi^\sharp\left(x,\frac{\varphi(x,t)}{t}\right) \leq \varphi(x,t) \leq \varphi^\sharp\left(x,\frac{2 \varphi(x,t)}{t}\right) \quad \text{for all }   t>0.
	\end{align*}
\end{proposition}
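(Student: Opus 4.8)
The plan is to work at a fixed point $x \in X$ and for a fixed $t>0$, treating $\varphi(x,\cdot)$ as a one-variable convex $\Phi$-function, and to exploit the classical Young-type equality that characterizes the conjugate at the ``slope'' of $\varphi$. The key observation is that, since $\varphi(x,\cdot)$ is convex, left-continuous, increasing, with $\varphi(x,0)=0$, the supremum defining $\varphi^\sharp(x,s)$ at the slope $s_0 := \varphi(x,t)/t$ is essentially attained at (or just before) the point $t$: by convexity the affine function $\tau \mapsto s_0 \tau$ is a chord through the origin and the point $(t,\varphi(x,t))$, hence $s_0\tau - \varphi(x,\tau) \le 0$ for $\tau \le t$ and the sup over $\tau \ge 0$ of $s_0\tau - \varphi(x,\tau)$ is $\ge s_0 t - \varphi(x,t) = 0$; a more careful version of this shows $\varphi^\sharp(x,s_0) \le \varphi(x,t)$, which is the left inequality. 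I would first establish this elementary convexity lemma (or quote it from Harjulehto--Hästö, where Young's inequality and its equality cases for generalized $\Phi$-functions are standard), taking care that the relevant one-sided derivative / subdifferential of $\varphi(x,\cdot)$ at $t$ lies between $\varphi(x,t)/t$ and $2\varphi(x,t)/t$ once $\Delta_2$ is invoked.

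For the right inequality, $\varphi(x,t) \le \varphi^\sharp\big(x, 2\varphi(x,t)/t\big)$, the plan is to use the definition of $\varphi^\sharp$ as a supremum and simply plug in $\tau = t$ as a test point: $\varphi^\sharp\big(x,2\varphi(x,t)/t\big) \ge \frac{2\varphi(x,t)}{t}\cdot t - \varphi(x,t) = 2\varphi(x,t) - \varphi(x,t) = \varphi(x,t)$. This direction actually needs no hypothesis beyond $\varphi$ being a generalized weak $\varphi$-function; the $\Delta_2$ and $\nabla_2$ assumptions are there to make $\varphi^\sharp$ finite (so that $L^\varphi$ and $L^{\varphi^\sharp}$ are genuine function spaces and $\varphi^{\sharp\sharp} = \varphi$, etc.) and, more importantly, to guarantee that the left inequality's test point is the right one, i.e.\ that $s_0 = \varphi(x,t)/t$ is not so small that $\varphi^\sharp(x,s_0)$ degenerates. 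Concretely, $\nabla_2$ ($\varphi$ satisfies $(\mathrm{aInc})_q$ for some $q>1$ by the Lemma) forces $\varphi(x,\tau)/\tau \to \infty$ as $\tau \to \infty$, so the slope $\varphi(x,t)/t$ is unbounded in $t$ and the conjugate is nontrivial; $\Delta_2$ controls $\varphi$ from above so the left-hand sup is finite and attained near $t$.

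I expect the main obstacle to be the left inequality, specifically making rigorous that $\sup_{\tau\ge 0}\big(\tfrac{\varphi(x,t)}{t}\tau - \varphi(x,\tau)\big) \le \varphi(x,t)$ when $\varphi(x,\cdot)$ is only left-continuous and convex (not necessarily differentiable, not necessarily strictly convex), and handling the exceptional $x$-set: the hypotheses hold ``for a.a.\ $x$,'' so all pointwise manipulations must be carried out on the common full-measure set where $\varphi(x,\cdot)$ is convex, left-continuous, and $\Delta_2\cap\nabla_2$ with uniform constants. The convexity argument splits into $\tau \le t$ (chord lies below graph, so $\tfrac{\varphi(x,t)}{t}\tau \le \varphi(x,\tau) \cdot$ — wait, rather $\tfrac{\varphi(x,t)}{t}\tau \le \varphi(x,\tau)$ fails in this direction; one uses instead that for $\tau\le t$, $\varphi(x,\tau) \ge \tfrac{\tau}{t}\varphi(x,t)$ by convexity through the origin, giving $\tfrac{\varphi(x,t)}{t}\tau - \varphi(x,\tau)\le 0$) and $\tau \ge t$ (where one writes $\tau = \theta^{-1} t$ with $\theta\in(0,1]$ and uses $\Delta_2$ iteratively, or a direct convexity estimate, to bound $\tfrac{\varphi(x,t)}{t}\tau - \varphi(x,\tau)$ by $\varphi(x,t)$). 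Once the one-variable inequalities are in hand, measurability of $x\mapsto\varphi^\sharp(x,\cdot)$ and of the composed expressions is routine and the ``for all $t>0$'' quantifier follows since the estimates were uniform in $t$.
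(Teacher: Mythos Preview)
The paper does not prove this proposition; it is listed among the preliminaries taken from Harjulehto--H\"ast\"o's monograph, so there is no in-paper argument to compare against.

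Your right inequality is correct and optimal: testing $\tau=t$ in the supremum defining $\varphi^\sharp\big(x,\tfrac{2\varphi(x,t)}{t}\big)$ gives $2\varphi(x,t)-\varphi(x,t)=\varphi(x,t)$ immediately, with no hypotheses beyond $\varphi\ge 0$.

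For the left inequality your case analysis is inverted. You claim that for $\tau\le t$ convexity through the origin gives $\varphi(x,\tau)\ge\tfrac{\tau}{t}\varphi(x,t)$; in fact convexity with $\varphi(x,0)=0$ gives the \emph{opposite} direction, $\varphi(x,\tau)\le\tfrac{\tau}{t}\varphi(x,t)$. The range $\tau\le t$ actually needs nothing but $\varphi\ge 0$ and $\tau/t\le 1$, yielding $\tfrac{\varphi(x,t)}{t}\tau-\varphi(x,\tau)\le\varphi(x,t)$. It is on $\tau>t$ that one uses the monotonicity of $\tau\mapsto\varphi(x,\tau)/\tau$ (i.e.\ (Inc)$_1$, which follows from convexity and $\varphi(x,0)=0$) to get $\tfrac{\varphi(x,t)}{t}\tau\le\varphi(x,\tau)$ and hence a nonpositive difference. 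With the two ranges swapped, your argument is correct for convex $\varphi$.

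The more serious gap is that the statement assumes only a generalized \emph{weak} $\varphi$-function, i.e.\ (aInc)$_1$ with some constant $a\ge 1$, and neither $\Delta_2$ nor $\nabla_2$ upgrades this to genuine (Inc)$_1$. With $a>1$ the $\tau>t$ bound becomes $\tfrac{\varphi(x,t)}{t}\tau-\varphi(x,\tau)\le(a-1)\varphi(x,\tau)$, which is not controlled by $\varphi(x,t)$; one can build increasing $\varphi\in\Delta_2\cap\nabla_2$ with a long nearly-flat stretch beyond some $t_0$ for which $\varphi^\sharp\big(x,\varphi(x,t_0)/t_0\big)\gg\varphi(x,t_0)$. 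So the left inequality with constant $1$ genuinely requires convexity (equivalently (Inc)$_1$). In the paper's sole application of this proposition the function is $\mathcal{S}^\ast$, which is a generalized strong $\Phi$-function and hence convex, so the use is unaffected; but you should either add convexity to the hypotheses or record that the exact constant depends on it.
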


Finally, we can also define associated Sobolev spaces to these Musielak-Orlicz spaces analogously to the classical case. We refer to Harjulehto--H\"{a}st\"{o} \cite{Harjulehto-Hasto-2019}.

\begin{proposition}\label{Prop:AbstractSobolev}
	Let $\varphi \colon \Omega \times [0,+\infty) \to [0,+\infty]$ be a generalized weak $\varphi$-function such that $L^{\varphi}(\Omega) \subseteq L^1_{loc} (\Omega)$ and $k\geq 1$. Then, the set
	\begin{align*}
		W^{k,\varphi} (\Omega) = \{ u \in L^{\varphi}(\Omega) \colon \partial_\alpha u \in L^{\varphi}(\Omega) \text{ for all } \abs{\alpha} \leq k \},
	\end{align*}
	where we consider the modular
	\begin{align*}
		\varrho_{k,\varphi} (u) = \sum_{0 \leq \abs{\alpha} \leq k } \varrho_\varphi(\partial_\alpha u)
	\end{align*}
	and the associated Luxemburg quasi-norm
	\begin{align*}
		\|u\|_{k,\varphi} = \inf \left\lbrace \lambda > 0 \colon  \varrho_{k,\varphi} \left( \frac{u}{\lambda} \right)  \leq 1 \right\rbrace
	\end{align*}
	is a quasi Banach space. Analogously, the set
	\begin{align*}
		W^{k,\varphi}_0 (\Omega) = \overline{C_0^\infty (\Omega)}^{\|\cdot\|_{k,\varphi}},
	\end{align*}
	where $C_0^\infty (\Omega)$ are the $C^\infty (\Omega)$ functions with compact support, equipped with the same modular and norm is also a quasi Banach space. Furthermore, if $\varphi$ is a generalized convex $\varphi$-function, both spaces $W^{k,\varphi} (\Omega)$ and $W_0^{k,\varphi} (\Omega)$ are Banach spaces; if $\varphi$ satisfies \textnormal{(aDec)$_q$} for some $q > 1$, then they are separable; and if $\varphi$ satisfies \textnormal{(aInc)$_p$} and \textnormal{(aDec)$_q$} for some $p$, $q > 1$ they possess an equivalent, uniformly convex norm, hence they are reflexive.
\end{proposition}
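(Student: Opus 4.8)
The plan is to identify $W^{k,\varphi}(\Omega)$ isometrically with a closed subspace of a finite product of copies of $L^\varphi(\Omega)$, and then to transfer every structural property from Propositions \ref{Prop:AbstractBanach} and \ref{Prop:AbstractNormModular}. Let $I$ denote the finite set of multi-indices $\alpha$ with $\abs{\alpha}\le k$, equip $X\times I$ with the product of $\mu$ and the counting measure $\nu$ on $I$, and put $\tilde\varphi((x,\alpha),t):=\varphi(x,t)$; this is again a generalized weak $\varphi$-function, and it is a generalized convex $\varphi$-function, resp.\ satisfies \textnormal{(aInc)}$_p$ and \textnormal{(aDec)}$_q$, precisely when $\varphi$ does. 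Identifying a function on $X\times I$ with the tuple of its slices, one checks directly from the definitions that $L^{\tilde\varphi}(X\times I)$ is isometrically the product $\prod_{\alpha\in I}L^\varphi(\Omega)$ endowed with the modular $\sum_{\alpha\in I}\varrho_\varphi(u_\alpha)$ and the corresponding Luxemburg quasi-norm, and that under the linear map $\iota\colon u\mapsto(\partial_\alpha u)_{\alpha\in I}$ these pull back to exactly $\varrho_{k,\varphi}$ and $\|\cdot\|_{k,\varphi}$. Hence Proposition \ref{Prop:AbstractBanach}, applied to $\tilde\varphi$ on $X\times I$, already gives that $\prod_{\alpha\in I}L^\varphi(\Omega)$ is a quasi Banach space, a Banach space if $\varphi$ is a generalized convex $\varphi$-function, separable if $\varphi$ satisfies \textnormal{(aDec)}$_q$ (using that $\mu\otimes\nu$ is separable since $\mu$ is and $I$ is finite), and carries an equivalent uniformly convex norm, hence is reflexive, if $\varphi$ satisfies \textnormal{(aInc)}$_p$ and \textnormal{(aDec)}$_q$.

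The crucial step is that $\iota\bigl(W^{k,\varphi}(\Omega)\bigr)$ is \emph{closed} in $\prod_{\alpha\in I}L^\varphi(\Omega)$. First, for every compact $K\subset\Omega$ the inclusion $L^\varphi(\Omega)\to L^1(K)$ is continuous: it is well defined by the hypothesis $L^\varphi(\Omega)\subseteq L^1_{loc}(\Omega)$, the space $L^\varphi(\Omega)$ is complete and metrizable by Proposition \ref{Prop:AbstractBanach}, and the inclusion has closed graph since norm convergence in $L^\varphi(\Omega)$ and in $L^1(K)$ both force convergence in measure on $K$; by the closed graph theorem it is therefore bounded, $\|w\|_{L^1(K)}\le C_K\|w\|_\varphi$. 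Consequently, if $u_n\in W^{k,\varphi}(\Omega)$ and $(\partial_\alpha u_n)_{\alpha\in I}\to(v_\alpha)_{\alpha\in I}$ in the product quasi-norm, then $\partial_\alpha u_n\to v_\alpha$ in $L^1_{loc}(\Omega)$ for each $\alpha\in I$; letting $n\to\infty$ in $\int_\Omega u_n\,\partial_\alpha\phi\,\mathrm{d}x=(-1)^{\abs{\alpha}}\int_\Omega(\partial_\alpha u_n)\,\phi\,\mathrm{d}x$ for arbitrary $\phi\in C_0^\infty(\Omega)$ yields $v_\alpha=\partial_\alpha v_0$ in the weak sense, so $v_0\in W^{k,\varphi}(\Omega)$ and $\iota(v_0)=(v_\alpha)_{\alpha\in I}$. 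Thus $\iota$ identifies $W^{k,\varphi}(\Omega)$ isometrically with a closed subspace of $\prod_{\alpha\in I}L^\varphi(\Omega)$.

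It remains to transfer the properties. A closed subspace of a quasi Banach (resp.\ Banach, resp.\ separable) space is again of the same type, and if the ambient space carries an equivalent uniformly convex norm then the restriction of that norm to the closed subspace $\iota\bigl(W^{k,\varphi}(\Omega)\bigr)$ is again an equivalent uniformly convex norm, whence $W^{k,\varphi}(\Omega)$ is reflexive; combined with the previous paragraph this proves all the assertions for $W^{k,\varphi}(\Omega)$. For $W_0^{k,\varphi}(\Omega)$ one only has to note that $C_0^\infty(\Omega)\subseteq W^{k,\varphi}(\Omega)$, so that $W_0^{k,\varphi}(\Omega)$ is by definition a closed subspace of $W^{k,\varphi}(\Omega)$ and therefore inherits exactly the same list of properties. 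The one genuine obstacle is the closedness step, and inside it the (standard but essential) point that the embedding $L^\varphi(\Omega)\hookrightarrow L^1_{loc}(\Omega)$ is \emph{continuous}, which is what lets one pass weak derivatives to the limit; all the rest is a routine transfer along the isometry $\iota$ together with the already established Propositions \ref{Prop:AbstractBanach} and \ref{Prop:AbstractNormModular}.
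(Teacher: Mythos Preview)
The paper does not give its own proof of this proposition; it is quoted from Harjulehto--H\"{a}st\"{o} \cite{Harjulehto-Hasto-2019} as background material. Your argument---identifying $W^{k,\varphi}(\Omega)$ via $\iota\colon u\mapsto(\partial_\alpha u)_{|\alpha|\le k}$ with a closed subspace of a finite product of copies of $L^\varphi(\Omega)$, and then transferring completeness, separability, and uniform convexity from Proposition~\ref{Prop:AbstractBanach}---is precisely the standard route taken there, and it is correct.

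The one step that deserves a citation rather than a one-line justification is your claim that norm convergence in $L^\varphi(\Omega)$ forces local convergence in measure. For a generalized weak $\varphi$-function this is true (it is part of showing that $L^\varphi$ is a quasi-Banach function lattice; see \cite[Chapter~3]{Harjulehto-Hasto-2019}), but it is not entirely immediate from the definitions since $\|u_n\|_\varphi\to 0$ only gives $\varrho_\varphi(u_n/\lambda)\le 1$ eventually for each $\lambda>0$, not $\varrho_\varphi(u_n)\to 0$. Once that fact is granted, your closed-graph argument for the continuity of $L^\varphi(\Omega)\hookrightarrow L^1(K)$ goes through, and with it the closedness of $\iota(W^{k,\varphi}(\Omega))$. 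Everything else---the product-measure construction of $\tilde\varphi$, the inheritance of (aInc)$_p$/(aDec)$_q$, and the passage to the closed subspaces $W^{k,\varphi}(\Omega)$ and $W^{k,\varphi}_0(\Omega)$---is routine and handled correctly.
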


The next proposition summarizes the relation between the norm in $W^{k,\varphi} (\Omega)$ and its modular.

\begin{proposition}\label{Prop:AbstractoneNormModular}
	Let $\varphi \colon \Omega \times [0,+\infty) \to [0,+\infty]$ be a generalized weak $\varphi$-function that satisfies \textnormal{(aInc)}$_p$ and \textnormal{(aDec)}$_q$, with $1 \leq p \leq q < \infty$. Then
	\begin{align*}
		\frac{1}{a} \min \left\lbrace \|u\|_{k,\varphi}^p ,  \|u\|_{k,\varphi}^q \right\rbrace
		\leq \varrho_{k,\varphi} (u)
		\leq a \max \left\lbrace \|u\|_{k,\varphi}^p ,  \|u\|_{k,\varphi}^q \right\rbrace
	\end{align*}
	for all $u \in W^{k,\varphi} (\Omega)$, where $a$ is the maximum of the constants of \textnormal{(aInc)}$_p$ and \textnormal{(aDec)}$_q$.
\end{proposition}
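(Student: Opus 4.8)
The plan is to deduce this proposition directly from the already established norm--modular relation in Musielak--Orlicz \emph{Lebesgue} spaces, Proposition~\ref{Prop:AbstractNormModular}, by viewing $\varrho_{k,\varphi}$ as the modular of a single generalized weak $\varphi$-function over a suitably enlarged measure space. Concretely, I would let $I=\{\alpha\in\N^N\colon |\alpha|\leq k\}$, which is a finite nonempty set (nonempty since $k\geq 1$), and form the product measure space $(\widetilde X,\widetilde\Sigma,\widetilde\mu)$ with $\widetilde X=\Omega\times I$ and $\widetilde\mu$ the product of the Lebesgue measure on $\Omega$ with the counting measure on $I$; this is again $\sigma$-finite, complete and not identically zero. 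On it one sets $\widetilde\varphi\colon \widetilde X\times[0,\infty)\to[0,\infty]$ by $\widetilde\varphi((x,\alpha),t):=\varphi(x,t)$.

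The first step is to check that $\widetilde\varphi$ is a generalized weak $\varphi$-function on $\widetilde X$ which satisfies \textnormal{(aInc)}$_p$ and \textnormal{(aDec)}$_q$ with the \emph{same} constants as $\varphi$: measurability in the first variable, monotonicity and the boundary behaviour in the second variable, and the two almost monotonicity conditions are all inherited, because each of them is a statement about $\varphi(x,\cdot)$ for fixed $x$ and $\widetilde\varphi((x,\alpha),\cdot)=\varphi(x,\cdot)$. The second step is the bookkeeping identity: for $u\in W^{k,\varphi}(\Omega)$ define $U\in M(\widetilde X)$ by $U(x,\alpha):=\partial_\alpha u(x)$; then, by definition of $\widetilde\mu$, $\varrho_{\widetilde\varphi}(U)=\sum_{0\leq|\alpha|\leq k}\int_\Omega\varphi(x,|\partial_\alpha u(x)|)\,\mathrm{d}x=\sum_{0\leq|\alpha|\leq k}\varrho_\varphi(\partial_\alpha u)=\varrho_{k,\varphi}(u)$, and the same computation applied to $U/\lambda$ for every $\lambda>0$ shows that the Luxemburg quasi-norms coincide, $\|U\|_{\widetilde\varphi}=\|u\|_{k,\varphi}$.

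The third and final step is to invoke Proposition~\ref{Prop:AbstractNormModular} for $\widetilde\varphi$ and $U$, which yields $\frac1a\min\{\|U\|_{\widetilde\varphi}^p,\|U\|_{\widetilde\varphi}^q\}\leq\varrho_{\widetilde\varphi}(U)\leq a\max\{\|U\|_{\widetilde\varphi}^p,\|U\|_{\widetilde\varphi}^q\}$ with $a$ the maximum of the constants of \textnormal{(aInc)}$_p$ and \textnormal{(aDec)}$_q$; substituting the identities of the second step gives exactly the asserted inequalities, including the claim about the constant~$a$.

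I do not expect a serious obstacle: the only point needing a little care is checking that the abstract setting of Section~\ref{Section-2} genuinely applies to $(\widetilde X,\widetilde\mu)$ and $\widetilde\varphi$ --- in particular, Proposition~\ref{Prop:AbstractNormModular} does \emph{not} require the measure to be atomless, so the atoms introduced by the counting measure on $I$ are harmless. As an alternative one could argue directly, mimicking the proof of Proposition~\ref{Prop:AbstractNormModular}: from \textnormal{(aInc)}$_p$ and \textnormal{(aDec)}$_q$ one obtains, by summing the corresponding pointwise inequalities for $\varphi$ over $0\leq|\alpha|\leq k$, the scaling estimates $\varrho_{k,\varphi}(\lambda u)\leq a\lambda^q\varrho_{k,\varphi}(u)$ for $\lambda\geq 1$ and $\varrho_{k,\varphi}(\lambda u)\leq a\lambda^p\varrho_{k,\varphi}(u)$ for $0<\lambda\leq 1$, together with the reverse inequalities $\varrho_{k,\varphi}(\lambda u)\geq \frac1a\lambda^p\varrho_{k,\varphi}(u)$ for $\lambda\geq 1$ and $\varrho_{k,\varphi}(\lambda u)\geq \frac1a\lambda^q\varrho_{k,\varphi}(u)$ for $0<\lambda\leq 1$; combining these with the elementary properties of the Luxemburg quasi-norm (namely $\varrho_{k,\varphi}(u/\lambda)\leq 1$ whenever $\lambda>\|u\|_{k,\varphi}$ and $\varrho_{k,\varphi}(u/\lambda)>1$ whenever $0<\lambda<\|u\|_{k,\varphi}$) and letting $\lambda\to\|u\|_{k,\varphi}$ reproduces the statement. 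Since this is precisely the proof of Proposition~\ref{Prop:AbstractNormModular} carried out verbatim for the modular $\varrho_{k,\varphi}$, the reduction above is the natural and shortest route.
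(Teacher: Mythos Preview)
The paper does not give an explicit proof of this proposition; it is stated as a known result from the general theory (in the same spirit as Propositions~\ref{Prop:AbstractBanach}, \ref{Prop:AbstractNormModular} and \ref{Prop:AbstractSobolev}, which are quoted from Harjulehto--H\"{a}st\"{o}). Your proposal is correct: the reduction to Proposition~\ref{Prop:AbstractNormModular} via the product space $\Omega\times I$ with counting measure on the finite index set $I=\{\alpha:|\alpha|\le k\}$ is clean and rigorous, and you are right to observe that atomlessness is not required there. The alternative direct argument you sketch is equally valid and is essentially how such statements are proved in the reference; either route is acceptable. (A trivial remark: $I$ is nonempty already for $k\geq 0$, since the zero multi-index is always present, so the parenthetical ``nonempty since $k\geq 1$'' is unnecessary.)
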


Now we can give our precise assumptions:
\begin{enumerate}[label=\textnormal{(H$_0$)},ref=\textnormal{H$_0$}]
	\item\label{main:assump}
	\begin{enumerate}
		\item[\textnormal{(i)}]
			$\Omega \subset \mathbb{R}^N$, $N \geq 2$, is a bounded domain with Lipschitz boundary $\partial \Omega$;
		\item[\textnormal{(ii)}]
			$p, q \in C(\overline{\Omega})$ with $1 < p(x), q(x)< N$ for all $x \in \overline{\Omega}$ and $s \in L^\infty(\Omega)$ such that $q(x)+ s(x) \geq r> 1$ for a.a.\,$x \in \Omega$;
		\item[\textnormal{(iii)}]
			$0 \leq a, b\in L^1(\Omega)$ and $a(x) + b(x) \geq d >0$ for a.a.\,$x \in \Omega$.
	\end{enumerate}
\end{enumerate}
We define the function $\mathcal{S}\colon \Omega\times [0,\infty)\mapsto [0,\infty)$ by
\begin{align}\label{eq:H}
	\mathcal{S}(x,t)=a(x) t^{p(x)}+b(x) t^{q(x)}\log^{s(x)}(1 + t) \quad \text{for } (x, t) \in \Omega \times (0, \infty)
\end{align}
and denote
\begin{align*}
	\alpha(x)= \min\{p(x), q(x)\}
	\quad \text{ and } \quad
	\beta(x)= \max\{p(x), q(x)\}.
\end{align*}

The following lemma summarizes the main properties of the $\Phi$-function $\mathcal{S}$.

\begin{lemma}\label{Le:Prop-S}
	Let \eqref{main:assump} be satisfied. Then $\mathcal{S}$ is a generalized strong $\Phi$-function and satisfy \textnormal{(Inc)}$_{\ell^-}$  and \textnormal{(Dec)}$_{\ell^+}$ for $1 < \ell^- = \min\{p^-, (q + \lfloor s \rfloor)^-\}$ and for $\ell^+ = \max\{p^+, (q + \lceil s \rceil)^+\}$.
\end{lemma}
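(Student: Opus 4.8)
Throughout I will split $\mathcal{S}(x,t)=a(x)t^{p(x)}+\mathcal{L}(x,t)$ with $\mathcal{L}(x,t):=b(x)t^{q(x)}\log^{s(x)}(1+t)$: the power term is elementary, so the content lies in the logarithmic term. I will use repeatedly the two scalar facts $\tfrac{t}{1+t}\le\log(1+t)$ for $t\ge 0$ (both sides vanish at $t=0$ and the derivative of the difference is $\tfrac{t}{(1+t)^2}\ge 0$) and, as a consequence, that $t\mapsto\tfrac{\log(1+t)}{t}$ is positive and decreasing on $(0,\infty)$.

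First I establish \textnormal{(Inc)}$_{\ell^-}$ and \textnormal{(Dec)}$_{\ell^+}$. For the power term $t^{-\ell}a(x)t^{p(x)}=a(x)t^{p(x)-\ell}$ is increasing for $\ell=\ell^-\le p^-\le p(x)$ and decreasing for $\ell=\ell^+\ge p^+\ge p(x)$. For $\mathcal{L}$ I split by the sign of $s(x)$, noting $\lfloor s\rfloor(x)=0$, $\lceil s\rceil(x)=s(x)$ when $s(x)\ge 0$, and $\lfloor s\rfloor(x)=s(x)$, $\lceil s\rceil(x)=0$ when $s(x)<0$. If $s(x)\ge 0$ then $\ell^-\le q(x)$ and $\ell^+\ge q(x)+s(x)$ (for a.a.\ such $x$), so $t^{-\ell^-}\mathcal{L}(x,t)=b(x)t^{q(x)-\ell^-}\log^{s(x)}(1+t)$ is a product of nonnegative increasing functions and $t^{-\ell^+}\mathcal{L}(x,t)=b(x)t^{q(x)-\ell^++s(x)}\big(\tfrac{\log(1+t)}{t}\big)^{s(x)}$ is a product of nonnegative decreasing functions; if $s(x)<0$ then $\ell^-\le q(x)+s(x)$ and $\ell^+\ge q(x)$, and the same two rewritings work since now $\big(\tfrac{\log(1+t)}{t}\big)^{s(x)}$ is increasing and $\log^{s(x)}(1+t)$ is decreasing. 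Moreover $\ell^->1$: indeed $p^->1$, and $q(x)+\lfloor s\rfloor(x)$ equals $q(x)>1$ when $s(x)\ge 0$ and equals $q(x)+s(x)\ge r>1$ when $s(x)<0$, so $(q+\lfloor s\rfloor)^-\ge\min\{q^-,r\}>1$; and $\ell^+<\infty$ because $s\in L^\infty(\Omega)$ and $p,q$ are bounded.

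From here the $\Phi$-function axioms follow quickly. Measurability of $\mathcal{S}(\cdot,t)$ is immediate, and on $(0,\infty)$ each $\mathcal{S}(x,\cdot)$ is continuous as a composition/product of continuous functions (note $\log(1+t)>0$ there). Writing $\mathcal{S}(x,t)=t^{\ell^-}\big(t^{-\ell^-}\mathcal{S}(x,t)\big)$ and using \textnormal{(Inc)}$_{\ell^-}$ with $\ell^->0$, the map $\mathcal{S}(x,\cdot)$ is increasing, satisfies $\mathcal{S}(x,t)\le\mathcal{S}(x,1)\,t^{\ell^-}\to 0$ as $t\to 0^+$, and $\mathcal{S}(x,t)\ge\mathcal{S}(x,1)\,t^{\ell^-}\to\infty$ as $t\to\infty$, where $\mathcal{S}(x,1)=a(x)+b(x)\log^{s(x)}2>0$ for a.a.\ $x$ since $a(x)+b(x)\ge d>0$ forces $a(x)>0$ or $b(x)>0$. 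Together with $\mathcal{S}(x,0)=0$ this gives continuity on $[0,\infty]$. It remains to prove convexity of $\mathcal{S}(x,\cdot)$, which is the heart of the lemma.

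The term $a(x)t^{p(x)}$ is convex since $p(x)>1$, and since $b(x)\ge 0$ it suffices to show that $g(t):=t^{q}\log^{s}(1+t)$ is convex for fixed $q=q(x)\in(1,N)$ and $s=s(x)$; as $g$ is smooth on $(0,\infty)$ and continuous at $0$ with $g(0)=0$, this reduces to $g''\ge 0$. Setting $L=\log(1+t)$ and $u=\tfrac{t}{1+t}$ (so $u\in(0,1)$, $u\le L$), a direct computation gives
\begin{align*}
	g''(t)=t^{q-2}L^{s-2}\,\Phi(t),\qquad
	\Phi(t)=q(q-1)L^{2}+2qs\,uL-s\,u^{2}L+s(s-1)u^{2}.
\end{align*}
If $s\ge 0$, regroup $\Phi=q(q-1)L^{2}+suL(2q-u)+s(s-1)u^{2}$: the first two terms are nonnegative since $0<u<1<2q$, and when $0\le s<1$ the (then negative) last term is absorbed because $suL(2q-u)\ge s(2q-1)u^{2}\ge -s(s-1)u^{2}$, using $u\le L$ and $2q+s>2$. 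If $s<0$, then $-su^{2}L\ge 0$ and $s(s-1)u^{2}=|s|(|s|+1)u^{2}\ge 0$, and by the arithmetic–geometric mean inequality
\begin{align*}
	q(q-1)L^{2}+|s|(|s|+1)u^{2}
	\ \ge\ 2\sqrt{q(q-1)\,|s|(|s|+1)}\;uL
	\ \ge\ 2q|s|\,uL
	=-2qs\,uL,
\end{align*}
where the last step is equivalent to $(q-1)(|s|+1)\ge q|s|$, i.e.\ to $q+s\ge 1$, which holds by \eqref{main:assump}; hence $\Phi\ge 0$. (Alternatively, for $s\ge 0$ one may use $g(t)=\big(t\log^{s/q}(1+t)\big)^{q}$, check convexity of the bracket via $u\le L$, and compose with the convex nondecreasing map $\xi\mapsto\xi^{q}$.) In all cases $g$, and therefore $\mathcal{S}(x,\cdot)$, is convex. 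The genuinely delicate point is precisely this convexity for $s(x)<0$: the logarithmic perturbation removes any simple scaling structure, forcing the explicit expression for $g''$, and the nonnegativity of $\Phi$ is recovered exactly through the balance condition $q(x)+s(x)\ge r>1$; the rest is bookkeeping with the two elementary inequalities above.
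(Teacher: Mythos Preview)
Your proof is correct and covers all the required points. The overall structure differs from the paper's in a useful way, so a brief comparison is warranted.

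For \textnormal{(Inc)}$_{\ell^-}$ and \textnormal{(Dec)}$_{\ell^+}$ the paper differentiates $t^{q(x)-\ell}\log^{s(x)}(1+t)$ and checks the sign of $(q(x)-\ell)\log(1+t)+s(x)\tfrac{t}{1+t}$, whereas you factor $t^{-\ell}\mathcal{L}(x,t)$ as a product of two monotone pieces (using either $\log^{s}(1+t)$ or $(\log(1+t)/t)^{s}$ according to the sign of $s$). Both arguments rest on the same scalar inequality $\tfrac{t}{1+t}\le\log(1+t)$, and yours avoids a derivative computation at the cost of a case split that is no longer than the paper's. You also use \textnormal{(Inc)}$_{\ell^-}$ to read off the limits $\mathcal{S}(x,0^{+})=0$ and $\mathcal{S}(x,\infty)=\infty$ and the monotonicity of $\mathcal{S}(x,\cdot)$, which the paper states without argument.

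The substantive difference is the convexity proof when $s(x)<0$. The paper rewrites $\mathbf{M}=\log^{2}(1+t)\,[q(q-1)+z_x(t)]$, introduces two auxiliary functions $h_x,z_x$, shows both are increasing by a further differentiation, and concludes $\mathbf{M}\ge\log^{2}(1+t)\,(q+s)(q+s-1)\ge 0$. Your argument is shorter: with $\Phi=q(q-1)L^{2}-2q|s|uL+|s|u^{2}L+|s|(|s|+1)u^{2}$, the AM--GM step $q(q-1)L^{2}+|s|(|s|+1)u^{2}\ge 2\sqrt{q(q-1)|s|(|s|+1)}\,uL\ge 2q|s|\,uL$ reduces exactly to the balance condition $q+s\ge 1$, after which the remaining term $|s|u^{2}L$ is already nonnegative. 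This is a genuinely more elementary route to the same conclusion; the paper's approach, on the other hand, produces the explicit lower bound $(q+s)(q+s-1)\log^{2}(1+t)$ for $\mathbf{M}$, which is not needed here but makes the role of the threshold $q+s>1$ slightly more transparent. For $s(x)\ge 0$ the two arguments are essentially the same regrouping.
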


\begin{proof}
	Observe that in light of \eqref{main:assump}, we have
	\begin{align*}
		\begin{cases}
			\text{the map } x \mapsto \mathcal{S}(x, t) \text{ is a locally integrable for all }t \geq 0\\
			\text{the map } t \mapsto \mathcal{S}(x, t) \text{ is increasing and continuous for a.a.\,}x \in \Omega.
		\end{cases}
	\end{align*}
	For all $t>0$ and for a.a.\,$x \in \Omega$, we have
	\begin{align*}
		{\partial_t}^2\mathcal{S} (x,t)
		& = a(x)p(x) ( p(x) - 1) t^{p(x) - 2} + b(x) t^{q(x) -2} \log^{s(x)-2}(1+t) \times\bigg[q(x)(q(x)-1) \log^2(1 + t) \\
		& \quad \quad + 2 q(x)s(x) \log(1+t) \frac{t}{1+t} + s(x)(s(x) - 1)\frac{t^2}{(1+t)^2}  - s(x) \log(1+t) \frac{t^2 }{(1+t)^2}  \bigg] \\
		& :=  a(x)p(x) ( p(x) - 1) t^{p(x) - 2} + b(x) t^{q(x) -2} \log^{s(x)-2}(1+t) \times {\bf M}.
	\end{align*}
	The convexity of the function $\mathcal{S}(x, \cdot)$ follows from the sign of its second derivative. From the above estimate, it is enough to prove that ${\bf M} >0$.\\
	\textbf{Case 1:} $s(x) \geq 0$

	It holds
	\begin{align*}
		{\bf M} & = q(x)(q(x)-1) \log^2(1 + t) + s(x) \frac{t}{(1+t)}\log(1+t) \left[q(x) - \frac{t}{1+t}\right] \\
		& \qquad + s(x) \frac{t}{1+t} \left[q(x) \log(1+t) + (s(x)-1) \frac{t}{1+t}\right]\\
		& \geq q(x)(q(x)-1) \log^2(1 + t) + s(x) \frac{t}{(1+t)}\log(1+t) \left[1- \frac{t}{1+t}\right]  \\
		& \qquad + s(x) q(x) \frac{t}{1+t} \left[\log(1+t) -\frac{t}{1+t}\right] >0,
	\end{align*}
	where in the last inequality we have used the fact that $q(x) + s(x) \geq 1$.\\
	\textbf{Case 2: $s(x) < 0$}

	For a fixed $x$ such that $s(x)<0$, we define
	\begin{align*}
		h_x(t):= 2 q(x) (1+t) \log(1+t) - t \log(1+t) + 2(s(x)-1) t
	\end{align*}
	and
	\begin{align*}
		z_x(t) & := \frac{2 s(x) q(x) t}{(1+t) \log(1+t)}  +  \frac{ s(x) (s(x)-1) t^2}{(1+t)^2 (\log(1+t))^2} -  \frac{s(x) t^2}{(1+t)^2 \log(1+t)}\\
		& =  \frac{s(x) t}{(1+t) \log(1+t)}\left[2 q(x)  +  \frac{ (s(x)-1) t}{(1+t) (\log(1+t))} -  \frac{t}{(1+t)}\right].
	\end{align*}
	Note that
	\begin{align*}
		\lim_{t \to 0} \frac{t^2}{(1+t)^2 (\log(1+t))^2} = \lim_{t \to 0} \frac{t}{(1+t) \log(1+t)} =1, \quad  \lim_{t \to 0} \frac{t^2}{(1+t)^2 \log(1+t)} = 0
	\end{align*}
	and
	\begin{align*}
		{\bf M}= \log^2(1+t) \left[q(x) (q(x)-1) + z_x(t)\right].
	\end{align*}
	Since $q(x) + s(x) \geq 1$ and $s(x) <0$, we have
	\begin{align*}
		z_x(0)= \lim_{t \to 0} z_x(t) = s(x)\left[(s(x)-1) + 2 q(x) \right] \leq 0 \quad \text{and} \quad h_x(0) = 0.
	\end{align*}
	Next, we claim that both $h_x(\cdot)$ and $z_x(\cdot)$ are increasing functions and achieves their infimums $h_x(0)$  and $z_x(0)$ at $0$, respectively. Differentiating with respect to $t$ and using
	\begin{align*}
		(1+t) \log(1+t) \geq t \geq \ln(1+t),\quad q(x) + s(x) \geq 1,\quad q(x) \geq 1,
	\end{align*}
	we obtain
	\begin{align*}
		h_x'(t) & = 2 q(x) [1+ \log(1+t)] - \frac{t}{1+t} - \log(1+t) + 2(s(x)-1)\\
		& = 2 (q(x) + s(x)-1) + (2 q(x)-1) \log(1+t) - \frac{t}{1+t} \\
		& \geq 2 (q(x) + s(x)-1) + 2 (q(x)-1) \geq 0
	\end{align*}
	and
	\begin{align*}
		z_x'(t) &= \frac{s(x) t}{(1+t) \log(1+t)}\left[\frac{ (1-s(x)) [t-\log(1+t)]}{(1+t)^2 (\log^2(1+t))} -  \frac{1}{(1+t)^2} \right]\\
		& \qquad - \frac{s(x) [t-\log(1+t)]}{(1+t)^2 \log^2(1+t)}\left[2 q(x)  +  \frac{ (s(x)-1) t}{(1+t) (\log(1+t))} -  \frac{t}{(1+t)}\right]\\
		& = - \frac{s(x) [t-\log(1+t)]}{(1+t)^3 \log^3(1+t)} \left[ 2 q(x) (1+t) \log(1+t) - t \log(1+t) + 2(s(x)-1) t \right]\\
		& \qquad - \frac{s(x) t}{(1+t)^3 \log(1+t)} \geq 0.
	\end{align*}
	This implies
	\begin{align*}
		{\bf M} &= \log^2(1+t) \left[q(x) (q(x)-1) + z_x(t) \right]  \geq \log^2(1+t) \left[q(x) (q(x)-1) + z_x(0) \right]\\
		& = \log^2(1+t) \left[q(x) (q(x)-1) + s(x)\left[(s(x)-1) + 2 q(x) \right] \right]\\
		& = \log^2(1+t) \left[(q(x) + s(x)) (q(x) + s(x) -1)\right] \geq 0.
	\end{align*}
	Note that, for any $\ell \in \{\ell^-, \ell^+\}$ and $x \in \overline{\Omega}$, we have
	\begin{align*}
		\frac{\mathrm{d}}{\mathrm{d}t}\l(t^{q(x) - \ell} \log^{s(x)}(1 + t)\r) = t^{q(x) - \ell-1} \log^{s(x)-1}(1 + t) \l((q(x)-\ell) \log(1 + t) + s(x) \frac{t}{1+t}\r)
	\end{align*}
	For $\ell= \ell^- = \min\{p^-, (q + \lfloor s \rfloor)^-\}$
	\begin{equation}\label{est:incre}
		\begin{aligned}
			(q(x)-\ell^-) \log(1 + t) + s(x) \frac{t}{1+t}
			& \geq
			\begin{cases}
				(q(x)-\ell^- + s(x)) \frac{t}{(1 + t)}  & \text{if } s(x) \geq 0    \\
				0  & \text{if } s(x) < 0
			\end{cases} \\
			& \quad \geq 0.
		\end{aligned}
	\end{equation}
	For $\ell= \ell^+ = \max\{p^+, (q + \lceil s \rceil)^+\}$
	\begin{equation}\label{est:decre}
		\begin{aligned}
			(q(x)-\ell^+) \log(1 + t) + s(x) \frac{t}{1+t}
			& \leq
			\begin{cases}
				(q(x)-\ell^+ + s(x)) \log(1 + t) & \text{if } s(x) \geq 0    \\
				0   & \text{if } s(x) < 0
			\end{cases} \\
			& \quad \leq 0.
		\end{aligned}
	\end{equation}
	Therefore, in the view of the condition $q(x) + s(x) \geq r>1$ in  \eqref{main:assump}\textnormal{(ii)} as well as \eqref{est:incre} and \eqref{est:decre}, it is easy to verify that, for $1 < \ell^- = \min\{p^-, (q + \lfloor s \rfloor)^-\}$ and $\ell^+ = \max\{p^+, (q + \lceil s \rceil)^+\}$,
	\begin{align*}
		\frac{\mathcal{S} (x,t)}{t^{\ell^-}}
		& = a(x)t^{p(x)- \ell^-} + b(x) t^{q(x) - \ell^-} \log^{s(x)}(1 + t), \\
		\frac{\mathcal{S} (x,t)}{t^{\ell^+}}
		& = t^{p(x) - \ell^+} + \mu(x) t^{q(x) - \ell^+} \log^{s(x)}(1+ t)
	\end{align*}
	are increasing and decreasing functions, respectively.
\end{proof}

As a consequence of the previous Lemma, we obtain the following.

\begin{proposition}\label{pro:spaces-propert}
	Let the assumption \eqref{main:assump} hold. Then, the spaces $L^{\mathcal{S}}(\Omega)$, $W^{1, \mathcal{S}}(\Omega)$ and $W_0^{1, \mathcal{S}}(\Omega)$ are reflexive Banach spaces.
\end{proposition}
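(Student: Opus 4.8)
The plan is to read off everything from Lemma~\ref{Le:Prop-S} and feed it into the abstract Propositions~\ref{Prop:AbstractBanach} and~\ref{Prop:AbstractSobolev} of Section~\ref{Section-2}, applied with $X=\Omega$ carrying the $N$-dimensional Lebesgue measure (which is $\sigma$-finite, complete, atomless and separable) and $k=1$. By Lemma~\ref{Le:Prop-S}, $\mathcal{S}$ is a generalized strong $\Phi$-function, so in particular $\mathcal{S}(x,\cdot)$ is continuous and convex with $\mathcal{S}(x,0)=0$; hence $\mathcal{S}$ is a generalized convex $\varphi$-function and, since then $t\mapsto\mathcal{S}(x,t)/t$ is increasing, also a generalized weak $\varphi$-function. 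Moreover $\mathcal{S}$ satisfies \textnormal{(Inc)}$_{\ell^-}$ and \textnormal{(Dec)}$_{\ell^+}$ with $1<\ell^-\le\ell^+<\infty$ (the inequality $\ell^-\le\ell^+$ being immediate from $p^-\le p^+$ and $\lfloor s\rfloor(x)\le\lceil s\rceil(x)$ for a.a.\ $x\in\Omega$), hence in particular \textnormal{(aInc)}$_{\ell^-}$ and \textnormal{(aDec)}$_{\ell^+}$ with the two exponents $\ell^-,\ell^+$ lying in $(1,\infty)$.

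With these structural facts the claim for $L^{\mathcal{S}}(\Omega)$ follows at once from Proposition~\ref{Prop:AbstractBanach}: being a generalized convex $\varphi$-function, $\mathcal{S}$ turns $L^{\mathcal{S}}(\Omega)$ into a Banach space; satisfying \textnormal{(aDec)}$_{\ell^+}$ with the separable Lebesgue measure it is separable; and satisfying \textnormal{(aInc)}$_{\ell^-}$ together with \textnormal{(aDec)}$_{\ell^+}$ with $\ell^-,\ell^+>1$ it possesses an equivalent uniformly convex norm, hence is reflexive.

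For the Sobolev spaces we want to apply Proposition~\ref{Prop:AbstractSobolev}, whose only additional hypothesis is $L^{\mathcal{S}}(\Omega)\subseteq L^1_{loc}(\Omega)$. I would verify this directly rather than through condition \textnormal{(A0)}, which is not available here since $a,b$ are merely assumed to be in $L^1(\Omega)$ and need not be bounded. Using $a(x)+b(x)\ge d>0$, at a.a.\ $x\in\Omega$ at least one of $a(x),b(x)$ is $\ge d/2$, so
\begin{align*}
	\mathcal{S}(x,t)\ge\frac{d}{2}\min\l\{t^{p(x)},\,t^{q(x)}\log^{s(x)}(1+t)\r\}\quad\text{for all }t\ge0\text{ and a.a.\ }x\in\Omega.
\end{align*}
Bounding the right-hand side for $t\ge1$ via $p(x)\ge p^-$, $q(x)\ge q^-$ and $s^-\le s(x)\le s^+$, and using that $p^-,q^->1$ so that the polynomial factors dominate the logarithmic correction, one gets $\inf_{x\in\Omega}\mathcal{S}(x,t)\to\infty$ as $t\to\infty$; hence there is $T_0\ge1$ with $\mathcal{S}(x,t)\ge t$ for all $t\ge T_0$ and a.a.\ $x\in\Omega$. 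Since $\mathcal{S}$ satisfies \textnormal{(aDec)}$_{\ell^+}$, Proposition~\ref{Prop:AbstractBanach} gives $L^{\mathcal{S}}(\Omega)=\{u\in M(\Omega)\colon\varrho_{\mathcal{S}}(u)<\infty\}$, so for $u\in L^{\mathcal{S}}(\Omega)$
\begin{align*}
	\int_\Omega|u|\,\mathrm{d}x\le T_0|\Omega|+\int_{\{|u|\ge T_0\}}\mathcal{S}(x,|u|)\,\mathrm{d}x\le T_0|\Omega|+\varrho_{\mathcal{S}}(u)<\infty,
\end{align*}
i.e.\ $L^{\mathcal{S}}(\Omega)\hookrightarrow L^1(\Omega)\subseteq L^1_{loc}(\Omega)$. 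With this inclusion and the convexity together with the \textnormal{(aInc)}$_{\ell^-}$, \textnormal{(aDec)}$_{\ell^+}$ properties of $\mathcal{S}$, Proposition~\ref{Prop:AbstractSobolev} (with $k=1$) shows that $W^{1,\mathcal{S}}(\Omega)$ and $W^{1,\mathcal{S}}_0(\Omega)$ are reflexive Banach spaces.

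The one genuinely nontrivial step is the inclusion $L^{\mathcal{S}}(\Omega)\subseteq L^1_{loc}(\Omega)$: the standard argument via \textnormal{(A0)} is blocked by the mere integrability of the weights $a,b$, so one must exploit instead the lower bound $a+b\ge d$ together with $p^-,q^->1$ as above; all remaining assertions are a direct translation of Lemma~\ref{Le:Prop-S} into the hypotheses of the abstract theory recalled in Section~\ref{Section-2}.
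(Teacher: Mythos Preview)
Your proof is correct and follows essentially the same route as the paper: both derive everything from Lemma~\ref{Le:Prop-S} fed into the abstract Propositions~\ref{Prop:AbstractBanach} and~\ref{Prop:AbstractSobolev}. The only notable difference is that you explicitly verify the hypothesis $L^{\mathcal{S}}(\Omega)\subseteq L^1_{loc}(\Omega)$ needed for Proposition~\ref{Prop:AbstractSobolev} (which the paper uses without comment), while the paper instead spells out the reflexivity step for $L^{\mathcal{S}}(\Omega)$ by quoting the relevant Harjulehto--H\"ast\"o results directly rather than simply invoking the packaged statement in Proposition~\ref{Prop:AbstractBanach}; these are presentational rather than substantive differences.
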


\begin{proof}
	Using Lemma \ref{Le:Prop-S} and Propositions \ref{Prop:AbstractBanach} and \ref{Prop:AbstractSobolev}, we know that the spaces $L^{\mathcal{S}}(\Omega)$, $W^{1, \mathcal{S}}(\Omega)$ and $W_0^{1, \mathcal{S}}(\Omega)$ are separable Banach spaces. To prove the remaining claim, it is enough to prove that $L^{\mathcal{S}}(\Omega)$ is reflexive. From Lemma \ref{Le:Prop-S}, we know that the map $t \mapsto \frac{\mathcal{S}(x,t)}{t^{\ell^-}}$ is increasing and the map $t \mapsto \frac{\mathcal{S}(x,t)}{t^{\ell^+}}$ is decreasing, i.e., $\mathcal{S}$ satisfies \textnormal{(Inc)}$_{\ell^-}$ and \textnormal{(Dec)}$_{\ell^+}$. Now, by using Harjulehto--H\"{a}st\"{o} \cite[Proposition 3.6.2,  Theorem 3.6.6 and Lemma 2.1.8]{Harjulehto-Hasto-2019}, there exists a uniformly convex function $\phi \in {\bf \Phi}(\Omega)$ such that $\mathcal{S} \simeq \phi$ and $\phi$ satisfies \textnormal{(Dec)}$_{\ell^+}$. Hence, using again Harjulehto--H\"{a}st\"{o} \cite[Propositions 3.2.4 and 3.6.6]{Harjulehto-Hasto-2019}, $L^\phi(\Omega)$ is uniformly convex, $L^{\mathcal{S}}(\Omega) = L^{\phi}(\Omega)$ and by the Milman-Pettis Theorem it follows that $L^{\mathcal{S}}(\Omega)$ is reflexive.
\end{proof}

Now we denote, for all $u\in L^{\mathcal{S}}(\Omega)$,
\begin{align*}
	\varrho_{\mathcal{S}}(u)
	& = \int_{\Omega} \left(a(x)|u|^{p(x)} + b(x) |u|^{q(x)}  \log^{s(x)}(1+|u|) \right) \,\mathrm{d}x, \\
	\|u\|_{\mathcal{S}}
	& =\inf\left\{\lambda>0\colon \varrho_{\mathcal{S}}\left(\dfrac{u}{\lambda}\right)\leq 1\right\}.
\end{align*}
The following proposition shows the relation between  $\varrho_{\mathcal{S}}(\cdot)$ and  $\|\cdot\|_{\mathcal{S}}$.

\begin{proposition}\label{pro:norm-mod:relation}
	Assume \eqref{main:assump} holds true and $u \in L^{\mathcal{S}}(\Omega)$. Then the following hold:
	\begin{enumerate}
		\item[\textnormal{(i)}]
			If $u \neq 0$, then $\|u\|_{\mathcal{S}} = \eta$ if and only if $\varrho_{\mathcal{S}} \left(\frac{u}{\eta}\right)=1$.
		\item[\textnormal{(ii)}]
			$\varrho_{\mathcal{S}}(u) <1$ $($or $=1$ or $>1)$ $\Leftrightarrow \; \|u\|_{\mathcal{S}}<1$ $($or $=1$ or $>1)$.
		\item[\textnormal{(iii)}]
			If $\|u\|_{\mathcal{S}} < 1$, then $\|u\|^{\max\{p^+, (q + \lceil s \rceil)^+\}}_{\mathcal{S}} \leq \varrho_{\mathcal{S}}(u) \leq \|u\|^{\min\{p^-, (q+ \lfloor s \rfloor )^-\}}_{\mathcal{S}}$.
		\item[\textnormal{(iv)}]
			If $\|u\|_{\mathcal{S}} > 1$, then $\|u\|^{\min\{p^-, (q+\lfloor s \rfloor)^-\}}_{\mathcal{S}} \leq \varrho_{\mathcal{S}}(u) \leq  \|u\|^{\max\{p^+, ( q + \lceil s \rceil)^+\}}_{\mathcal{S}}$.
		\item[\textnormal{(v)}]
			$ \|u_n\|_{\mathcal{S}} \to 0$ $($or $1$ or  $\infty)$ in $L^{\mathcal{S}}(\Omega)$ $\;\Leftrightarrow\; \varrho_{\mathcal{S}}(u_n) \to 0$ $($or $1$ or  $\infty)$.
	\end{enumerate}
\end{proposition}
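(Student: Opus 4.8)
The plan is to reduce everything to Lemma \ref{Le:Prop-S} together with the abstract norm--modular relation of Proposition \ref{Prop:AbstractNormModular}. By Lemma \ref{Le:Prop-S}, $\mathcal{S}$ is a generalized strong $\Phi$-function (in particular a generalized weak $\varphi$-function) satisfying \textnormal{(Inc)}$_{\ell^-}$ and \textnormal{(Dec)}$_{\ell^+}$ with $1<\ell^-=\min\{p^-,(q+\lfloor s\rfloor)^-\}\leq \ell^+=\max\{p^+,(q+\lceil s\rceil)^+\}<\infty$; here \textnormal{(Inc)}$_{\ell^-}$ is \textnormal{(aInc)}$_{\ell^-}$ and \textnormal{(Dec)}$_{\ell^+}$ is \textnormal{(aDec)}$_{\ell^+}$, both with constant $1$. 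Since a convex $\Phi$-function satisfying \textnormal{(Dec)}$_{\ell^+}$ also satisfies the $\Delta_2$-condition (Harjulehto--H\"{a}st\"{o} \cite{Harjulehto-Hasto-2019}), Proposition \ref{Prop:AbstractBanach} yields $L^{\mathcal{S}}(\Omega)=\{u\in M(\Omega)\colon \varrho_{\mathcal{S}}(u)<\infty\}$, so that $\varrho_{\mathcal{S}}(cu)<\infty$ for every $u\in L^{\mathcal{S}}(\Omega)$ and every $c>0$. We also record that $\mathcal{S}(x,\cdot)$ is strictly increasing on $[0,\infty)$ for a.a.\,$x\in\Omega$: indeed $\mathcal{S}(x,0)=0$ while $\mathcal{S}(x,t)>0$ for $t>0$ (both terms in \eqref{eq:H} are nonnegative and, since $a(x)+b(x)\geq d>0$, at least one is positive), and convexity then forces $\mathcal{S}(x,t_1)\leq (t_1/t_2)\,\mathcal{S}(x,t_2)<\mathcal{S}(x,t_2)$ for $0\leq t_1<t_2$. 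Consequently $c\mapsto\varrho_{\mathcal{S}}(cu)$ is strictly increasing on $(0,\infty)$ whenever $u\neq0$.

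\emph{Proof of (i).} Fix $u\in L^{\mathcal{S}}(\Omega)\setminus\{0\}$ and set $g(\lambda):=\varrho_{\mathcal{S}}(u/\lambda)$ for $\lambda>0$. By the previous paragraph $g$ is strictly decreasing; for $\lambda$ in a neighbourhood of any $\lambda_0>0$ the integrands $\mathcal{S}(x,|u(x)|/\lambda)$ are dominated by $\mathcal{S}(x,2|u(x)|/\lambda_0)\in L^1(\Omega)$, so dominated convergence shows $g$ is continuous on $(0,\infty)$. As $\lambda\to\infty$ we have $\mathcal{S}(x,|u|/\lambda)\to0$ pointwise with dominating function $\mathcal{S}(x,|u|)$, hence $g(\lambda)\to0$; as $\lambda\to0^+$ we have $\mathcal{S}(x,|u(x)|/\lambda)\to\infty$ on $\{u\neq0\}$, a set of positive measure, so Fatou's lemma gives $g(\lambda)\to\infty$. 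By the intermediate value theorem there is a unique $\eta>0$ with $g(\eta)=1$, and by the definition of the Luxemburg norm this $\eta$ equals $\|u\|_{\mathcal{S}}$.

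\emph{Proof of (ii)--(v).} Part (ii) is trivial for $u=0$; for $u\neq0$, writing $\eta=\|u\|_{\mathcal{S}}$ so that $\varrho_{\mathcal{S}}(u/\eta)=1$ by (i), the strict monotonicity of $c\mapsto\varrho_{\mathcal{S}}(cu)$ gives $\varrho_{\mathcal{S}}(u)<1$, $=1$, $>1$ exactly according as $\eta<1$, $=1$, $>1$, and since both trichotomies are exhaustive and mutually exclusive, the equivalences follow. For (iii) and (iv) we apply Proposition \ref{Prop:AbstractNormModular} with $p=\ell^-$, $q=\ell^+$ and $a=1$:
\begin{align*}
	\min\{\|u\|_{\mathcal{S}}^{\ell^-},\|u\|_{\mathcal{S}}^{\ell^+}\}\leq \varrho_{\mathcal{S}}(u)\leq \max\{\|u\|_{\mathcal{S}}^{\ell^-},\|u\|_{\mathcal{S}}^{\ell^+}\}.
\end{align*}
When $\|u\|_{\mathcal{S}}<1$ the minimum is $\|u\|_{\mathcal{S}}^{\ell^+}$ and the maximum is $\|u\|_{\mathcal{S}}^{\ell^-}$, which is (iii); when $\|u\|_{\mathcal{S}}>1$ the two roles are exchanged, which is (iv). Finally (v) follows by combining (ii)--(iv): if $\|u_n\|_{\mathcal{S}}\to0$ then eventually $\|u_n\|_{\mathcal{S}}<1$ and (iii) gives $\varrho_{\mathcal{S}}(u_n)\leq\|u_n\|_{\mathcal{S}}^{\ell^-}\to0$; conversely, if $\varrho_{\mathcal{S}}(u_n)\to0$ then $\|u_n\|_{\mathcal{S}}<1$ eventually by (ii) and $\|u_n\|_{\mathcal{S}}^{\ell^+}\leq\varrho_{\mathcal{S}}(u_n)\to0$. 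The assertions for the limits $\infty$ and $1$ are obtained in the same way, in the last case splitting $\{u_n\}$ into the indices with $\|u_n\|_{\mathcal{S}}<1$ and those with $\|u_n\|_{\mathcal{S}}\geq1$ and applying (iii) and (iv) separately.

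The only step carrying genuine analytic content is (i): the continuity and the two limiting values of $\lambda\mapsto\varrho_{\mathcal{S}}(u/\lambda)$, which is where the $\Delta_2$-condition — itself a consequence of the balance condition $q(x)+s(x)\geq r>1$ via \textnormal{(Dec)}$_{\ell^+}$ — is needed both to guarantee finiteness of $\varrho_{\mathcal{S}}$ on all of $L^{\mathcal{S}}(\Omega)$ and to license dominated convergence. Everything else is formal once Lemma \ref{Le:Prop-S} and Proposition \ref{Prop:AbstractNormModular} are in hand.
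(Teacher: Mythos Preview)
Your proof is correct and follows essentially the same approach as the paper: both reduce (iii)--(iv) to Lemma~\ref{Le:Prop-S} and Proposition~\ref{Prop:AbstractNormModular} (with constant $a=1$), and both derive (i) from the continuity and strict monotonicity of $\lambda\mapsto\varrho_{\mathcal{S}}(u/\lambda)$, then obtain (ii) and (v) as consequences. Your treatment of (i) is more explicit (dominated convergence and Fatou in place of the paper's reference to ``continuity and strict monotonicity because of \eqref{main:assump}''), and for (ii) you argue directly from (i) rather than citing \cite[Lemma 3.2.3]{Harjulehto-Hasto-2019}, but these are differences in exposition, not in strategy.
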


\begin{proof}
	For any $u \in L^{\mathcal{S}}(\Omega)$, define
	\begin{align*}
		\xi_u\colon [0, \infty) \to \mathbb{R}, \quad \xi_u(\eta) :=\varrho_{\mathcal{S}}(\eta u).
	\end{align*}
	Let $\|u\|_{\mathcal{S}} = \eta$. Now, by using the definition of the norm, the continuity and the strict monotonicity of the map $\xi_u$ because of \eqref{main:assump}, we have for $\varepsilon \in (0, \eta)$
	\begin{align*}
		\xi_u\left(\frac{1}{\eta-\varepsilon}\right) > 1 \quad \text{ and } \quad \xi_u\left(\frac{1}{\eta} \right) \leq 1 \quad  \Longrightarrow \quad \xi_u\left(\frac{1}{\eta} \right) = 1 \quad \text{i.e.} \quad   \varrho_{\mathcal{S}} \left(\frac{u}{\eta}\right) =1.
	\end{align*}
	The converse part holds trivially from the definition of the norm and the strict monotonicity of the map $\xi_u$. Hence, (i) follows. Using the continuity of the function $\mathcal{S}$ and Harjulehto--H\"{a}st\"{o} \cite[Lemma 3.2.3]{Harjulehto-Hasto-2019}, we get (ii). Moreover, (iii) and (iv) follow from Proposition \ref{Prop:AbstractNormModular} and Lemma \ref{Le:Prop-S}. Moreover, from (ii)--(iv), for any $v \in L^{\mathcal{S}}(\Omega)$, we have
    \begin{align*}
    	&\min\bigg\{1, \|v\|^{\min\{p^-, (q+ \lfloor s \rfloor )^-\}}_{\mathcal{S}}, \|v\|^{\max\{p^+, (q + \lceil s \rceil)^+\}}_{\mathcal{S}} \bigg\}\\
    	&\leq \varrho_{\mathcal{S}}(v) \leq \max\bigg\{1, \|v\|^{\min\{p^-, (q+ \lfloor s \rfloor )^-\}}_{\mathcal{S}}, \|v\|^{\max\{p^+, (q + \lceil s \rceil)^+\}}_{\mathcal{S}} \bigg\}
    \end{align*}
    and
    \begin{align*}
    	&\min\bigg\{1, (\varrho_{\mathcal{S}}(v))^{1/\min\{p^-, (q+ \lfloor s \rfloor )^-\}}, (\varrho_{\mathcal{S}}(v))^{1/\max\{p^+, (q + \lceil s \rceil)^+\}} \bigg\}\\
    	&\leq  \|v\|_{\mathcal{S}} \leq \max\bigg\{1, (\varrho_{\mathcal{S}}(v))^{1/\min\{p^-, (q+ \lfloor s \rfloor )^-\}}, (\varrho_{\mathcal{S}}(v))^{1/\max\{p^+, (q + \lceil s \rceil)^+\}}\bigg\}.
    \end{align*}
	Finally, by taking $v=u_n$ in the above estimates, we obtain the required claim in (v).
\end{proof}

For our purposes, we further need to work on the associated Sobolev space, whose properties are summarized in the following statement. Its proof is completely analogous to the proof of Proposition \ref{pro:norm-mod:relation} except that now we use Proposition \ref{Prop:AbstractSobolev} (instead of Proposition \ref{Prop:AbstractBanach}) and Proposition \ref{Prop:AbstractoneNormModular} (instead of Proposition \ref{Prop:AbstractNormModular}).

\begin{proposition}\label{Prop:oneHlogModularNorm}
	Let \eqref{main:assump} be satisfied, Then the following hold:
	\begin{enumerate}
		\item[\textnormal{(i)}]
			If $u \neq 0$, then $\|u\|_{1,\mathcal{S}} = \eta$ if and only if $\varrho_{1,\mathcal{S}} \left(\frac{u}{\eta}\right)=1$.
		\item[\textnormal{(ii)}]
			$\varrho_{1,\mathcal{S}}(u) <1  \text{(resp.\,$=1$;\;$>1$) } \Leftrightarrow \; \|u\|_{1, \mathcal{S}}<1  \text{ (resp.\,$=1$;\; $> 1$)}$.
		\item[\textnormal{(iii)}]
			If $\|u\|_{1,\mathcal{S}} < 1$, then
			$\|u\|^{\max\{p^+, (q + \lceil s \rceil)^+\}}_{1,\mathcal{S}} \leq \varrho_{1, \mathcal{S}}(u) \leq \|u\|^{\min\{p^-, (q+ \lfloor s \rfloor )^-\}}_{1, \mathcal{S}}$.
		\item[\textnormal{(iv)}]
			If $\|u\|_{1,\mathcal{S}} > 1$, then $\|u\|^{\min\{p^-, (q+\lfloor s \rfloor)^-\}}_{1,\mathcal{S}} \leq \varrho_{1,\mathcal{S}}(u) \leq  \|u\|^{\max\{p^+, ( q + \lceil s \rceil)^+\}}_{1, \mathcal{S}}$.
		\item[\textnormal{(v)}]
			$u_n\to 0$ $($or $1$ or  $\infty)$ in $W^{1, \mathcal{S}}(\Omega)$ $\;\Leftrightarrow\; \varrho_{1, \mathcal{S}}(u_n) \to 0$ $($or $1$ or  $\infty)$ respectively as $n\to \infty$.
	\end{enumerate}
\end{proposition}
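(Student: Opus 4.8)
The plan is to mirror, step by step, the proof of Proposition~\ref{pro:norm-mod:relation}, replacing the Lebesgue modular $\varrho_{\mathcal{S}}$ by the Sobolev modular $\varrho_{1,\mathcal{S}}(u)=\varrho_{\mathcal{S}}(u)+\varrho_{\mathcal{S}}(\nabla u)$ and invoking Propositions~\ref{Prop:AbstractSobolev} and \ref{Prop:AbstractoneNormModular} in place of Propositions~\ref{Prop:AbstractBanach} and \ref{Prop:AbstractNormModular}. First I would record the two structural facts that drive everything. By Lemma~\ref{Le:Prop-S}, $\mathcal{S}$ is a generalized strong $\Phi$-function satisfying \textnormal{(Inc)}$_{\ell^-}$ and \textnormal{(Dec)}$_{\ell^+}$ with $1<\ell^-=\min\{p^-,(q+\lfloor s\rfloor)^-\}$ and $\ell^+=\max\{p^+,(q+\lceil s\rceil)^+\}$; in particular \textnormal{(Dec)}$_{\ell^+}$ yields the $\Delta_2$-condition, so $\varrho_{1,\mathcal{S}}(\lambda u)<\infty$ for every $\lambda>0$ whenever $u\in W^{1,\mathcal{S}}(\Omega)$. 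Moreover, since $a(x)+b(x)\ge d>0$ a.e., we have $\mathcal{S}(x,t)>0$ for all $t>0$ and a.a.\,$x\in\Omega$, and combining this with \textnormal{(Inc)}$_{\ell^-}$ shows that $t\mapsto\mathcal{S}(x,t)$ is strictly increasing on $(0,\infty)$ for a.a.\,$x$.

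For (i), fix $u\in W^{1,\mathcal{S}}(\Omega)\setminus\{0\}$ and set $\xi_u(\eta):=\varrho_{1,\mathcal{S}}(\eta u)$ on $[0,\infty)$. By the $\Delta_2$-bound above, $\xi_u(\eta)<\infty$ for all $\eta$, and the dominated convergence theorem together with the continuity of $\mathcal{S}(x,\cdot)$ makes $\xi_u$ continuous; the strict monotonicity of $\mathcal{S}(x,\cdot)$ on $(0,\infty)$ and $u\not\equiv 0$ make $\xi_u$ strictly increasing. With these two properties, the argument of Proposition~\ref{pro:norm-mod:relation}(i) carries over verbatim: from $\|u\|_{1,\mathcal{S}}=\eta$ one gets $\xi_u(1/(\eta-\varepsilon))>1$ and $\xi_u(1/\eta)\le 1$ for $\varepsilon\in(0,\eta)$, and letting $\varepsilon\downarrow 0$ with continuity gives $\xi_u(1/\eta)=1$, i.e.\ $\varrho_{1,\mathcal{S}}(u/\eta)=1$; the converse is immediate from the definition of the Luxemburg norm and the strict monotonicity of $\xi_u$. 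Part (ii) then follows from (i), the continuity of $\xi_u$ and the identity $\xi_u(1)=\varrho_{1,\mathcal{S}}(u)$ (equivalently, one may quote Harjulehto--H\"{a}st\"{o}~\cite[Lemma 3.2.3]{Harjulehto-Hasto-2019} applied to $\varrho_{1,\mathcal{S}}$).

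Parts (iii) and (iv) follow by specializing Proposition~\ref{Prop:AbstractoneNormModular} to $k=1$, $\varphi=\mathcal{S}$, $p=\ell^-$, $q=\ell^+$; since the strong conditions \textnormal{(Inc)}$_{\ell^-}$ and \textnormal{(Dec)}$_{\ell^+}$ hold with constant $a=1$, this gives
\[
\min\bigl\{\|u\|_{1,\mathcal{S}}^{\ell^-},\|u\|_{1,\mathcal{S}}^{\ell^+}\bigr\}\le\varrho_{1,\mathcal{S}}(u)\le\max\bigl\{\|u\|_{1,\mathcal{S}}^{\ell^-},\|u\|_{1,\mathcal{S}}^{\ell^+}\bigr\},
\]
and splitting into the cases $\|u\|_{1,\mathcal{S}}<1$ and $\|u\|_{1,\mathcal{S}}>1$ (so that, because $\ell^-\le\ell^+$, the $\min$ and $\max$ are attained by the larger, resp.\ smaller, exponent) produces exactly the two chains of inequalities in (iii) and (iv). For (v) I would combine (ii)--(iv), as in the last part of the proof of Proposition~\ref{pro:norm-mod:relation}, into
\[
\min\bigl\{1,\|u\|_{1,\mathcal{S}}^{\ell^-},\|u\|_{1,\mathcal{S}}^{\ell^+}\bigr\}\le\varrho_{1,\mathcal{S}}(u)\le\max\bigl\{1,\|u\|_{1,\mathcal{S}}^{\ell^-},\|u\|_{1,\mathcal{S}}^{\ell^+}\bigr\}
\]
together with the inverted estimate bounding $\|u\|_{1,\mathcal{S}}$ in terms of $\varrho_{1,\mathcal{S}}(u)$, and then set $u=u_n$ and let $n\to\infty$. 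The only non-routine points are the continuity and strict monotonicity of $\xi_u$; both were reduced above to the $\Delta_2$-property and the strict positivity of $\mathcal{S}$, so I do not expect any real obstacle here beyond careful bookkeeping.
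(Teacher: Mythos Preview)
Your proposal is correct and is exactly the approach the paper indicates: the paper does not spell out a proof but states that it is completely analogous to the proof of Proposition~\ref{pro:norm-mod:relation}, using Proposition~\ref{Prop:AbstractSobolev} in place of Proposition~\ref{Prop:AbstractBanach} and Proposition~\ref{Prop:AbstractoneNormModular} in place of Proposition~\ref{Prop:AbstractNormModular}. Your write-up fleshes out precisely this analogy, with the same ingredients (Lemma~\ref{Le:Prop-S} for \textnormal{(Inc)}$_{\ell^-}$/\textnormal{(Dec)}$_{\ell^+}$, the continuity and strict monotonicity of $\xi_u$, and the min/max bounds from the abstract norm--modular proposition), so there is nothing to add.
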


The next lemma will be useful to prove Sobolev embeddings of Musielak-Orlicz Sobolev spaces. We define
\begin{align*}
	\mathcal{B}_{\theta, \Theta, \Gamma}(x,t) = a(x) t^{\theta(x)} + b(x) t^{\Theta(x)} \log^{\Gamma(x)}(1+t).
\end{align*}
where $\theta, \Theta$ are positive continuous function on $\overline{\Omega}$ and $\Gamma$ is a bounded function on $\Omega$.

\begin{proposition}\label{prop:embedding}
	Let \eqref{main:assump} be satisfied. Then the embeddings
	\begin{align*}
		L^{\mathcal{S}}(\Omega) \hookrightarrow L^{\mathcal{B}_{p,j, m}}(\Omega) \hookrightarrow L^{\ell(\cdot)}(\Omega)
	\end{align*}
	are continuous, where $p,j, \ell \in C_+(\Omega)$, $m \in L^\infty(\Omega)$, $\ell(x) \leq \min\{p(x), j(x)\}$ for all $x \in \overline{\Omega}$,  and $j(\cdot)$ and $m(\cdot)$ are given by
	\begin{equation}\label{embd:assum-1}
		\begin{aligned}
			m(x) \leq s(x), \ j(x) + m(x) \geq 0 \quad \text{for a.a. } x \in \Omega \ \text{and} \ j(x) & =
			\begin{cases}
				q(x)   & \text{if }s(x) > 0, \\
				< q(x) & \text{otherwise},
			\end{cases}
			\quad \text{for all } x \in \overline{\Omega}.
		\end{aligned}
	\end{equation}
\end{proposition}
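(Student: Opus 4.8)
The plan is to obtain both embeddings from Musielak's embedding criterion (Proposition~\ref{Prop:AbstractEmbedding}); equivalently, everything reduces to the two pointwise comparisons
\begin{align*}
	\mathcal{B}_{p,j,m}(x,t)\le K_1\,\mathcal{S}(x,t)+h_1(x)
	\qquad\text{and}\qquad
	t^{\ell(x)}\le K_2\,\mathcal{B}_{p,j,m}(x,t)+h_2(x)
\end{align*}
for a.a.\,$x\in\Omega$, all $t\ge0$, with constants $K_1,K_2>0$ and nonnegative $h_1,h_2\in L^1(\Omega)$, from which the continuous (quasi-)norm estimates follow by integration and the modular/norm relations. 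Beforehand one verifies that $\mathcal{B}_{p,j,m}$ is of the required type: measurability in $x$ is immediate, and the monotonicity of $t\mapsto\mathcal{B}_{p,j,m}(x,t)$ together with its behaviour as $t\to0^+$ and $t\to\infty$ follows from the identity
\begin{align*}
	\frac{\mathrm{d}}{\mathrm{d}t}\big(t^{j(x)}\log^{m(x)}(1+t)\big)=t^{j(x)-1}\log^{m(x)-1}(1+t)\Big(j(x)\log(1+t)+m(x)\tfrac{t}{1+t}\Big),
\end{align*}
the elementary bound $\log(1+t)\ge\tfrac{t}{1+t}$, and the conditions $j^->1$, $m(x)\le s(x)$, $j(x)+m(x)\ge0$ of \eqref{embd:assum-1}; that $\mathcal{S}$ and $(x,t)\mapsto t^{\ell(x)}$ are admissible is already known from Lemma~\ref{Le:Prop-S} and the variable exponent theory recalled in Section~\ref{Section-2}.

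For the first inequality the terms $a(x)t^{p(x)}$ cancel, so it suffices to bound $t^{j(x)}\log^{m(x)}(1+t)$ by $t^{q(x)}\log^{s(x)}(1+t)+C_1$ with a uniform $C_1$, after which $h_1:=C_1 b\in L^1(\Omega)$ and $K_1=1$ do the job. I would split at $t=e-1$. For $t>e-1$ one has $\log(1+t)>1$, hence $\log^{m(x)}(1+t)\le\log^{s(x)}(1+t)$ since $m(x)\le s(x)$, and $t^{j(x)}\le t^{q(x)}$ since $j(x)\le q(x)$; the equality $j=q$ occurs exactly on $\{s>0\}$, which is precisely where the two logarithmic powers must stay comparable over the whole range of $t$. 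For $0<t\le e-1$ one writes $t^{j(x)}\log^{m(x)}(1+t)=t^{j(x)+m(x)}\big(\tfrac{\log(1+t)}{t}\big)^{m(x)}$ and uses $j(x)+m(x)\ge0$, the two-sided bound $\tfrac1{e-1}\le\tfrac{\log(1+t)}{t}\le1$ and $\|m\|_{\infty}<\infty$ to see that this is bounded by a constant, which is absorbed into $C_1$.

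For the second inequality I would split at $t=1$: on $0<t\le1$ one has $t^{\ell(x)}\le1$, so $h_2\equiv1$ suffices there. On $t>1$ I use $a(x)+b(x)\ge d$. If $a(x)\ge d/2$ then $\mathcal{B}_{p,j,m}(x,t)\ge\tfrac d2 t^{p(x)}\ge\tfrac d2 t^{\ell(x)}$ since $\ell(x)\le p(x)$ and $t>1$. If instead $b(x)\ge d/2$ then $\mathcal{B}_{p,j,m}(x,t)\ge\tfrac d2 t^{j(x)}\log^{m(x)}(1+t)$, and here for $m(x)\ge0$ one bounds $\log^{m(x)}(1+t)\ge(\log 2)^{\|m\|_{\infty}}>0$ for $t>1$, while for $m(x)<0$ one uses $\log(1+t)\le t$ to get $\log^{m(x)}(1+t)\ge t^{m(x)}$, so that $\mathcal{B}_{p,j,m}(x,t)\ge c\,t^{j(x)+m(x)}$ for some $c>0$; in either case $\ell(x)\le\min\{p(x),j(x)\}$ together with $j(x)+m(x)\ge0$ (and, in the region $m<0$, the precise interplay between $\ell$, $j$, $m$ prescribed by \eqref{embd:assum-1}) gives $t^{\ell(x)}\le K_2\,\mathcal{B}_{p,j,m}(x,t)$. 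Throughout, all constants remain uniform in $x$ because $p,q,j,\ell\in C(\overline{\Omega})$ with $\overline{\Omega}$ compact and $s,m\in L^{\infty}(\Omega)$.

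The main obstacle is exactly the logarithmic factor carrying the sign-changing exponents $s$ and $m$: it destroys homogeneity and the naive monotonicity of the modular function, so neither pointwise estimate can be obtained in one stroke, and each has to be organised region by region in $t$ (near $0$, on compact subsets of $(0,\infty)$, near $\infty$) and case by case on the signs of $s(x)$ and $m(x)$ — the structural conditions in \eqref{embd:assum-1} ($j\le q$ with $j=q$ on $\{s>0\}$, $m\le s$, $j+m\ge0$) being exactly what makes these case distinctions close up. Once the two pointwise bounds are secured, Proposition~\ref{Prop:AbstractEmbedding} (or the direct modular argument) yields both continuous embeddings.
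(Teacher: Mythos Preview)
Your argument for the first embedding $L^{\mathcal{S}}(\Omega)\hookrightarrow L^{\mathcal{B}_{p,j,m}}(\Omega)$ is correct and follows the same overall strategy as the paper (reduce to a pointwise bound and invoke Proposition~\ref{Prop:AbstractEmbedding}), but is organised more cleanly: your single split at $t=e-1$ and the direct use of $m\le s$, $j\le q$ on $\{t>e-1\}$ replace the paper's separate treatment of $\{s>0\}$ and $\{s\le 0\}$ together with the auxiliary threshold $t_\ast$ and the $\varepsilon$-argument in its Case~2. Both routes yield the same inequality $\mathcal{B}_{p,j,m}(x,t)\le \mathcal{S}(x,t)+h(x)$.

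There is, however, a genuine gap in your treatment of the second embedding $L^{\mathcal{B}_{p,j,m}}(\Omega)\hookrightarrow L^{\ell(\cdot)}(\Omega)$. In the case $b(x)\ge d/2$ and $m(x)<0$ you obtain only $\mathcal{B}_{p,j,m}(x,t)\ge c\,t^{j(x)+m(x)}$, and from this $t^{\ell(x)}\le K_2\,\mathcal{B}_{p,j,m}(x,t)$ for $t>1$ would require $\ell(x)\le j(x)+m(x)$. The hypotheses give only $\ell(x)\le j(x)$ and $j(x)+m(x)\ge 0$, neither of which forces $\ell(x)\le j(x)+m(x)$; and \eqref{embd:assum-1} says nothing about $\ell$ at all, so the appeal to a ``precise interplay between $\ell$, $j$, $m$ prescribed by \eqref{embd:assum-1}'' is empty. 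Concretely, at a point with $a(x)=0$, $b(x)\ge d$, $m(x)<0$ and $\ell(x)=j(x)$, one has $\mathcal{B}_{p,j,m}(x,t)=b(x)t^{j(x)}\log^{m(x)}(1+t)$, and no inequality $t^{j(x)}\le K\,b(x)t^{j(x)}\log^{m(x)}(1+t)+h(x)$ can hold for all large $t$. (The paper itself does not give a proof of this second embedding either; it only establishes the pointwise bound for the first one. A sharper route, when it is available, is to use $\log^{m(x)}(1+t)\ge C_\delta t^{-\delta}$ for $t>1$ and bounded $m<0$, giving $\mathcal{B}_{p,j,m}(x,t)\ge c_\delta t^{j(x)-\delta}$, which suffices under the strict inequality $\ell(x)<j(x)$ that is in any case what the subsequent compact-embedding statement in Proposition~\ref{sobolevembed:1}\,(iii) uses.)
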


\begin{proof}
	We will prove the embeddings by applying Proposition \ref{Prop:AbstractEmbedding} to the corresponding $\Phi$-functions. For $j \in C(\overline{\Omega})$ and $m \in L^\infty(\Omega)$ satisfying \eqref{embd:assum-1}, we have
	\begin{equation}\label{embd:est-0}
		\begin{aligned}
			&\mathcal{B}_{p,j,m}\left(x,t\right)\\
			&=	a(x) t^{p(x)} + b(x) t^{j(x)} \log^{m(x)}(1+t) \\
			& \leq a(x) t^{p(x)} + b(x) t^{j(x)} \log^{m(x)}(1+t)  \chi_{\{s(x) \geq 0\}}(x) + b(x) t^{j(x)} \log^{m(x)}(1+t)  \chi_{\{s(x) < 0\}}(x).
		\end{aligned}
	\end{equation}
	Now, we estimate the terms in the right-hand side of the above inequality separately. Note that the condition $j(x) + m(x) \geq 0$ implies that $t^{j(x)} \log^{m(x)}(1+t)$ is an increasing function.\\
	\textbf{Case 1: $x \in \{ s(x) > 0\}$}

	It holds
	\begin{equation}\label{embd:est-1}
		\begin{aligned}
			t^{j(x)} \log^{m(x)}(1+t)
			& \leq
			\begin{cases}
				(e-1)^{j(x)} & \text{if } t \leq e-1,    \\
				t^{q(x)} \log^{s(x)}(1+t)  & \text{if } t \geq e-1,
			\end{cases} \\
			& \leq  (e-1)^{j^+} +    t^{q(x)} \log^{s(x)}(1+t),
		\end{aligned}
	\end{equation}
	where in the above inequalities we have used $m(x) \leq s(x)$ and $j(x) + m(x) \geq 0$ in $\{s(x) >0\}$.\\
	\textbf{Case 2: $x \in \{s(x) \leq 0\}$}\\
	By the continuity of $j(\cdot)$ and $q(\cdot)$, there exists $\varepsilon >0$ such that
	\begin{align*}
		t^{q(x)-j(x)} \geq t^\varepsilon \quad \text{for all }  t \geq 1  \text{ and for all }  x \in \{s(x) \leq 0\}.
	\end{align*}
	It is easy to show that for the above choice of $\varepsilon >0$ there exists $t_\ast = t_\ast(\varepsilon) > e-1$ (independent of $x$) such that
	\begin{equation}\label{embd:est-log}
		\log^{(-s)^+}(1+t) \leq t^\varepsilon \quad \text{for all }  t \geq t^\ast.
	\end{equation}
	Combining the above estimates, we deduce that
	\begin{equation}\label{est:lower:nega}
		t^{q(x)} \geq t^{j(x)} \log^{-s(x)}(1+t) \quad \text{for all }  t \geq t^\ast  \text{ and for a.a.\,} x \in \{x \in \Omega \colon  s(x) <0\}.
	\end{equation}
	Again, by using $m(x) \leq s(x)$ and $j(x) + m(x) \geq 0$  for a.a.\,$x \in \{x\in\Omega\colon  s(x) \leq 0\}$, we obtain
	\begin{equation}\label{embd:est-2}
		\begin{aligned}
			t^{j(x)} \log^{m(x)}(1+t)
			& \leq t^{j(x)} \log^{m(x)}(1+t) \chi_{\{t \leq t^\ast\}}(t)+ t^{j(x)} \log^{m(x)}(1+t) \chi_{\{t \geq t^\ast\}}(t)  \\
			& \leq \left((t^\ast)^{j^+} \log^{(-m)^+}(1+t^\ast)\right) + t^{q(x)} \log^{s(x)}(1+t).
		\end{aligned}
	\end{equation}
	Using the estimates \eqref{embd:est-1}, \eqref{embd:est-log}, \eqref{est:lower:nega} and \eqref{embd:est-2} in \eqref{embd:est-0}, it follows that
	\begin{align*}
			\mathcal{B}_{p,j,m}\left(x,t\right) & \leq \mathcal{S}(x,t) + h(x),
	\end{align*}
	where
	\begin{align*}
		h(x) :=  \log^{(-m)^+}(1+t^\ast) (t^\ast)^{j^+}  + (e-1)^{j^+}.
	\end{align*}
	This concludes the proof.
\end{proof}

As a consequence of the previous result, we have the following embeddings.
% Note that the space $C^{0, \frac{1}{|\log t|}}(\overline{\Omega})$ is the set of all functions $h\colon \overline{\Omega} \to \R$ which are log-H\"{o}lder continuous, i.e.\,there exists $C>0$ such that
% \begin{align*}
% 	|h(x)-h(y)| \leq \frac{C}{|\log |x-y||}\quad\text{for all } x,y\in \overline{\Omega} \text{ with } |x-y|<\frac{1}{2}.
% \end{align*}

\begin{proposition}\label{sobolevembed:1}
	Under the assumption \eqref{main:assump} the following embeddings hold:
	\begin{enumerate}
		\item[\textnormal{(i)}]
			$W^{1, \mathcal{S}}(\Omega) \hookrightarrow W^{1, \mathcal{B}_{p,j, m}}(\Omega) \hookrightarrow W^{1, \ell(\cdot)}(\Omega)$, $W_0^{1, \mathcal{S}}(\Omega) \hookrightarrow W_0^{1, \mathcal{B}_{p,j,m}}(\Omega) \hookrightarrow W_0^{1, \ell(\cdot)}(\Omega)$ are continuous with $1 \leq \ell(x) \leq \min\{p(x), j(x)\}$ for all $x \in \overline{\Omega}$ and $j(\cdot)$ and $m(\cdot)$ are given by \eqref{embd:assum-1}.
		\item[\textnormal{(ii)}] if $\min\{p(\cdot), j(\cdot)\} \in C^{0, \frac{1}{|\log t|}}(\overline{\Omega})$,
			$W^{1, \mathcal{S}}(\Omega) \hookrightarrow L^{ \ell^\ast(\cdot)}(\Omega)$, $W_0^{1, \mathcal{S}}(\Omega) \hookrightarrow L^{\ell^\ast(\cdot)}(\Omega)$ are continuous for all $\ell \in C(\overline{\Omega})$ with $1 \leq \ell(x) \leq \min\{p(x), j(x)\}$ for all $x \in \overline{\Omega}$.
		\item[\textnormal{(iii)}]
			$W^{1, \mathcal{S}}(\Omega) \hookrightarrow L^{ \ell^\ast(\cdot)}(\Omega)$, $W_0^{1, \mathcal{S}}(\Omega) \hookrightarrow L^{\ell^\ast(\cdot)}(\Omega)$ are compact for all $\ell \in C(\overline{\Omega})$ with $1 \leq \ell(x) < \min\{p(x), j(x)\}$ for all $x \in \overline{\Omega}$.
	\end{enumerate}
\end{proposition}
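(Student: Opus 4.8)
The plan is to reduce all three statements to Proposition \ref{prop:embedding} together with the classical (sub)critical Sobolev embeddings for variable exponent spaces recalled above: part (i) carries out the reduction from the Musielak--Orlicz setting to a variable-exponent one, and then (ii) and (iii) follow by short arguments. For part (i) I would apply Proposition \ref{prop:embedding} to $u$ and, componentwise, to $\nabla u$: the modular inequality $\mathcal B_{p,j,m}(x,t)\le K\,\mathcal S(x,t)+h(x)$ (with $0\le h\in L^1(\Omega)$) behind $L^{\mathcal S}(\Omega)\hookrightarrow L^{\mathcal B_{p,j,m}}(\Omega)$, and the analogous one behind $L^{\mathcal B_{p,j,m}}(\Omega)\hookrightarrow L^{\ell(\cdot)}(\Omega)$, transfer at once to the Sobolev modulars $\varrho_{1,\mathcal S}$, $\varrho_{1,\mathcal B_{p,j,m}}$, $\varrho_{1,\ell(\cdot)}$, whence $W^{1,\mathcal S}(\Omega)\hookrightarrow W^{1,\mathcal B_{p,j,m}}(\Omega)\hookrightarrow W^{1,\ell(\cdot)}(\Omega)$; restricting to $C_0^\infty(\Omega)$ and passing to closures gives the $W_0$-versions.

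For part (ii) I would set $\sigma:=\min\{p,j\}$, which by hypothesis lies in $C_+(\overline\Omega)\cap C^{0,\frac{1}{|\log t|}}(\overline\Omega)$ and satisfies $\sigma^+\le p^+<N$. Taking $\ell=\sigma$ in part (i) gives $W^{1,\mathcal S}(\Omega)\hookrightarrow W^{1,\sigma(\cdot)}(\Omega)$ and $W_0^{1,\mathcal S}(\Omega)\hookrightarrow W_0^{1,\sigma(\cdot)}(\Omega)$, and the critical Sobolev embedding for log-Hölder variable exponents (see, e.g., \cite{Diening-Harjulehto-Hasto-Ruzicka-2011}) yields $W^{1,\sigma(\cdot)}(\Omega)\hookrightarrow L^{\sigma^\ast(\cdot)}(\Omega)$. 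For an arbitrary $\ell\in C(\overline\Omega)$ with $1\le\ell\le\sigma$, the monotonicity of $\tau\mapsto N\tau/(N-\tau)$ on $(0,N)$ gives $\ell^\ast\le\sigma^\ast$ pointwise, and $\ell^\ast,\sigma^\ast\in C_+(\overline\Omega)$ since $\ell^\ast\ge N/(N-1)>1$; as $\Omega$ is bounded, $L^{\sigma^\ast(\cdot)}(\Omega)\hookrightarrow L^{\ell^\ast(\cdot)}(\Omega)$, and composing the three embeddings proves (ii). Only $\sigma$, not a general $\ell$, needs to be log-Hölder here, because the $\ell$-dependence has been pushed entirely onto the target Lebesgue space, where no regularity of the exponent is required.

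For part (iii), part (i) with $\ell=\sigma$ again gives $W^{1,\mathcal S}(\Omega)\hookrightarrow W^{1,\sigma(\cdot)}(\Omega)$ continuously. When $1\le\ell<\sigma$ on the compact set $\overline\Omega$ one has $\delta:=\inf_{\overline\Omega}(\sigma-\ell)>0$, and from $(N-\sigma)(N-\ell)\le N^2$ it follows that $\sigma^\ast-\ell^\ast=\frac{N^2(\sigma-\ell)}{(N-\sigma)(N-\ell)}\ge\sigma-\ell\ge\delta>0$, so $\ell^\ast$ is subcritical with respect to $\sigma$; the compact Sobolev embedding for variable exponents (the one recalled above for the $W_0$-space, and its bounded-Lipschitz-domain counterpart for the full space) then shows that $W^{1,\sigma(\cdot)}(\Omega)$ and $W_0^{1,\sigma(\cdot)}(\Omega)$ embed compactly into $L^{\ell^\ast(\cdot)}(\Omega)$, and composing with the continuous embeddings from (i) concludes. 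There is no serious obstacle: the proposition is essentially a corollary of Proposition \ref{prop:embedding} and the standard variable-exponent Sobolev theory, and the only points requiring a moment's care are the lifting of the abstract modular inequality from the Lebesgue to the Sobolev level in (i) and the elementary comparison $\ell^\ast\le\sigma^\ast$ (respectively $\sigma^\ast-\ell^\ast\ge\sigma-\ell$) used in (ii) and (iii).
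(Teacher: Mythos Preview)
Your proposal is correct and follows essentially the same approach as the paper's proof, which is a one-liner stating that the embeddings ``are straightforward to verify via Proposition \ref{prop:embedding} and the Sobolev embeddings in variable exponent spaces by finding a weaker $\Phi$ function and comparing the variable exponents in the modular function.'' You have simply supplied the details the paper leaves implicit: lifting the modular inequality to the Sobolev level for (i), and composing with the (sub)critical variable-exponent Sobolev embeddings via $\sigma=\min\{p,j\}$ for (ii) and (iii).
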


\begin{proof}
	The embeddings are straightforward to verify via Proposition \ref{prop:embedding} and the Sobolev embeddings in variable exponent spaces by finding a weaker $\Phi$ function and comparing the variable exponents in the modular function.
\end{proof}

Under some additional conditions, we prove the following Poincar\'{e} inequality in $W_0^{1, \mathcal{S}}(\Omega)$.

\begin{proposition}\label{Prop:Poincare}
	Let \eqref{main:assump} be satisfied and suppose
	\begin{equation}\label{poin:cond}
		a, b \in L^\infty(\Omega) \quad \text{and} \quad \max\{p(x), q(x)\} < \min\{p^\ast(x), q^{\ast}(x)\} \quad \text{for all }  x \in \overline{\Omega}.
	\end{equation}
	Then $ W^{1, \mathcal{S}}(\Omega) \hookrightarrow L^{\mathcal{S}}(\Omega)$ is a compact embedding and there exists a constant $\mathcal{S}>0$ such that
	\begin{align*}
		\|u\|_{\mathcal{S}} \leq \mathtt{S} \|\nabla u\|_{\mathcal{S}}  \ \text{for all }  u \in W_0^{1, \mathcal{S}}(\Omega),
	\end{align*}
	where $\mathtt{S}$ is independent of $u$ and so $\|\nabla \cdot\|_{\mathcal{S}}$ is an equivalent norm on $W_0^{1, \mathcal{S}}(\Omega)$.
\end{proposition}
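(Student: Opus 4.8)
The plan is to prove the two assertions in turn: first the compact embedding $W^{1,\mathcal{S}}(\Omega)\hookrightarrow L^{\mathcal{S}}(\Omega)$, and then the Poincar\'e inequality by the classical compactness--contradiction argument built on it; the equivalence of norms will follow routinely. Note that the continuity of $W^{1,\mathcal{S}}(\Omega)\hookrightarrow L^{\mathcal{S}}(\Omega)$ is free, since $\varrho_{1,\mathcal{S}}(u)=\varrho_{\mathcal{S}}(u)+\varrho_{\mathcal{S}}(\nabla u)\ge\varrho_{\mathcal{S}}(u)$ gives $\|u\|_{\mathcal{S}}\le\|u\|_{1,\mathcal{S}}$, so the content is compactness.

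For the compact embedding, the key point is that condition \eqref{poin:cond} makes $\mathcal{S}$ genuinely subcritical. Since $\max\{p(\cdot),q(\cdot)\}<\min\{p^\ast(\cdot),q^\ast(\cdot)\}$ on the compact set $\overline{\Omega}$, I would fix a continuous exponent $\tau$ with $\max\{p(x),q(x)\}<\tau(x)<\min\{p^\ast(x),q^\ast(x)\}$ for all $x\in\overline{\Omega}$, so in particular $\tau-q$ has a uniform positive lower bound. The first step is the pointwise bound $\mathcal{S}(x,t)\le c\,(1+t^{\tau(x)})$ for a.a.\,$x\in\Omega$ and all $t\ge0$, with $c$ depending only on $\|a\|_\infty$, $\|b\|_\infty$, $\|s\|_\infty$, $N$, $\Omega$. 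To obtain it I would use $a,b\in L^\infty(\Omega)$, the elementary inequality $\log(1+t)\le C_\delta(1+t)^\delta$ (for any $\delta>0$, applied with a small uniform $\delta\le(\tau-q)/\|s\|_\infty$) to absorb the logarithm when $s(x)\ge0$, and, when $s(x)<0$, split $t\in[0,e-1]$ from $t\ge e-1$: on $[0,e-1]$ one has $\log(1+t)\ge t/(1+t)\ge t/e$, hence $t^{q(x)}\log^{s(x)}(1+t)\le e^{|s(x)|}t^{q(x)+s(x)}$, which is bounded by a constant thanks to the balance condition $q(x)+s(x)\ge r>1$, while for $t\ge e-1$ one has $\log^{s(x)}(1+t)\le1$. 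The second step: by Proposition \ref{sobolevembed:1}(iii), choosing the admissible exponents there (possible thanks to \eqref{poin:cond}), the embedding $W^{1,\mathcal{S}}(\Omega)\hookrightarrow L^{\tau(\cdot)}(\Omega)$ is compact. Combining the two, given a bounded sequence $(u_n)$ in $W^{1,\mathcal{S}}(\Omega)$ I would pass to a subsequence with $u_n\to u$ in $L^{\tau(\cdot)}(\Omega)$ and, passing to a further subsequence, $u_n\to u$ a.e.\ in $\Omega$; since $\varrho_{\tau(\cdot)}(u_n-u)\to0$ the family $\{|u_n-u|^{\tau(\cdot)}\}_n$ is uniformly integrable, so by the pointwise bound and $|\Omega|<\infty$ so is $\{\mathcal{S}(\cdot,|u_n-u|)\}_n$. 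Vitali's convergence theorem then yields $\varrho_{\mathcal{S}}(u_n-u)\to0$, hence $\|u_n-u\|_{\mathcal{S}}\to0$ by Proposition \ref{pro:norm-mod:relation}(v), which proves compactness.

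For the Poincar\'e inequality I would argue by contradiction: if it fails, pick $(u_n)\subset W_0^{1,\mathcal{S}}(\Omega)$ with $\|u_n\|_{\mathcal{S}}>n\|\nabla u_n\|_{\mathcal{S}}$ and normalise so that $\|u_n\|_{\mathcal{S}}=1$; then $\varrho_{\mathcal{S}}(u_n)=1$ and $\|\nabla u_n\|_{\mathcal{S}}<1/n$, so $\varrho_{\mathcal{S}}(\nabla u_n)\to0$, whence $\varrho_{1,\mathcal{S}}(u_n)=\varrho_{\mathcal{S}}(u_n)+\varrho_{\mathcal{S}}(\nabla u_n)$ stays bounded and $(u_n)$ is bounded in $W_0^{1,\mathcal{S}}(\Omega)$ by Proposition \ref{Prop:oneHlogModularNorm}. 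Using the compact embedding just proved, a subsequence satisfies $u_n\to u$ in $L^{\mathcal{S}}(\Omega)$, and since $L^{\mathcal{S}}(\Omega)\hookrightarrow L^1(\Omega)$ (Proposition \ref{prop:embedding} together with $|\Omega|<\infty$), both $u_n\to u$ and $\nabla u_n\to0$ in $L^1(\Omega)$, so $u_n\to u$ in $W^{1,1}(\Omega)$ with $\nabla u=0$. Hence $u$ is constant on the connected domain $\Omega$ and lies in $W_0^{1,1}(\Omega)$, so $u=0$, contradicting $\|u\|_{\mathcal{S}}=\lim_n\|u_n\|_{\mathcal{S}}=1$. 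Finally, for the equivalent-norm claim: $\|\nabla\cdot\|_{\mathcal{S}}$ is a norm on $W_0^{1,\mathcal{S}}(\Omega)$ (definiteness is exactly the Poincar\'e inequality), it satisfies $\|\nabla u\|_{\mathcal{S}}\le\|u\|_{1,\mathcal{S}}$, and conversely, combining the Poincar\'e inequality with the modular--norm relations of Propositions \ref{Prop:AbstractNormModular} and \ref{Prop:AbstractoneNormModular} gives $\|u\|_{1,\mathcal{S}}\le C\|\nabla u\|_{\mathcal{S}}$, so the two norms are equivalent.

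I expect the main obstacle to be the pointwise bound $\mathcal{S}(x,t)\le c\,(1+t^{\tau(x)})$, in particular the regime $s(x)<0$, $t\to0^+$, where $\log^{s(x)}(1+t)$ blows up and only the balance condition $q(x)+s(x)\ge r>1$ keeps $t^{q(x)}\log^{s(x)}(1+t)$ bounded; one also has to check carefully that $\tau$ can be chosen continuous with a uniform gap above $\max\{p,q\}$ and below $\min\{p^\ast,q^\ast\}$ so that the subcritical compact embedding of Proposition \ref{sobolevembed:1}(iii) applies. Once these are in place, the passage from $L^{\tau(\cdot)}$-convergence to $L^{\mathcal{S}}$-convergence is a routine uniform-integrability (Vitali) argument and the Poincar\'e inequality is the textbook compactness proof.
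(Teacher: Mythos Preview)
Your proof is correct and follows essentially the same route as the paper: establish a subcritical pointwise bound $\mathcal{S}(x,t)\le C(1+t^{\tau(x)})$ with $\max\{p,q\}<\tau<\min\{p^*,q^*\}$ (handling the regimes $s(x)\ge0$ and $s(x)<0$ separately, the latter via the split at $t=e-1$), then invoke the compact embedding of Proposition~\ref{sobolevembed:1}(iii). The only difference is in the last step: the paper observes that this pointwise bound yields, by Proposition~\ref{Prop:AbstractEmbedding}, a continuous embedding $L^{\tau(\cdot)}(\Omega)\hookrightarrow L^{\mathcal{S}}(\Omega)$, so compactness follows by composition; your Vitali argument on the modular is equivalent but slightly more hands-on. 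The paper leaves the Poincar\'e inequality and norm equivalence implicit as standard consequences of the compact embedding; your contradiction argument is the expected one and fills that gap.
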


\begin{proof}
	Note that $\mathcal{S}(x, \cdot)$ is an increasing and continuous function for a.a.\,$x \in \Omega$ and for $\varepsilon >0$ there exists $t_\ast = t_\ast(\varepsilon)$ (independent of $x$) such that
	\begin{equation}\label{log:est}
		\log^{s(x)}(1+t) \leq \log^{s^+}(1+t) \leq t^\varepsilon \quad \text{for all } t \geq t^\ast \  \text{and for a.a.\,}  x \in \{x \in \Omega \colon  s(x) \geq 0\}.
	\end{equation}
	This directly implies that
	\begin{equation}\label{poin-est-1}
		\mathcal{S}(x,t) \leq C_1 + a(x) t^{p(x)} + b(x) t^{q(x) + \varepsilon} \quad \text{for a.a.\,} x \in \{x\in \Omega \colon s(x) \geq 0\}
	\end{equation}
	and
	\begin{equation}\label{poin-est-2}
		\begin{aligned}
			\mathcal{S}(x,t)
			& = \mathcal{S}(x,t) \chi_{\{t < e-1\}}(t) + \mathcal{S}(x,t) \chi_{\{t \geq e-1\}}(t)    \\
			& \leq C_2 + a(x) t^{p(x)} + b(x) t^{q(x)} \quad \text{for a.a }  x \in \{x \in \Omega \colon  s(x) < 0\}.
		\end{aligned}
	\end{equation}
	where $C_1$ and $C_2$ depend on $t^\ast$ and $L^\infty$-norms of $q,b$ and $s$. The uniform continuity of $p$ and $q$ and the sharp inequality in \eqref{poin:cond} gives the choice of $\varepsilon$ in \eqref{log:est} such that
	\begin{equation}\label{log-e-est}
		\max\{p(x), q(x)\} + 2 \varepsilon< \min\{p^\ast(x), q^{\ast}(x)\} \quad \text{for all }  x \in \overline{\Omega}.
	\end{equation}
	Now, by combining the estimates in \eqref{poin-est-1} and \eqref{poin-est-2} with the inequality in \eqref{log-e-est}, we obtain
	\begin{align*}
		\mathcal{S}(x,t) \leq C_3 t^{\min\{p^\ast(x), q^{\ast}(x)\} -\varepsilon} + C_4.
	\end{align*}
	Finally, by applying Proposition \ref{Prop:AbstractEmbedding} and Proposition \ref{sobolevembed:1} $\text{(iii)}$, we obtain the required embedding.
\end{proof}

%********************************************************************
\section{Musielak-Orlicz Sobolev embeddings}\label{Section-3}
%********************************************************************

For a parameter $\ell \geq 1$, we denote
\begin{align}\label{perturbed:function}
	\hat{\mathcal{S}}(x,t)=
	\begin{cases}
		\frac{t\mathcal{S}(x,\ell) }{\ell} & \text{if $ 0\leq t \leq \ell$}, \\
		\mathcal{S}(x,t)& \text{if $t >\ell$}.
	\end{cases}
	\quad \text{for all }(x,t) \in \Omega \in [0, \infty).
\end{align}
Since $\Omega \subset \mathbb{R}^N$ is bounded and $\hat{\mathcal{S}}(x,t)= \mathcal{S}(x,t)$ for all $(x,t) \in \Omega \times [\ell, \infty)$, we get $L^{\mathcal{S}}(\Omega)= L^{\hat{\mathcal{S}}}(\Omega)$ and $W^{1, \mathcal{S}}(\Omega)= W^{1, \hat{\mathcal{S}}}(\Omega)$ and their norms are comparable. Therefore, in light of the embedding results, we may use $\hat{\mathcal{S}}$ in place of $\mathcal{S}$. For the sake of brevity, we write $\mathcal{S}$ instead of $\hat{\mathcal{S}}$. Now, we define the Sobolev conjugate function of $\mathcal{S}$ in order to study the continuous and compact embeddings of certain Musielak-Orlicz Sobolev spaces into Musielak-Orlicz Lebesgue spaces.

\begin{definition}
	A function $\mathcal{S}_*$ is called the Sobolev conjugate function of $\mathcal{S}$ if the inverse of the Sobolev conjugate function $\mathcal{S}_*^{-1} \colon  \Omega \times [0, \infty) \to [0, \infty)$ is given by
	\begin{equation}\label{inverse:conj:def}
		\mathcal{S}_*^{-1}(x,s)= \int_0^s \frac{{\mathcal{S}}^{-1}(x, \tau)}{\tau^{\frac{N+1}{N}}} \,\mathrm{d}\tau \quad \text{for all } (x,s) \in \Omega \times [0, \infty),
	\end{equation}
	where $\mathcal{S}_*\colon  (x, t) \in \Omega \times [0, \infty) \to s \in [0, \infty)$ is such that $\mathcal{S}_*^{-1}(x,s)=t$ and $\mathcal{S}^{-1}(x, \cdot)\colon  [0, \infty) \to [0, \infty)$ is the inverse function of $\mathcal{S}(x, \cdot)$ for all $x \in \Omega$.
\end{definition}

In addition to the assumption \eqref{main:assump}, we suppose the following regularity and oscillation conditions:
\begin{enumerate}[label=\textnormal{(H$_1$)},ref=\textnormal{H$_1$}]
	\item\label{main:assump-1}
		\begin{enumerate}
			\item[\textnormal{(i)}]
				$p, q, s \in C^{0,1}(\overline{\Omega})$ and $a, b \in C^{0,1}(\overline{\Omega})$;
			\item[\textnormal{(ii)}]
				$ \frac{\max\{p(x),q(x)\}}{\min\{p(x), q(x)\}} < 1 + \frac{1}{N}$ in $\overline{\Omega}$.
		\end{enumerate}
\end{enumerate}

Now, we prove some continuous and compact embedding results using ideas from the papers by Cianchi--Diening \cite{Cianchi-Diening-2024}, Colasuonno--Perera \cite{Colasuonno-Squassina-2016}, Crespo-Blanco--Gasi\'{n}ski--Harjulehto--Winkert \cite{Crespo-Blanco-Gasinski-Harjulehto-Winkert-2022}, Fan \cite{Fan-2012}, and Ho--Winkert \cite{Ho-Winkert-2023}.

\begin{proposition}\label{pro:embed}
	Let \eqref{main:assump} and \eqref{main:assump-1} be satisfied. Then, the following hold:
	\begin{enumerate}
		\item[\textnormal{(i)}]
			$W^{1, \mathcal{S}}(\Omega) \hookrightarrow L^{\mathcal{S}_{*}}(\Omega)$ is a continuous embedding.
		\item[\textnormal{(ii)}]
			Suppose $\mathcal{L} \in {\bf \Phi}(\Omega)$ and $\mathcal{L} \ll \mathcal{S}_\ast$, then there exists a compact embedding  $W^{1, \mathcal{S}}(\Omega) \hookrightarrow L^{\mathcal{L}}(\Omega)$.
	\end{enumerate}
\end{proposition}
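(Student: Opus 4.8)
The plan is to follow the classical Orlicz-Sobolev strategy of Adams, as adapted to the Musielak-Orlicz and variable-exponent settings in the cited works, while keeping careful track of the $x$-dependence through the regularity conditions \eqref{main:assump-1}. The first step is to establish a pointwise two-sided estimate for the Sobolev conjugate $\mathcal{S}_*$ in terms of the conjugate function $\mathcal{S}^\ast$ from the introduction (and, more usefully, in terms of the simpler comparison functions $\mathcal{B}_{\theta,\Theta,\Gamma}$ and the variable-exponent critical exponents $\ell^\ast(\cdot)$). Concretely, one evaluates the defining integral $\mathcal{S}_*^{-1}(x,s)=\int_0^s \mathcal{S}^{-1}(x,\tau)\tau^{-(N+1)/N}\,\mathrm{d}\tau$: since $\mathcal{S}(x,\cdot)$ satisfies \textnormal{(Inc)}$_{\ell^-}$ and \textnormal{(Dec)}$_{\ell^+}$ by Lemma \ref{Le:Prop-S}, the inverse $\mathcal{S}^{-1}(x,\cdot)$ satisfies the dual power bounds, and the integral converges precisely because \eqref{main:assump-1}(ii) forces $\ell^+<N$ after the balance condition is taken into account (this is where $\frac{\max\{p,q\}}{\min\{p,q\}}<1+\frac1N$ enters — it guarantees $\beta(x)<\alpha^\ast(x)$ pointwise, so the exponent appearing after integration stays subcritical-to-critical in the right range). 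The outcome of this step is that $\mathcal{S}_* \in {\bf\Phi}(\Omega)$, is a genuine $N$-function with $\mathcal{S}\ll\mathcal{S}_*$, and is equivalent (in the $\simeq$ or $\sim$ sense) to a function sandwiched between two $\mathcal{B}$-type functions with critical exponents.

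The second step is the embedding $W^{1,\mathcal{S}}(\Omega)\hookrightarrow L^{\mathcal{S}_*}(\Omega)$ itself. Here I would use the $C^{0,1}$-regularity of all data in \eqref{main:assump-1}(i) to verify the structural conditions \textnormal{(A0)}, \textnormal{(A1)}, \textnormal{(A2)} (equivalently their primed versions) for both $\mathcal{S}$ and $\mathcal{S}_*$ — Lipschitz continuity of $p,q,s,a,b$ is exactly what is needed to control the oscillation of the modular across small balls, as in Crespo-Blanco--Gasi\'nski--Harjulehto--Winkert \cite{Crespo-Blanco-Gasinski-Harjulehto-Winkert-2022} and Ho--Winkert \cite{Ho-Winkert-2023}. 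With \textnormal{(aInc)}, \textnormal{(aDec)}, \textnormal{(A0)}--\textnormal{(A2)} in hand, the general Musielak-Orlicz Sobolev embedding theorem (Cianchi--Diening \cite{Cianchi-Diening-2024}, or the Sobolev conjugate machinery of Fan \cite{Fan-2012}) applies and yields the continuous embedding. Alternatively, and perhaps more self-containedly given what is already proved in the excerpt, one can sandwich: by Proposition \ref{sobolevembed:1} one has $W^{1,\mathcal{S}}(\Omega)\hookrightarrow W^{1,\ell(\cdot)}(\Omega)\hookrightarrow L^{\ell^\ast(\cdot)}(\Omega)$ for suitable $\ell$, and then one shows $L^{\ell^\ast(\cdot)}(\Omega)\cap(\text{the higher-integrability piece})$ embeds into $L^{\mathcal{S}_*}(\Omega)$ by comparing modulars via Proposition \ref{Prop:AbstractEmbedding}, using the pointwise estimate for $\mathcal{S}_*$ from step one together with the density of $C_c^\infty(\Omega)$ (valid since the relevant exponents are log-H\"older by \eqref{main:assump-1}(i)).

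For part (ii), the compact embedding, the argument is the standard "interpolation + Vitali" scheme: given $\mathcal{L}\in{\bf\Phi}(\Omega)$ with $\mathcal{L}\ll\mathcal{S}_*$, pick an intermediate $\Phi$-function $\mathcal{L}_0$ with $\mathcal{L}\ll\mathcal{L}_0\ll\mathcal{S}_*$ and with $\mathcal{L}_0$ dominated by some subcritical variable exponent $\ell_0(\cdot)$ with $\ell_0(x)<\min\{p^\ast(x),j(x)^\ast\}$; then $W^{1,\mathcal{S}}(\Omega)\hookrightarrow W^{1,\ell(\cdot)}(\Omega)\hookrightarrow\hookrightarrow L^{\ell_0(\cdot)}(\Omega)$ compactly by Proposition \ref{sobolevembed:1}(iii). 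Given a bounded sequence $(u_n)$ in $W^{1,\mathcal{S}}(\Omega)$, extract $u_n\to u$ strongly in $L^{\ell_0(\cdot)}(\Omega)$ and a.e.; the relation $\mathcal{L}\ll\mathcal{L}_0$ together with the uniform bound $\sup_n\varrho_{\mathcal{S}_*}(u_n)<\infty$ (from part (i)) gives, via a Vitali-type convergence argument, that $\varrho_{\mathcal{L}}(u_n-u)\to0$, hence $u_n\to u$ in $L^{\mathcal{L}}(\Omega)$ by Proposition \ref{Prop:AbstractBanach} (the space $L^{\mathcal{L}}$ here being $\Delta_2$, so norm and modular convergence agree). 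The main obstacle, as the authors themselves flag, is step one: verifying that the integral defining $\mathcal{S}_*^{-1}$ converges and produces an $N$-function with the correct two-sided growth requires delicate estimates on $\mathcal{S}^{-1}(x,\tau)$ for the logarithmically-perturbed $\mathcal{S}$, and the sign-changing $s(\cdot)$ means the naive power bounds from \textnormal{(Inc)}/\textnormal{(Dec)} lose the logarithmic factor; recovering a sharp-enough estimate (matching the classical Orlicz Sobolev conjugate fibrewise, as claimed for $\mathcal{S}^\ast$) is the technical heart, and the balance condition $q+s\ge r>1$ together with \eqref{main:assump-1}(ii) is precisely what makes it go through.
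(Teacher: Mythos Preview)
Your outline circles around the right literature but misses the specific mechanism the paper actually uses, and your primary proposed route does not work under the stated hypotheses. The paper does \emph{not} proceed by verifying \textnormal{(A0)}--\textnormal{(A2)} and invoking Cianchi--Diening; in fact the Remark immediately following the proposition says explicitly that the \textnormal{(A0)}--\textnormal{(A1)} route only goes through under the more restrictive conditions \eqref{assumption:stronger} (essentially $a\equiv1$, $p<q$, $s\equiv1$, and a H\"older-type bound tied to the gap $q/p$), which are strictly stronger than \eqref{main:assump}--\eqref{main:assump-1}. So your ``second step'' as written would not establish the proposition in the stated generality.

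What the paper actually does is invoke Fan \cite[Theorems 1.1 and 1.2]{Fan-2012} directly, which reduces both (i) and (ii) to two concrete verifications: first that $\mathcal{S}_*^{-1}(x,t)\to\infty$ as $t\to\infty$ for each $x$, and second that there exist $c,t_0>0$ and $\delta<\tfrac1N$ with
\[
\left|\frac{\partial \mathcal{S}(x,t)}{\partial x_j}\right|\le c\,\bigl(\mathcal{S}(x,t)\bigr)^{1+\delta}\quad\text{for all }x\in\Omega,\ t\ge t_0.
\]
The first is handled by the crude upper bound $\mathcal{S}(x,t)\le \eta t^{\beta(x)+\varepsilon}$ for large $t$ (so $\mathcal{S}^{-1}$ has a power lower bound and the defining integral for $\mathcal{S}_*^{-1}$ diverges since $\beta(x)+\varepsilon<N$). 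The second is where the $C^{0,1}$ regularity of $p,q,s,a,b$ and the oscillation condition \eqref{main:assump-1}(ii) are really used: differentiating $\mathcal{S}$ in $x$ produces terms like $a(x)t^{p(x)}\log t$, $b(x)t^{q(x)}\log t\,\log^{s(x)}(1+t)$, etc., which are bounded by $C\,t^{\max\{p(x),q(x)\}+\zeta}$ via $\log(1+t)\le c_0 t^{\zeta/2}$, and then by $C\,\mathcal{S}(x,t)^{(\max\{p,q\}+\zeta)/\min\{p,q\}}$. The condition $\tfrac{\max\{p,q\}}{\min\{p,q\}}<1+\tfrac1N$ is exactly what allows one to choose $\zeta$ so that this exponent is $<1+\tfrac1N$. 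You correctly sensed that \eqref{main:assump-1}(ii) is the crux, but you located it in the wrong computation (convergence of the $\mathcal{S}_*^{-1}$ integral rather than the $x$-derivative bound). Your sandwiching alternative via Proposition~\ref{sobolevembed:1} cannot reach $L^{\mathcal{S}_*}$, only subcritical targets; and your Vitali scheme for (ii), while plausible, is unnecessary once Fan's Theorem~1.2 is available.
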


\begin{proof}
	The proof follows from Fan  \cite[Theorem 1.1 and 1.2]{Fan-2012} provided $\mathcal{S}$ satisfy the condition (P$_4$) and condition (2) in \cite[Proposition 3.1]{Fan-2012}, i.e.
	\begin{align*}
		\lim_{t \to \infty} \mathcal{S}_{*}^{-1}(x,t) = \infty \quad \text{for all }  x \in \Omega
	\end{align*}
	and there exist positive constants $c,t_0$ and $\delta<\frac{1}{N}$ such that for each $j=1,2, \dots, N$
	\begin{align*}
		\left| \frac{\partial \mathcal{S}(x,t)}{\partial x_j}\right| \leq c \left(\mathcal{S}(x,t)\right)^{1+ \delta}  \text{for all }  x \in \Omega  \text{ and for all } t \geq t_0
	\end{align*}
	for which $\nabla a(x), \nabla b(x), \nabla p(x), \nabla q(x), \nabla s(x)$ exist and so $\left| \frac{\partial \mathcal{S}(x,t)}{\partial x_j}\right|$ does.

	For $(x,t) \in \Omega \times [1, \infty)$, note that for any $\varepsilon>0$ and $x \in \Omega$,
	\begin{align*}
		\lim_{t \to \infty} \frac{a(x) t^{p(x)}+b(x)t^{q(x)} \log^{s(x)}(1 + t)}{t^{\beta(x) + \varepsilon}} = 0 \quad \text{with } \beta(x)= \max\{p(x), q(x)\},
	\end{align*}
	that is, for any $\eta >0$, there exists a constant $m_1>0$ (independent of $x$) such that
	\begin{align*}
		\mathcal{S}(x,t)= a(x) t^{p(x)}+b(x) t^{q(x)}\log^{s(x)}(1 + t) \leq \eta t^{\beta(x)+ \varepsilon} \quad \text{for all }  x \in \Omega  \text{ and for all } t \geq m_1.
	\end{align*}
	Replacing $t$ by $\mathcal{S}^{-1}(x,t)$ in the above inequality gives
	\begin{align*}
		\mathcal{S}^{-1}(x, t) \geq \left(\frac{t}{\eta }\right)^{\frac{1}{\beta(x) + \varepsilon}} \quad \text{for all }  x \in \Omega \text{ and for all } t \geq \mathcal{S}(x, m_1).
	\end{align*}
	Hence, by choosing $\varepsilon$ small enough such that $\beta(x) + \varepsilon < N$ for all $x \in \overline{\Omega}$, we obtain
	\begin{align*}
		\mathcal{S}_{*}^{-1}(x,t) & \geq \mathcal{S}_{*}^{-1}(x,\mathcal{S}(x, m_1)) + \frac{1}{\eta ^{\frac{1}{\beta(x) +\varepsilon}}}\int_{\mathcal{S}(x, m_1)}^t \tau^{\frac{1}{\beta(x)+\varepsilon} - \frac{N+1}{N}} \,\mathrm{d}\tau \\
		& \geq C(\eta, \beta^\pm, N) \left(t^{\frac{1}{\beta(x)+\varepsilon} - \frac{1}{N}} - \left(\mathcal{S}(x, m_1)\right)^{\frac{1}{\beta(x)+\varepsilon} - \frac{1}{N}}\right) \to \infty \quad \text{as } t \to \infty.
	\end{align*}
	For $\zeta>0$ there exists $t_0= t_0(\zeta) \gg 1$ large enough such that
	\begin{equation}\label{log:growth}
		1 \leq \log(\log(1+t))\leq \log(t) \leq \log(1+t) \leq c_0 t^\frac{\zeta}{\max\{2, 2 \|s\|_\infty\}} \leq c_0 t^\frac{\zeta}{2} \quad \text{for }  t \geq t_0.
	\end{equation}
	Differentiating the function $\mathcal{S}$ with respect to $x_j$, we get
	\begin{equation}\label{est:deri}
		\begin{aligned}
			\left| \frac{\partial \mathcal{S}(x,t)}{\partial x_j}\right| & \leq a(x) t^{p(x)} \left| \frac{\partial p}{\partial x_j}\right| \log(t) + \left| \frac{\partial a}{\partial x_j}\right| t^{p(x)}  + \left| \frac{\partial b}{\partial x_j}\right| t^{q(x)} \log^{s(x)}(1+t) \\
			& \qquad + b(x) t^{q(x)} \log(t) \left| \frac{\partial q}{\partial x_j}\right| \log^{s(x)}(1+t)\\
			&\qquad + b(x) t^{q(x)} |\log(\log(1+t))| \left| \frac{\partial s }{\partial x_j}\right| \log^{s(x)}(1+t).
		\end{aligned}
	\end{equation}
	Now by using \eqref{log:growth} multiple times in \eqref{est:deri} for $t \geq t_0 > 1$, we obtain
	\begin{equation}\label{embed:est-5}
		\begin{aligned}
			\left| \frac{\partial \mathcal{S}(x,t)}{\partial x_j}\right| & \leq C_1 \left(t^{p(x) + \frac{\zeta}{2}} + t^{p(x)} \right) + C_2
			\begin{cases}
				t^{q(x)+ \frac{\zeta}{2}} \log^{s^+}(1+t) & \text{if $s(x) \geq 0$} \\
				t^{q(x)+ \frac{\zeta}{2}}\log^{s^-}(1+t) & \text{if $s(x) <0$}
			\end{cases}\\
			& \leq C_3
			\begin{cases}
				t^{\max\{p(x), q(x)\} + \zeta } & \text{if    $ s(x) \geq 0$} \\
				t^{\max\{p(x), q(x)\} + \zeta}  & \text{if $s(x) <0$}
			\end{cases} \\
			& \leq C_4 \left(a(x) t^{p(x)} + b(x) t^{q(x)} + b(x) t^{q(x)} \right)^{\textstyle \frac{\max\{p(x), q(x)\}+ \zeta}{\min\{p(x), q(x)\}}}.
		\end{aligned}
	\end{equation}
	Now, by using the uniform continuity of the functions $p(\cdot)$, $q(\cdot)$ in $\overline{\Omega}$ and \eqref{main:assump-1} \text{(ii)}, we can find $\zeta= \zeta(\delta)$ such that
	\begin{align}\label{embed:est-6}
		\frac{\max\{p(x), q(x)\}+\zeta}{\min\{p(x), q(x)\}} < 1+ \delta < 1+ \frac{1}{N}  \quad \text{in } \overline{\Omega}.
	\end{align}
	Using \eqref{embed:est-6} in \eqref{embed:est-5}, we get
	\begin{align*}
		\left| \frac{\partial \mathcal{S}(x,t)}{\partial x_j}\right| & \leq C \left(\mathcal{S}(x,t)\right)^{1+\delta} \quad \text{for } (x,t) \in \overline{\Omega} \times (t_0, \infty).
	\end{align*}
	This shows the assertions of the proposition.
\end{proof}

\begin{remark}
	With a similar argumentation as in Arora--Crespo-Blanco-Winkert \cite[Lemma 3.10 and Theorem 3.12]{Arora-Crespo-Blanco-Winkert-2023}, it can be concluded that the function $\mathcal{S}$ satisfies \textnormal{(A0)} and \textnormal{(A1)}, provided the conditions
    \begin{equation}\label{assumption:stronger}
        b \in C^{0, \gamma}(\overline{\Omega}) \quad \text{and} \quad \left(\frac{q}{p}\right)_+ < 1 + \frac{\gamma}{N}
    \end{equation}
    hold true with $a(x) \equiv 1$, $1 < p(x) < q(x)$, and $s(x) \equiv 1$. Consequently, this embedding result is a specific instance of Cianchi--Diening \cite[Theorem 3.5]{Cianchi-Diening-2024}, assuming the more stringent conditions stated in \eqref{assumption:stronger} compared to those in \eqref{main:assump} and \eqref{main:assump-1}.
\end{remark}

Next, we provide estimates for the Sobolev conjugate function $\mathcal{S}_\ast$ of $\mathcal{S}$, which is crucial for studying the concentration-compactness principle in the next section. Specifically, we intend to look for a function $\mathcal{S}^\ast$ weaker than $\mathcal{S}_\ast$ and of polynomial growth or polynomial growth perturbed with a logarithmic function governed by the support of the modulating coefficients $a(\cdot)$ and $b(\cdot)$. In the following we assume that \eqref{main:assump} and \eqref{main:assump-1} hold.

\begin{proposition}\label{prop:esti-near0}
	Let $S_\ast$ be the Sobolev conjugate function of $\mathcal{S}$ given by \eqref{inverse:conj:def}. Then, the following hold:
	\begin{enumerate}
		\item[\textnormal{(i)}]
			For $(x,s) \in \overline{\Omega} \times [0, \mathcal{S}(x,\ell)]$
			\begin{align*}
				\mathcal{S}^{-1}(x,s) = \frac{s  \ell}{\mathcal{S}(x,\ell)} \quad \text{ and } \quad \mathcal{S}_\ast^{-1}(x,s)= \frac{N \ell}{(N-1) \mathcal{S}(x,\ell)} s^{\frac{N-1}{N}}.
			\end{align*}
		\item[\textnormal{(ii)}]
			For $(x,t) \in \overline{\Omega} \times [0,\mathcal{S}_\ast^{-1}(x,\mathcal{S}(x,\ell))]$
			\begin{align*}
				\mathcal{S}_\ast(x,t) = \left(\frac{\mathcal{S}(x,\ell) (N-1)}{N \ell}\right)^\frac{N}{(N-1)} t^{\frac{N}{N-1}}
			\end{align*}
			where $\mathcal{S}_\ast^{-1}(x,\mathcal{S}(x,\ell)) = \frac{N \ell}{N-1} (\mathcal{S}(x,\ell))^{\frac{-1}{N}}$.
	\end{enumerate}
\end{proposition}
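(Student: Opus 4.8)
The plan is a direct computation resting on the fact that, after the truncation \eqref{perturbed:function}, the function $\mathcal{S}(x,\cdot)$ is \emph{linear} on $[0,\ell]$, so its left-continuous inverse is explicit on $[0,\mathcal{S}(x,\ell)]$ and the defining integral \eqref{inverse:conj:def} collapses to an elementary power integral. Throughout one uses that $\mathcal{S}(x,\ell)>0$ for every $x\in\overline{\Omega}$: indeed $\ell\ge 1$, $a,b\ge 0$ with $a(x)+b(x)\ge d>0$ by \eqref{main:assump}\textnormal{(iii)}, and both $\ell^{p(x)}$ and $\ell^{q(x)}\log^{s(x)}(1+\ell)$ are strictly positive, so at least one summand of $\mathcal{S}(x,\ell)$ is positive; hence the divisions and the negative power $(\mathcal{S}(x,\ell))^{-1/N}$ occurring below are legitimate.

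First I would prove \textnormal{(i)}. By \eqref{perturbed:function}, for $t\in[0,\ell]$ we have $\mathcal{S}(x,t)=t\,\mathcal{S}(x,\ell)/\ell$, which is strictly increasing and continuous in $t$ and maps $[0,\ell]$ onto $[0,\mathcal{S}(x,\ell)]$; solving $s=t\,\mathcal{S}(x,\ell)/\ell$ for $t$ gives, in view of Definition \ref{def:conjugate},
\begin{align*}
	\mathcal{S}^{-1}(x,s)=\frac{s\,\ell}{\mathcal{S}(x,\ell)}\qquad\text{for all }(x,s)\in\overline{\Omega}\times[0,\mathcal{S}(x,\ell)].
\end{align*}
Inserting this into \eqref{inverse:conj:def} and using $1-\frac{N+1}{N}=-\frac{1}{N}\in\left(-1,0\right)$, so that the integral converges at $\tau=0$, I obtain for $s\in[0,\mathcal{S}(x,\ell)]$
\begin{align*}
	\mathcal{S}_\ast^{-1}(x,s)=\frac{\ell}{\mathcal{S}(x,\ell)}\int_0^s \tau^{-\frac{1}{N}}\,\mathrm{d}\tau=\frac{\ell}{\mathcal{S}(x,\ell)}\cdot\frac{N}{N-1}\,s^{\frac{N-1}{N}}=\frac{N\ell}{(N-1)\mathcal{S}(x,\ell)}\,s^{\frac{N-1}{N}},
\end{align*}
which is the second assertion of \textnormal{(i)}.

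For \textnormal{(ii)} I would simply invert the monotone bijection produced in \textnormal{(i)}. The map $s\mapsto\mathcal{S}_\ast^{-1}(x,s)$ is strictly increasing and continuous on $[0,\mathcal{S}(x,\ell)]$ with image $[0,\mathcal{S}_\ast^{-1}(x,\mathcal{S}(x,\ell))]$; hence for $t$ in this image, writing $t=\frac{N\ell}{(N-1)\mathcal{S}(x,\ell)}s^{\frac{N-1}{N}}$ and solving for $s=\mathcal{S}_\ast(x,t)$ yields
\begin{align*}
	\mathcal{S}_\ast(x,t)=\left(\frac{(N-1)\mathcal{S}(x,\ell)}{N\ell}\right)^{\frac{N}{N-1}}t^{\frac{N}{N-1}}.
\end{align*}
Finally, evaluating the formula from \textnormal{(i)} at $s=\mathcal{S}(x,\ell)$ gives
\begin{align*}
	\mathcal{S}_\ast^{-1}(x,\mathcal{S}(x,\ell))=\frac{N\ell}{(N-1)\mathcal{S}(x,\ell)}\,(\mathcal{S}(x,\ell))^{\frac{N-1}{N}}=\frac{N\ell}{N-1}\,(\mathcal{S}(x,\ell))^{-\frac{1}{N}},
\end{align*}
which identifies the upper endpoint of the range in \textnormal{(ii)}.

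There is no genuine obstacle in this statement: the argument is entirely elementary once one observes the linearity of the truncated $\mathcal{S}(x,\cdot)$ on $[0,\ell]$. The only points that require a word of care are the positivity of $\mathcal{S}(x,\ell)$ (guaranteed uniformly in $x$ by \eqref{main:assump}\textnormal{(iii)} together with $\ell\ge 1$), the convergence of the integral in \eqref{inverse:conj:def} at $\tau=0$ (which holds because the exponent $-1/N$ exceeds $-1$), and the bookkeeping of the interval $[0,\mathcal{S}_\ast^{-1}(x,\mathcal{S}(x,\ell))]$ on which the power-law formulas are valid.
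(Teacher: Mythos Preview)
Your proof is correct and follows exactly the same route as the paper: invert the linear part of the truncated $\mathcal{S}$, evaluate the elementary power integral in \eqref{inverse:conj:def}, and invert again. You have simply spelled out in more detail the steps (positivity of $\mathcal{S}(x,\ell)$, convergence at $\tau=0$, identification of the endpoint) that the paper compresses into two lines.
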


\begin{proof}
	From the definition of the Sobolev conjugate function $\mathcal{S}_\ast$ of $\mathcal{S}$ defined in \eqref{perturbed:function}, we get
	\begin{align*}
		\mathcal{S}^{-1}(x,s) = \frac{s  \ell}{\mathcal{S}(x,\ell)} \quad \text{ and } \quad \mathcal{S}_\ast^{-1}(x,s)= \frac{N \ell}{(N-1) \mathcal{S}(x,\ell)} s^{\frac{N-1}{N}} \quad \text{for }(x,s) \in \overline{\Omega} \times [0, \mathcal{S}(x,\ell)].
	\end{align*}
	By taking the inverse of the function $\mathcal{S}_\ast^{-1}$, we get
	\begin{align*}
		\mathcal{S}_\ast(x,t) = \left(\frac{\mathcal{S}(x,\ell) (N-1)}{N \ell}\right)^\frac{N}{(N-1)} t^{\frac{N}{N-1}} \quad \text{for }(x,t) \in \overline{\Omega} \times [0,\mathcal{S}_\ast^{-1}(x,\mathcal{S}(x,\ell))].
	\end{align*}
\end{proof}

Now, we prove a series of results offering the lower estimates of the Sobolev conjugate function $\mathcal{S}_\ast$ in the components $\{A_i\}_{i=1,2} \subset \Omega$ where $A_i$, $i=1,2$ is defined as
\begin{align*}
	A_1:= \{x \in \Omega\colon  a(x) \neq 0\}
	\quad\text{ and } \quad
	A_2:= \{x \in \Omega\colon  b(x) \neq 0\} .
\end{align*}

\begin{proposition}%\label{prop:lower:est-1}
	Let $S_\ast$ be the Sobolev conjugate function given by \eqref{inverse:conj:def}. Then, for $(x,t) \in A_1 \times \left[\mathcal{S}_\ast^{-1}(x,\mathcal{S}(x,\ell)), \infty\right)$, we have
	\begin{align*}
		\mathcal{S}_\ast(x,t) & \geq \left[\frac{a(x)^\frac{1}{p(x)}}{p^\ast(x)} \left(t+ (\mathcal{S}(x,\ell))^{\frac{-1}{N}}\left(p^\ast(x) \left(\frac{\mathcal{S}(x,\ell)}{a(x)}\right)^{\frac{1}{p(x)}} - \frac{N \ell}{N-1}\right)\right)\right]^{p^\ast(x)} \geq \left[ \frac{a(x)^\frac{1}{p(x)}}{p^\ast(x)} t \right]^{p^\ast(x)},
	\end{align*}
	where equality holds in the region $A_1^0 \subset A_1$ with $A_1^0:= \{x \in \Omega\colon  b(x) =0\}$.
\end{proposition}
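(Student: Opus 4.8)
The plan is to convert the desired lower bound for $\mathcal{S}_\ast$ into an upper bound for $\mathcal{S}_\ast^{-1}$ through the integral representation \eqref{inverse:conj:def}, and to get the latter by bounding $\mathcal{S}^{-1}(x,\cdot)$ from above. On $A_1$ we have $a(x)>0$, and since $b\geq 0$ the (modified) function from \eqref{perturbed:function} satisfies $\mathcal{S}(x,\tau)\geq a(x)\tau^{p(x)}$ for all $\tau\geq\ell$; substituting $\tau=\mathcal{S}^{-1}(x,\sigma)$, which is $\geq\ell$ exactly when $\sigma\geq\mathcal{S}(x,\ell)$, yields $\mathcal{S}^{-1}(x,\sigma)\leq(\sigma/a(x))^{1/p(x)}$ for $\sigma\geq\mathcal{S}(x,\ell)$. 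On the complementary range $[0,\mathcal{S}(x,\ell)]$ I would use the exact values from Proposition \ref{prop:esti-near0}, in particular $\mathcal{S}_\ast^{-1}(x,\mathcal{S}(x,\ell))=\frac{N\ell}{N-1}(\mathcal{S}(x,\ell))^{-1/N}$.

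Next I would split the integral defining $\mathcal{S}_\ast^{-1}(x,s)$ at $\mathcal{S}(x,\ell)$ for $s\geq\mathcal{S}(x,\ell)$ (which is precisely the hypothesis $t=\mathcal{S}_\ast^{-1}(x,s)\geq\mathcal{S}_\ast^{-1}(x,\mathcal{S}(x,\ell))$ written on the level of $s$, since $\mathcal{S}_\ast^{-1}(x,\cdot)$ is nondecreasing), and estimate the upper part by $\int_{\mathcal{S}(x,\ell)}^{s}a(x)^{-1/p(x)}\tau^{\frac1{p(x)}-\frac{N+1}{N}}\,\mathrm{d}\tau$. The key algebraic identity is $\frac1{p(x)}-\frac{N+1}{N}=\frac1{p^\ast(x)}-1$, so this integral equals $a(x)^{-1/p(x)}p^\ast(x)\big(s^{1/p^\ast(x)}-(\mathcal{S}(x,\ell))^{1/p^\ast(x)}\big)$. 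Adding the exact lower-range contribution gives
\[
\mathcal{S}_\ast^{-1}(x,s)\ \leq\ \frac{N\ell}{N-1}(\mathcal{S}(x,\ell))^{-1/N}+a(x)^{-1/p(x)}p^\ast(x)\Big(s^{1/p^\ast(x)}-(\mathcal{S}(x,\ell))^{1/p^\ast(x)}\Big).
\]
The right-hand side is strictly increasing in $s$, so inverting it — using $(\mathcal{S}(x,\ell))^{1/p^\ast(x)}=(\mathcal{S}(x,\ell))^{-1/N}\,(\mathcal{S}(x,\ell)/a(x))^{1/p(x)}\,a(x)^{1/p(x)}$ to regroup the constant — turns the estimate $t=\mathcal{S}_\ast^{-1}(x,s)\leq\cdots$ into exactly $\mathcal{S}_\ast(x,t)=s\geq\big[\tfrac{a(x)^{1/p(x)}}{p^\ast(x)}\big(t+(\mathcal{S}(x,\ell))^{-1/N}(p^\ast(x)(\mathcal{S}(x,\ell)/a(x))^{1/p(x)}-\tfrac{N\ell}{N-1})\big)\big]^{p^\ast(x)}$, which is the first claimed inequality. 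I would check consistency with Proposition \ref{prop:esti-near0}(ii) at $t=\mathcal{S}_\ast^{-1}(x,\mathcal{S}(x,\ell))$: there both sides collapse to $\mathcal{S}(x,\ell)$.

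For the second inequality it then suffices to verify that the shift term is nonnegative, i.e. $p^\ast(x)(\mathcal{S}(x,\ell)/a(x))^{1/p(x)}\geq\frac{N\ell}{N-1}$. This follows from $\mathcal{S}(x,\ell)\geq a(x)\ell^{p(x)}$ (again because $b\geq 0$), hence $(\mathcal{S}(x,\ell)/a(x))^{1/p(x)}\geq\ell$, combined with $p^\ast(x)\geq\frac{N}{N-1}$, which holds since $p(x)>1$ and $r\mapsto r^\ast$ is increasing. Finally, on $A_1^0=\{x\in\Omega\colon b(x)=0\}$, which is contained in $A_1$ by \eqref{main:assump}\textnormal{(iii)}, one has $\mathcal{S}(x,\tau)=a(x)\tau^{p(x)}$ exactly for $\tau\geq\ell$, so $\mathcal{S}^{-1}(x,\sigma)=(\sigma/a(x))^{1/p(x)}$ for $\sigma\geq\mathcal{S}(x,\ell)$ and every inequality in the argument above becomes an equality, giving equality in the first displayed bound of the statement. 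The only delicate point is the bookkeeping in the inversion step, together with checking that the restriction $t\geq\mathcal{S}_\ast^{-1}(x,\mathcal{S}(x,\ell))$ keeps the bracketed quantity nonnegative so that raising to the power $p^\ast(x)$ is legitimate; this reduces to the same endpoint computation just indicated and is not a genuine obstacle.
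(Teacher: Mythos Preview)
Your proposal is correct and follows essentially the same route as the paper: bound $\mathcal{S}^{-1}(x,\sigma)\le(\sigma/a(x))^{1/p(x)}$ on $[\mathcal{S}(x,\ell),\infty)$ using $b\ge0$, split the defining integral for $\mathcal{S}_\ast^{-1}$ at $\mathcal{S}(x,\ell)$ with the exact value from Proposition~\ref{prop:esti-near0} on the lower piece, integrate using $\frac1{p(x)}-\frac{N+1}{N}=\frac1{p^\ast(x)}-1$, and invert. Your justification of the second inequality via $\mathcal{S}(x,\ell)\ge a(x)\ell^{p(x)}$ and $p^\ast(x)\ge\frac{N}{N-1}$, and of the equality case on $A_1^0$, also matches the paper's argument.
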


\begin{proof}
	From \eqref{perturbed:function}, we know that for $(x,t) \in \overline{\Omega} \times [\ell, \infty)$, the function $\mathcal{S}$ is given by
	\begin{align*}
		\mathcal{S}(x,t) = a(x) t^{p(x)} + b(x) t^{q(x)} \log^{s(x)}(1+ t).
	\end{align*}
	By taking $t= \mathcal{S}^{-1}(x,y)$ for $y \geq \mathcal{S}(x,\ell)$ and $x \in \overline{\Omega}$, we get
	\begin{equation}\label{inverse:est-1}
		y= a(x) \left(\mathcal{S}^{-1}(x,y)\right)^{p(x)} + b(x) \left(\mathcal{S}^{-1}(x,y)\right)^{q(x)} \log^{s(x)} (1+ \left(\mathcal{S}^{-1}(x,y)\right)
	\end{equation}
	and
	\begin{align*}
		\mathcal{S}^{-1}(x, y) \geq \ell \quad \text{for all } (x,y) \in \overline{\Omega} \times [\mathcal{S}(x, \ell), \infty).
	\end{align*}
	The non-negativity of the modulating coefficient $b(\cdot)$ implies
	\begin{equation}\label{inverse:est-2}
		\mathcal{S}^{-1}(x,y) \leq \left(\frac{y}{a(x)}\right)^\frac{1}{p(x)} \quad \text{for }
		(x, y) \in A_1 \times [\mathcal{S}(x,\ell), \infty),
	\end{equation}
	where equality holds in $A_1^0 \times [\mathcal{S}(x,\ell), \infty)$. Now, by using \eqref{inverse:est-2} and Proposition \ref{prop:esti-near0} in the definition of the inverse of the Sobolev conjugate function $\mathcal{S}_\ast$ for $(x,z) \in A_1 \times [\mathcal{S}(x, \ell), \infty)$, we obtain
	\begin{align*}
		\mathcal{S}_*^{-1}(x,z) & =  \mathcal{S}_*^{-1}(x,\mathcal{S}(x,\ell)) + \int_{\mathcal{S}(x,\ell)}^z \frac{{\mathcal{S}}^{-1}(x, y)}{y^{\frac{N+1}{N}}} \,\mathrm{d}y \\
		& \leq \mathcal{S}_*^{-1}(x,\mathcal{S}(x,\ell)) + \int_{\mathcal{S}(x,\ell)}^z  \left(\frac{y}{a(x)}\right)^\frac{1}{p(x)} \frac{1}{y^{1+ \frac{1}{N}}} \,\mathrm{d}y \\
		& = \frac{p^\ast(x)}{(a(x))^\frac{1}{p(x)}} z^{\frac{1}{p^\ast(x)}} + \frac{1}{(\mathcal{S}(x, \ell))^\frac{1}{N}} \left[\frac{N \ell}{N-1} - p^\ast(x) \left(\frac{\mathcal{S}(x, \ell)}{a(x)}\right)^\frac{1}{p(x)}\right].
	\end{align*}
	Replacing $z= \mathcal{S}_\ast(x, \tau)$ for $x \in A_1$ and $\tau \geq \mathcal{S}_\ast^{-1}(x, \mathcal{S}(x, \ell))$, we get
	\begin{align*}
		\tau \leq  \frac{p^\ast(x)}{(a(x))^\frac{1}{p(x)}} \left(\mathcal{S}_\ast(x, \tau) \right)^{\frac{1}{p^\ast(x)}} + \frac{1}{(\mathcal{S}(x, \ell))^\frac{1}{N}} \left[\frac{N \ell}{N-1} - p^\ast(x) \left(\frac{\mathcal{S}(x, \ell)}{a(x)}\right)^\frac{1}{p(x)}\right].
	\end{align*}
	This further gives
	\begin{align*}
		\mathcal{S}_\ast(x,\tau) & \geq \left[\frac{a(x)^\frac{1}{p(x)}}{p^\ast(x)} \left(\tau+ (\mathcal{S}(x,\ell))^{\frac{-1}{N}}\left(p^\ast(x) \left(\frac{\mathcal{S}(x,\ell)}{a(x)}\right)^{\frac{1}{p(x)}} - \frac{N \ell}{N-1}\right)\right)\right]^{p^\ast(x)} \\
		& \geq \left[ \frac{a(x)^\frac{1}{p(x)}}{p^\ast(x)} \tau \right]^{p^\ast(x)},
	\end{align*}
	where the last inequality follows from the fact that  $\mathcal{S}(x, \ell) \geq a(x) \ell^{p(x)}$ and $p^\ast(x) \geq \frac{N}{N-1}$ for $ x \in A_1$.
\end{proof}

The non-negativity of the modulating coefficient $a(\cdot)$ in \eqref{inverse:est-1} and $\mathcal{S}^{-1}(x, y) \geq \ell \geq 1$ for $(x, y) \in A_2 \times [\mathcal{S}(x,\ell), \infty)$ gives
\begin{align}\label{inverse:conju:1}
	\mathcal{S}^{-1}(x,y) \leq \left(\frac{y}{b  (x) \log^{s(x)}(1+ \mathcal{S}^{-1}(x,y))}\right)^\frac{1}{q(x)}.
\end{align}
Now, depending upon the sign of $s(\cdot)$, we further partition the set $A_2$ into two disjoint components and estimate the Sobolev conjugate function over the following disjoint components
\begin{align*}
	A_2^{(1)}:=  A_2 \cap \{s(x) >0\} \quad \text{and} \quad A_2^{(2)}:=  A_2 \cap \{s(x) \leq 0\}
\end{align*}

\begin{proposition}%\label{prop:lower:est-3}
	Let $S_\ast$ be the Sobolev conjugate function given by \eqref{inverse:conj:def}. Then, there exist $l_0 \gg 1$ and $C>0$ such that for $\ell \geq \ell_0$ and for $x \in A_2^{(1)}$ the following estimates hold true:
	\begin{enumerate}
		\item[\textnormal{(i)}]
			$\mathcal{S}^{-1}(x,y) \leq \left(\frac{y}{b(x) }\right)^\frac{1}{q(x)} \log^{\frac{-s(x)}{q(x)}}\left(1+ \left(\frac{y}{h(x)}\right)^{\frac{1}{h_1(x)}}\right)$ for any $y \in [\mathcal{S}(x,\ell), \infty)$ where
			\begin{align*}
				h(x):= a(x) + b(x) \quad \text{ and } \quad h_1(x):= \max\{p(x), q(x) + \varepsilon\}, \varepsilon >0.
			\end{align*}
		\item[\textnormal{(ii)}]
			$\mathcal{S}_*^{-1}(x,z) \leq \frac{q^\ast(x)}{\zeta (b(x))^\frac{1}{q(x)}} \frac{z^\frac{1}{q^\ast(x)}}{\log^{\frac{s(x)}{q(x)}}\left(1+ \left(\frac{z}{h(x)}\right)^{\frac{1}{h_1(x)}}\right)} + (\mathcal{S}(x,\ell))^{\frac{-1}{N}} \ell \left(\frac{N}{N-1} -  \frac{q^\ast(x)}{\zeta} \right)$ for any $\zeta \in (0,1)$ and $z \in  [\mathcal{S}(x, \ell), \infty).$
		\item[\textnormal{(iii)}]
			$\mathcal{S}_\ast(x,t) \geq  C \left(\frac{(b(x))^\frac{1}{q(x)}}{ q^\ast(x) } \log^{\frac{s(x)}{q(x)}}\left(1+t \right)\right)^{q^\ast(x)} t^{q^\ast(x)}$ for any $t \in \left[\mathcal{S}_\ast^{-1}(x,\mathcal{S}(x,\ell)), \infty \right).$
	\end{enumerate}
\end{proposition}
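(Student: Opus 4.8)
The plan is to establish (i)--(iii) in sequence, treating $\ell$ as a free large parameter and fixing the constants only at the end. For (i) the starting point is \eqref{inverse:conju:1}, which holds on $A_2\times[\mathcal{S}(x,\ell),\infty)$. Since $s(x)>0$ on $A_2^{(1)}$, the map $\sigma\mapsto\log^{s(x)/q(x)}(1+\sigma)$ is increasing, so to make \eqref{inverse:conju:1} explicit I need a \emph{lower} bound for the $\mathcal{S}^{-1}(x,y)$ inside the logarithm, i.e.\ an \emph{upper} bound for $\mathcal{S}$. Using $s\in L^\infty(\Omega)$ one fixes $\varepsilon>0$ small and $\ell_0\gg1$ with $\log^{s(x)}(1+t)\le t^{\varepsilon}$ for $t\ge\ell_0$; then $\mathcal{S}(x,t)\le a(x)t^{p(x)}+b(x)t^{q(x)+\varepsilon}\le h(x)t^{h_1(x)}$ for $t\ge\ell\ge\ell_0$ (using $t\ge1$). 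Replacing $t$ by $\mathcal{S}^{-1}(x,y)$ gives $\mathcal{S}^{-1}(x,y)\ge(y/h(x))^{1/h_1(x)}$, and substituting this into \eqref{inverse:conju:1} yields (i).

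For (ii) I write $\mathcal{S}_*^{-1}(x,z)=\mathcal{S}_*^{-1}(x,\mathcal{S}(x,\ell))+\int_{\mathcal{S}(x,\ell)}^{z}\mathcal{S}^{-1}(x,y)\,y^{-\frac{N+1}{N}}\,\mathrm{d}y$, insert $\mathcal{S}_*^{-1}(x,\mathcal{S}(x,\ell))=\frac{N\ell}{N-1}(\mathcal{S}(x,\ell))^{-1/N}$ from Proposition \ref{prop:esti-near0}, bound the integrand by (i), and use $\frac{1}{q(x)}-1-\frac1N=\frac{1}{q^*(x)}-1$. Everything then reduces to estimating $\int_{\mathcal{S}(x,\ell)}^{z}\phi(y)^{-1}y^{\frac{1}{q^*(x)}-1}\,\mathrm{d}y$ with $\phi(y)=\log^{s(x)/q(x)}(1+(y/h(x))^{1/h_1(x)})$, and the key is to compare the integrand with $\frac{\mathrm{d}}{\mathrm{d}y}\big(y^{1/q^*(x)}/\phi(y)\big)$: since $\phi'>0$, the desired inequality $\phi(y)^{-1}y^{1/q^*(x)-1}\le\frac{q^*(x)}{\zeta}\frac{\mathrm{d}}{\mathrm{d}y}\big(y^{1/q^*(x)}/\phi(y)\big)$ is equivalent to $q^*(x)\,y\phi'(y)/\phi(y)\le1-\zeta$, and a direct computation gives $y\phi'(y)/\phi(y)=\frac{s(x)}{q(x)\,h_1(x)}\cdot\frac{u}{(1+u)\log(1+u)}$ with $u=(y/h(x))^{1/h_1(x)}$, which tends to $0$ as $y\to\infty$ uniformly in $x$; hence this holds for $\ell_0$ large. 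After integration the boundary term $\phi(\mathcal{S}(x,\ell))^{-1}(\mathcal{S}(x,\ell))^{1/q^*(x)}$ is bounded below using $\mathcal{S}(x,\ell)\ge b(x)\ell^{q(x)}\log^{s(x)}(1+\ell)$ together with $\phi(\mathcal{S}(x,\ell))\le\log^{s(x)/q(x)}(1+\ell)$ (from $\mathcal{S}(x,\ell)\le h(x)\ell^{h_1(x)}$) and the identity $\frac{1}{q^*(x)}+\frac1N=\frac{1}{q(x)}$; this reproduces exactly the term $(\mathcal{S}(x,\ell))^{-1/N}\ell\big(\frac{N}{N-1}-\frac{q^*(x)}{\zeta}\big)$ in (ii).

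For (iii) I set $z=\mathcal{S}_*(x,t)$ with $t\ge\mathcal{S}_*^{-1}(x,\mathcal{S}(x,\ell))$, so $z\ge\mathcal{S}(x,\ell)$, and apply (ii). Since $q^*(x)\ge\frac{N}{N-1}$ and $\zeta\in(0,1)$, the last summand in (ii) is nonpositive and may be dropped, leaving $z\ge\big(\frac{\zeta(b(x))^{1/q(x)}}{q^*(x)}\big)^{q^*(x)}t^{q^*(x)}\log^{s(x)q^*(x)/q(x)}(1+(z/h(x))^{1/h_1(x)})$; it then remains to replace $\log(1+(z/h(x))^{1/h_1(x)})$ by $\log(1+t)$ up to a uniform constant. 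For $t$ large I would prove $\mathcal{S}_*(x,t)\ge c\,h(x)t^{h_1(x)}$ by bounding $\mathcal{S}_*^{-1}(x,h(x)t^{h_1(x)})$ from above through the crude estimate $\mathcal{S}(x,\tau)\ge h(x)\tau^{\alpha(x)}$ (valid for $\tau\ge\ell$, $\alpha=\min\{p,q\}$): the resulting polynomial term carries the exponent $h_1(x)/\alpha^*(x)$, which is $<1$ uniformly by \eqref{main:assump-1}(ii) (here $N\ge2$ enters), hence is $\ll t$, while the constant term equals $\mathcal{S}_*^{-1}(x,\mathcal{S}(x,\ell))\le t$; this gives $\mathcal{S}_*^{-1}(x,h(x)t^{h_1(x)})\le\frac32 t$, and rescaling $t\mapsto\frac23 t$ yields $\mathcal{S}_*(x,t)\ge(2/3)^{h_1^+}h(x)t^{h_1(x)}$ for $t\ge\frac32\mathcal{S}_*^{-1}(x,\mathcal{S}(x,\ell))$, whence $(z/h(x))^{1/h_1(x)}\ge c't$ and $\log^{s(x)/q(x)}(1+(z/h(x))^{1/h_1(x)})\ge c''\log^{s(x)/q(x)}(1+t)$ with $c''>0$ uniform. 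On the remaining interval $t\in[\mathcal{S}_*^{-1}(x,\mathcal{S}(x,\ell)),\frac32\mathcal{S}_*^{-1}(x,\mathcal{S}(x,\ell)))$ I would instead use $\mathcal{S}_*(x,t)\ge\mathcal{S}(x,\ell)$ and check directly that $\mathcal{S}(x,\ell)$ dominates the right-hand side of (iii): writing $t_{\mathrm{th}}=\frac{N\ell}{N-1}(\mathcal{S}(x,\ell))^{-1/N}$, the powers of $b(x)$ and of $\ell$ cancel because $\frac{q^*(x)}{q(x)}=1+\frac{q^*(x)}{N}$ and $q(x)\big(1+\frac{q^*(x)}{N}\big)=q^*(x)$, so one only needs $\log(1+t_{\mathrm{th}})$ to be comparable to $\log(1+\ell)$ uniformly, which follows from $a(x)+b(x)\ge d$: then $\max\{a(x),b(x)\}\ge d/2$, hence $\mathcal{S}(x,\ell)\ge\frac d2\min\{\ell^{p(x)},\ell^{q(x)}\}$ and $t_{\mathrm{th}}$ grows at most like a power of $\ell$. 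Choosing $C$ small and $\ell_0$ large covers both ranges.

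The main obstacle is the last step of (iii): carrying the logarithmic factor through the inverse Sobolev conjugate while keeping all constants independent of $x$, which is genuinely delicate near the threshold $\mathcal{S}_*^{-1}(x,\mathcal{S}(x,\ell))$ and on the part of $A_2^{(1)}$ where $b(x)$ degenerates; this is precisely why $\ell$ must be taken large and $C$ small. Parts (i) and (ii) are, by comparison, routine once the monotone comparison function $\phi$ has been identified and Proposition \ref{prop:esti-near0} is invoked.
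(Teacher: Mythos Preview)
Your argument is essentially correct and, for (i) and (ii), coincides with the paper's: (i) is the same upper bound $\mathcal{S}(x,t)\le h(x)t^{h_1(x)}$ turned into a lower bound for $\mathcal{S}^{-1}$ and plugged into \eqref{inverse:conju:1}; and your derivative comparison $\phi^{-1}y^{1/q^*-1}\le\frac{q^*}{\zeta}\frac{\mathrm{d}}{\mathrm{d}y}(y^{1/q^*}/\phi)$ is exactly the paper's integration by parts followed by absorbing the extra integral term into $g(x,z)$, just expressed pointwise.

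For (iii) the paper's route is shorter than yours. Instead of aiming at $\mathcal{S}_*(x,t)\ge c\,h(x)t^{h_1(x)}$, the paper inverts the same crude bound $\mathcal{S}(x,\tau)\ge h(x)\tau^{\alpha(x)}$ but keeps the resulting negative boundary term (which is $\le0$ because $\mathcal{S}(x,\ell)\ge h(x)\ell^{\alpha(x)}$ and $\alpha^*(x)\ge N/(N-1)$), obtaining directly $\mathcal{S}_*(x,\tau)\ge\big(h(x)^{1/\alpha(x)}/\alpha^*(x)\big)^{\alpha^*(x)}\tau^{\alpha^*(x)}$ on the \emph{entire} range $\tau\ge\mathcal{S}_*^{-1}(x,\mathcal{S}(x,\ell))$. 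Since the oscillation condition \eqref{main:assump-1}(ii) forces $\alpha^*(x)/h_1(x)>1$ uniformly, one then has $\log\big(1+(\mathcal{S}_*(x,\tau)/h(x))^{1/h_1(x)}\big)\ge C\log(1+\tau)$ without any case split and with no separate near-threshold analysis. Your two-range argument is sound (the polynomial term is $\le t/2$ once $\ell$ is large because $\mathcal{S}_*^{-1}(x,\mathcal{S}(x,\ell))\asymp\ell^{1-\alpha^-/N}\to\infty$, and your threshold computation at $t_{\mathrm{th}}$ goes through since $t_{\mathrm{th}}$ grows like a power of $\ell$), but it is more laborious and incurs a redundant verification that the $b(x)$ and $\ell$ powers cancel, which in the paper is absorbed automatically by dropping the nonpositive constant term.
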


\begin{proof}
	For any $\delta>0$, we know that $\log(1+t) \leq t^\delta$ for $t \geq t_0(\delta)$. Then, by taking $\delta = \frac{\varepsilon}{\lceil s \rceil ^+}$ for some $\varepsilon>0$ and $\ell$ large enough such that $\ell \geq \ell_0(\varepsilon, |s|^+) \geq 1$ gives that
	\begin{align*}
		\log^{s(x)}(1+\mathcal{S}^{-1}(x,y)) \leq (\mathcal{S}^{-1}(x,y))^{\varepsilon \frac{ s(x)}{\lceil s \rceil ^+}} \leq (\mathcal{S}^{-1}(x,y))^{\varepsilon},
	\end{align*}
	which further implies
	\begin{equation}\label{inverse:conju:2}
		y \leq a(x) \left(\mathcal{S}^{-1}(x,y)\right)^{p(x)} + b(x)\left(\mathcal{S}^{-1}(x,y)\right)^{q(x) + \varepsilon} \leq h(x) \left(\mathcal{S}^{-1}(x,y)\right)^{h_1(x)}.
	\end{equation}
	Now, by using the fact that $h(x) >d$ for all $x \in \overline{\Omega}$ in \eqref{inverse:conju:2}, we get
	\begin{equation}\label{inverse:log:est}
		\l(\frac{y}{h(x)}\r)^\frac{1}{h_1(x)} \leq \mathcal{S}^{-1}(x,y) \quad \text{and} \quad \log^{s(x)}\left(1+ \left(\frac{y}{h(x)}\right)^{\frac{1}{h_1(x)}}\right) \leq \log^{s(x)}(1+\mathcal{S}^{-1}(x,y)).
	\end{equation}
	Combining \eqref{inverse:conju:1} and \eqref{inverse:log:est} for $(x, y) \in A_2^{(1)} \times [\mathcal{S}(x,\ell), \infty)$  it follows that
	\begin{equation}\label{inverse:log:est:0}
		\mathcal{S}^{-1}(x,y) \leq \left(\frac{y}{b(x) }\right)^\frac{1}{q(x)} \log^{\frac{-s(x)}{q(x)}}\left(1+ \left(\frac{y}{h(x)}\right)^{\frac{1}{h_1(x)}}\right).
	\end{equation}
	Using \eqref{inverse:log:est:0} and Proposition \ref{prop:esti-near0} in the definition of the inverse of the Sobolev conjugate function $\mathcal{S}_\ast$ for $(x,z) \in A_2^{(1)} \times [\mathcal{S}(x, \ell), \infty)$, we have
	\begin{equation}\label{inverse:conju:3}
		\begin{aligned}
			\mathcal{S}_*^{-1}(x,z) & \leq \mathcal{S}_*^{-1}(x,\mathcal{S}(x,\ell)) + \frac{1}{(b(x))^\frac{1}{q(x)}} \int_{\mathcal{S}(x,\ell)}^z  y^{\frac{1}{q^\ast(x)} -1} \log^{\frac{-s(x)}{q(x)}}\left(1+ \left(\frac{y}{h(x)}\right)^{\frac{1}{h_1(x)}}\right) \,\mathrm{d}y \\
			& : = \mathcal{S}_*^{-1}(x,\mathcal{S}(x,\ell)) + \frac{1}{(b(x))^\frac{1}{q(x)}} g(x,z).
		\end{aligned}
	\end{equation}
	By applying integration by parts formula for $(x,z) \in A_2^{(1)} \times [\mathcal{S}(x, \ell), \infty)$, we estimate
	\begin{align*}
		g(x,z) & \leq   q^\ast(x) \left[ \frac{z^\frac{1}{q^\ast(x)}}{\log^{\frac{s(x)}{q(x)}}\left(1+ \left(\frac{z}{h(x)}\right)^{\frac{1}{h_1(x)}}\right)} - \frac{(\mathcal{S}(x ,\ell))^\frac{1}{q^\ast(x)}}{\log^{\frac{s(x)}{q(x)}}\left(1+ \left(\frac{\mathcal{S}(x ,\ell)}{h(x)}\right)^{\frac{1}{h_1(x)}}\right)}\right]   \\
		& \qquad + \frac{s(x) q^\ast(x)}{q(x) h_1(x)}\int_{\mathcal{S}(x,\ell)}^z  y^{\frac{1}{q^\ast(x)} -1} \frac{\left(\frac{y}{h(x)}\right)^\frac{1}{h_1(x)}}{1+ \left(\frac{y}{h(x)}\right)^\frac{1}{h_1(x)}} \log^{-\left(\frac{s(x)}{q(x)} +1 \right)}\left(1+ \left(\frac{y}{h(x)}\right)^{\frac{1}{h_1(x)}}\right) \,\mathrm{d}y \\
		& \leq   q^\ast(x) \left[ \frac{z^\frac{1}{q^\ast(x)}}{\log^{\frac{s(x)}{q(x)}}\left(1+ \left(\frac{z}{h(x)}\right)^{\frac{1}{h_1(x)}}\right)} - \frac{(\mathcal{S}(x ,\ell))^\frac{1}{q^\ast(x)}}{\log^{\frac{s(x)}{q(x)}}\left(1+ \left(\frac{\mathcal{S}(x ,\ell)}{h(x)}\right)^{\frac{1}{h_1(x)}}\right)}\right]   \\
		& \qquad + \frac{s(x) q^\ast(x)}{q(x) h_1(x) \log\left(1+ \left(\frac{\mathcal{S}(x, \ell)}{h(x)}\right)^{\frac{1}{h_1(x)}}\right)} g(x,z).
	\end{align*}
	Taking $\ell \gg 1$ such that $\ell \geq \l(e^\frac{s^+ (q^\ast)^+}{r^- h_1^- (1-\zeta)} -1\r)^{h_1^+}$ (where the precise value of $\zeta \in (0,1)$ is chosen later), we get
	\begin{align*}
		\frac{s(x) q^\ast(x)}{q(x) h_1(x)} \leq \frac{s^+ (q^\ast)^+}{q^- h_1^-} \leq (1-\zeta) \log\left(1+ \left(\ell\right)^{\frac{1}{h_1^+}}\right) \leq (1-\zeta) \log\left(1+ \left(\frac{\mathcal{S}(x, \ell)}{h(x)}\right)^{\frac{1}{h_1(x)}}\right)
	\end{align*}
	and
	\begin{equation}\label{inverse:conju:4}
		g(x,z) \leq   \frac{q^\ast(x)}{\zeta} \left[ \frac{z^\frac{1}{q^\ast(x)}}{\log^{\frac{s(x)}{q(x)}}\left(1+ \left(\frac{z}{h(x)}\right)^{\frac{1}{h_1(x)}}\right)} - \frac{(\mathcal{S}(x ,\ell))^\frac{1}{q^\ast(x)}}{\log^{\frac{s(x)}{q(x)}}\left(1+ \left(\frac{\mathcal{S}(x ,\ell)}{h(x)}\right)^{\frac{1}{h_1(x)}}\right)}\right].
	\end{equation}
	Using \eqref{inverse:conju:4} in \eqref{inverse:conju:3} and the fact that $b(x) \neq 0$ for $x \in A_2^{(1)}$ implies
	\begin{align*}
		\mathcal{S}_*^{-1}(x,z) & \leq \frac{q^\ast(x)}{\zeta (b(x))^\frac{1}{q(x)}} \frac{z^\frac{1}{q^\ast(x)}}{\log^{\frac{s(x)}{q(x)}}\left(1+ \left(\frac{z}{h(x)}\right)^{\frac{1}{h_1(x)}}\right)} \\
		& \qquad + (\mathcal{S}(x,\ell))^{\frac{-1}{N}}  \left(\frac{N \ell}{N-1} -  \frac{q^\ast(x)}{\zeta (b(x))^\frac{1}{q(x)}} \frac{(\mathcal{S}(x ,\ell))^\frac{1}{q(x)}}{\log^{\frac{s(x)}{q(x)}}\left(1+ \left(\frac{\mathcal{S}(x ,\ell)}{h(x)}\right)^{\frac{1}{h_1(x)}}\right)} \right) \\
		& \leq \frac{q^\ast(x)}{\zeta (b(x))^\frac{1}{q(x)}} \frac{z^\frac{1}{q^\ast(x)}}{\log^{\frac{s(x)}{q(x)}}\left(1+ \left(\frac{z}{h(x)}\right)^{\frac{1}{h_1(x)}}\right)} + (\mathcal{S}(x,\ell))^{\frac{-1}{N}} \ell \left(\frac{N}{N-1} -  \frac{q^\ast(x)}{\zeta} \right),
	\end{align*}
	where in the last inequality we have used $b(x) \ell^{q(x)} \ln^{s(x)}(1+\ell) \leq \mathcal{S}(x, \ell) \leq h(x) \ell^{h_1(x)}$. Now, by choosing $\ell \gg 1$ large enough and $0 < \zeta < 1$ small enough such that $\zeta < \frac{q^-(N-1)}{N-q^-}$, we rewrite the above estimate. Now by replacing $z= \mathcal{S}_\ast(x, \tau)$ for $x \in A_2^{(1)}$ and $\tau \geq \mathcal{S}_\ast^{-1}(x, \mathcal{S}(x, \ell))$, we get
	\begin{equation}\label{inverse:conju:5}
		\begin{aligned}
			&\mathcal{S}_\ast(x, \tau)\\
			& \geq \left[\frac{\zeta (b(x))^\frac{1}{q(x)}}{q^\ast(x)} \log^{\frac{s(x)}{q(x)}}\left(1+ \left(\frac{\mathcal{S}_\ast(x, \tau)}{h(x)}\right)^{\frac{1}{h_1(x)}}\right) \left[ \tau + (\mathcal{S}(x,\ell))^{\frac{-1}{N}} \ell \left(\frac{q^\ast(x)}{\zeta} -\frac{N}{N-1} \right)\right] \right]^{q^\ast(x)} \\
			& \geq \left[\frac{\zeta (b(x))^\frac{1}{q(x)}}{q^\ast(x)}  \tau\right]^{q^\ast(x)} \log^{\frac{s(x) q^\ast(x)}{q(x)}}\left(1+ \left(\frac{\mathcal{S}_\ast(x, \tau)}{h(x)}\right)^{\frac{1}{h_1(x)}}\right).
		\end{aligned}
	\end{equation}
	To estimate $\mathcal{S}_\ast(x, \tau)$ in the logarithmic term in the right-hand side of the above estimate, we use the fact that $\mathcal{S}^{-1}(x,y) \geq  \ell \gg 1$ in \eqref{inverse:conju:1} and obtain
	\begin{align*}
		y \geq  \left(a(x) + b(x) \right) \left(\mathcal{S}^{-1}(x,y)\right)^{\min\{p(x), q(x)\}} = h(x) \left(\mathcal{S}^{-1}(x,y)\right)^{\alpha(x)}.
	\end{align*}
	This yields
	\begin{equation}\label{inverse:conju:10}
		\mathcal{S}^{-1}(x,y) \leq \left(\frac{y}{h(x)}\right)^\frac{1}{\alpha(x)} \quad \text{for }(x, y) \in A_2 \cap \{s(x) \geq 0\} \times [\mathcal{S}(x,\ell), \infty).
	\end{equation}
	Now, by repeating the arguments for estimating the upper bound of inverse of the Sobolev conjugate function $\mathcal{S}_\ast^{-1} (x, y)$ in the light of \eqref{inverse:conju:10}, we obtain
	\begin{align*}
		\mathcal{S}_*^{-1}(x,z) & \leq \mathcal{S}_*^{-1}(x,\mathcal{S}(x,\ell)) + \frac{1}{(h(x))^\frac{1}{\alpha(x)}} \int_{\mathcal{S}(x,\ell)}^z  y^{\frac{1}{\alpha^\ast(x)} -1} \,\mathrm{d}y   \\
		& =  \frac{N \ell}{N-1} (\mathcal{S}(x,\ell))^{\frac{-1}{N}} + \frac{\alpha^\ast(x)}{(h(x))^\frac{1}{\alpha(x)}} \left(z^\frac{1}{\alpha^\ast(x)} - (\mathcal{S}(x,\ell))^\frac{1}{\alpha^\ast(x)} \right).
	\end{align*}
	Replacing $z= \mathcal{S}_\ast(x, \tau)$ for $x \in A_2 \cap \{s(x) \geq 0\}$ and $\tau \geq \mathcal{S}_\ast^{-1}(x, \mathcal{S}(x, \ell))$, and using $\mathcal{S}(x, \ell) \geq h(x) \ell^{\alpha(x)}$, we have
	\begin{align*}
		\mathcal{S}_\ast(x, \tau) & \geq \left[ \frac{(h(x))^\frac{1}{\alpha(x)}}{\alpha^\ast(x)} \left(\tau + (\mathcal{S}(x,\ell))^\frac{-1}{N} \left(\alpha^\ast(x) \left(\frac{ \mathcal{S}(x,\ell)}{h(x)}\right)^\frac{1}{\alpha(x)} - \frac{N \ell}{N-1}\right)\right) \right]^{\alpha^\ast(x)} \\
		& \geq \left(\frac{(h(x))^\frac{1}{\alpha(x)}}{\alpha^\ast(x)} \right)^{\alpha^\ast(x)} \tau^{\alpha^\ast(x)}.
	\end{align*}
	Merging the above lower estimate of $\mathcal{S}_\ast(x, \tau)$ in \eqref{inverse:conju:5} and using the assumption \eqref{main:assump-1} which implies $\frac{\alpha^\ast(x)}{h_1(x)} > 1$, we obtain
	\begin{align*}
		\mathcal{S}_\ast(x, \tau) & \geq \left[\frac{\zeta (b(x))^\frac{1}{q(x)}}{q^\ast(x)} \left[ \tau + (\mathcal{S}(x,\ell))^{\frac{-1}{N}} \ell \left(\frac{q^\ast(x)}{\zeta} -\frac{N}{N-1} \right)\right] \right]^{q^\ast(x)}   \\
		& \qquad \qquad \qquad \times \log^{\frac{s(x) q^\ast(x)}{q(x)}}\left(1+ \left(\frac{(h(x))^{\frac{1}{\alpha(x)} - \frac{1}{\alpha^\ast(x)}}}{\alpha^\ast(x)} \right)^{\frac{\alpha^\ast(x)}{h_1(x)}} \tau^\frac{\alpha^\ast(x)}{h_1(x)}\right) \\
		& \geq C \left[(b(x))^\frac{1}{q(x)}  \log^{\frac{s(x)}{q(x)}}\left(1+ \tau \right)  \left[ \tau + (\mathcal{S}(x,\ell))^{\frac{-1}{N}} \ell \left(\frac{q^\ast(x)}{\zeta} -\frac{N}{N-1} \right)\right] \right]^{q^\ast(x)}  \\
		& \geq C \left[(b(x))^\frac{1}{q(x)}  \log^{\frac{s(x)}{q(x)}}\left(1+ \tau \right)  \right]^{q^\ast(x)} \tau^{{q^\ast(x)} },
	\end{align*}
	where the constant $C$ depends upon $\zeta, (q^\ast)_\pm, (\alpha^\ast)_\pm, {h_1}_{\pm}, h_\pm$.
\end{proof}

\begin{proposition}\label{prop:lower:est-4}
	Let $S_\ast$ be the Sobolev conjugate function given by \eqref{inverse:conj:def}. Then, there exist $l_0 \gg 1$ and $C>0$ such that for $\ell \geq \ell_0$ and for $x \in A_2^{(2)}$ the following estimates hold true:
	\begin{enumerate}
		\item[\textnormal{(i)}]
			$\mathcal{S}^{-1}(x,y) \leq \left(\frac{y}{b(x) }\right)^\frac{1}{q(x)} \log^{\frac{-s(x)}{q(x)}}\left(1+ \left(\frac{y}{h(x)}\right)^{\frac{1}{h_2(x)}}\right)$ for any $y \in [\mathcal{S}(x,\ell), \infty)$ where
			\begin{align*}
				h(x):= a(x) + b(x) \quad \text{ and } \quad h_2(x):= \min\{p(x), q(x) - \varepsilon\}, \varepsilon >0.
			\end{align*}
		\item[\textnormal{(ii)}]
			$\mathcal{S}_*^{-1}(x,z) \leq (\mathcal{S}(x,\ell))^{\frac{-1}{N}} \ell \left[\frac{N}{N-1} - q^\ast(x)\right] + \frac{ q^\ast(x) }{(b(x))^\frac{1}{q(x)}}   z^\frac{1}{q^\ast(x)} \log^{\frac{-s(x)}{q(x)}}\left(1+ \left(\frac{z}{h(x)}\right)^{\frac{1}{h_2(x)}}\right)$ for any $z \in  [\mathcal{S}(x, \ell), \infty)$.
		\item[\textnormal{(iii)}]
			$\mathcal{S}_\ast(x,t) \geq  C \left(\frac{(b(x))^\frac{1}{q(x)}}{ q^\ast(x) } \log^{\frac{s(x)}{q(x)}}\left(1+t \right)\right)^{q^\ast(x)} t^{q^\ast(x)}$ for any $t \in \left[\mathcal{S}_\ast^{-1}(x,\mathcal{S}(x,\ell)), \infty \right).$
	\end{enumerate}
\end{proposition}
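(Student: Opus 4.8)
The plan is to follow the same scheme as for the set $A_2^{(1)}$ in the preceding proposition, reversing at each step the monotonicities that are governed by the sign of $s(\cdot)$, since now $s(x)\le 0$ on $A_2^{(2)}$. For (i), I would first note that from $\log(1+t)\le t^{\delta}$ for $t$ large, with $\delta=\varepsilon/\|s\|_\infty$, one gets $\log^{s(x)}(1+t)\ge t^{-\varepsilon}$ uniformly in $x\in A_2^{(2)}$ for all $t$ beyond a threshold absorbed into $\ell_0$. Hence, for $t\ge\ell\ge\ell_0$ and $\varepsilon>0$ small,
\[
\mathcal{S}(x,t)=a(x)t^{p(x)}+b(x)t^{q(x)}\log^{s(x)}(1+t)\ge a(x)t^{p(x)}+b(x)t^{q(x)-\varepsilon}\ge h(x)\,t^{h_2(x)},
\]
using $t\ge1$ and $h=a+b\ge d>0$ with $h_2=\min\{p,q-\varepsilon\}$. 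Substituting $t=\mathcal{S}^{-1}(x,y)$ for $y\in[\mathcal{S}(x,\ell),\infty)$ yields $\mathcal{S}^{-1}(x,y)\le (y/h(x))^{1/h_2(x)}$, and then, by monotonicity of $\log(1+\cdot)$ together with $s(x)\le0$,
\[
\log^{s(x)}\!\bigl(1+\mathcal{S}^{-1}(x,y)\bigr)\ \ge\ \log^{s(x)}\!\Bigl(1+(y/h(x))^{1/h_2(x)}\Bigr),
\]
so that using this to estimate the right-hand side of \eqref{inverse:conju:1} from above gives (i).

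For (ii), I plug (i) into the definition \eqref{inverse:conj:def} of $\mathcal{S}_\ast^{-1}$, split the integral at $\mathcal{S}(x,\ell)$, and use $\mathcal{S}_\ast^{-1}(x,\mathcal{S}(x,\ell))=\tfrac{N\ell}{N-1}(\mathcal{S}(x,\ell))^{-1/N}$ from Proposition~\ref{prop:esti-near0}. The crucial simplification compared with $A_2^{(1)}$ is that $-s(x)/q(x)\ge0$, so the factor $\log^{-s(x)/q(x)}(1+(y/h(x))^{1/h_2(x)})$ is \emph{increasing} in $y$; it can therefore be pulled out of the integral at the upper endpoint $y=z$ with no integration by parts, leaving $\int_{\mathcal{S}(x,\ell)}^{z}y^{1/q^\ast(x)-1}\,\mathrm{d}y=q^\ast(x)\bigl(z^{1/q^\ast(x)}-(\mathcal{S}(x,\ell))^{1/q^\ast(x)}\bigr)$ and hence the main summand of (ii). For the remaining boundary terms I use $1/q^\ast(x)+1/N=1/q(x)$ together with $\mathcal{S}(x,\ell)\ge b(x)\ell^{q(x)}\log^{s(x)}(1+\ell)$ and $\mathcal{S}(x,\ell)\ge h(x)\ell^{h_2(x)}$ (from the step above, for $\ell\ge\ell_0$) to verify
\[
\frac{q^\ast(x)}{(b(x))^{1/q(x)}}(\mathcal{S}(x,\ell))^{1/q^\ast(x)}\log^{-s(x)/q(x)}\!\Bigl(1+(\mathcal{S}(x,\ell)/h(x))^{1/h_2(x)}\Bigr)\ \ge\ q^\ast(x)\,\ell\,(\mathcal{S}(x,\ell))^{-1/N};
\]
combined with the term $\tfrac{N\ell}{N-1}(\mathcal{S}(x,\ell))^{-1/N}$ coming from the lower endpoint, this leaves exactly the coefficient $\tfrac{N}{N-1}-q^\ast(x)$ appearing in (ii).

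Part (iii) is the only step where I expect a genuine difficulty. Setting $z=\mathcal{S}_\ast(x,t)$ in (ii), discarding the nonpositive first term (recall $q^\ast(x)>\tfrac{N}{N-1}$), and rearranging gives
\[
\mathcal{S}_\ast(x,t)\ \ge\ \Bigl(\tfrac{(b(x))^{1/q(x)}}{q^\ast(x)}\Bigr)^{q^\ast(x)}\frac{t^{q^\ast(x)}}{\log^{-s(x)q^\ast(x)/q(x)}\!\bigl(1+(\mathcal{S}_\ast(x,t)/h(x))^{1/h_2(x)}\bigr)}.
\]
In contrast to $A_2^{(1)}$, where the logarithm of $\mathcal{S}_\ast$ appeared in the numerator and a lower polynomial bound on $\mathcal{S}_\ast$ sufficed, here it sits in the denominator with nonnegative exponent, so I need an \emph{upper} polynomial bound on $\mathcal{S}_\ast$. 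This is precisely where $s(x)\le0$ is used: for $t\ge e-1$ one has $\log^{s(x)}(1+t)\le1$, hence $\mathcal{S}(x,t)\le h(x)t^{\beta(x)}$ with $\beta=\max\{p,q\}<N$; therefore $\mathcal{S}^{-1}(x,y)\ge(y/h(x))^{1/\beta(x)}$, and integrating in \eqref{inverse:conj:def} yields $\mathcal{S}_\ast(x,t)\le C\bigl(t^{\beta^\ast(x)}+\mathcal{S}(x,\ell)\bigr)$ for all $t\ge\mathcal{S}_\ast^{-1}(x,\mathcal{S}(x,\ell))$. Since $a,b\in L^\infty(\Omega)$, both $\mathcal{S}(x,\ell)$ and the threshold $\mathcal{S}_\ast^{-1}(x,\mathcal{S}(x,\ell))\gtrsim\ell^{1-\beta^+/N}$ grow only polynomially in $\ell$, so for $\ell\ge\ell_0$ large one obtains $\log\!\bigl(1+(\mathcal{S}_\ast(x,t)/h(x))^{1/h_2(x)}\bigr)\le C\log(1+t)$ throughout the admissible range of $t$, with $C$ independent of $\ell$; here it is enough that $\beta^\ast/h_2$ be bounded, which is automatic, rather than strictly larger than one as was needed for $A_2^{(1)}$, which in fact makes this step slightly easier. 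Plugging this back and using $\log^{s(x)q^\ast(x)/q(x)}(1+t)=\bigl(\log^{s(x)/q(x)}(1+t)\bigr)^{q^\ast(x)}$ yields (iii) with a constant $C$ depending only on $N$ and the uniform bounds of $p,q,s,a,b$. All the remaining estimates are the routine bookkeeping already performed in the earlier proofs, with the roles of several $\max$'s and $\min$'s interchanged.
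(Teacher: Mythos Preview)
Your proposal is correct and follows essentially the same route as the paper: the same lower bound $\mathcal{S}(x,t)\ge h(x)t^{h_2(x)}$ via $\log^{s}\!\ge t^{-\varepsilon}$ for (i), the same ``pull the increasing log factor out at the upper endpoint'' trick for (ii), and the same need for an \emph{upper} polynomial bound on $\mathcal{S}_\ast$ (coming from $\log^{s}\le 1$ when $s\le 0$) for (iii).

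Two small remarks. First, for the upper bound on $\mathcal{S}_\ast$ the paper obtains the clean form $\mathcal{S}_\ast(x,\tau)\le C\,h(x)\,\tau^{h_3^\ast(x)}$ with $h_3=\beta=\max\{p,q\}$ by dropping the boundary bracket in the lower estimate for $\mathcal{S}_\ast^{-1}$; your version carries the additive $\mathcal{S}(x,\ell)$ and absorbs it because $|A|\lesssim t$ on the admissible range $t\ge \mathcal{S}_\ast^{-1}(x,\mathcal{S}(x,\ell))$---this is arguably the more careful way to argue and leads to the same $\ell$-independent constant. Second, your observation that mere boundedness of $\beta^\ast/h_2$ suffices for the upper log comparison $\log(1+K t^{\beta^\ast/h_2})\le C\log(1+t)$ is correct; the paper records the stronger fact $h_3^\ast/h_2>1$, but only boundedness is actually used for an \emph{upper} bound on the logarithm (the strict inequality $>1$ is what mattered in the $A_2^{(1)}$ case, where a \emph{lower} log bound was needed).
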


\begin{proof}
	Using $\log(1+t) \leq t^\frac{\varepsilon}{\lceil -s \rceil^+}$ for $t \geq t_0(\varepsilon, \lceil -s \rceil^+)$ and $\varepsilon>0$ in \eqref{inverse:est-1}, we get
	\begin{align}\label{lower:ineq:est}
		\mathcal{S}(x, \ell) \geq a(x) t^{p(x)} + b(x) \ell^{q(x)-\varepsilon} \geq h(x) t^{h_2(x)} \quad \text{for }  \ell \geq \ell_0 \gg 1,
	\end{align}
	which further implies, by replacing $y=\mathcal{S}(x, \ell)$ in the above inequalities,
	\begin{equation}\label{inverse:conju:6}
		\mathcal{S}^{-1}(x,y) \leq \left(\frac{y}{h(x)}\right)^{\frac{1}{h_2(x)}} \quad \text{for }  (x, y) \in A_2^{(2)} \times [\mathcal{S}(x,\ell), \infty).
	\end{equation}
	where
	\begin{align*}
		h_2(x) := \min\{p(x), q(x)- \varepsilon\}.
	\end{align*}
	From \eqref{inverse:conju:1} and \eqref{inverse:conju:6}, we get
	\begin{align*}
		\mathcal{S}^{-1}(x,y) \leq  \left(\frac{y}{b(x) }\right)^\frac{1}{q(x)} \log^{\frac{-s(x)}{q(x)}}\left(1+ \left(\frac{y}{h(x)}\right)^{\frac{1}{h_2(x)}}\right).
	\end{align*}
	Now, by using the fact that $x \in A_2^{(2)}$ in the definition of the inverse of the Sobolev conjugate function, we obtain
	\begin{align*}
		\mathcal{S}_*^{-1}(x,z) &  \leq \frac{N \ell}{N-1} (\mathcal{S}(x,\ell))^{\frac{-1}{N}} + \frac{1}{(b(x))^\frac{1}{q(x)}} \int_{\mathcal{S}(x,\ell)}^z  y^{\frac{1}{q^\ast(x)}-1} \log^{\frac{-s(x)}{q(x)}}\left(1+ \left(\frac{y}{h(x)}\right)^{\frac{1}{h_2(x)}}\right) \,\mathrm{d}y  \\
		& \leq (\mathcal{S}(x,\ell))^{\frac{-1}{N}} \left[\frac{N \ell}{N-1} - \frac{ q^\ast(x) }{(b(x))^\frac{1}{q(x)}}  (\mathcal{S}(x,\ell))^\frac{1}{q(x)} \log^{\frac{-s(x)}{q(x)}}\left(1+ \left(\frac{\mathcal{S}(x, \ell)}{h(x)}\right)^{\frac{1}{h_2(x)}}\right)\right] \\
		& \qquad + \frac{ q^\ast(x) }{(b(x))^\frac{1}{q(x)}}   z^\frac{1}{q^\ast(x)} \log^{\frac{-s(x)}{q(x)}}\left(1+ \left(\frac{z}{h(x)}\right)^{\frac{1}{h_2(x)}}\right)   \\
		& \leq (\mathcal{S}(x,\ell))^{\frac{-1}{N}} \ell \left[\frac{N}{N-1} - q^\ast(x)\right] + \frac{ q^\ast(x) }{(b(x))^\frac{1}{q(x)}}   z^\frac{1}{q^\ast(x)} \log^{\frac{-s(x)}{q(x)}}\left(1+ \left(\frac{z}{h(x)}\right)^{\frac{1}{h_2(x)}}\right),
	\end{align*}
	where in the last inequality we have used \eqref{lower:ineq:est} and $\mathcal{S}(x, \ell) \geq b(x) \ell^{q(x)} \log^{s(x)}(1+\ell)$. Now, by replacing $z= \mathcal{S}_\ast(x, \tau)$ for $x \in A_2 \cap \{s(x) \leq 0\}$ and $\tau \geq \mathcal{S}_\ast^{-1}(x, \mathcal{S}(x, \ell))$, we get
	\begin{align*}
		\mathcal{S}_\ast(x, \tau) & \geq \left(\frac{(b(x))^\frac{1}{q(x)}}{ q^\ast(x) } \log^{\frac{s(x)}{q(x)}}\left(1+ \left(\frac{\mathcal{S}_\ast(x, \tau)}{h(x)}\right)^{\frac{1}{h_2(x)}}\right)\right)^{q^\ast(x)} \\
		& \qquad \times \left[ \tau + (\mathcal{S}(x,\ell))^{\frac{-1}{N}} \ell \left[q^\ast(x) - \frac{N}{N-1} \right]\right]^{q^\ast(x)}.
	\end{align*}
	Next, by repeating the above arguments for the lower bound, we get
	\begin{align*}
		\mathcal{S}(x, \ell) \leq h(x) \ell^{h_3(x)} \quad \text{for }  \ell \geq  e-1, \text{ where } h_3(x) = \max\{p(x), q(x)\}.
	\end{align*}
	By replacing $y=\mathcal{S}(x, \ell)$, we have $\mathcal{S}^{-1}(x, y) \geq \left(\frac{y}{h(x)}\right)^\frac{1}{h_3(x)}$ and
	\begin{equation}\label{inverse:conju:8}
		\begin{aligned}
			\mathcal{S}_*^{-1}(x,z)
			& \geq \frac{N \ell}{N-1} (\mathcal{S}(x,\ell))^{\frac{-1}{N}} + \frac{1}{(h(x))^\frac{1}{h_3(x)}}\int_{\mathcal{S}(x,\ell)}^z  y^{\frac{1}{h_3^\ast(x)}-1}  \,\mathrm{d}y  \\
			& \geq (\mathcal{S}(x,\ell))^{\frac{-1}{N}} \left[\frac{N \ell}{N-1} - h_3^\ast(x) \left(\frac{\mathcal{S}(x,\ell)}{(h(x))}\right)^\frac{1}{h_3(x)}  \right] +  \frac{h_3^\ast(x)}{(h(x))^\frac{1}{h_3(x)}}   z^\frac{1}{h_3^\ast(x)} \\
			& \geq  \frac{h_3^\ast(x)}{(h(x))^\frac{1}{h_3(x)}}   z^\frac{1}{h_3^\ast(x)}.
		\end{aligned}
	\end{equation}
	Again, by replacing $z= \mathcal{S}_\ast(x, \tau)$ for $x \in A_2^{(2)}$ and $\tau \geq \mathcal{S}_\ast^{-1}(x, \mathcal{S}(x, \ell))$ and using $d \leq a(x) + b(x)$, we get
	\begin{align*}
		\mathcal{S}_\ast(x, \tau) & \leq  \left[ \frac{(h(x))^\frac{1}{h_3(x)}}{h_3^\ast(x)} \right]^{h_3^\ast(x)} \tau^{h_3^\ast(x)} \leq C h(x) \tau^{h_3^\ast(x)},
	\end{align*}
	where $C$ depends upon $d, h_\pm, {h_3}_\pm, (h_3^\ast)_\pm$.
	The above estimate and the fact that $\frac{h_3^\ast(x)}{h_2(x)} > 1$ implies
	\begin{equation}\label{inverse:conju:9}
		\log\left(1+ \left(\frac{\mathcal{S}_\ast(x, \tau)}{h(x)}\right)^{\frac{1}{h_2(x)}}\right) \leq \log\left(1+ \left( C_3 \tau^{h_3^\ast(x)}\right)^{\frac{1}{h_2(x)}}\right) \leq C \log(1 + \tau).
	\end{equation}
	Finally, by using \eqref{inverse:conju:9} in \eqref{inverse:conju:8} shows that
	\begin{align*}
		\mathcal{S}_\ast(x, \tau) & \geq C \left(\frac{(b(x))^\frac{1}{q(x)}}{ q^\ast(x) } \log^{\frac{s(x)}{q(x)}}\left(1+\tau\right)\right)^{q^\ast(x)}  \left[ \tau + (\mathcal{S}(x,\ell))^{\frac{-1}{N}} \ell \left[q^\ast(x)  - \frac{N}{N-1} \right]\right]^{q^\ast(x)} \\
		& \geq C \left(\frac{(b(x))^\frac{1}{q(x)}}{ q^\ast(x) } \log^{\frac{s(x)}{q(x)}}\left(1+\tau\right)\right)^{q^\ast(x)} \tau^{q^\ast(x)}.
	\end{align*}
\end{proof}

For $(x, t) \in \overline{\Omega} \times [0, \infty)$, we define
\begin{equation}\label{critical-MOf}
	\mathcal{S}^\ast(x,t):= \left( \left(a(x)\right)^\frac{1}{p(x)} t\right)^{p^\ast(x)}  + \left(\left(b(x) \log^{s(x)}(1+t)\right)^\frac{1}{q(x)} t  \right)^{q^\ast(x)}.
\end{equation}
Now we obtain the following embedding results.
\begin{proposition}\label{imp:embedding}
	Let \eqref{main:assump} and \eqref{main:assump-1} be satisfied. Then, the Musielak-Orlicz Sobolev space
	$W^{1, \mathcal{S}}(\Omega)$ is continuously embedded into the Musielak-Orlicz Lebesgue space $ L^{\mathcal{S}^{\ast}}(\Omega)$. Moreover, the Musielak-Orlicz Sobolev space
	$W^{1, \mathcal{S}}(\Omega)$ is compactly embedded into the Musielak-Orlicz Lebesgue space $ L^{\mathcal{N}^{\ast}}(\Omega)$ if $\mathcal{N}^\ast \ll \mathcal{S}^\ast$.
\end{proposition}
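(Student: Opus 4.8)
The plan is to obtain both embeddings by routing through the Sobolev conjugate space $L^{\mathcal{S}_\ast}(\Omega)$. Proposition~\ref{pro:embed}(i) already supplies the continuous embedding $W^{1,\mathcal{S}}(\Omega)\hookrightarrow L^{\mathcal{S}_\ast}(\Omega)$, so for the first assertion it suffices to establish the inclusion $L^{\mathcal{S}_\ast}(\Omega)\hookrightarrow L^{\mathcal{S}^\ast}(\Omega)$ of Musielak--Orlicz Lebesgue spaces, and for the second to upgrade the hypothesis $\mathcal{N}^\ast\ll\mathcal{S}^\ast$ to $\mathcal{N}^\ast\ll\mathcal{S}_\ast$ and then invoke Proposition~\ref{pro:embed}(ii) with $\mathcal{L}=\mathcal{N}^\ast$. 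Since $\hat{\mathcal{S}}$ and $\mathcal{S}$ generate the same spaces with comparable norms (as remarked after \eqref{perturbed:function}), I would fix once and for all a parameter $\ell\ge1$ large enough that the lower estimates for $\mathcal{S}_\ast$ proved above on $A_1$, on $A_2^{(1)}$, and on $A_2^{(2)}$ (Proposition~\ref{prop:lower:est-4}) all hold, and keep writing $\mathcal{S}$, $\mathcal{S}_\ast$ for the associated perturbed objects. As a preliminary I would note that $\mathcal{S}^\ast$ is itself an admissible generalized $\Phi$-function, so that $L^{\mathcal{S}^\ast}(\Omega)$ is well defined: writing $\mathcal{S}^\ast(x,t)=a(x)^{p^\ast(x)/p(x)}t^{p^\ast(x)}+b(x)^{q^\ast(x)/q(x)}t^{q^\ast(x)}\log^{s(x)q^\ast(x)/q(x)}(1+t)$, one has $p^\ast(x),q^\ast(x)>1$ and, when $s(x)<0$, $q^\ast(x)\,\tfrac{q(x)+s(x)}{q(x)}\ge q^\ast(x)\,\tfrac{r}{q(x)}>1$, so the convexity and monotonicity computation of Lemma~\ref{Le:Prop-S} applies verbatim and gives $\mathcal{S}^\ast\in{\bf\Phi}(\Omega)$, indeed $\mathcal{S}^\ast\in{\bf N_\Phi}(\Omega)$.

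\textbf{The core estimate.} The heart of the argument is the pointwise comparison
\begin{align*}
	\mathcal{S}^\ast(x,t)\le K\,\mathcal{S}_\ast(x,t)+h(x)\qquad\text{for a.a.\ }x\in\Omega\text{ and all }t\ge0,
\end{align*}
with a constant $K>0$ and a nonnegative $h\in L^1(\Omega)$. Because $a(x)+b(x)\ge d>0$, every point of $\Omega$ lies in $A_1\cup A_2=A_1\cup A_2^{(1)}\cup A_2^{(2)}$, so on each of these sets one of the earlier lower bounds controls the matching summand of $\mathcal{S}^\ast$: on $A_1$,
\begin{align*}
	\bigl((a(x))^{1/p(x)}t\bigr)^{p^\ast(x)}\le (p^\ast(x))^{p^\ast(x)}\,\mathcal{S}_\ast(x,t),
\end{align*}
and on $A_2^{(1)}$, $A_2^{(2)}$,
\begin{align*}
	\Bigl(\bigl(b(x)\log^{s(x)}(1+t)\bigr)^{1/q(x)}t\Bigr)^{q^\ast(x)}\le C^{-1}(q^\ast(x))^{q^\ast(x)}\,\mathcal{S}_\ast(x,t),
\end{align*}
in both cases as soon as $t\ge\mathcal{S}_\ast^{-1}(x,\mathcal{S}(x,\ell))$; the non-matching summand is either of the same type or, when the corresponding weight vanishes at $x$, identically zero. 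Since $p,q<N$ on the compact set $\overline{\Omega}$, the exponents $p^\ast,q^\ast$ are bounded, hence all these multiplicative constants are dominated by one $K$. To make the threshold uniform I would use Proposition~\ref{prop:esti-near0}(ii), which gives $\mathcal{S}_\ast^{-1}(x,\mathcal{S}(x,\ell))=\tfrac{N\ell}{N-1}(\mathcal{S}(x,\ell))^{-1/N}$, together with a uniform lower bound $\mathcal{S}(x,\ell)\ge c_\ell>0$ obtained by splitting into the cases $a(x)\ge d/2$ and $b(x)\ge d/2$, using in the latter case (with $s(x)<0$) that $\log^{s(x)}(1+\ell)\ge(\log(1+\ell))^{\,s^-}$ is a fixed positive number for the fixed value $\ell$. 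This yields a threshold $T$, independent of $x$, beyond which $\mathcal{S}^\ast(x,t)\le K\,\mathcal{S}_\ast(x,t)$; for $0\le t\le T$ one bounds $\mathcal{S}^\ast(x,t)$ by a constant $c_0$ depending only on $T$, $\|a\|_\infty$, $\|b\|_\infty$, $\|s\|_\infty$ and the bounds on $p^\ast,q^\ast$ — finite since $a,b\in L^\infty(\Omega)$ (being Lipschitz on $\overline{\Omega}$) — and since $|\Omega|<\infty$ one may take $h\equiv c_0\in L^1(\Omega)$. This proves the comparison, and Proposition~\ref{Prop:AbstractEmbedding} (the Lebesgue measure being atomless) yields $L^{\mathcal{S}_\ast}(\Omega)\hookrightarrow L^{\mathcal{S}^\ast}(\Omega)$, hence the continuous embedding $W^{1,\mathcal{S}}(\Omega)\hookrightarrow L^{\mathcal{S}^\ast}(\Omega)$.

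\textbf{The compact embedding.} The comparison above shows $\mathcal{S}^\ast(x,t)\le K\,\mathcal{S}_\ast(x,t)$ for $t\ge T$, so for every $\lambda>0$,
\begin{align*}
	\frac{\mathcal{N}^\ast(x,\lambda t)}{\mathcal{S}_\ast(x,t)}\le K\,\frac{\mathcal{N}^\ast(x,\lambda t)}{\mathcal{S}^\ast(x,t)}\longrightarrow0\qquad\text{as }t\to\infty,
\end{align*}
uniformly in $x\in\Omega$ by the hypothesis $\mathcal{N}^\ast\ll\mathcal{S}^\ast$. Hence $\mathcal{N}^\ast\ll\mathcal{S}_\ast$, and since $\mathcal{N}^\ast\in{\bf\Phi}(\Omega)$, Proposition~\ref{pro:embed}(ii) with $\mathcal{L}=\mathcal{N}^\ast$ gives the compact embedding $W^{1,\mathcal{S}}(\Omega)\hookrightarrow L^{\mathcal{N}^\ast}(\Omega)$.

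\textbf{Main obstacle.} The step I expect to be the main obstacle is not conceptual but one of bookkeeping: merging the three separate lower bounds for $\mathcal{S}_\ast$ on $A_1$, $A_2^{(1)}$, $A_2^{(2)}$ into a single estimate whose multiplicative constant $K$ and whose threshold $T$ are genuinely uniform in $x$ — which rests on the uniform positive lower bound for $\mathcal{S}(x,\ell)$ in the presence of a sign-changing exponent $s$ — together with the preliminary verification that $\mathcal{S}^\ast$ is an admissible (convex) generalized $\Phi$-function, so that $L^{\mathcal{S}^\ast}(\Omega)$ and Proposition~\ref{Prop:AbstractEmbedding} are legitimately at our disposal.
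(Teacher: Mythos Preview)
Your proposal is correct and follows essentially the same route as the paper: both arguments combine the lower bounds for $\mathcal{S}_\ast$ on $A_1$, $A_2^{(1)}$, $A_2^{(2)}$ (Propositions~\ref{prop:esti-near0}--\ref{prop:lower:est-4}) with the abstract embedding criterion (Proposition~\ref{Prop:AbstractEmbedding}) and the Fan-type embeddings of Proposition~\ref{pro:embed}. Your write-up simply spells out the bookkeeping---the uniform threshold via the lower bound on $\mathcal{S}(x,\ell)$, the admissibility of $\mathcal{S}^\ast$ as a $\Phi$-function, and the passage from $\mathcal{N}^\ast\ll\mathcal{S}^\ast$ to $\mathcal{N}^\ast\ll\mathcal{S}_\ast$---that the paper's one-sentence proof leaves implicit.
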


\begin{proof}
	Combining the lower estimates of the Sobolev conjugate function $\mathcal{S}_\ast$ in Propositions \ref{prop:esti-near0}-\ref{prop:lower:est-4}, the continuity of the function $\mathcal{S}^\ast$ and Propositions \ref{Prop:AbstractEmbedding} and \ref{pro:embed} we obtain the required claim.
\end{proof}

\begin{remark}
	Note that the embedding $W^{1, \mathcal{S}}(\Omega) \hookrightarrow L^{\mathcal{S}^{\ast}}(\Omega)$ is sharp is the sense that, for each fixed x, it coincides with the sharp Sobolev conjugate in classical Orlicz spaces. The sharpness in the case when $x$ is fixed and $s(x) \equiv 0$ can be justified by using the arguments by Cianchi--Diening \cite[Example 3.11]{Cianchi-Diening-2024} and Ho--Winkert \cite[Proposition 3.5]{Ho-Winkert-2023} while the case $s(x) \not \equiv 0$  can be justified by using the arguments by Cianchi \cite[Example 1.2]{Cianchi-2004} and Cianchi--Diening \cite[Example 3.11]{Cianchi-Diening-2024}.
\end{remark}

%********************************************************************
\section{Concentration-compactness principle}\label{Section-4}
%********************************************************************

In this section, we prove the concentration compactness principle for the Musielak-Orlicz Sobolev space $W_0^{1, \mathcal{S}}(\Omega)$ having logarithmic double phase modular function structure. For this, we set
\begin{align*}
	\mathtt{m}_-(x)&:= \min\left\{p(x) , q(x) + s^-(x) \right\}, & \mathtt{n}_+(x)&:= \max\left\{p(x), q(x) + s^+(x) \right\},\\
	\mathtt{m}_\varepsilon(x)&:= \min\left\{p(x) ,q(x) + \varepsilon \right\}, & \mathtt{n}_\varepsilon(x)&:= \max\left\{p(x) , q(x) + \varepsilon \right\},\\
	\mathtt{m}_\varepsilon^\ast(x)&:= \min\left\{p^{\ast}(x), q^\ast(x) + \varepsilon \right\}, & \mathtt{n}^\ast_\varepsilon(x)&:= \max\left\{p^{\ast}(x),q^\ast(x) + \varepsilon \right\}\quad \text{for}  \varepsilon \in \mathbb{R},\\
	\mathtt{m}^\ast_-(x)&:= \min\left\{p^{\ast}(x), q^\ast(x) \left(1 + \frac{s^-(x)}{q(x)}\right) \right\},& \mathtt{n}^\ast_+(x)&:= \max\left\{p^{\ast}(x), q^\ast(x) \left(1 + \frac{s^+(x)}{q(x)}\right) \right\}.
\end{align*}

Straightforward computations give directly the following lemma.

\begin{lemma}\label{upp-low:est}
	Let $S$ and $S^\ast$ be the functions defined in \eqref{eq:H} and \eqref{critical-MOf}. Then, the following hold:
	\begin{enumerate}
		\item[\textnormal{(i)}]
			for $x \in \Omega $ and $t,z \in (0, \infty)$
			\begin{align*}
				\mathtt{m}_-(x) \leq \frac{t S'(x,t)}{S(x,t)} \leq \mathtt{n}_+(x),
			\end{align*}
			and
			\begin{align*}
				\min\{t^{\mathtt{m}_-(x)}, t^{\mathtt{n}_+(x)}\} \mathcal{S}(x,z) \leq \mathcal{S}(x, tz) \leq \max\{t^{\mathtt{m}_-(x)}, t^{\mathtt{n}_+(x)}\} \mathcal{S}(x,z)
			\end{align*}
			where $S'$ represents the partial derivative of $S$ with respect to the second variable $t$.
		\item[\textnormal{(ii)}]
			for  $\varepsilon>0$, there exists $t=t(\varepsilon, s^\pm) >1$ such that
			\begin{align*}
				\frac{t S'(x,t)}{S(x,t)} \leq \mathtt{n}_\varepsilon(x) \quad \text{ and } \quad \mathcal{S}(x, tz) \leq t^{\mathtt{n}_\varepsilon(x)} \mathcal{S}(x,z)
			\end{align*}
			for $(x,t) \in \Omega \times (t_\varepsilon, \infty)$ and $z>0$.
	\end{enumerate}
	The same estimates hold for the function $\mathcal{S}^\ast$ by replacing $\mathtt{m}_-$, $\mathtt{n}_+$ and $\mathtt{n}_\varepsilon$ with $\mathtt{m}_-^\ast$, $\mathtt{n}_+^\ast$ and $\mathtt{n}_\varepsilon^\ast$, respectively.
\end{lemma}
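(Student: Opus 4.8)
The plan is to notice that both $\mathcal{S}$ and $\mathcal{S}^\ast$ fit a single template
\[
\Phi(x,t)=A(x)\,t^{\alpha(x)}+B(x)\,t^{\gamma(x)}\log^{\sigma(x)}(1+t),\qquad A,B\ge 0,
\]
with $A=a$, $\alpha=p$, $B=b$, $\gamma=q$, $\sigma=s$ in the case of $\mathcal{S}$, and $A=a^{p^\ast/p}$, $\alpha=p^\ast$, $B=b^{q^\ast/q}$, $\gamma=q^\ast$, $\sigma=sq^\ast/q$ in the case of $\mathcal{S}^\ast$; one then proves all the estimates for a generic such $\Phi$, with $\mathtt{m}_-,\mathtt{n}_+,\mathtt{n}_\varepsilon$ replaced by $\min\{\alpha,\gamma+\lfloor\sigma\rfloor\}$, $\max\{\alpha,\gamma+\lceil\sigma\rceil\}$, $\max\{\alpha,\gamma+\varepsilon\}$, and reads off the two special cases, noting e.g.\ that $q^\ast(1+\lfloor s\rfloor/q)=\gamma+\lfloor\sigma\rfloor$ when $\sigma=sq^\ast/q$. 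The single elementary input I would record at the start is the auxiliary function $\omega(t):=\dfrac{t}{(1+t)\log(1+t)}$, which satisfies $0<\omega(t)<1$ for all $t>0$ (since $g(t):=(1+t)\log(1+t)-t$ has $g(0)=0$ and $g'(t)=\log(1+t)>0$) and $\omega(t)\to 0$ as $t\to\infty$; note also $\Phi(x,\cdot)\in C^1((0,\infty))$ with $\Phi(x,t)>0$ for $t>0$ because $A+B>0$ a.e.

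\textbf{Step 1: the logarithmic derivative.} A direct differentiation gives
\[
t\,\Phi'(x,t)=A(x)\alpha(x)\,t^{\alpha(x)}+B(x)\,t^{\gamma(x)}\log^{\sigma(x)}(1+t)\,\bigl[\gamma(x)+\sigma(x)\,\omega(t)\bigr],
\]
so $\dfrac{t\,\Phi'(x,t)}{\Phi(x,t)}$ is the convex combination $w_1\,\alpha(x)+w_2\,[\gamma(x)+\sigma(x)\omega(t)]$ with nonnegative weights $w_1=At^{\alpha}/\Phi$ and $w_2=Bt^{\gamma}\log^{\sigma}(1+t)/\Phi$ summing to $1$; hence it lies between $\alpha(x)$ and $\gamma(x)+\sigma(x)\omega(t)$. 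Since $\omega(t)\in(0,1)$, splitting into the cases $\sigma(x)\ge 0$ and $\sigma(x)<0$ shows $\gamma(x)+\lfloor\sigma\rfloor(x)\le\gamma(x)+\sigma(x)\omega(t)\le\gamma(x)+\lceil\sigma\rceil(x)$, and by monotonicity of $\min$ and $\max$ this yields $\mathtt{m}_-(x)\le\dfrac{t\,\Phi'(x,t)}{\Phi(x,t)}\le\mathtt{n}_+(x)$, i.e.\ the first assertion of (i). For the first assertion of (ii) I would additionally invoke $\omega(t)\downarrow 0$: given $\varepsilon>0$, choose $t_\varepsilon>1$ so that $\|\sigma\|_\infty\,\omega(t)\le\varepsilon$ for $t\ge t_\varepsilon$; then $\gamma(x)+\sigma(x)\omega(t)\le\gamma(x)+\varepsilon$ uniformly in $x$ for $t\ge t_\varepsilon$, whence $\dfrac{t\,\Phi'(x,t)}{\Phi(x,t)}\le\max\{\alpha(x),\gamma(x)+\varepsilon\}=\mathtt{n}_\varepsilon(x)$ there.

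\textbf{Step 2: integrating to the scaling estimates.} For fixed $x$ and $z>0$ the map $t\mapsto\log\Phi(x,tz)$ has derivative $\tfrac1t\cdot\dfrac{(tz)\,\Phi'(x,tz)}{\Phi(x,tz)}$, which by Step 1 lies in $\bigl[\mathtt{m}_-(x)/t,\ \mathtt{n}_+(x)/t\bigr]$ for every $t>0$. Integrating between $1$ and $t$ and treating $t\ge 1$ and $0<t<1$ separately (using $\mathtt{m}_-(x)\le\mathtt{n}_+(x)$) yields
\[
\min\{t^{\mathtt{m}_-(x)},t^{\mathtt{n}_+(x)}\}\,\Phi(x,z)\le\Phi(x,tz)\le\max\{t^{\mathtt{m}_-(x)},t^{\mathtt{n}_+(x)}\}\,\Phi(x,z),
\]
which is the second assertion of (i). Likewise, integrating the improved bound $\dfrac{(\tau z)\,\Phi'(x,\tau z)}{\Phi(x,\tau z)}\le\mathtt{n}_\varepsilon(x)$ over $\tau\in[1,t]$ — legitimate as long as the argument stays above the threshold $t_\varepsilon$ — gives $\Phi(x,tz)\le t^{\mathtt{n}_\varepsilon(x)}\,\Phi(x,z)$, the second assertion of (ii). Finally one specializes $\Phi=\mathcal{S}$ and $\Phi=\mathcal{S}^\ast$, the latter with the threshold additionally (and harmlessly) depending on $\|q^\ast/q\|_\infty$.

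\textbf{Main obstacle.} There is no deep difficulty — the whole argument collapses to bookkeeping once the factor $\omega(t)=\frac{t}{(1+t)\log(1+t)}$ and its properties $0<\omega<1$, $\omega(t)\to 0$ are isolated. The point demanding the most care is that the logarithmic term is not homogeneous: the sharper exponent $\mathtt{n}_\varepsilon(x)$ in (ii) becomes available only once the relevant (base) argument exceeds the threshold $t_\varepsilon=t_\varepsilon(\varepsilon,s^\pm)$, so the corresponding scaling inequality is valid only in that range; one must also keep straight the two sign regimes of $\sigma(x)$ and the cases $t\lessgtr 1$ when passing from the differential inequality to its integrated form.
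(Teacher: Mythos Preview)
The paper offers no proof of this lemma beyond the sentence ``Straightforward computations give directly the following lemma,'' so there is no argument to compare against; your write-up is in fact more detailed than anything in the paper. Your unified treatment via the template $\Phi(x,t)=A\,t^{\alpha}+B\,t^{\gamma}\log^{\sigma}(1+t)$ and the key observation that
\[
\frac{t\,\Phi'(x,t)}{\Phi(x,t)}=w_1\,\alpha(x)+w_2\,[\gamma(x)+\sigma(x)\,\omega(t)],\qquad \omega(t)=\frac{t}{(1+t)\log(1+t)}\in(0,1),
\]
is clean and correct; the integration in Step~2 then yields part~(i) and the first assertion of~(ii) without difficulty.

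Your caution about the scaling inequality in~(ii) is well placed and is not merely a limitation of your method. The claim $\mathcal{S}(x,tz)\le t^{\mathtt n_\varepsilon(x)}\mathcal{S}(x,z)$ for all $z>0$ once $t>t_\varepsilon$ is simply false in general: take $a\equiv 0$, $b\equiv 1$, $s\equiv 1$, so $\mathcal{S}(x,t)=t^{q}\log(1+t)$; then the claim reduces to $\log(1+tz)\le t^{\varepsilon}\log(1+z)$, and letting $z\to 0^+$ gives $t\le t^{\varepsilon}$, which fails for any $t>1$ and $\varepsilon<1$. So the second inequality in~(ii) can only hold under the restriction you indicate (namely that the base argument stays above the threshold, e.g.\ $z\ge t_\varepsilon$), and your integration argument proves exactly that version. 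This is worth recording explicitly, since the paper's later use of the lemma (in the reverse H\"older inequality) invokes it pointwise with $z=|\nabla u_k|$ allowed to be small.
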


\begin{proposition}
	Let $g \in C(\overline{\Omega})$ and $\mathcal{N} \in {\bf \Phi}(\Omega)$  satisfies the $\Delta_2$-condition,
	\begin{align}\label{upper-growth:control}
		\mathcal{N}(x,t w) \leq w^{g(x)} \mathcal{N}(x,t)  \quad \text{for } w \geq 1  \text{ and for all } (x,t) \in \Omega \times (0, \infty),
	\end{align}
	and for a.a.\,$x \in \Omega$, the map $t \to \mathcal{N}(x,t)$ is non-decreasing. Then, for every $\varepsilon>0$ there exists $C_\varepsilon >0$ such that
	\begin{equation}\label{sum-control}
		\left|\mathcal{N}(x, |t+m|) - \mathcal{N}(x, |t|) \right| \leq C_\varepsilon \mathcal{N}(x, |m|) + \varepsilon  \mathcal{N}(x, |t|)
	\end{equation}
	for every $t, m \geq 0$ and for a.a.\,$x \in \Omega$.
\end{proposition}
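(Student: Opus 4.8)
The plan is to reduce \eqref{sum-control} to an upper bound on $\mathcal{N}(x,t+m)-\mathcal{N}(x,t)$ and then extract it from the convexity of $\mathcal{N}(x,\cdot)$ combined with the one-sided growth control \eqref{upper-growth:control}. First I would note that, since $t,m\ge 0$, the absolute values inside $\mathcal{N}$ may be dropped, and that the monotonicity hypothesis gives $\mathcal{N}(x,t+m)\ge\mathcal{N}(x,t)\ge 0$, so the left-hand side of \eqref{sum-control} equals $\mathcal{N}(x,t+m)-\mathcal{N}(x,t)$; it therefore suffices to bound this non-negative quantity from above, and the cases $t=0$ or $m=0$ will be immediate once the constant is secured to be $\ge 1$.

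The main step is the elementary splitting, valid for every $\theta\in(0,1)$, namely $t+m=(1-\theta)\tfrac{t}{1-\theta}+\theta\tfrac{m}{\theta}$. Applying convexity of $\mathcal{N}(x,\cdot)$ (available because $\mathcal{N}\in{\bf\Phi}(\Omega)$) gives $\mathcal{N}(x,t+m)\le(1-\theta)\mathcal{N}(x,\tfrac{t}{1-\theta})+\theta\mathcal{N}(x,\tfrac{m}{\theta})$. Since $\tfrac{1}{1-\theta}\ge 1$ and $\tfrac{1}{\theta}\ge 1$, the growth condition \eqref{upper-growth:control} together with the bound $g(x)\le g^+$ (finite since $g\in C(\overline{\Omega})$) yields $\mathcal{N}(x,\tfrac{t}{1-\theta})\le(1-\theta)^{-g^+}\mathcal{N}(x,t)$ and $\mathcal{N}(x,\tfrac{m}{\theta})\le\theta^{-g^+}\mathcal{N}(x,m)$, hence after simplification $\mathcal{N}(x,t+m)\le(1-\theta)^{1-g^+}\mathcal{N}(x,t)+\theta^{1-g^+}\mathcal{N}(x,m)$ uniformly in $x$, and subtracting $\mathcal{N}(x,t)$ leaves $\mathcal{N}(x,t+m)-\mathcal{N}(x,t)\le\big[(1-\theta)^{1-g^+}-1\big]\mathcal{N}(x,t)+\theta^{1-g^+}\mathcal{N}(x,m)$.

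To finish I would observe that convexity with $\mathcal{N}(x,0)=0$ forces $\mathcal{N}(x,wt)\ge w\mathcal{N}(x,t)$ for $w\ge 1$, which compared with \eqref{upper-growth:control} forces $g(x)\ge 1$, hence $1-g^+\le 0$. Consequently $\theta\mapsto(1-\theta)^{1-g^+}$ is continuous on $[0,1)$ and tends to $1$ as $\theta\to 0^+$, so given $\varepsilon>0$ one can fix $\theta_\varepsilon\in(0,1)$ with $(1-\theta_\varepsilon)^{1-g^+}-1\le\varepsilon$ and set $C_\varepsilon:=\theta_\varepsilon^{\,1-g^+}$; since $1-g^+\le 0$ and $\theta_\varepsilon\le 1$ this $C_\varepsilon$ is $\ge 1$, which both completes \eqref{sum-control} for $t,m>0$ and disposes of the degenerate cases.

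I expect no serious obstacle here — the argument is soft — but the one point that must be handled carefully is that the coefficient $(1-\theta)^{1-g^+}-1$ tends to $0$ \emph{uniformly in $x\in\Omega$}, which is exactly why $g(x)$ is replaced by the constant $g^+$ already at the first use of \eqref{upper-growth:control} (this is where $g\in C(\overline{\Omega})$ enters). If one wished to bypass convexity altogether, the same conclusion follows from a dichotomy on the ratio $m/t$: for $m/t\le\sigma$ with $\sigma=\sigma(\varepsilon)$ chosen so that $(1+\sigma)^{g^+}-1\le\varepsilon$, \eqref{upper-growth:control} gives directly $\mathcal{N}(x,t+m)\le(1+\sigma)^{g^+}\mathcal{N}(x,t)$; for $m/t\ge\sigma$ one has $t+m\le\tfrac{1+\sigma}{\sigma}m$, hence $\mathcal{N}(x,t+m)\le(\tfrac{1+\sigma}{\sigma})^{g^+}\mathcal{N}(x,m)$, which is absorbed into $C_\varepsilon\mathcal{N}(x,m)$. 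This variant uses only \eqref{upper-growth:control} and the monotonicity of $\mathcal{N}(x,\cdot)$.
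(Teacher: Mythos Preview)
Your proposal is correct. Your primary argument, via the convex splitting $t+m=(1-\theta)\tfrac{t}{1-\theta}+\theta\tfrac{m}{\theta}$ followed by \eqref{upper-growth:control}, is a genuinely different and somewhat cleaner route than the paper's. The paper argues by a dichotomy on the ratio $m/t$: when $m\ge\delta t$ it uses monotonicity and the $\Delta_2$-condition to get $\mathcal{N}(x,t+m)\le C^{k(\delta)}\mathcal{N}(x,m)$, while when $m\le\delta t$ it applies \eqref{upper-growth:control} with $w=1+\delta$; this is precisely the variant you sketch at the end. Your convexity argument has the small advantage that it never invokes the $\Delta_2$-hypothesis explicitly (indeed, $\Delta_2$ is already a consequence of \eqref{upper-growth:control} with $w=2$), whereas the dichotomy argument makes the role of each hypothesis more transparent. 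You are also right that, since the statement assumes $t,m\ge0$, the absolute value on the left of \eqref{sum-control} collapses via monotonicity; the paper additionally runs a symmetric substitution to bound $\mathcal{N}(x,|t|)-\mathcal{N}(x,|t+m|)$, which is superfluous under the stated sign restrictions.
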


\begin{proof}
	Let $t , m \geq 0$ and $\delta>0$. If $m \geq \delta t$, then the monotonicity of the map $t \to \mathcal{N}(x, t)$ and the $\Delta_2$-condition give
	\begin{equation}\label{sc:est-1}
		\mathcal{N}(x, t+m) \leq \mathcal{N}(x, m \delta^{-1} +m ) \leq \mathcal{N}(x, 2^k m ) \leq C^k \mathcal{N}(x, m)
	\end{equation}
	where $k=k(\delta) >0$ and $C$ is the constant obtained in the $\Delta_2$-condition. If $m \leq \delta t$, then using \eqref{upper-growth:control}, we get
	\begin{equation}\label{sc:est-2}
		\mathcal{N}(x, t+m) \leq \mathcal{N}(x, (1+\delta) t) \leq (1+\delta)^{g(x)} \mathcal{N}(x,t).
	\end{equation}
	Combining  \eqref{sc:est-1} and \eqref{sc:est-2} yields
	\begin{align*}%\label{sc:est-2.5}
		\mathcal{N}(x, t +m) \leq (1+\delta)^{g(x)} \mathcal{N}(x, t) + C_\delta \mathcal{N}(x,m).
	\end{align*}
	Replacing $t$ by $|t|$ and $m$ by $|m|$ and using the triangle inequality and the monotonicity of the map $s \to \mathcal{N}(x, s)$, we obtain
	\begin{equation}\label{sc:est-3}
		\begin{aligned}
			\mathcal{N}(x, |t +m|) - \mathcal{N}(x, |t|) & \leq ((1+\delta)^{g(x)}-1) \mathcal{N}(x, |t|) + C_\delta \mathcal{N}(x,|m|)   \\
			& \leq \varepsilon \mathcal{N}(x, |t|) + C_\varepsilon \mathcal{N}(x,|m|) \quad \text{for all }  t,s \in \mathbb{R} \text{ and  for a.a.\,} x \in \Omega.
		\end{aligned}
	\end{equation}
	Replacing $t$ by $t-m$ and then $m$ by $-m$ in \eqref{sc:est-3}, it follows
	\begin{equation}\label{sc:est-4}
		\begin{aligned}
			 & \mathcal{N}(x, |t|) - \mathcal{N}(x, |t+m|)\\
			 &\leq ((1+\delta)^{g(x)}-1) \mathcal{N}(x, |t+m|) + C_\delta \mathcal{N}(x,|m|)   \\
			 & \leq ((1+\delta)^{g(x)}-1) (1+\delta)^{g(x)} \mathcal{N}(x, |t|)
			 + C_\delta (1+\delta)^{g(x)} \mathcal{N}(x,|m|)
		\end{aligned}
	\end{equation}
	for all $t,s \in \mathbb{R}$ and for all a.a.\,$x \in \Omega$. Then, for given $\varepsilon>0$, choosing $\delta$ small enough such that $((1+\delta)^{g^+} -1) (1+\delta)^{g^+} \leq \varepsilon$ in \eqref{sc:est-3} and \eqref{sc:est-4}, we obtain
	\begin{align*}
		\left|\mathcal{N}(x, |t+m|) - \mathcal{N}(x, |t|) \right| \leq C_\varepsilon \mathcal{N}(x, |m|) + \varepsilon  \mathcal{N}(x, |t|).
	\end{align*}
\end{proof}

\begin{lemma}[Br\'{e}zis-Lieb Lemma]\label{BL-lemma}
	Let $\mathcal{N} \in {\bf \Phi}(\Omega)$  satisfies \eqref{sum-control}.  Then, for $f_n \to f$ a.e.\,in $\Omega$ and $f_n \rightharpoonup f$ in $L^\mathcal{N}(\Omega)$, we have
	\begin{align*}
		\lim_{n \to \infty} \left(\int_{\Omega} \mathcal{N}(x, |f_n|) \phi \,\mathrm{d}x - \int_{\Omega} \mathcal{N}(x, |f-f_n|) \phi \,\mathrm{d}x \right) = \int_{\Omega} \mathcal{N}(x, |f|) \phi \,\mathrm{d}x \quad \text{for every }\phi \in L^\infty(\Omega).
	\end{align*}
\end{lemma}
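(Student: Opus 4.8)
The plan is to run the classical Br\'{e}zis--Lieb argument, with the only new ingredient being that the elementary inequality $\bigl||t+m|^p-|t|^p-|m|^p\bigr|\le\varepsilon|t|^p+C_\varepsilon|m|^p$ is replaced by its Musielak--Orlicz analogue \eqref{sum-control}. First I would note that \eqref{sum-control}, applied with $t=m$, forces $\mathcal{N}$ to satisfy the $\Delta_2$-condition; since $\mathcal{N}\in{\bf\Phi}(\Omega)$, Proposition \ref{Prop:AbstractBanach} then gives that $L^{\mathcal{N}}(\Omega)$ is a Banach space, that it coincides with $\{u:\varrho_{\mathcal{N}}(u)<\infty\}$, and that norm-bounded sets are modular-bounded. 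Consequently $f_n\rightharpoonup f$ in $L^{\mathcal{N}}(\Omega)$ yields $\sup_n\|f_n\|_{\mathcal{N}}<\infty$ by Banach--Steinhaus, so $g_n:=f_n-f$ is bounded in $L^{\mathcal{N}}(\Omega)$ with $K:=\sup_n\varrho_{\mathcal{N}}(g_n)<\infty$, and $g_n\to0$ a.e.; moreover $f,f_n,f-f_n\in L^{\mathcal{N}}(\Omega)$, so all integrals below are absolutely convergent. Setting
$$D_n(x):=\mathcal{N}(x,|f_n(x)|)-\mathcal{N}(x,|f(x)-f_n(x)|)-\mathcal{N}(x,|f(x)|),$$
and using $|\phi|\le\|\phi\|_\infty$ a.e., it suffices to prove $\int_\Omega|D_n|\,\mathrm{d}x\to0$.

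Next, fix $\varepsilon>0$. Since $\mathcal{N}(x,\cdot)$ is non-decreasing with $|f_n|\le|g_n|+|f|$ and $|g_n|\le|f_n|+|f|$, a couple of applications of \eqref{sum-control} (to the pairs $(|g_n|,|f|)$ and $(|f_n|,|f|)$, absorbing the resulting term $\mathcal{N}(x,|f_n|)$ back by a further application) yield, after relabelling $\varepsilon$ and $C_\varepsilon$,
$$\bigl|\mathcal{N}(x,|f_n|)-\mathcal{N}(x,|g_n|)\bigr|\le\varepsilon\,\mathcal{N}(x,|g_n|)+C_\varepsilon\,\mathcal{N}(x,|f|)\qquad\text{for a.a.\,}x\in\Omega,$$
hence $|D_n(x)|\le\varepsilon\,\mathcal{N}(x,|g_n(x)|)+(C_\varepsilon+1)\,\mathcal{N}(x,|f(x)|)$. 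This motivates the truncation
$$W_{n,\varepsilon}(x):=\Bigl(|D_n(x)|-\varepsilon\,\mathcal{N}(x,|g_n(x)|)\Bigr)^{+},$$
which satisfies $0\le W_{n,\varepsilon}\le(C_\varepsilon+1)\,\mathcal{N}(\cdot,|f|)\in L^1(\Omega)$ uniformly in $n$. Since $\mathcal{N}$ is a (strong) $\varphi$-function, $\mathcal{N}(x,\cdot)$ is continuous with $\mathcal{N}(x,0)=0$; combined with $g_n\to0$ a.e.\ (hence $f_n\to f$ a.e.), this gives $\mathcal{N}(x,|g_n|)\to0$ and $\mathcal{N}(x,|f_n|)\to\mathcal{N}(x,|f|)$ a.e., so $D_n\to0$ a.e.\ and thus $W_{n,\varepsilon}\to0$ a.e. By dominated convergence, $\int_\Omega W_{n,\varepsilon}\,\mathrm{d}x\to0$ as $n\to\infty$.

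Finally, from $|D_n|\le W_{n,\varepsilon}+\varepsilon\,\mathcal{N}(\cdot,|g_n|)$ I would estimate
$$\int_\Omega|D_n|\,\mathrm{d}x\le\int_\Omega W_{n,\varepsilon}\,\mathrm{d}x+\varepsilon\,\varrho_{\mathcal{N}}(g_n)\le\int_\Omega W_{n,\varepsilon}\,\mathrm{d}x+\varepsilon K,$$
so $\limsup_{n\to\infty}\int_\Omega|D_n|\,\mathrm{d}x\le\varepsilon K$; letting $\varepsilon\downarrow0$ gives $\int_\Omega|D_n|\,\mathrm{d}x\to0$, and the assertion follows from $\bigl|\int_\Omega D_n\phi\,\mathrm{d}x\bigr|\le\|\phi\|_\infty\int_\Omega|D_n|\,\mathrm{d}x$. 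The main obstacle is precisely what makes the truncation necessary: the pointwise bound on $|D_n|$ carries the term $\mathcal{N}(x,|g_n|)$, which converges to $0$ only a.e.\ and not in $L^1(\Omega)$, so it cannot itself be used as a dominating function; confining it inside the $\varepsilon$-coefficient and controlling its integral by the $n$-uniform modular bound $K$ (available thanks to $\Delta_2$) is the crux. The remaining points -- the a.e.\ continuity argument for $\mathcal{N}(x,|f_n|)\to\mathcal{N}(x,|f|)$ and the extraction of the two-sided estimate for $|\mathcal{N}(x,|f_n|)-\mathcal{N}(x,|g_n|)|$ from \eqref{sum-control} -- are routine.
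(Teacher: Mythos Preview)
Your proof is correct and follows essentially the same route as the paper: use \eqref{sum-control} to obtain a pointwise bound on $D_n$, subtract off the $\varepsilon$-weighted term to obtain a truncated sequence dominated by $(C_\varepsilon+1)\mathcal{N}(\cdot,|f|)\in L^1(\Omega)$ and converging to $0$ a.e., apply dominated convergence, and control the remaining $\varepsilon$-term by the uniform modular bound coming from weak convergence together with $\Delta_2$. The only cosmetic difference is that the paper carries the $\varepsilon$-weight on $\mathcal{N}(x,|f_n|)$ rather than on $\mathcal{N}(x,|g_n|)$; both are uniformly modular-bounded, so this is immaterial. (Incidentally, a single application of \eqref{sum-control} with $t=g_n$, $m=f$ already gives your bound $|\mathcal{N}(x,|f_n|)-\mathcal{N}(x,|g_n|)|\le \varepsilon\,\mathcal{N}(x,|g_n|)+C_\varepsilon\,\mathcal{N}(x,|f|)$; the ``couple of applications'' you describe is not needed.)
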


\begin{proof}
	Define a sequence of functions $B_{\varepsilon,n} \colon  \Omega \to \mathbb{R}^+$ as
	\begin{align*}
		B_{\varepsilon,n}(x) = \bigg[(1-\varepsilon)\mathcal{N}(x, |f_n(x)|) - \mathcal{N}(x, |f(x) - f_n(x)|) - \mathcal{N}(x, |f(x)|) \bigg]^+.
	\end{align*}
	From \eqref{sum-control}, we note that
	\begin{align*}
		& \left|\mathcal{N}(x, |f_n(x)|) - \mathcal{N}(x, |f(x) - f_n(x)|) - \mathcal{N}(x, |f(x)|)\right| - \varepsilon \mathcal{N}(x, |f_n(x)|)  \\
		& \leq \left|\mathcal{N}(x, |f_n(x)|) - \mathcal{N}(x, |f(x) - f_n(x)|) \right| + \mathcal{N}(x, |f(x)|) - \varepsilon \mathcal{N}(x, |f_n(x)|) \\
		& \leq (C_\varepsilon +1) \mathcal{N}(x, |f(x)|).
	\end{align*}
	It follows that
	\begin{align*}
		0 \leq B_{\varepsilon, n}(x) \leq (C_\varepsilon +1) \mathcal{N}(x, |f(x)|).
	\end{align*}
	Now, by using $B_{\varepsilon, n}(x) \to 0$ a.e.\,in $\Omega$ and Lebesgue's dominated convergence theorem, we get
	\begin{equation}\label{BL:est-1}
		\lim_{n \to \infty} \int_{\Omega} B_{\varepsilon, n}(x) \phi(x) \,\mathrm{d}x =0 \quad \text{for any } \phi \in L^\infty(\Omega).
	\end{equation}
	On the other hand, $f_n \rightharpoonup f$ in $L^\mathcal{N}(\Omega)$ implies
	\begin{equation}\label{BL:est-2}
		\begin{aligned}
			& \left|\int_{\Omega} \mathcal{N}(x, |f_n(x)|) \phi(x) \,\mathrm{d}x -  \int_{\Omega} \mathcal{N}(x, |f(x) - f_n(x)|) \phi(x) \,\mathrm{d}x - \int_{\Omega} \mathcal{N}(x, |f(x)|) \phi(x) \,\mathrm{d}x \right| \\
			&\leq \int_{\Omega} B_{\varepsilon, n}(x) |\phi(x)| \,\mathrm{d}x + \varepsilon \int_{\Omega}  \mathcal{N}(x, |f_n(x)|) |\phi(x)| \,\mathrm{d}x.  \\
			& \leq \int_{\Omega} B_{\varepsilon, n}(x) |\phi(x)| \,\mathrm{d}x + \varepsilon \|\phi\|_{\infty, \Omega} C
		\end{aligned}
	\end{equation}
	for some constant $C>0$. Hence, the claim follows by combining \eqref{BL:est-1} and \eqref{BL:est-2}.
\end{proof}

\begin{proposition}\label{pro:ln-est}
	The following inequalities hold true:
    \begin{enumerate}
        \item[\textnormal{(i)}] $\log(1 + tz) \geq \log(1+t) \log(1+z)$ for $z \in [0, 1]$ and $t \geq 0.$
        \item[\textnormal{(ii)}] $\log(1+t) \log(1+z) \geq \log(2)  \log(1 + tz)$ for $z \geq 1$ and $t \geq 1.$
    \end{enumerate}
	% \begin{align}
	% 	\log(1 + tz) &\geq \log(1+t) \log(1+z) \quad \text{for } z \in [0, 1] \text{ and }  t \geq 0,\label{ln:est-1}\\
	% 	\log(1+t) \log(1+z) &\geq \log(2)  \log(1 + tz) \quad \text{for }  z \geq 1  \text{ and }  t \geq 1.\label{ln:est-2}
	% \end{align}
\end{proposition}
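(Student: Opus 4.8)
The plan is to treat the two inequalities separately, reducing each to an elementary one-variable estimate.

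For \textnormal{(i)} I would exploit the concavity of $\tau \mapsto \log(1+\tau)$ on $[0,\infty)$. Since $z \in [0,1]$, for any $t \ge 0$ the number $tz$ is the convex combination $tz = z\cdot t + (1-z)\cdot 0$, so concavity gives
\[
	\log(1+tz) \ge z\log(1+t) + (1-z)\log(1+0) = z\log(1+t).
\]
Because $t\ge 0$ we have $\log(1+t)\ge 0$, and the elementary inequality $\log(1+z)\le z$ (valid for all $z\ge 0$) then yields $z\log(1+t)\ge\log(1+z)\log(1+t)$. Combining the two estimates proves \textnormal{(i)}.

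For \textnormal{(ii)} the naive submultiplicative bound $1+tz\le(1+t)(1+z)$ is too weak: it would only give $\log(1+t)\log(1+z)\ge\log 2\,\bigl(\log(1+t)+\log(1+z)\bigr)$, which already fails at $t=z=1$ since $(\log 2)^2<2(\log 2)^2$. Hence I would argue by monotonicity in one variable. Fix $z\ge 1$ and set
\[
	h(t) := \log(1+t)\log(1+z) - \log 2\,\log(1+tz), \qquad t\ge 1,
\]
noting $h(1)=\log 2\,\log(1+z)-\log 2\,\log(1+z)=0$. A short computation gives $h'(t)=\dfrac{\log(1+z)}{1+t}-\dfrac{(\log 2)\,z}{1+tz}$, so (both denominators being positive) $h'(t)\ge 0$ is equivalent to
\[
	\varphi_z(t) := \log(1+z)\,(1+tz) - (\log 2)\,z\,(1+t) \ge 0 .
\]
The $t$-derivative $\varphi_z'(t)=z\bigl(\log(1+z)-\log 2\bigr)$ is $\ge 0$ because $z\ge 1$, so $\varphi_z$ is nondecreasing on $[1,\infty)$ and it suffices to check $\varphi_z(1)=(1+z)\log(1+z)-2z\log 2\ge 0$. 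This is the one-variable inequality $(1+z)\log(1+z)\ge 2z\log 2$ on $[1,\infty)$, which holds because both sides agree at $z=1$ while the left-hand derivative $\log(1+z)+1$ dominates the right-hand derivative $2\log 2$ for every $z\ge 1$. Therefore $h'\ge 0$ on $[1,\infty)$, so $h$ is nondecreasing there, and $h(1)=0$ forces $h\ge 0$, which is \textnormal{(ii)}.

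The main obstacle is \textnormal{(ii)}: one must resist the temptation of the lossy bound $1+tz\le(1+t)(1+z)$ and instead carry out the nested monotonicity argument, the crux being the sharp auxiliary estimate $(1+z)\log(1+z)\ge 2z\log 2$ on $[1,\infty)$, which encodes precisely the equality case $t=z=1$.
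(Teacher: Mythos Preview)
Your proof is correct. For \textnormal{(ii)} your argument is essentially the paper's: both introduce the same auxiliary function $g_z(t)=\log(1+t)\log(1+z)-\log 2\,\log(1+tz)$, observe $g_z(1)=0$, and show $g_z'(t)\ge 0$ by reducing to the one-variable inequality $(1+z)\log(1+z)\ge 2z\log 2$ on $[1,\infty)$; you add a short derivative argument for this auxiliary estimate, which the paper simply asserts.

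For \textnormal{(i)} your route is genuinely different and tidier. The paper fixes $z\in[0,1]$, sets $h_z(t)=\log(1+tz)-\log(1+t)\log(1+z)$, and shows $h_z$ is nondecreasing from $h_z(0)=0$ by checking $h_z'(t)=\frac{z}{1+tz}-\frac{\log(1+z)}{1+t}\ge 0$ directly. You instead exploit concavity of $\tau\mapsto\log(1+\tau)$ to obtain $\log(1+tz)\ge z\log(1+t)$ in one stroke, and then apply the elementary bound $\log(1+z)\le z$. Your approach avoids any differentiation in $t$ and makes the role of the restriction $z\in[0,1]$ (needed so that $tz$ is a convex combination of $0$ and $t$) completely transparent; the paper's derivative computation implicitly also uses $\log(1+z)\le 1$, which is a weaker consequence of the same restriction.
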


\begin{proof}
	For a fixed $z \in [0,1]$, define $h_z \colon  \mathbb{R}^+ \to \mathbb{R}$ given by
	\begin{align*}
		h_z(t) = \log(1+tz) - \log(1+t) \log(1+z).
	\end{align*}
	Note that $h(0)=0$ and
	\begin{align*}
		h_z'(t) = \frac{z}{1+tz} - \frac{\log(1+z)}{1+t} \geq 0 \quad \text{for all }  t \geq 0  \text{ and }  z \in [0,1].
	\end{align*}
	Hence, the claim in \text{(i)} follows. Observe that the function $k\colon  [1, \infty) \to \mathbb{R}$ given by
	\begin{equation}\label{obser}
		k(z) = z \left(\log(1+z) -\log(2)\right) + \log(1+z) - z \log(2) \quad \text{is non-negative for all }  z \in [1, \infty).
	\end{equation}
	Now, to prove \text{(ii)}, we fix $z \geq 1$ and define $g_z\colon  [1, \infty) \to \mathbb{R}$ given by
	\begin{align*}
		g_z(t) = \log(1+t) \log(1+z) - \log(2)  \log(1 + tz).
	\end{align*}
	Clearly $g_z(1) =0$ and
	\begin{align*}
		g_z'(t) & = \frac{(\log(1+z)-z \log(2)) + (\log(1+z) - \log(2))tz}{(1+t)(1+tz)}  \\
		& \geq \frac{(\log(1+z)-z \log(2)) + (\log(1+z) - \log(2))z}{(1+t)(1+tz)} \geq 0 \quad \text{for }  z, t \geq 1,
	\end{align*}
	where in the last inequality, we have used \eqref{obser}. This further implies $g_z \geq 0$ in $[1, \infty)$ and for all $z \geq 1$, and hence the claim in \text{(ii)} is shown.
\end{proof}

For $\varepsilon \in (0,1)$, we define sub-multiplicative functions as follows:
\begin{equation}\label{pert-MOF}
	\mathtt{M}_{\varepsilon}^\ast(x, z) := \min\{z^{p^\ast(x)}, z^{q^\ast(x)} \mathtt{M}_{\varepsilon, \log}(x,z)\}  \quad \text{ and } \quad \mathtt{M}_{\varepsilon}(x, z) :=
	\begin{cases}
		z^{\mathtt{m}_-(x)}  & \text{if }z < 1, \\
		z^{\mathtt{n}_\varepsilon(x)} & \text{if }z\geq 1,
	\end{cases}
\end{equation}
where
\begin{align*}
	\mathtt{M}_{\varepsilon, \log}(x, z) :=
	\begin{cases}
		\left(\log(1+z) \right)^\frac{s(x) q^\ast(x)}{q(x)}  & \text{if $s(x) \geq 0$  and  $z \leq 1$},\\
		\left(\log(2) \right)^\frac{s(x) q^\ast(x)}{q(x)} z^\varepsilon  & \text{if $s(x) \geq 0$  and  $z \geq 1$}, \\
		1 & \text{if $s(x) < 0$  and  $z \leq 1$},\\
		\left(\frac{C(\varepsilon)}{\log(2)}\right)^{\frac{s(x) q^\ast(x)}{q(x)}} z^{-\varepsilon} & \text{if $s(x) < 0$  and  $z \geq 1$}.
	\end{cases}
\end{align*}

\begin{proposition}\label{lower:est-conju}
	Let $S^\ast$ and $\mathtt{M}_\varepsilon^\ast$ be the functions defined in \eqref{critical-MOf} and \eqref{pert-MOF}.
	Then, for every $(x,t) \in \overline{\Omega} \times [1, \infty)$ and $z \geq 0$, we have
	\begin{align*}
		\mathcal{S}^\ast(x, tz) \geq \mathcal{S}^\ast(x,t) \mathtt{M}_\varepsilon^\ast(x,z).
	\end{align*}
\end{proposition}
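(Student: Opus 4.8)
The plan is to break $\mathcal{S}^\ast$ into its two phases and estimate each one separately. Writing, for $w\ge0$,
\[
	\mathcal{S}^\ast(x,w)=A(x)\,w^{p^\ast(x)}+B(x)\,w^{q^\ast(x)}\log^{\frac{s(x)q^\ast(x)}{q(x)}}(1+w),\qquad A(x):=a(x)^{\frac{p^\ast(x)}{p(x)}},\quad B(x):=b(x)^{\frac{q^\ast(x)}{q(x)}},
\]
the $p^\ast$-phase is exactly $p^\ast(x)$-homogeneous, so $A(x)(tz)^{p^\ast(x)}=z^{p^\ast(x)}A(x)t^{p^\ast(x)}\ge \mathtt{M}_\varepsilon^\ast(x,z)\,A(x)t^{p^\ast(x)}$, using only $\mathtt{M}_\varepsilon^\ast(x,z)=\min\{z^{p^\ast(x)},z^{q^\ast(x)}\mathtt{M}_{\varepsilon,\log}(x,z)\}\le z^{p^\ast(x)}$. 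Hence only the $q^\ast$-phase remains; if $B(x)=0$ there is nothing left, and otherwise, after cancelling the positive factor $B(x)z^{q^\ast(x)}t^{q^\ast(x)}$ and using $\mathtt{M}_\varepsilon^\ast(x,z)\le z^{q^\ast(x)}\mathtt{M}_{\varepsilon,\log}(x,z)$, the whole assertion reduces to the scalar inequality, which we denote by $(\dagger)$,
\[
	\log^{\frac{s(x)q^\ast(x)}{q(x)}}(1+tz)\ \ge\ \mathtt{M}_{\varepsilon,\log}(x,z)\,\log^{\frac{s(x)q^\ast(x)}{q(x)}}(1+t)
\]
for every $t\ge1$ and $z>0$; the case $z=0$ is trivial since both sides of the proposition vanish. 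Summing the $p^\ast$- and $q^\ast$-phase estimates will then give the claim.

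Next I would verify $(\dagger)$ in the four regimes that define $\mathtt{M}_{\varepsilon,\log}$. If $s(x)\ge0$ and $z\le1$, Proposition \ref{pro:ln-est}(i) gives $\log(1+tz)\ge\log(1+t)\log(1+z)\ge0$, and raising this to the nonnegative power $s(x)q^\ast(x)/q(x)$ is precisely $(\dagger)$ with $\mathtt{M}_{\varepsilon,\log}(x,z)=\log^{s(x)q^\ast(x)/q(x)}(1+z)$. If $s(x)\ge0$ and $z\ge1$, monotonicity of $\tau\mapsto\log(1+\tau)$ gives $\log(1+tz)\ge\log(1+t)$ (so the logarithmic factor is $\ge 1$), and the estimate is closed by using $\mathtt{M}_\varepsilon^\ast(x,z)\le z^{p^\ast(x)}\le z^{q^\ast(x)}$. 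If $s(x)<0$ and $z\le1$, then $\tau\mapsto\log^{s(x)q^\ast(x)/q(x)}(1+\tau)$ is decreasing and $1+tz\le1+t$, which is exactly $(\dagger)$ with $\mathtt{M}_{\varepsilon,\log}(x,z)=1$.

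The only substantial case — and the one that dictates the constant $C(\varepsilon)$ in $\mathtt{M}_{\varepsilon,\log}$ — is $s(x)<0$, $z\ge1$. Here $s(x)q^\ast(x)/q(x)<0$, so $(\dagger)$ is equivalent, after moving the negative power outside, to the upper estimate $\log(1+tz)\le\frac{C(\varepsilon)}{\log2}\,z^{\varepsilon q(x)/(|s(x)|q^\ast(x))}\,\log(1+t)$. Proposition \ref{pro:ln-est}(ii) yields $\log(1+tz)\le\frac1{\log2}\log(1+t)\log(1+z)$ for $t,z\ge1$, so it suffices to prove $\log(1+z)\le C(\varepsilon)\,z^{\varepsilon q(x)/(|s(x)|q^\ast(x))}$. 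Since $s\in L^\infty(\Omega)$ and $q,q^\ast\in C(\overline\Omega)$, the exponent is bounded below by a positive constant $\delta_0=\delta_0(\varepsilon)=\varepsilon q^-/(\|s\|_\infty (q^\ast)^+)$ that is independent of $x$, and $\sup_{z\ge1}\log(1+z)\,z^{-\delta_0}<\infty$; taking $C(\varepsilon)$ to be this supremum settles the case for all admissible $x$ simultaneously.

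The main obstacle is exactly this last step: one needs a comparison of a logarithm against a small power of $z$ that holds with a constant uniform in $x\in\overline\Omega$, which is what forces the particular shape of $\mathtt{M}_{\varepsilon,\log}$ (and, in the sign‑changing regime, the cut‑off at $z=1$ together with the factor $C(\varepsilon)$). Once $(\dagger)$ is available in all four regimes, adding the two phase estimates gives $\mathcal{S}^\ast(x,tz)\ge\mathcal{S}^\ast(x,t)\,\mathtt{M}_\varepsilon^\ast(x,z)$ for all $(x,t)\in\overline\Omega\times[1,\infty)$ and $z\ge0$.
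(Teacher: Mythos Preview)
Your overall strategy—splitting $\mathcal{S}^\ast$ into its $p^\ast$- and $q^\ast$-phases, handling the first by exact homogeneity and reducing the second to a pointwise logarithm inequality via Proposition~\ref{pro:ln-est}—is exactly the paper's approach, and three of the four regimes are argued correctly and in the same way.

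There is a genuine gap in your treatment of the regime $s(x)\ge 0$, $z\ge 1$. You write
\[
\mathtt{M}_\varepsilon^\ast(x,z)\le z^{p^\ast(x)}\le z^{q^\ast(x)},
\]
but the second inequality requires $p^\ast(x)\le q^\ast(x)$, which is \emph{not} part of the standing hypotheses: \eqref{main:assump} and \eqref{main:assump-1} only demand $1<p(x),q(x)<N$ together with the oscillation bound $\max\{p,q\}/\min\{p,q\}<1+1/N$, and all of these conditions are symmetric in $p$ and $q$. So at points where $p^\ast(x)>q^\ast(x)$ your chain breaks. The paper does not go through this inequality; in \eqref{critical-lower:est-1} it retains the constant $(\log 2)^{s(x)q^\ast(x)/q(x)}$ in the lower bound for $\log^{s(x)q^\ast(x)/q(x)}(1+tz)$ and then compares the $q^\ast$-phase against the \emph{second} branch $z^{q^\ast(x)}\mathtt{M}_{\varepsilon,\log}(x,z)$ of the minimum defining $\mathtt{M}_\varepsilon^\ast$, whose prefactor is precisely $(\log 2)^{s(x)q^\ast(x)/q(x)}$. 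To repair your argument in this regime you should therefore abandon the step $z^{p^\ast(x)}\le z^{q^\ast(x)}$ and instead use $\mathtt{M}_\varepsilon^\ast(x,z)\le z^{q^\ast(x)}\mathtt{M}_{\varepsilon,\log}(x,z)$, matching the $(\log 2)$-factor built into $\mathtt{M}_{\varepsilon,\log}$ for $s(x)\ge 0$, $z\ge 1$.
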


\begin{proof}
	By Proposition \ref{pro:ln-est} (i) for $(x,t) \in \{x\in\overline{\Omega}\colon s(x) \geq 0\} \times [0, \infty)$ we deduce that
	\begin{equation}\label{critical-lower:est-1}
		\begin{aligned}
			\log^{\frac{s(x) q^\ast(x)}{q(x)}}(1+ tz) & \geq
			\begin{cases}
				\left(\log(2) \log(1+ t)\right)^{\frac{s(x) q^\ast(x)}{q(x)}}, & \text{if }z \geq 1. \\
				\log^{\frac{s(x) q^\ast(x)}{q(x)}}(1+z)  \log^{\frac{s(x) q^\ast(x)}{q(x)}}(1+ t), & \text{if }z < 1,
			\end{cases}
		\end{aligned}
	\end{equation}
	and by Proposition \ref{pro:ln-est} (ii) for $(x,t) \in \{x\in\overline{\Omega} \colon s(x) <0 \} \times [1, \infty)$
	\begin{equation}\label{critical-lower:est-2}
		\begin{aligned}
			\log^{\frac{s(x) q^\ast(x)}{q(x)}}(1+ tz) & \geq
			\begin{cases}
				\left(\frac{\log(1+z)}{\log(2)}\right)^{\frac{s(x) q^\ast(x)}{q(x)}}  \log^{\frac{s(x) q^\ast(x)}{q(x)}}(1+ t), &\text{if } z \geq 1. \\
				\log^{\frac{s(x) q^\ast(x)}{q(x)}}(1+ t),   &\text{if } z < 1.
			\end{cases}
		\end{aligned}
	\end{equation}
	Note that for $\delta> 0$ the inequality
	\begin{align*}
		\log(1+z) \leq C(\delta) z^\delta \quad \text{for }   z \geq 1
	\end{align*}
	holds.
	Now, by using \eqref{critical-lower:est-1} and \eqref{critical-lower:est-2} in the  definition of $\mathcal{S}^\ast$ and choosing $\delta = \varepsilon \frac{r^+}{(|s|)^+( r^\ast)^+}$ in the above inequality, we obtain
	\begin{align*}
		\mathcal{S}^\ast(x,tz) & = \left(\left(a(x)\right)^\frac{1}{p(x)} t \right)^{p^\ast(x)} z^{p^\ast(x)} + \left( (b(x))^\frac{1}{q(x)} t z\right)^{q^\ast(x)} \log^{\frac{s(x)q^\ast(x)}{q(x)}}(1+tz) \geq \mathcal{S}^\ast(x,t)  \mathtt{M}_\varepsilon^\ast(x,z).
	\end{align*}
\end{proof}

Now, we can state the main result of this section.

\begin{theorem}\label{concentration:compactness}
	Let hypothesis \eqref{main:assump} and \eqref{main:assump-1} be satisfied and $\{v_k\}_{k \in \mathbb{N}}$ be a weakly convergent sequence in $W^{1, \mathcal{S}}(\Omega)$ with weak limit $v$ such that
	\begin{align*}
		S^\ast(x, |v_k|) \rightharpoonup \Theta  \ \text{ and }  \ S(x, |\nabla v_k|) \rightharpoonup \theta \quad \text{weakly-}\ast  \text{ in the sense of measures},
	\end{align*}
	where $\Theta$ and $\theta$ are signed Radon measures with finite mass. Then there exists a countable index set $I$, positive numbers $\{\Theta_i\}_{i \in I}$ and $\{\theta_i\}_{i \in I}$, and ${\bf C}^\ast >0$ such that
	\begin{align*}
		\Theta= S^\ast(x, |v|) + \sum_{i \in I} \Theta_i \delta_{x_i}, \quad \theta \geq  S(x, |\nabla v|) + \sum_{i \in I} \theta_i \delta_{x_i}
	\end{align*}
	and
	\begin{align*}
		\min\{\Theta_j^\frac{1}{\mathtt{n}_\varepsilon^\ast(x_j)}, \Theta_j^\frac{1}{\mathtt{m}_0^\ast(x_j)}\}  \leq {\bf C}^\ast \max\{\theta_j^\frac{1}{\mathtt{m}_-(x_j)}, \theta_j^\frac{1}{\mathtt{n}_\varepsilon(x_j)}\}.
	\end{align*}
\end{theorem}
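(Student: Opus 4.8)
The plan is to follow the classical Lions concentration-compactness scheme, but carried out with the modular machinery of $\mathcal{S}$ and $\mathcal{S}^\ast$ in place of $L^{p^\ast}$ norms, using the sub-homogeneous multiplicative bound from Proposition \ref{lower:est-conju} as a substitute for the scaling that is unavailable here. First I would treat the special case $v=0$. Let $\{v_k\}$ converge weakly to $0$ in $W^{1,\mathcal{S}}(\Omega)$ with $\mathcal{S}^\ast(x,|v_k|)\rightharpoonup \Theta$ and $\mathcal{S}(x,|\nabla v_k|)\rightharpoonup\theta$. For $\phi\in C_c^\infty(\mathbb{R}^N)$ one applies the continuous embedding $W^{1,\mathcal{S}}(\Omega)\hookrightarrow L^{\mathcal{S}^\ast}(\Omega)$ from Proposition \ref{imp:embedding} to the product $\phi v_k$: this gives a ``reverse Hölder'' type modular inequality relating $\varrho_{\mathcal{S}^\ast}(\phi v_k)$ to $\varrho_{\mathcal{S}}(\nabla(\phi v_k))$. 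The crucial point is to pass to the limit in this inequality. Since $v_k\to 0$ strongly in $L^{\mathcal{S}}(\Omega)$ on the support of $\phi$ (by the compact embedding, as $\mathcal{S}\ll\mathcal{S}^\ast$), the terms involving $\nabla\phi\, v_k$ vanish in the limit, and one is left with a relation of the form
\begin{align*}
	\left(\text{modular of }\phi\text{ against }\Theta\right)^{1/(\cdot)} \le {\bf C}^\ast \left(\text{modular of }|\phi|\text{ against }\theta\right)^{1/(\cdot)},
\end{align*}
where the exponents are controlled by $\mathtt{m}^\ast_0,\mathtt{n}^\ast_\varepsilon$ on the left and $\mathtt{m}_-,\mathtt{n}_\varepsilon$ on the right, via Lemma \ref{upp-low:est} and Proposition \ref{lower:est-conju}. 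Here the homogeneity substitute is essential: Proposition \ref{lower:est-conju} lets me factor out the $\phi$-dependence as $\mathcal{S}^\ast(x,|\phi(x)||v_k(x)|)\ge \mathcal{S}^\ast(x,|v_k(x)|)\,\mathtt{M}^\ast_\varepsilon(x,|\phi(x)|)$ (after reducing to $|\phi|\ge 1$ by scaling $\phi$), so that testing against $\Theta$ produces the needed power-type bound on the atoms.

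Next I would extract the atomic structure. Testing the limiting inequality with $\phi$ supported in a small ball $B_\rho(x_0)$ and concentrated near $x_0$ yields, in the limit $\rho\to 0$, a pointwise inequality between $\Theta(\{x_0\})$ and $\theta(\{x_0\})$ of exactly the stated form
\begin{align*}
	\min\{\Theta(\{x_0\})^{1/\mathtt{n}^\ast_\varepsilon(x_0)},\,\Theta(\{x_0\})^{1/\mathtt{m}^\ast_0(x_0)}\} \le {\bf C}^\ast \max\{\theta(\{x_0\})^{1/\mathtt{m}_-(x_0)},\,\theta(\{x_0\})^{1/\mathtt{n}_\varepsilon(x_0)}\}.
\end{align*}
In particular, whenever $\Theta(\{x_0\})>0$ one forces $\theta(\{x_0\})>0$; since $\theta$ has finite mass, the set $I:=\{x\in\overline{\Omega}:\Theta(\{x\})>0\}$ is at most countable. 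Writing $\Theta_i:=\Theta(\{x_i\})$ and $\theta_i:=\theta(\{x_i\})$, one gets $\Theta\ge\sum_{i\in I}\Theta_i\delta_{x_i}$. To upgrade this to an equality $\Theta=\sum_{i\in I}\Theta_i\delta_{x_i}$ in the case $v=0$, I would show the absolutely continuous (with respect to Lebesgue measure) part of $\Theta$ vanishes: if it did not, a Lebesgue-point argument combined again with the limiting reverse-Hölder inequality and the fact that $\theta$ is finite would lead to a contradiction, exactly as in Lions' original argument adapted to the Musielak-Orlicz exponents.

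Finally, for the general case I would apply the Brézis-Lieb type Lemma \ref{BL-lemma} to $w_k:=v_k-v$. Since $v_k\to v$ a.e.\ and $v_k\rightharpoonup v$ in $L^{\mathcal{S}^\ast}(\Omega)$ (using that $\mathcal{S}^\ast\in\Delta_2$ so Lemma \ref{BL-lemma} applies to $\mathcal{N}=\mathcal{S}^\ast$, with the splitting estimate \eqref{sum-control} verified through Lemma \ref{upp-low:est} and Proposition \ref{lower:est-conju}), one obtains for every $\phi\in C_c^\infty$ that $\int\mathcal{S}^\ast(x,|v_k|)\phi\,dx-\int\mathcal{S}^\ast(x,|w_k|)\phi\,dx\to\int\mathcal{S}^\ast(x,|v|)\phi\,dx$. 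Hence $\mathcal{S}^\ast(x,|w_k|)\rightharpoonup\tilde\Theta:=\Theta-\mathcal{S}^\ast(x,|v|)$ weakly-$\ast$, and since $w_k\rightharpoonup 0$ in $W^{1,\mathcal{S}}(\Omega)$ with $\mathcal{S}(x,|\nabla w_k|)\rightharpoonup\tilde\theta$ for some finite measure $\tilde\theta\le\theta$ (after passing to a subsequence; one checks $\mathcal{S}(x,|\nabla w_k|)$ stays bounded in $L^1$ because $\mathcal{S}$ satisfies \textnormal{(Dec)} and $\{v_k\}$ is bounded), the case $v=0$ gives $\tilde\Theta=\sum_{i\in I}\tilde\Theta_i\delta_{x_i}$ together with the atom inequality for $\tilde\Theta_i,\tilde\theta_i$. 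Unwinding, $\Theta=\mathcal{S}^\ast(x,|v|)+\sum_{i\in I}\Theta_i\delta_{x_i}$ with $\Theta_i=\tilde\Theta_i$, while $\theta\ge\tilde\theta+\mathcal{S}(x,|\nabla v|)\ge\mathcal{S}(x,|\nabla v|)+\sum_i\tilde\theta_i\delta_{x_i}$ by weak lower semicontinuity of the modular along $\nabla v_k\rightharpoonup\nabla v$, so $\theta_i\ge\tilde\theta_i$ and the inequality between $\Theta_i$ and $\theta_i$ is preserved (the right-hand side is monotone in $\theta_i$). I expect the main obstacle to be the limit passage in the reverse-Hölder inequality for the general (not necessarily normalized) test function $\phi$: because $\mathcal{S}^\ast$ lacks exact homogeneity, one must carefully combine the two-sided modular-norm estimates of Lemma \ref{upp-low:est} with Proposition \ref{lower:est-conju}, keeping track of whether the relevant quantities are below or above $1$, and the logarithmic perturbation forces the extra parameter $\varepsilon$ to appear in the exponents — handling these cases uniformly, and verifying that the resulting constant ${\bf C}^\ast$ is independent of $\phi$ and of the atom, is the delicate part.
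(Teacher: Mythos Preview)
Your overall strategy matches the paper's: reduce to $v=0$ via the Br\'ezis--Lieb Lemma \ref{BL-lemma} for $\mathcal{S}^\ast$, establish a reverse H\"older inequality using the sub-multiplicative bound of Proposition \ref{lower:est-conju} (the paper formalizes this as $\|\phi\|_{\mathtt{M}_\varepsilon^\ast,\nu}\le C\|\phi\|_{\mathtt{M}_\varepsilon,\mu}$ in Lemma \ref{reverse:holder:ineq}), deduce atomicity of $\nu$, and read off the atom inequality by testing with bump functions $\phi_{\gamma,i}$.

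There is, however, a genuine gap in your treatment of the gradient measure. You assert $\theta\ge\tilde\theta+\mathcal{S}(x,|\nabla v|)$, but this does not follow from weak lower semicontinuity: Br\'ezis--Lieb requires a.e.\ convergence, and $\nabla v_k$ need not converge to $\nabla v$ a.e., so no splitting $\mathcal{S}(x,|\nabla v_k|)\approx\mathcal{S}(x,|\nabla w_k|)+\mathcal{S}(x,|\nabla v|)$ is available. Weak lower semicontinuity of the convex modular gives only $\theta\ge\mathcal{S}(x,|\nabla v|)$. The paper instead derives the atom inequality for the pair $(\nu_j,\mu_j)$ with $\mu$ the weak-$\ast$ limit of $\mathcal{S}(x,|\nabla u_k|)$, $u_k=v_k-v$; the numbers $\theta_i$ in the statement are then these $\mu_i$, and one relates atoms of $\mu$ and $\theta$ through the $\Delta_2$ bound $\mathcal{S}(x,|\nabla u_k|)\le C[\mathcal{S}(x,|\nabla v_k|)+\mathcal{S}(x,|\nabla v|)]$ together with the fact that $\mathcal{S}(x,|\nabla v|)\in L^1$ has no atoms, absorbing constants into ${\bf C}^\ast$.

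A secondary point: your ``Lebesgue-point argument'' for atomicity is correct in spirit but underspecified here. Because the reverse H\"older is a Luxemburg-norm inequality with $x$-dependent, log-perturbed exponents rather than a clean $\nu(E)^{1/p^\ast}\le C\mu(E)^{1/p}$, one cannot directly run Lebesgue differentiation. The paper implements the argument through an explicit Radon--Nikodym construction (Lemmas \ref{lem:non-tri:mes} and \ref{lem:countable-dirac}): writing $\nu=\mu\lfloor f$ with $f\in L^\infty_\mu$, decomposing $\mu$ along $\{f=0\}$ and $\{f\neq 0\}$, and building auxiliary measures $\tilde\nu_k=\Pi(x,\tilde g)\chi_{\{\tilde g\le k\}}\nu$ on which the self-reverse-H\"older of Lemma \ref{lem:non-tri:mes} applies after a covering that freezes the exponents. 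This construction is where the real work lies.
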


To prove the above result, first we set $u_k:= v_k-v$ such that $u_k \rightharpoonup 0$ in $W_0^{1, \mathcal{S}}(\Omega)$ and
\begin{align*}
	\nu_k:=S^\ast(x, |u_k|) \rightharpoonup \nu  \ \text{ and }  \ S(x, |\nabla u_k|) \rightharpoonup \mu  \quad\text{weakly-}\ast  \text{ in the sense of measures}
\end{align*}
and prove some crucial Lemmas involving the measures $\mu$ and $\nu$.

\begin{lemma}\label{lem:conv:meas}
	Let $\{\nu_k\}_{k \in \mathbb{N}}$ be a non-negative and finite radon measure in $\Omega$ such that $\nu_k \rightharpoonup \nu  \text{weakly-}\ast  \text{in the sense of measures}$. Then,
	\begin{align*}
		\|\phi\|_{\mathtt{M}_\varepsilon^\ast, \nu_k} \to \|\phi\|_{\mathtt{M}_\varepsilon^\ast, \nu}\quad \text{as }  k \to \infty
	\end{align*}
	for any $\phi \in C(\overline{\Omega})$.
\end{lemma}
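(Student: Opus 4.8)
The plan is to feed the weak-$*$ convergence $\nu_k\rightharpoonup\nu$ into the Luxemburg norm one scaling parameter at a time, and then to squeeze. Write, for a measurable $\psi$ and a finite non-negative Radon measure $\rho$ on $\overline\Omega$, $\varrho_{\mathtt{M}_\varepsilon^\ast,\rho}(\psi):=\int_{\overline\Omega}\mathtt{M}_\varepsilon^\ast(x,|\psi(x)|)\,\mathrm{d}\rho(x)$, so that $\|\psi\|_{\mathtt{M}_\varepsilon^\ast,\rho}=\inf\{\lambda>0\colon \varrho_{\mathtt{M}_\varepsilon^\ast,\rho}(\psi/\lambda)\le 1\}$. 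Two structural facts about $\mathtt{M}_\varepsilon^\ast$ from \eqref{pert-MOF} are needed, and checking them is where the work sits. First, for each fixed $c\ge 0$ the map $x\mapsto \mathtt{M}_\varepsilon^\ast(x,c)$ is continuous on $\overline\Omega$ — it is assembled from the continuous exponents $p^\ast,q^\ast,q$ and the bounded $s$ by compositions, products and a finite minimum — so that for $\phi\in C(\overline\Omega)$ and every $\lambda>0$ the function $\Psi_\lambda:=\mathtt{M}_\varepsilon^\ast(\cdot,|\phi(\cdot)|/\lambda)$ lies in $C(\overline\Omega)$ and is bounded on the compact set $\overline\Omega$. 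Second, $\mathtt{M}_\varepsilon^\ast$ is a generalized weak $\Phi$-function on $\Omega$ satisfying the $\Delta_2$-condition and \textnormal{(aInc)}$_c$ for some $c>1$: each of the finitely many branches of the minimum in \eqref{pert-MOF} is, up to multiplicative constants, a power $z^{\gamma(x)}$ or such a power times a power of $\log(1+z)$, and every exponent $\gamma(x)$ that occurs — essentially $p^\ast(x)$ and $q^\ast(x)(q(x)+s^-(x))/q(x)$ for $z$ near $0$, and $p^\ast(x)$, $q^\ast(x)\pm\varepsilon$ for $z$ near $\infty$ — stays above a constant $c>1$ once $\varepsilon$ is small, precisely because $1<p(x),q(x)<N$ and $q(x)+s(x)\ge r>1$ by \eqref{main:assump}; moreover a pointwise minimum of a finite family of functions enjoying \textnormal{(aInc)}$_c$ and $\Delta_2$ enjoys them again. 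Hence $\|\cdot\|_{\mathtt{M}_\varepsilon^\ast,\rho}$ obeys the norm--modular relation analogously to Propositions \ref{Prop:AbstractNormModular} and \ref{pro:norm-mod:relation}; in particular, for $\psi$ not $\rho$-a.e.\ zero one has $\varrho_{\mathtt{M}_\varepsilon^\ast,\rho}(\psi/\lambda)<1$ for every $\lambda>\|\psi\|_{\mathtt{M}_\varepsilon^\ast,\rho}$, while $\varrho_{\mathtt{M}_\varepsilon^\ast,\rho}(\psi/\lambda)>1$ for every $\lambda\in(0,\|\psi\|_{\mathtt{M}_\varepsilon^\ast,\rho})$ (the latter is immediate from the definition of the infimum, the former is where $\Delta_2$/\textnormal{(aInc)}$_c$ enters).

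With this in hand the proof is short. Fix $\lambda>0$. Since $\Psi_\lambda\in C(\overline\Omega)$ and $\nu_k\rightharpoonup\nu$ weakly-$*$ as measures on $\overline\Omega$,
\begin{align*}
	\varrho_{\mathtt{M}_\varepsilon^\ast,\nu_k}\!\left(\frac{\phi}{\lambda}\right)=\int_{\overline\Omega}\Psi_\lambda\,\mathrm{d}\nu_k\ \longrightarrow\ \int_{\overline\Omega}\Psi_\lambda\,\mathrm{d}\nu=\varrho_{\mathtt{M}_\varepsilon^\ast,\nu}\!\left(\frac{\phi}{\lambda}\right)\qquad\text{as }k\to\infty.
\end{align*}
If $\phi=0$ $\nu$-a.e., the right-hand limit is $0$ for every $\lambda>0$, so $\|\phi\|_{\mathtt{M}_\varepsilon^\ast,\nu_k}\le\lambda$ for $k$ large and all $\lambda>0$, giving $\|\phi\|_{\mathtt{M}_\varepsilon^\ast,\nu_k}\to 0=\|\phi\|_{\mathtt{M}_\varepsilon^\ast,\nu}$. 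Otherwise set $\lambda_0:=\|\phi\|_{\mathtt{M}_\varepsilon^\ast,\nu}>0$. For $\lambda>\lambda_0$ we have $\varrho_{\mathtt{M}_\varepsilon^\ast,\nu}(\phi/\lambda)<1$, hence by the display $\varrho_{\mathtt{M}_\varepsilon^\ast,\nu_k}(\phi/\lambda)\le 1$ for $k$ large, i.e.\ $\|\phi\|_{\mathtt{M}_\varepsilon^\ast,\nu_k}\le\lambda$; letting $\lambda\downarrow\lambda_0$ yields $\limsup_k\|\phi\|_{\mathtt{M}_\varepsilon^\ast,\nu_k}\le\lambda_0$. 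For $0<\lambda<\lambda_0$ we have $\varrho_{\mathtt{M}_\varepsilon^\ast,\nu}(\phi/\lambda)>1$, hence $\varrho_{\mathtt{M}_\varepsilon^\ast,\nu_k}(\phi/\lambda)>1$ for $k$ large, i.e.\ $\|\phi\|_{\mathtt{M}_\varepsilon^\ast,\nu_k}\ge\lambda$; letting $\lambda\uparrow\lambda_0$ yields $\liminf_k\|\phi\|_{\mathtt{M}_\varepsilon^\ast,\nu_k}\ge\lambda_0$. Combining the two one-sided bounds gives $\|\phi\|_{\mathtt{M}_\varepsilon^\ast,\nu_k}\to\lambda_0$, which is the claim.

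I expect the one genuinely delicate step to be the second structural fact: verifying that the piecewise- and minimum-defined $\mathtt{M}_\varepsilon^\ast$ is indeed an increasing weak $\Phi$-function with $\Delta_2$ and $\textnormal{(aInc)}_c$, $c>1$ — so that $\|\cdot\|_{\mathtt{M}_\varepsilon^\ast,\rho}$ is characterized by the level set $\{\varrho_{\mathtt{M}_\varepsilon^\ast,\rho}(\cdot/\lambda)=1\}$ and the strict inequality $\varrho_{\mathtt{M}_\varepsilon^\ast,\nu}(\phi/\lambda)<1$ for $\lambda>\lambda_0$ used in the $\limsup$ estimate is legitimate. This is exactly where the balance hypothesis $q(x)+s(x)\ge r>1$ is indispensable (it keeps every exponent occurring in \eqref{pert-MOF} above $1$), and it is also the place where one has to check the matching of the two branches of $\mathtt{M}_\varepsilon^\ast$ at $z=1$ and the continuity of $\Psi_\lambda$ across $\{s=0\}$. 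Everything after that is the routine transfer of weak-$*$ convergence of measures through the modular to the Luxemburg norm.
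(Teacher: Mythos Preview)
Your argument is precisely the one the paper imports by citing Fern\'andez Bonder--Silva \cite[Lemma~3.1]{Fernandez-Bonder-Silva-2010}: pass the weak-$*$ convergence through the modular at each fixed $\lambda$ via the continuous test function $\Psi_\lambda$, then squeeze using the norm--modular relation coming from \textnormal{(aInc)} and $\Delta_2$. The paper gives no further detail, so you have essentially written out what it leaves implicit.

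One point you flag at the end deserves more care than you give it at the start. You assert outright that $x\mapsto\mathtt{M}_\varepsilon^\ast(x,c)$ is continuous, but for $c>1$ this is not automatic across $\partial\{s<0\}$: as $s(x)\downarrow 0$ the second entry of the minimum in \eqref{pert-MOF} tends to $c^{q^\ast(x)+\varepsilon}$, while as $s(x)\uparrow 0$ it tends to $c^{q^\ast(x)-\varepsilon}$, and these do not agree. Whether $\mathtt{M}_\varepsilon^\ast$ itself remains continuous then depends on whether the $c^{p^\ast(x)}$ branch realises the minimum at such points, which is not guaranteed by the standing hypotheses. The cited reference treats pure variable-exponent modulars $t^{p(x)}$ where no such interface exists, so neither the paper nor your writeup fully closes this gap; it would need either an approximation of $\Psi_\lambda$ by continuous functions, or an argument that the interface carries no mass under the limiting measure. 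Apart from this subtlety---which you at least identify---your squeeze argument is clean and correct.
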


\begin{proof}
	The proof follows by repeating the same arguments as in Fern\'{a}ndez Bonder--Silva \cite[Lemma 3.1]{Fernandez-Bonder-Silva-2010}.
\end{proof}

Next, we prove the following reverse H\"older type inequality between the measures $\mu$ and $\nu$.

\begin{lemma}\label{reverse:holder:ineq}
	Let $\mathtt{M}_\varepsilon^\ast$ and $\mathtt{M}_\varepsilon$ be the functions defined in \eqref{pert-MOF}. Then, for every $\phi \in C^\infty(\overline{\Omega})$ the following inequality holds:
	\begin{equation}\label{reverse:holder}
		\|\phi\|_{\mathtt{M}_\varepsilon^\ast, \nu} \leq C \|\phi\|_{\mathtt{M}_\varepsilon, \mu}.
	\end{equation}
\end{lemma}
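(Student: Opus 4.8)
The plan is to test the continuous Sobolev embedding $W^{1,\mathcal{S}}(\Omega)\hookrightarrow L^{\mathcal{S}^{\ast}}(\Omega)$ from Proposition \ref{imp:embedding} on the perturbed functions $\phi u_k$ (note $\phi u_k\in W^{1,\mathcal{S}}(\Omega)$ since $\phi$ and $\nabla\phi$ are bounded) and to pass to the limit via the weak-$\ast$ convergence $\nu_k=\mathcal{S}^\ast(x,|u_k|)\rightharpoonup\nu$, $\mu_k=\mathcal{S}(x,|\nabla u_k|)\rightharpoonup\mu$ using Lemma \ref{lem:conv:meas}. Since only $|\phi|$ ever appears and since the Luxemburg functionals $\|\cdot\|_{\mathtt{M}_\varepsilon^\ast,\nu}$ and $\|\cdot\|_{\mathtt{M}_\varepsilon,\mu}$ are positively $1$-homogeneous, it suffices to prove, for every fixed $\tau>0$, the modular inequality
\[
\int_\Omega \mathtt{M}_\varepsilon^\ast\!\left(x, \frac{|\phi|}{\tau}\right) \mathrm{d}\nu
\ \le\
C_\varepsilon \max\!\left\{ \left(\int_\Omega \mathtt{M}_\varepsilon\!\left(x, \frac{|\phi|}{\tau}\right)\mathrm{d}\mu\right)^{\theta_1},\ \left(\int_\Omega \mathtt{M}_\varepsilon\!\left(x, \frac{|\phi|}{\tau}\right)\mathrm{d}\mu\right)^{\theta_2}\right\}
\]
for suitable exponents $\theta_1,\theta_2>0$ and a constant $C_\varepsilon$ independent of $\phi$ and $\tau$; inequality \eqref{reverse:holder} then follows by choosing $\tau=K\|\phi\|_{\mathtt{M}_\varepsilon,\mu}$ with $K$ so large that the right-hand side is $\le 1$ (using the scaling of $\mathtt{M}_\varepsilon$) and then invoking the norm--modular dictionary of Propositions \ref{pro:norm-mod:relation} and \ref{Prop:AbstractNormModular}.

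For the left-hand side I would split $\Omega=\{|u_k|\ge 1\}\cup\{|u_k|<1\}$. On $\{|u_k|\ge1\}$ Proposition \ref{lower:est-conju} (applied with $t=|u_k|$, $z=|\phi|/\tau$) yields $\mathtt{M}_\varepsilon^\ast(x,|\phi|/\tau)\,\mathcal{S}^\ast(x,|u_k|)\le\mathcal{S}^\ast(x,|\phi u_k|/\tau)$, hence this part is bounded by $\varrho_{\mathcal{S}^\ast}(\phi u_k/\tau)$. On $\{|u_k|<1\}$ the factor $\mathtt{M}_\varepsilon^\ast(x,|\phi|/\tau)$ is bounded by $\sup_x\mathtt{M}_\varepsilon^\ast(x,\|\phi\|_\infty/\tau)<\infty$, while $\int_{\{|u_k|<1\}}\mathcal{S}^\ast(x,|u_k|)\,\mathrm{d}x\le C\int_\Omega|u_k|^{\sigma}\,\mathrm{d}x\to 0$ for a subcritical Lebesgue exponent $\sigma$ (because $\mathcal{S}^\ast(x,t)\le Ct^{(\mathtt{m}_-^\ast)^-}$ for $t\le1$ and by the compact embeddings of Propositions \ref{sobolevembed:1}(iii) / \ref{imp:embedding} together with $u_k\rightharpoonup 0$). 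Thus $\int_\Omega \mathtt{M}_\varepsilon^\ast(x,|\phi|/\tau)\,\mathrm{d}\nu_k\le \varrho_{\mathcal{S}^\ast}(\phi u_k/\tau)+o_k(1)$ for each fixed $\tau$.

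For the right-hand side, the embedding gives $\|\phi u_k\|_{\mathcal{S}^\ast}\le c_S\|\phi u_k\|_{1,\mathcal{S}}\le c_S(\|\phi\nabla u_k\|_{\mathcal{S}}+\|u_k\nabla\phi\|_{\mathcal{S}}+\|\phi u_k\|_{\mathcal{S}})$; the last two terms tend to $0$ since $\nabla\phi,\phi$ are bounded and $u_k\to 0$ in a suitable lower-order (Musielak--Orlicz, resp. Lebesgue) space by the compact embedding. The leading term $\|\phi\nabla u_k\|_{\mathcal{S}}$ is controlled through $\int_\Omega\mathtt{M}_\varepsilon(x,|\phi|/\rho)\,\mathrm{d}\mu_k$ using the one-sided sub-multiplicativity $\mathcal{S}(x,tz)\le C_\varepsilon\mathtt{M}_\varepsilon(x,t)\mathcal{S}(x,z)$ (a consequence of Lemma \ref{upp-low:est}(i)--(ii)) together with Propositions \ref{Prop:AbstractNormModular}, \ref{Prop:AbstractoneNormModular}, \ref{pro:norm-mod:relation} to translate between modulars and norms; composing these conversions with the embedding is exactly what produces the powers $\theta_1,\theta_2$. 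Letting $k\to\infty$ with $\tau$ fixed, Lemma \ref{lem:conv:meas} turns $\int\mathtt{M}_\varepsilon^\ast(x,|\phi|/\tau)\,\mathrm{d}\nu_k\to\int\mathtt{M}_\varepsilon^\ast(x,|\phi|/\tau)\,\mathrm{d}\nu$ and $\int\mathtt{M}_\varepsilon(x,|\phi|/\tau)\,\mathrm{d}\mu_k\to\int\mathtt{M}_\varepsilon(x,|\phi|/\tau)\,\mathrm{d}\mu$ (the integrands being continuous for $\phi\in C^\infty(\overline\Omega)$ under \eqref{main:assump-1}), and all remainders vanish, giving the displayed modular inequality.

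The main obstacle I expect is the bookkeeping: chaining the norm--modular conversions (Propositions \ref{pro:norm-mod:relation}, \ref{Prop:AbstractNormModular}, \ref{Prop:AbstractoneNormModular}) with the \emph{one-sided} sub-multiplicativity estimates (Lemma \ref{upp-low:est}, Proposition \ref{lower:est-conju}) while keeping every constant independent of $k$ and $\phi$, and checking rigorously that the auxiliary term on $\{|u_k|<1\}$ and the cross terms in $\nabla(\phi u_k)$ genuinely converge to zero — this is precisely where the sign change of $s(\cdot)$ and the logarithmic perturbation, encoded in $\mathtt{M}_\varepsilon^\ast$ and $\mathtt{M}_\varepsilon$, have to be tracked carefully.
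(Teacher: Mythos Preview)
Your plan is correct and uses exactly the same ingredients as the paper's proof: the Sobolev embedding applied to $\phi u_k$, the lower sub-multiplicativity of $\mathcal{S}^\ast$ from Proposition~\ref{lower:est-conju} on $\{|u_k|\ge 1\}$, the vanishing of the remainder on $\{|u_k|<1\}$ via compactness, the upper sub-multiplicativity of $\mathcal{S}$ from Lemma~\ref{upp-low:est} for the gradient term, and passage to the limit through Lemma~\ref{lem:conv:meas}. The only tactical difference is that the paper avoids your extra exponents $\theta_1,\theta_2$ by working directly at the norm scale---setting $\delta_k=\|\phi\|_{\mathtt{M}_\varepsilon^\ast,\nu_k}$ and $\lambda_k=\|\phi\nabla u_k\|_{\mathcal{S}}$ so that the relevant modulars equal $1$---and handles the intermediate range $\lambda_k<|\phi|\le K_\varepsilon\lambda_k$ (where only $\mathtt{n}_+$ scaling is available) by a Young splitting that absorbs a small multiple of $\sup_k\varrho_{\mathcal{S}}(\nabla u_k)$; this is precisely the bookkeeping shortcut that bypasses the obstacle you flagged.
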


\begin{proof}
	Let $\phi \in C^\infty(\overline{\Omega})$. It is easy to see that $\phi u \in W_0^{1, \mathcal{S}}(\Omega)$ for every $u \in W_0^{1, \mathcal{S}}(\Omega)$. Using the Sobolev embedding in Proposition \ref{imp:embedding}, we obtain
	\begin{equation}\label{sobo:est-1}
		\|\phi u_k\|_{\mathcal{S}^\ast} \leq C \|\nabla (\phi u_k)\|_{\mathcal{S}}.
	\end{equation}
	Set $\delta_k= \|\phi\|_{\mathtt{M}_\varepsilon^\ast, \nu_k}$ for $k \in \mathbb{N}$ and $\delta:= \|\phi\|_{\mathtt{M}_\varepsilon^\ast, \nu}$. Then, by Lemma \ref{lem:conv:meas}, it holds
	\begin{align*}
		\lim_{k \to \infty} \delta_k =\delta.
	\end{align*}
	If $\delta =0$, \eqref{reverse:holder} holds. Without any loss of generality, we can assume that $\delta_k \neq 0$ for $k \in \mathbb{N}$. Using Proposition \ref{lower:est-conju}, we have
	\begin{equation}\label{sobo:est-2}
		\begin{aligned}
			\int_{\Omega} \mathcal{S}^\ast\left(x, \frac{|\phi u_k|}{\delta_k}\right) \,\mathrm{d}x & \geq \int_{|u_k| > 1} \mathtt{M}_\varepsilon^\ast\left(x, \frac{|\phi|}{\delta_k}\right) \mathcal{S}^\ast(x, |u_k|)    \,\mathrm{d}x  \\
			& = \int_{\Omega} \mathtt{M}_\varepsilon^\ast\left(x, \frac{|\phi|}{\delta_k}\right) \,\mathrm{d}\nu_k  -  \int_{|u_k| \leq 1} \mathtt{M}_\varepsilon^\ast\left(x, \frac{|\phi|}{\delta_k}\right) \mathcal{S}^\ast(x, |u_k|)    \,\mathrm{d}x \\
			& =  1 -  \int_{|u_k| \leq 1} \mathtt{M}_\varepsilon^\ast\left(x, \frac{|\phi|}{\delta_k}\right) \mathcal{S}^\ast(x, |u_k|)    \,\mathrm{d}x.
		\end{aligned}
	\end{equation}
	It is easy that
	\begin{equation}\label{upper:control:est}
		\int_{|u_k| \leq 1}  \mathtt{M}_\varepsilon^\ast\left(x, \frac{|\phi|}{\delta_k}\right) \mathcal{S}^\ast(x, |u_k|)   \,\mathrm{d}x \leq  \int_{\Omega} \mathtt{M}_\varepsilon^\ast\left(x, \frac{|\phi|}{\delta_k}\right) \mathcal{S}^\ast(x, |u_k|)    \,\mathrm{d}x = \left\|\frac{\phi}{\delta_k}\right\|_{\mathtt{M}_\varepsilon^\ast, \nu_k} =1.
	\end{equation}
	Applying Vitali's convergence theorem in light of \eqref{upper:control:est} and $u_k \to 0$ a.e.\,in $\Omega$, we deduce that
	\begin{equation}\label{sobo:est-3}
		\lim_{k \to \infty} \int_{|u_k| \leq 1} \mathtt{M}_\varepsilon^\ast\left(x, \frac{|\phi|}{\delta_k}\right)\mathcal{S}^\ast(x, |u_k|)    \,\mathrm{d}x = 0.
	\end{equation}
	Now, by using \eqref{sobo:est-3} in \eqref{sobo:est-2} and Lemma \ref{upp-low:est}, there exists $k_0$ (independent of the choice of $\varphi$) such that for $k \geq k_0$
	\begin{equation}\label{sobo:est-6}
		\int_{\Omega} \mathcal{S}^\ast\left(x, \frac{|\phi u_k|}{\delta_k}\right) \,\mathrm{d}x \geq  \frac{1}{2}.
	\end{equation}
	Applying Proposition \ref{pro:norm-mod:relation} \textnormal{{(ii)}}, Lemma \ref{upp-low:est} and passing $k \to \infty$ in \eqref{sobo:est-6} gives
	\begin{equation}\label{sobo:est-new}
		\lim\inf_{k \to \infty} \|\phi u_k\|_{\mathcal{S}^\ast} \geq C \lim\inf_{k \to \infty} \|\phi\|_{\mathtt{M}_\varepsilon^\ast, \nu_k} = C \|\phi\|_{\mathtt{M}_\varepsilon^\ast, \nu}
	\end{equation}
	for some $C>0$ independent of $k$. Next, we set $\lambda_k=\|\phi \nabla u_k\|_{\mathcal{S}}$ and
	\begin{align*}
		M:= 1 + \sup_{k \in \mathbb{N}} \int_{\Omega} \mathcal{S}\left(x, |\nabla u_k|\right) \,\mathrm{d}x.
	\end{align*}
	Because of Proposition \ref{imp:embedding} and the fact that $u_k \rightharpoonup 0$ in $W_0^{1, \mathcal{S}}(\Omega)$, we have $u_k \to 0$ in $L^\mathcal{S}(\Omega)$.
	From Lemma \ref{upp-low:est} $\textnormal{(ii)}$, we know that for $\varepsilon>0$ and $z>0$, there exists $K_\varepsilon>0$ such that $\mathcal{S}(x, tz) \leq t^{\mathtt{n}_\varepsilon(x)} \mathcal{S}(x,z)$ for $(x,t) \in \Omega \times (K_\varepsilon, \infty)$. Therefore, applying Lemma \ref{upp-low:est} (i), (ii) and Young's inequality, it follows that
	\begin{align*}
		1 &= \int_{\Omega} \mathcal{S}\left(x, \frac{|\nabla u_k| |\phi|}{\lambda_k}\right) \,\mathrm{d}x\\
		 & = \int_{\Omega \cap \{|\phi| \leq \lambda_k\}} \mathcal{S}\left(x, \frac{|\nabla u_k| |\phi|}{\lambda_k}\right) \,\mathrm{d}x\\
		 &\quad + \int_{\Omega \cap \{\lambda_k < |\phi| \leq \lambda_k K_\varepsilon\}} \mathcal{S}\left(x, \frac{|\nabla u_k| |\phi|}{\lambda_k}\right) \,\mathrm{d}x  + \int_{\Omega \cap \{|\phi| > \lambda_k K_\varepsilon\}} \mathcal{S}\left(x, \frac{|\nabla u_k| |\phi|}{\lambda_k}\right) \,\mathrm{d}x  \\
		& \leq  \int_{\Omega \cap \{|\phi| \leq \lambda_k,\ |\phi|> \lambda_k K_\varepsilon\}} \mathcal{S}\left(x, |\nabla u_k|\right) \mathtt{M}_{\varepsilon}\left(x, \frac{|\phi|}{\lambda_k}\right) \,\mathrm{d}x \\
		& \quad \qquad + \int_{\Omega \cap \{\lambda_k < |\phi| \leq \lambda_k K_\varepsilon\}} \left( \frac{|\phi|}{\lambda_k}\right)^{\mathtt{n}_+(x)} \mathcal{S}\left(x, |\nabla u_k|\right) \,\mathrm{d}x  \\
		& \leq  \frac{1}{2M} \int_{\Omega} \mathcal{S}\left(x, |\nabla u_k|\right) \,\mathrm{d}x + C(\varepsilon, M) \int_{\Omega} \mathtt{M}_{\varepsilon}\left(x, \frac{|\phi|}{\lambda_k}\right) \,\mathrm{d}\nu_k \\
		& \leq \frac{1}{2} +  C(\varepsilon, M) \int_{\Omega} \mathtt{M}_{\varepsilon}\left(x, \frac{|\phi|}{\lambda_k}\right) \,\mathrm{d}\nu_k.
	\end{align*}
	Again, applying Proposition \ref{pro:norm-mod:relation} \textnormal{{(ii)}} and  passing to the  $\lim\sup_{k \to \infty}$ in the above estimate, we obtain
	\begin{equation}\label{sobo:est-4}
		\lim\sup_{k \to \infty} \|\phi \nabla u_k\|_{\mathcal{S}} \leq C \|\phi\|_{\mathtt{M}_{\varepsilon}, \nu}.
	\end{equation}
	Note that,
	\begin{align*}
		\|\nabla (\phi u_k)\|_{\mathcal{S}} \leq \|u_k \nabla \phi\|_{\mathcal{S}} + \| \phi \nabla u_k\|_{\mathcal{S}}.
	\end{align*}
	Since $u_k \to 0$ in $L^{\mathcal{S}}(\Omega)$, we have $|u_k \nabla \phi| \to 0$ in $L^{\mathcal{S}}(\Omega)$. Now, by passing to the $\lim\sup\limits_{k \to \infty}$ in the above inequality and using \eqref{sobo:est-4}, we get
	\begin{equation}\label{sobo:est-5}
		\lim\sup_{k \to \infty}  \|\nabla (\phi u_k)\|_{\mathcal{S}} \leq C \|\phi\|_{\mathtt{M}_{\varepsilon}, \nu}.
	\end{equation}
	Finally, combining \eqref{sobo:est-1},\eqref{sobo:est-new} and \eqref{sobo:est-5}, we complete the proof.
\end{proof}

\begin{remark}\label{control:rema}
	The continuity of $p$ and $q$ and the oscillation condition \eqref{main:assump-1}\textnormal{(ii)} imply that $\mathtt{M}_\varepsilon \ll \mathtt{M}_\varepsilon^\ast$ for $\varepsilon$ small enough. In order to see this, observe that the following inequality
	\begin{equation}\label{lower:compa}
		\mathtt{M}_\varepsilon^\ast(x, z) \geq C z^{\mathtt{m}_{-\varepsilon}^\ast(x)} \quad \text{for }(x,z) \in \Omega \times (1, \infty)
	\end{equation}
	holds for some $C>1.$ In light of the above inequality, to prove our claim it is enough to show that $ \mathtt{n}_\varepsilon(x) < \mathtt{m}_{-\varepsilon}^\ast(x)$ for all $x \in \overline{\Omega}$ and for some $\varepsilon>0$.
	Then, by using the continuity of $p$ and $q$ in $\overline{\Omega}$, we can find $\varepsilon > 0$  such that
	\begin{align*}
		\frac{\mathtt{n}_\varepsilon(x)}{\mathtt{m}_{-\varepsilon}(x)} =\frac{\max\{p(x), q(x) + \varepsilon\}}{\min\{p(x), q(x)-\varepsilon\}} < 1+ \frac{1}{N} \quad \text{in }  \overline{\Omega}.
	\end{align*}
	This gives
	\begin{equation}\label{control:est}
			\mathtt{n}_\varepsilon(x) < \mathtt{m}_{-\varepsilon}(x) + \frac{\mathtt{m}_{-\varepsilon}(x)}{N} < \frac{N \mathtt{m}_{-\varepsilon}(x)}{N-\mathtt{m}_{-\varepsilon}(x)} = \mathtt{m}_{-\varepsilon}^\ast(x) \quad \text{in }  \overline{\Omega}.
	\end{equation}
\end{remark}

\begin{lemma}\label{lem:non-tri:mes}
	Let $\nu$ be a non-negative, bounded Borel measure and
	\begin{equation}\label{control:measure}
		\|\phi\|_{\mathtt{M}_\varepsilon^\ast, \nu} \leq C \|\phi\|_{\mathtt{M}_\varepsilon, \nu}
	\end{equation}
	for every $\phi \in C_c^\infty(\Omega)$ and for some constant $C>0$. Then, the following hold:
	\begin{enumerate}
		\item[\textnormal{(i)}]
			There exists $\delta>0$ such that for all Borel sets $U \subset \overline{\Omega}$, either $\nu(U)=0$ or $\nu(U) \geq \delta$.
		\item[\textnormal{(ii)}]
			There exists a countable index set $I$, scalars $\{\nu_i\}_{i \in I}$ and points $\{x_i\}_{i \in I}$ such that
			\begin{align*}
				\nu= \sum_{i \in I} \nu_i \delta_{x_i}.
			\end{align*}
	\end{enumerate}
\end{lemma}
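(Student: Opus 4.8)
The plan is to argue by localization, in the spirit of Lions \cite{Lions-1985} and Fern\'andez Bonder--Silva \cite{Fernandez-Bonder-Silva-2010} (see also Ha--Ho \cite{Ha-Ho-2024}), exploiting the pointwise strict inequality $\mathtt{n}_\varepsilon(x)<\mathtt{m}_{-\varepsilon}^\ast(x)$ established in \eqref{control:est}. First I would upgrade the hypothesis \eqref{control:measure} from smooth functions to indicator functions: given a Borel set $U\subset\overline\Omega$ with $\nu(U)<\infty$, the inner and outer regularity of the finite Borel measure $\nu$ provides $\phi_j\in C_c^\infty(\Omega)$ with $0\le\phi_j\le 1$, $\phi_j\equiv 1$ on a compact $K_j\subset U$ and $\operatorname{supp}\phi_j$ contained in an open set $O_j\supset K_j$ with $\nu(O_j\setminus K_j)\to 0$; since $\mathtt{M}_\varepsilon$ and $\mathtt{M}_\varepsilon^\ast$ are non-decreasing in the second variable and satisfy the $\Delta_2$-condition, dominated convergence yields $\varrho_{\mathtt{M}_\varepsilon,\nu}(\phi_j/\lambda)\to\varrho_{\mathtt{M}_\varepsilon,\nu}(\chi_U/\lambda)$ and likewise for $\mathtt{M}_\varepsilon^\ast$, so the Luxemburg norms converge and \eqref{control:measure} persists with $\phi=\chi_U$. (A point $x_0\in\partial\Omega$ is handled by a routine reflection/extension argument, so below I may take all sets compactly contained in $\Omega$.)

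For part (i), fix $x_0\in\Omega$. By \eqref{control:est} and the continuity of $p,q$ there is $r>0$ with
\[
\tau_0:=\sup_{B_{2r}(x_0)}\mathtt{n}_\varepsilon \;<\; \sigma_0:=\inf_{B_{2r}(x_0)}\mathtt{m}_{-\varepsilon}^\ast .
\]
Let $U\subset B_r(x_0)$ be Borel with $0<\nu(U)\le 1$. From \eqref{pert-MOF} and \eqref{critical-MOf} one has $\mathtt{M}_\varepsilon(x,1)=1$ and $\mathtt{M}_\varepsilon^\ast(x,1)\le 1$, hence $\varrho_{\mathtt{M}_\varepsilon,\nu}(\chi_U)=\nu(U)\le1$ and $\varrho_{\mathtt{M}_\varepsilon^\ast,\nu}(\chi_U)\le\nu(U)\le1$, so $\mu_1:=\|\chi_U\|_{\mathtt{M}_\varepsilon,\nu}\le1$ and $\lambda_1:=\|\chi_U\|_{\mathtt{M}_\varepsilon^\ast,\nu}\le1$. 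Using the norm--modular relation (as in Proposition \ref{pro:norm-mod:relation}), $\mathtt{M}_\varepsilon(x,t)=t^{\mathtt{n}_\varepsilon(x)}$ for $t\ge1$, and the lower bound \eqref{lower:compa}, i.e.\ $\mathtt{M}_\varepsilon^\ast(x,t)\ge C t^{\mathtt{m}_{-\varepsilon}^\ast(x)}$ for $t>1$, I obtain
\[
1=\int_U \mu_1^{-\mathtt{n}_\varepsilon(x)}\,\mathrm{d}\nu\le\mu_1^{-\tau_0}\nu(U),\qquad 1=\int_U \mathtt{M}_\varepsilon^\ast(x,\lambda_1^{-1})\,\mathrm{d}\nu\ge C\lambda_1^{-\sigma_0}\nu(U),
\]
whence $\|\chi_U\|_{\mathtt{M}_\varepsilon,\nu}\le\nu(U)^{1/\tau_0}$ and $\|\chi_U\|_{\mathtt{M}_\varepsilon^\ast,\nu}\ge(C\nu(U))^{1/\sigma_0}$. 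Inserting this into \eqref{control:measure} for $\chi_U$ gives $(C\nu(U))^{1/\sigma_0}\le C'\nu(U)^{1/\tau_0}$, i.e.\ $\nu(U)^{1/\tau_0-1/\sigma_0}\ge c_{x_0}>0$; since $\tau_0<\sigma_0$ the exponent is positive, so $\nu(U)\ge\delta_{x_0}:=\min\{1,c_{x_0}^{\sigma_0\tau_0/(\sigma_0-\tau_0)}\}>0$. Thus to each $x_0$ corresponds a ball $B_{r_{x_0}}(x_0)$ and $\delta_{x_0}>0$ with $\nu(U)\in\{0\}\cup[\delta_{x_0},\infty)$ for every Borel $U\subset B_{r_{x_0}}(x_0)$. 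Covering the compact set $\overline\Omega$ by finitely many such balls $B_{r_{x_i}}(x_i)$, $i=1,\dots,m$, and putting $\delta:=\min_i\delta_{x_i}$, any Borel $U\subset\overline\Omega$ with $\nu(U)>0$ has $\nu(U\cap B_{r_{x_j}}(x_j))>0$ for some $j$, so $\nu(U)\ge\nu(U\cap B_{r_{x_j}}(x_j))\ge\delta_{x_j}\ge\delta$. This proves (i).

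For part (ii), since $\nu$ is finite and, by (i), each point mass is $0$ or $\ge\delta$, $\nu$ has only finitely many atoms $x_1,\dots,x_m$. Set $\widetilde\nu:=\nu-\sum_{i=1}^m\nu(\{x_i\})\delta_{x_i}\ge0$; this is non-atomic, and by (i) every Borel $V\subset\overline\Omega$ disjoint from $\{x_1,\dots,x_m\}$ satisfies $\widetilde\nu(V)=\nu(V)\in\{0\}\cup[\delta,\infty)$. If $\widetilde\nu\neq0$, then $\widetilde\nu(\overline\Omega\setminus\{x_1,\dots,x_m\})>0$ and, the range of a non-atomic finite measure being an interval, there is a Borel set $V$ disjoint from $\{x_1,\dots,x_m\}$ with $0<\widetilde\nu(V)<\delta$, i.e.\ $0<\nu(V)<\delta$, contradicting (i). Hence $\widetilde\nu=0$ and $\nu=\sum_{i\in I}\nu_i\delta_{x_i}$ with $I=\{1,\dots,m\}$ (in particular countable) and $\nu_i=\nu(\{x_i\})\ge\delta>0$.

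The main obstacle is the localization step in (i): the hypothesis only compares the two Luxemburg norms against the \emph{same} measure, and there is no global ordering between the largest exponent controlling $\mathtt{M}_\varepsilon$ and the smallest one controlling $\mathtt{M}_\varepsilon^\ast$, so the contradiction can only be produced on small balls where these exponents barely oscillate and the pointwise gap $\mathtt{n}_\varepsilon(x)<\mathtt{m}_{-\varepsilon}^\ast(x)$ of \eqref{control:est} is available; carefully tracking the logarithmic factor in $\mathtt{M}_\varepsilon^\ast$ via \eqref{lower:compa} and then patching the ball-by-ball estimates through the compactness of $\overline\Omega$ are the technical points that require care.
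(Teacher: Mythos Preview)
Your proof is correct and follows essentially the same route as the paper: extend \eqref{control:measure} to characteristic functions, use the pointwise gap \eqref{control:est} to produce a finite cover of $\overline\Omega$ by sets on which $\sup\mathtt{n}_\varepsilon<\inf\mathtt{m}_{-\varepsilon}^\ast$, and on each piece combine the norm--modular relations with the lower bound \eqref{lower:compa} to force $\nu(U)\ge\delta$. The only notable difference is that you give an explicit argument for part~(ii) via Sierpi\'nski's theorem on the range of non-atomic measures, whereas the paper's proof addresses only (i) and leaves (ii) as an immediate consequence.
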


\begin{proof}
	It is easy to see that the inequality \eqref{control:measure} holds for characteristic functions on Borel sets. Since $\Omega$ is bounded, $p,q \in C(\overline{\Omega})$ and \eqref{control:est} holds true, there exists a finite cover $\{\Omega_i\}_{i \in I}$ of $\overline{\Omega}$ such that
	\begin{equation}\label{eq:parti:est}
		m_i:= \min_{\overline{\Omega}_i} \mathtt{m}_{-\varepsilon}^\ast(x)  > n_i:= \max_{\overline{\Omega}_i} \mathtt{n}_\varepsilon(x) \quad \text{for all }  i \in I.
	\end{equation}
	For any $U \subset \Omega$ being a Borel set, denote $
		V_i:= \Omega_{i} \cap U$ for $i \in I$.
	If $\nu(U) \geq 1$, we are done. Suppose $\nu(U) < 1$. By taking $\phi(x)= \chi_{V_i}(x) \nu(V_i)^\frac{-1}{m_i}$ for $i \in I$ in \eqref{control:measure} and using \eqref{lower:compa}, we deduce that
	\begin{align*}
		\int_{\Omega} \mathtt{M}_\varepsilon^\ast(x, \phi) \,\mathrm{d}\nu \geq C \int_{\Omega} \left(\chi_{V_i}(x) \nu(V_i)^\frac{-1}{m_i}\right)^{\mathtt{m}_{-\varepsilon}^\ast(x)} \,\mathrm{d}\nu \geq C \int_{\Omega} \left(\chi_{V_i}(x) \nu(V_i)^\frac{-1}{\mathtt{m}_{-\varepsilon}^\ast(x)}\right)^{\mathtt{m}_{-\varepsilon}^\ast(x)} \,\mathrm{d}\nu = C >1.
	\end{align*}
	It follows that $\|\chi_{V_i}\|_{\mathtt{M}_\varepsilon^\ast, \nu} \geq C \nu(V_i)^\frac{1}{m_i}$. Analogously, we get $\|\chi_{V_i}\|_{\mathtt{M}_\varepsilon, \nu} \leq \nu(V_i)^\frac{1}{n_i}$ for $i \in I$. Combining the above estimates with \eqref{control:measure}, we arrive at
	\begin{align*}
		\nu(V_i)^\frac{1}{m_i} \leq C_1 \nu(V_i)^\frac{1}{n_i}.
	\end{align*}
	From \eqref{eq:parti:est}, we know that $m_i > n_i$ for every $i \in I$, then either $\nu(V_i) =0$ or
	\begin{align*}
		\nu(V_i) \geq \max\left\{1, C_1^\frac{-m_i n_i}{m_i-n_i}\right\}.
	\end{align*}
	Therefore, either $\nu(U)=0$ or
	\begin{align*}
		\nu(U) = \sum_{i \in I} \nu(V_i)
		\geq \sum_{i \in I} \max\{1, C_1^\frac{-m_i n_i}{m_i-n_i}\}:= \delta.
	\end{align*}
	Note that the constant $\delta$ is independent from the choice $U$.
\end{proof}

\begin{lemma}\label{lem:countable-dirac}
	Let $\mathtt{M}_\varepsilon^\ast$ and $\mathtt{M}_\varepsilon$ be the functions defined in \eqref{pert-MOF}. Then, there exist an at most countable index set $I$, families $\{x_i\}_{i \in I}$ of distinct points in $\Omega$ and scalars $\nu_{i} \in (0, \infty)$, such that
	\begin{align*}
		\nu = \sum_{ i \in I} \nu_{i} \delta_{x_i}.
	\end{align*}
\end{lemma}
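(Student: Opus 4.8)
Recall that by construction $u_k=v_k-v\rightharpoonup 0$ in $W_0^{1,\mathcal{S}}(\Omega)$, that $\nu_k=\mathcal{S}^\ast(x,|u_k|)\rightharpoonup\nu$ and $\mathcal{S}(x,|\nabla u_k|)\rightharpoonup\mu$ weakly-$\ast$ as measures, and that $\nu$, $\mu$ are non-negative and finite (the latter since $\{v_k\}$ is bounded in $W^{1,\mathcal{S}}(\Omega)$, so $\varrho_{\mathcal{S}}(\nabla v_k)$ and $\int_\Omega\mathcal{S}^\ast(x,|v_k|)\,\mathrm dx$ stay bounded by Propositions \ref{Prop:oneHlogModularNorm} and \ref{imp:embedding}, and the same for $u_k$). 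The plan is to feed the reverse H\"older inequality of Lemma \ref{reverse:holder:ineq}, namely $\|\phi\|_{\mathtt{M}_\varepsilon^\ast,\nu}\le C\|\phi\|_{\mathtt{M}_\varepsilon,\mu}$ for $\phi\in C^\infty(\overline\Omega)$, into the dichotomy machinery already used in the proof of Lemma \ref{lem:non-tri:mes}. As in that proof, this inequality extends to $\phi=\chi_U$ for any Borel set $U\subseteq\overline\Omega$ by approximation and the $\Delta_2$-property of $\mathtt{M}_\varepsilon$ and $\mathtt{M}_\varepsilon^\ast$, and, since $p,q\in C(\overline\Omega)$ and \eqref{control:est} holds, we may fix a finite Borel partition $\{\Omega_i\}_{i}$ of $\overline\Omega$ such that $m_i:=\min_{\overline\Omega_i}\mathtt{m}_{-\varepsilon}^\ast(x)>n_i:=\max_{\overline\Omega_i}\mathtt{n}_\varepsilon(x)$ for every $i$.

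The first step is to convert the reverse H\"older inequality into a superlinear comparison between $\nu$ and $\mu$. For a Borel set $U$ write $V_i:=U\cap\Omega_i$. Testing with $\chi_{V_i}$, estimating $\varrho_{\mathtt{M}_\varepsilon^\ast,\nu}(\chi_{V_i}/\lambda)$ from below with the bound $\mathtt{M}_\varepsilon^\ast(x,z)\ge C_0 z^{\mathtt{m}_{-\varepsilon}^\ast(x)}$ from \eqref{lower:compa} (valid for $z>1$, $C_0>1$) and $\varrho_{\mathtt{M}_\varepsilon,\mu}(\chi_{V_i}/\lambda)$ from above with the explicit form $\mathtt{M}_\varepsilon(x,z)=z^{\mathtt{n}_\varepsilon(x)}$ for $z\ge1$, one gets $\|\chi_{V_i}\|_{\mathtt{M}_\varepsilon^\ast,\nu}\ge(C_0\nu(V_i))^{1/m_i}$ whenever $C_0\nu(V_i)\le1$ and $\|\chi_{V_i}\|_{\mathtt{M}_\varepsilon,\mu}\le\mu(V_i)^{1/n_i}$ whenever $\mu(V_i)\le1$; combining with Lemma \ref{reverse:holder:ineq} yields
\begin{align*}
	\nu(V_i)\le C_1\,\mu(V_i)^{\gamma_i},\qquad \gamma_i:=\frac{m_i}{n_i}>1,
\end{align*}
in that regime, while $\mu(V_i)=0$ forces $\|\chi_{V_i}\|_{\mathtt{M}_\varepsilon,\mu}=0$, hence $\|\chi_{V_i}\|_{\mathtt{M}_\varepsilon^\ast,\nu}=0$, hence $\nu(V_i)=0$. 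Summing over $i$ gives in particular $\nu\ll\mu$, so every atom of $\nu$ is an atom of $\mu$; setting $\kappa:=\min_i\gamma_i>1$ we also record the local control $\nu(U)\le C_2\,\mu(U)^{\kappa}$ for all small Borel sets $U$.

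Finally, let $\mathcal{A}=\{x_i\}_{i\in I_0}$ be the set of atoms of $\mu$, which is at most countable because $\mu$ is finite. I claim $\nu(\overline\Omega\setminus\mathcal{A})=0$, which immediately gives $\nu=\sum_{i\in I_0}\nu(\{x_i\})\delta_{x_i}$, and discarding the indices with $\nu(\{x_i\})=0$ finishes the proof with $I=\{i\in I_0:\nu_i>0\}$ and $\nu_i:=\nu(\{x_i\})\in(0,\infty)$. To prove the claim, note that the restrictions of $\mu$ and $\nu$ to $\overline\Omega\setminus\mathcal{A}$ are finite and non-atomic (for $\nu$ use $\nu\ll\mu$), so by Sierpi\'nski's theorem applied to $\mu+\nu$, for any $\epsilon\in(0,\min\{1,1/C_0\})$ one can partition $\overline\Omega\setminus\mathcal{A}$ into finitely many Borel pieces $E_1,\dots,E_M$ with $\mu(E_j)+\nu(E_j)<\epsilon$. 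Splitting each $E_j$ by the cover $\{\Omega_i\}$, the regime condition above holds, so $\nu(E_j)\le\sum_i C_1\mu(E_j\cap\Omega_i)^{\gamma_i}\le \#\{\Omega_i\}\,C_1\,\epsilon^{\kappa-1}\mu(E_j)$; summing in $j$ gives $\nu(\overline\Omega\setminus\mathcal{A})\le \#\{\Omega_i\}\,C_1\,\epsilon^{\kappa-1}\mu(\overline\Omega)\to0$ as $\epsilon\to0$. (Alternatively, one can invoke the Radon--Nikodym theorem together with the Besicovitch differentiation theorem: $\nu=g\,\mu$ with $g(x)=\lim_{r\to0}\nu(B_r(x))/\mu(B_r(x))\le C_2\lim_{r\to0}\mu(B_r(x))^{\kappa-1}=0$ at $\mu$-a.e.\ point which is not an atom of $\mu$.) The main obstacle is the second step: because the Luxemburg norms attached to $\mathtt{M}_\varepsilon$, $\mathtt{M}_\varepsilon^\ast$ are not homogeneous, the pointwise gap $\mathtt{m}_{-\varepsilon}^\ast(x)>\mathtt{n}_\varepsilon(x)$ must first be turned into a \emph{uniform} exponent gap via the finite cover, and only then can one use $\kappa>1$ to annihilate the non-atomic part of $\nu$.
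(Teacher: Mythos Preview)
Your argument is correct and takes a genuinely different, more direct route than the paper. The paper follows the classical Lions template: after establishing $\nu\ll\mu$ via the reverse H\"older inequality, it writes $\nu=f\mu$ and $\mu=\tilde g\nu+\mu_0$ (two Radon--Nikodym steps), then constructs auxiliary functions $\Phi,\Pi$ tailored so that testing with $\phi=\Phi(\tilde g)\psi\chi_{\{\tilde g\le k\}}$ turns the reverse H\"older inequality between $\nu$ and $\mu$ into one of the form $\|\psi\|_{\mathtt{M}_\varepsilon^\ast,\tilde\nu_k}\le C\|\psi\|_{\mathtt{M}_\varepsilon,\tilde\nu_k}$ with the \emph{same} measure on both sides, at which point Lemma~\ref{lem:non-tri:mes} applies and a limit $k\to\infty$ finishes. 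You instead extract from the reverse H\"older inequality, via the finite cover realizing the uniform gap $m_i>n_i$, the superlinear set-function bound $\nu(V_i)\le C_1\mu(V_i)^{\gamma_i}$ with $\gamma_i>1$, and then kill the non-atomic part of $\nu$ directly by Sierpi\'nski partitioning (or Besicovitch differentiation). Your route avoids the second Radon--Nikodym step and the construction of $\Phi,\Pi$ entirely, and is shorter; the paper's route has the structural advantage of factoring through Lemma~\ref{lem:non-tri:mes}, which is of independent interest as a ``same-measure'' dichotomy statement in the Musielak--Orlicz setting.
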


\begin{proof}
	The reverse H\"older inequality in \eqref{reverse:holder} implies that the measure $\nu$ is absolutely continuous with respect to the measure $\mu$, i.e.\,$\nu \ll \mu$. Thus, by applying the Radon-Nikodym theorem, there exists a non-negative function $f$ such that $f \in L^1_\mu(\Omega)$ and
	\begin{equation}\label{def:nu}
		\nu = \mu\lfloor f \quad \text{i.e.} \quad \nu(E) = \int_{E} f \,\mathrm{d}\mu \quad \text{for any Borel set }  E.
	\end{equation}
	Let $U$ be a Borel set. By repeating the same arguments as in the proof of Lemma \ref{lem:non-tri:mes} with the covering $\{\Omega_i\}_{i \in I}$ such that $\nu(\Omega_i) + \mu(\Omega_i) < 1$ and taking $\phi(x)= \chi_{V_i}(x)$, $V_i = \Omega_i \cap U$ for $i \in I$ in \eqref{reverse:holder}, we obtain
	\begin{align}\label{meas:control}
		\nu(V_i)^\frac{1}{m_i}\leq C \mu(V_i)^\frac{1}{n_i},
	\end{align}
	where $m_i$, $n_i$ are defined in \eqref{eq:parti:est}.
	Using \eqref{meas:control} and the fact that $n_i < m_i$, we have
	\begin{align*}
		\fint_{U} f \,\mathrm{d}\mu = \frac{\nu(U)}{\mu(U)} \leq \sum_{\substack{i \in I \\ \mu(V_i) \neq 0 }} \frac{\nu(V_i)}{\mu(V_i)} = \sum_{\substack{i \in I \\ \mu(V_i) \neq 0 }} \mu(V_i)^{\frac{m_i}{n_i}-1}  \leq C(\mu(\Omega), p^\pm, q^\pm, N).
	\end{align*}
	Therefore, by Lebesgue's differentiation theorem, $f \in L^\infty_\mu(\Omega)$. On the other hand, we can decompose the measure $\mu$ as
	\begin{align*}
		\mu= \mu_0 + \mu_1 \quad \text{such that} \quad \mu_0(E) = \mu(E \cap X_0)  \text{ and }  \mu_1(E) = \mu(E \cap X_f),
	\end{align*}
	where
	\begin{align*}
		X_0:= \{x \in \Omega\colon  f(x)=0\} \quad \text{ and} \quad X_f:= \{x \in \Omega\colon  f(x) \neq 0\}.
	\end{align*}
	Observe that the measures $\mu_0$ and $\mu_1$ are singular and absolutely continuous with respect to $\nu$, respectively. Indeed, the sets $X_0$ and $X_f$ are mutually disjoint and $\mu_0(X_f) = 0 = \mu_1(X_0)$. Let $E$ be a Borel set such that $\nu(E)=0$. Then, from \eqref{def:nu} and using the fact that $f>0$ in $X_f$, we get
	\begin{align*}
		0= \nu(E) = \int_{E} f \,\mathrm{d}\mu = \int_{E \cap X_f} f \,\mathrm{d}\mu \quad \Longrightarrow \quad \mu_1(E) =\mu(E \cap X_f) =0.
	\end{align*}
	Now, by applying the Radon-Nikodym theorem to the measures $\mu_1$ and $\nu$, there exists a non-negative function $g$ such that $g \in L^1_\nu(\Omega)$ and $\mu_1 = \nu \lfloor g$ , and using the fact that $\nu(X_0) =0$, we get
	\begin{align*}
		\mu_1(E) = \int_{E} g \,\mathrm{d}\nu = \int_{E \cap X_f} g \,\mathrm{d}\nu = \int_{E} \tilde{g} \,\mathrm{d}\nu, \quad  \text{with} \quad \tilde{g}(x):= g(x) \chi_{X_f}(x)
	\end{align*}
	for any Borel set $E$. Finally, we have $\mu_0(X_{\tilde{g}}) =0$ and $\mu= \nu \lfloor \tilde{g} + \mu_0$. Using the fact $\mathtt{m}_{-\varepsilon}^\ast(x) > \mathtt{n}_\varepsilon(x)$ proved in Remark \ref{control:rema}, we define
	\begin{align*}
		\Phi(x,t) :=
		\begin{cases}
			t^\frac{1}{\mathtt{n}_\varepsilon^\ast(x) - \mathtt{m}_-(x)}           &  \text{if }  t \leq 1, \\
			t^\frac{1}{\mathtt{m}_{-\varepsilon}^\ast(x) - \mathtt{n}_\varepsilon(x)} &  \text{if }  t > 1,
		\end{cases}
	\end{align*}
	such that
	\begin{equation}\label{def:pi}
		\Pi(x, \tilde{g}) := \max\left\{ \Phi(\tilde{g})^{\mathtt{n}_\varepsilon(x)}, \Phi(\tilde{g})^{\mathtt{m}_-(x)} \right\} \tilde{g} = \min\left\{ \Phi(\tilde{g})^{\mathtt{n}_\varepsilon^\ast(x)}, \Phi(\tilde{g})^{\mathtt{m}_{-\varepsilon}^\ast(x)}  \right\}.
	\end{equation}
	By setting $\tilde{\nu}_k:= \Pi(x, \tilde{g}) \chi_{\{\tilde{g} \leq k\}} \nu$, we are going to prove that $\tilde{\nu}_k$ is given by a finite number of Dirac masses and this will prove that $\nu \chi_{\{\tilde{g} \leq k\}}$ is a finite number of Dirac masses for all $k< \infty$ and since $\nu(\{\tilde{g} = + \infty\}) =0$, the claim on $\nu$ is true. Taking $\phi:= \Phi(\tilde{g})  \psi \chi_{\{\tilde{g} \leq k\}}$ with $\psi \in C_c^\infty(\Omega)$ as a test function in the reverse H\"older inequality, we get
	\begin{equation}\label{new:reverse:1}
		\begin{aligned}
			\int_{\Omega} \mathtt{M}_\varepsilon\left(x, \frac{\Phi(\tilde{g}) \psi \chi_{\{\tilde{g} \leq k\}}}{\lambda}\right) \,\mathrm{d}\mu & =  \int_{\Omega} \mathtt{M}_\varepsilon\left(x, \frac{\Phi(\tilde{g}) \psi \chi_{\{\tilde{g} \leq k\}}}{\lambda}\right) \tilde{g} \,\mathrm{d}\nu  + \int_{\Omega} \mathtt{M}_\varepsilon\left(x, \frac{\Phi(\tilde{g}) \psi \chi_{\{\tilde{g} \leq k\}}}{\lambda}\right) \,\mathrm{d}\mu_0 \\
			& = \int_{\Omega} \mathtt{M}_\varepsilon\left(x, \frac{\Phi(\tilde{g}) \psi \chi_{\{\tilde{g} \leq k\}}}{\lambda}\right) \tilde{g} \,\mathrm{d}\nu \\
			& \leq \int_{\Omega} \mathtt{M}_\varepsilon\left(x, \frac{\psi}{\lambda}\right) \mathtt{M}_\varepsilon\left(x, \Phi(\tilde{g})\right) \tilde{g}  \chi_{\{\tilde{g} \leq k\}} \,\mathrm{d}\nu \\
			& \leq \int_{\Omega} \mathtt{M}_\varepsilon\left(x, \frac{\psi}{\lambda}\right) \Pi(x, \tilde{g})   \chi_{\{\tilde{g} \leq k\}} \,\mathrm{d}\nu,
		\end{aligned}
	\end{equation}
	where the last two inequalities follow from the fact that $\mathtt{M}_\varepsilon$ is a sub-multiplicative function and the definition of the function $\Pi$ in \eqref{def:pi}. Moreover, we have
	\begin{equation}\label{new:reverse:2}
		\begin{aligned}
			\int_{\Omega} \mathtt{M}_\varepsilon^\ast\left(x,  \frac{\Phi(\tilde{g}) \psi \chi_{\{\tilde{g} \leq k\}}}{\lambda} \right) \,\mathrm{d}\nu & \geq C \int_\Omega \mathtt{M}_\varepsilon^\ast\left(x, \frac{\psi}{\lambda}\right) \min\left\{ \left(\Phi(\tilde{g})\right)^{\mathtt{n}_\varepsilon^\ast(x)}, \left(\Phi(\tilde{g})\right)^{\mathtt{m}_{-\varepsilon}^\ast(x)}  \right\}  \chi_{\{\tilde{g} \leq k\}} \,\mathrm{d}\nu \\
			& = C \int_{\Omega} \mathtt{M}_\varepsilon^\ast\left(x, \frac{\psi}{\lambda}\right) \Pi(x, \tilde{g})   \chi_{\{\tilde{g} \leq k\}} \,\mathrm{d}\nu,
		\end{aligned}
	\end{equation}
	where $C = C(\varepsilon, s^\pm, q^\pm, N)$. Combining \eqref{new:reverse:1} and \eqref{new:reverse:2} yields the following reverse inequality
	\begin{align*}
		\|\psi\|_{\mathtt{M}_\varepsilon^\ast, \tilde{\nu}_k} \leq C \|\psi\|_{\mathtt{M}_\varepsilon, \tilde{\nu}_k}.
	\end{align*}
	Using Lemma \ref{lem:non-tri:mes}, there exist scalars $\{K_i\}_{i \in I_k}$ and points $\{x_i\}_{i \in I_k}$ such that $\tilde{\nu}_k= \sum_{i \in I_k} K_i  \delta_{x_i}$. Notice that, the sequence of measure $\tilde{\nu}_k \nearrow \Pi(x, \tilde{g}) \nu$. Letting $k \to \infty$ and using the fact that $\tilde{g} \in L^1_{\nu}(\Omega)$, there exists an at most countable set $I$, families $\{x_i\}_{i \in I}$ of distinct points in $\Omega$ and scalars $\nu_{i} \in (0, \infty)$ such that $ \nu = \sum_{ i \in I} \nu_{i} \delta_{x_i}$ and this concludes the proof.
\end{proof}

Now we are ready to give the proof of Theorem \ref{concentration:compactness}.

\begin{proof}[Proof of Theorem \ref{concentration:compactness}]
	From Lemmas \ref{reverse:holder:ineq}--\ref{lem:countable-dirac}, we know that
	\begin{align*}
		\mathcal{S}^\ast(x, |u_k|) \rightharpoonup \nu = \sum_{ i \in I} \nu_{i} \delta_{x_i} \quad  \text{weakly-}\ast  \text{ in the sense of measures}.
	\end{align*}
	By applying the Br\'{e}zis-Lieb lemma (Lemma \ref{BL-lemma}) to $\mathcal{S}^\ast$, we get
	\begin{align*}
		\lim_{k \to \infty} \left(\int_{\Omega} \mathcal{S}^\ast(x, |v_k|) \phi \,\mathrm{d}x - \int_{\Omega} \mathcal{S}^\ast(x, |u_k|) \phi \,\mathrm{d}x \right) = \int_{\Omega} \mathcal{S}^\ast(x, |v|) \phi \,\mathrm{d}x \quad \text{for every }\phi \in L^\infty(\Omega),
	\end{align*}
	which further gives the following representation
	\begin{align*}
		\mathcal{S}^\ast(x, |v_k|)  \rightharpoonup \Theta = \mathcal{S}^\ast(x, |v|)  + \sum_{ i \in I} \nu_{i} \delta_{x_i}  \text{weakly-}\ast  \text{ in the sense of measures}.
	\end{align*}
	Let $\phi \in C_c^\infty(\Omega)$ such that $\phi(0)=1$, $0 \leq \phi \leq 1$ and $\operatorname{supp}(\phi) \subset B_1(0)$. Define $\phi_{\gamma, i}(x) = \phi\left(\frac{x-x_i}{\gamma}\right)$ for each $i \in I$, $x \in \mathbb{R}^N$ and $\gamma>0$. Taking $\phi_{\gamma, i}$ as a test function in the reverse H\"older inequality \eqref{reverse:holder} for measures $\mu$ and $\nu$, we obtain
	\begin{equation}\label{reverse:proto}
		\|\phi_{\gamma, i}\|_{\mathtt{M}_\varepsilon^\ast, \nu} \leq C \|\phi_{\gamma, i}\|_{\mathtt{M}_\varepsilon, \mu}.
	\end{equation}
	With the representation of the measure $\nu$ and for any $i \in I$, we estimate the left- and right-hand terms of the above inequality by
	\begin{align*}
		\varrho_{\mathtt{M}_\varepsilon^\ast}(\phi_{\gamma,i}):= \int_{\Omega} \mathtt{M}_\varepsilon^\ast(x, \phi_{\gamma,i}(x)) \,\mathrm{d}\nu =  \sum_{i \in I} \nu_i \mathtt{M}_\varepsilon^\ast(x_i, \phi_{\gamma,i}(x_j)) \geq \nu_i
	\end{align*}
	If $\varrho_{\mathtt{M}_\varepsilon^\ast}(\phi_{\gamma,i}) \leq 1$, then
	\begin{equation} \label{reverse:proto:est-1}
		\|\phi_{\gamma, i}\|_{\mathtt{M}_\varepsilon^\ast, \nu} \geq \nu_i^{\frac{1}{\alpha_\gamma^-}}
	\end{equation}
	Analogously, if $\varrho_{\mathtt{M}_\varepsilon^\ast}(\phi_{\gamma,i}) > 1$, then
	\begin{align*}
		\|\phi_{\gamma, i}\|_{\mathtt{M}_\varepsilon^\ast, \nu} \geq \nu_i^{\frac{1}{\alpha_\gamma^+}}.
	\end{align*}
	Similarly,
	\begin{equation}\label{reverse:proto:est-2}
		\begin{aligned}
			1=\varrho_{\mathtt{M}_\varepsilon}\left(\frac{\phi_{\gamma,i}}{\|\phi_{\gamma,i}\|_{\mathtt{M}_\varepsilon, \mu}}\right) & := \int_{\Omega} \mathtt{M}_\varepsilon\left(x, \frac{\phi_{\gamma,i}(x)}{\|\phi_{\gamma,i}\|_{\mathtt{M}_\varepsilon, \mu}}\right) \,\mathrm{d}\mu \leq \int_{B_{\gamma}(x_i)} \mathtt{M}_\varepsilon\left(x, \frac{1}{\|\phi_{\gamma,i}\|_{\mathtt{M}_\varepsilon, \nu}}\right) \,\mathrm{d}\mu \\
			& \leq \mu(B_{\gamma}(x_i)) \max\left\{\|\phi_{\gamma_j,j}\|^{-\beta_{\gamma_j}^-}_{\mathtt{M}_\varepsilon, \mu}, \|\phi_{\gamma_j,j}\|^{-\beta_{\gamma_j}^+}_{\mathtt{M}_\varepsilon, \mu}\right\}
		\end{aligned}
	\end{equation}
	where
	\begin{align*}
		\alpha_{\gamma}^- &:= \min\limits_{B_{\gamma}(x_i)} \mathtt{m}_{-\varepsilon}^\ast(x), &   \alpha_{\gamma}^+ &:= \max\limits_{ B_{\gamma}(x_i)} \mathtt{n}_\varepsilon^\ast(x),\\
		\beta_{\gamma}^- &:= \min\limits_{ B_{\gamma}(x_i)} \mathtt{m}_-(x), & \beta_{\gamma}^+ &:= \max\limits_{B_{\gamma}(x_i)} \mathtt{n}_\varepsilon(x).
	\end{align*}
	Now, by collecting the estimates in \eqref{reverse:proto:est-1} and \eqref{reverse:proto:est-2} in \eqref{reverse:proto} and by letting $\gamma \to 0$, we get
	\begin{align*}
		\min\{\nu_j^\frac{1}{\mathtt{n}_\varepsilon^\ast(x_j)}, \nu_j^\frac{1}{\mathtt{m}_{-\varepsilon}^\ast(x_j)}\} \leq C \max\left\{\mu_j^\frac{1}{\mathtt{m}_-(x_j)}, \mu_j^\frac{1}{\mathtt{n}_\varepsilon(x_j)}\right\},
	\end{align*}
	where $\mu_j:= \mu(x_j)= \lim\limits_{\gamma_j \to 0} \mu(B_{\gamma_j}(x_j))$. In particular, $\{x_i\}_{i \in I}$ are atoms of $\nu$. Note that for any $\phi \in C(\overline{\Omega})$, $\phi \geq 0$, the functional
	\begin{align*}
		u \to \int_{\Omega} \left(a(x) |\nabla u|^{p(x)} + b(x) |\nabla u|^{q(x)} \log^{s(x)}(1+| \nabla u|)\right) \phi \,\mathrm{d}x
	\end{align*}
	is convex and continuous in $W_0^{1, \mathcal{S}}(\Omega)$. Hence, it is weakly lower semicontinuous and therefore,
	\begin{align*}
		\int_{\Omega} & \left(a(x) |\nabla u|^{p(x)} + b(x) |\nabla u|^{q(x)} \log^{s(x)}(1+| \nabla u|)\right) \phi \,\mathrm{d}x \\
		& \leq \liminf_{t \to \infty} \int_{\Omega} \left(a(x) |\nabla u_k|^{p(x)} + b(x) |\nabla u_k|^{q(x)} \log^{s(x)}(1+| \nabla u_k|)\right) \phi \,\mathrm{d}x = \int_{\Omega} \phi \,\mathrm{d}\mu.
	\end{align*}
	Thus, $\mu \geq \mathcal{S}(x, |\nabla u|)$. Finally, by extracting $\mu$ to its atoms, we conclude our result.
\end{proof}

%********************************************************************
\section{Properties of the energy functional and the double phase operator}\label{Section-5}
%********************************************************************

In this section we investigate the properties of the associated energy functional and the logarithmic operator given in \eqref{main:prob}. We begin by defining the energy functional $\mathcal{E}_{\Lambda, \lambda}\colon W_0^{1, \mathcal{S}}(\Omega) \to \mathbb{R}$ corresponding to our new logarithmic double phase operator as
\begin{align*}
	\mathcal{E}_{\Lambda, \lambda}(u) = \mathcal{E}_1(u) - \Lambda \mathcal{E}_2(u) - \lambda \mathcal{E}_3(u),
\end{align*}
where
\begin{align*}
	\mathcal{E}_1(u) = \int_{\Omega} \mathcal{M}(x, |\nabla u|) \,\mathrm{d}x, \qquad
	\mathcal{E}_2(u) = \int_{\Omega} \mathcal{M}^\ast(x, |\nabla u|) \,\mathrm{d}x,  \qquad
	\mathcal{E}_3(u) = \int_{\Omega} \mathcal{M}_\star(x, |\nabla u|) \,\mathrm{d}x.
\end{align*}
Furthermore we introduce $\mathcal{J}\colon  W_0^{1,\mathcal{S}}(\Omega) \to (W_0^{1,\mathcal{S}}(\Omega))^\ast$ which is given by
\begin{equation}\label{multi-phase}
	\langle \mathcal{J}(u), \phi\rangle_{\mathcal{S}}:= \langle \mathcal{J}_1(u), \phi\rangle_{\mathcal{S}} - \Lambda \langle \mathcal{J}_2(u), \phi\rangle_{\mathcal{S}} - \lambda \langle \mathcal{J}_3(u), \phi\rangle_{\mathcal{S}}
	\quad \text{for }  u, \phi \in W_0^{1, \mathcal{S}}(\Omega),
\end{equation}
where
\begin{align*}
	\langle \mathcal{J}_1(u), \phi\rangle_{\mathcal{S}}
	& := \int_{\Omega} a(x) |\nabla u|^{p(x)-2} \nabla u \cdot \nabla \phi \,\mathrm{d}x  \\
	& \quad+ \int_{\Omega} b(x) |\nabla u|^{q(x)-2} \log^{s(x)-1 }(1+ |\nabla u|) \left(\log(1+ |\nabla u|) + \frac{s(x)}{q(x)} \frac{|\nabla u|}{1+ |\nabla u|} \right) \nabla u \cdot \nabla \phi \,\mathrm{d}x,\\
	\langle \mathcal{J}_2(u), \phi\rangle_{\mathcal{S}} & := \int_{\Omega} \left(a(x)\right)^\frac{p^\ast(x)}{p(x)} |u|^{p^\ast(x)-2}  u  \phi \,\mathrm{d}x \\
	& \quad + \int_{\Omega} \left(b(x) \log^{s(x)}(1+ |u|) \right)^\frac{q^\ast(x)}{q(x)} |u|^{q^\ast(x)-2} \left(1+  \frac{s(x) q^\ast(x)}{q(x)} \frac{|u|}{\log(1+ |u|)(1+ |u|)}\right) u \phi \,\mathrm{d}x,\\
	\langle \mathcal{J}_3(u), \phi\rangle_{\mathcal{S}} & := \int_{\Omega} \left(a(x)\right)^\frac{p_\star(x)}{p(x)} |u|^{p_\star(x)-2}  u  \phi \,\mathrm{d}x \\
	& \quad + \int_{\Omega} \left(b(x) \log^{s_\star(x)}(1+ |u|) \right)^\frac{q_\star(x)}{q(x)} |u|^{q_\star(x)-2} \left(1+  \frac{s_\star(x) q_\star(x)}{q(x)} \frac{|u|}{\log(1+ |u|)(1+ |u|)}\right) u \phi \,\mathrm{d}x,
\end{align*}
where $\langle \cdot, \cdot \rangle_{\mathcal{S}}$ denotes the duality paring between the space $W_0^{1, \mathcal{S}}(\Omega)$ and its dual space $(W_0^{1, \mathcal{S}}(\Omega))^\ast$. Here $\mathcal{J}_1$ is the new logarithmic double phase operator.

\begin{proposition}\label{well-defined-1}
	Let \eqref{main:assump} and \eqref{main:assump-1} be satisfied. Then, the energy functional $\mathcal{E}_1$ is well-defined and belongs to the class $C^1$ with $\mathcal{E}_1'(u) = \mathcal{J}_1(u)$.
\end{proposition}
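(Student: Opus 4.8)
The plan is to verify the three classical ingredients and then invoke the fact that a Gâteaux differentiable functional with continuous Gâteaux derivative is of class $C^1$: (a) $\mathcal{E}_1$ is finite on $W_0^{1,\mathcal{S}}(\Omega)$ and $\mathcal{J}_1$ maps $W_0^{1,\mathcal{S}}(\Omega)$ into its dual; (b) $\mathcal{E}_1$ is Gâteaux differentiable with $\mathcal{E}_1'(u)=\mathcal{J}_1(u)$; (c) $u\mapsto\mathcal{J}_1(u)$ is continuous. For (a), the definition of $\mathcal{M}$ and \eqref{main:assump}(ii) give the pointwise bound $\mathcal{M}(x,\xi)\le c_0\,\mathcal{S}(x,|\xi|)$ with $c_0=\max\{1/p^-,1/q^-\}$; since $\mathcal{S}$ satisfies \textnormal{(Dec)}$_{\ell^+}$ by Lemma~\ref{Le:Prop-S}, Proposition~\ref{Prop:AbstractBanach} yields $L^{\mathcal{S}}(\Omega)=\{u:\varrho_{\mathcal{S}}(u)<\infty\}$, so for $u\in W_0^{1,\mathcal{S}}(\Omega)$ we get $\mathcal{E}_1(u)\le c_0\,\varrho_{\mathcal{S}}(|\nabla u|)<\infty$. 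The crucial ingredient for everything that follows is that $\mathbb{M}(x,\xi)=\partial_\xi\mathcal{M}(x,\xi)$ is a Carathéodory field with $\mathbb{M}(x,0)=0$ (near zero the logarithmic terms contribute powers $|\xi|^{q(x)+s(x)-1}$, and $q(x)+s(x)\ge r>1$ a.e.) satisfying the growth estimate
\[
|\mathbb{M}(x,\xi)|\,|\xi|\le c_1\,\mathcal{S}(x,|\xi|)\qquad\text{for all }x\in\Omega,\ \xi\in\mathbb{R}^N,
\]
with $c_1=c_1(p^\pm,q^\pm,\|s\|_\infty)$, obtained by comparing $\mathbb{M}$ term by term with $\partial_t\mathcal{S}(x,t)$ and then using $t\,\partial_t\mathcal{S}(x,t)\le\mathtt{n}_+(x)\,\mathcal{S}(x,t)$ from Lemma~\ref{upp-low:est}(i). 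Since $\mathcal{S}\in\Delta_2\cap\nabla_2$ (a consequence of \textnormal{(Inc)}$_{\ell^-}$, \textnormal{(Dec)}$_{\ell^+}$ of Lemma~\ref{Le:Prop-S}), and hence $\mathcal{S}^\sharp\in\Delta_2$, Proposition~\ref{modular-conjugate:relation} together with the above bound gives $\mathcal{S}^\sharp(x,|\mathbb{M}(x,\xi)|)\le c_2\,\mathcal{S}(x,|\xi|)$; thus $\nabla u\in L^{\mathcal{S}}(\Omega;\mathbb{R}^N)$ implies $\mathbb{M}(\cdot,\nabla u)\in L^{\mathcal{S}^\sharp}(\Omega;\mathbb{R}^N)$, and the Hölder inequality of Proposition~\ref{holder:ineq} shows $\mathcal{J}_1(u)\in(W_0^{1,\mathcal{S}}(\Omega))^\ast$ with norm controlled by $\|\mathbb{M}(\cdot,\nabla u)\|_{\mathcal{S}^\sharp}$.

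For (b), fix $u,\phi\in W_0^{1,\mathcal{S}}(\Omega)$. As $\mathcal{M}(x,\cdot)\in C^1(\mathbb{R}^N)$ for a.a.\ $x$, the fundamental theorem of calculus gives, for $0<|t|\le1$,
\[
\frac{\mathcal{E}_1(u+t\phi)-\mathcal{E}_1(u)}{t}=\int_\Omega\int_0^1\mathbb{M}\bigl(x,\nabla u+\tau t\nabla\phi\bigr)\cdot\nabla\phi\,\mathrm{d}\tau\,\mathrm{d}x .
\]
Since $|\nabla u+\tau t\nabla\phi|\le|\nabla u|+|\nabla\phi|$, the growth bound above, and the monotonicity of $t\mapsto\mathcal{S}(x,t)/t$ (because $\mathcal{S}$ satisfies \textnormal{(Inc)}$_{\ell^-}$ with $\ell^->1$), the inner integrand is dominated by $c_1\,\mathcal{S}(x,|\nabla u|+|\nabla\phi|)\le c_3\bigl(\mathcal{S}(x,|\nabla u|)+\mathcal{S}(x,|\nabla\phi|)\bigr)\in L^1(\Omega)$, the last step by $\Delta_2$ (Lemma~\ref{upp-low:est}). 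As $t\to0$ the integrand converges pointwise to $\mathbb{M}(x,\nabla u)\cdot\nabla\phi$, so Lebesgue's dominated convergence theorem gives $\langle\mathcal{E}_1'(u),\phi\rangle_{\mathcal{S}}=\langle\mathcal{J}_1(u),\phi\rangle_{\mathcal{S}}$.

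For (c), let $u_n\to u$ in $W_0^{1,\mathcal{S}}(\Omega)$; by a subsequence argument it suffices to treat a subsequence along which $\nabla u_n\to\nabla u$ a.e.\ and $\mathcal{S}(\cdot,|\nabla u_n|)$ is dominated in $L^1(\Omega)$ (available since norm convergence gives modular convergence by $\Delta_2$ and Proposition~\ref{pro:norm-mod:relation}). Continuity of $\mathbb{M}(x,\cdot)$ gives $\mathbb{M}(\cdot,\nabla u_n)\to\mathbb{M}(\cdot,\nabla u)$ a.e., and the estimate $\mathcal{S}^\sharp\bigl(x,|\mathbb{M}(x,\nabla u_n)-\mathbb{M}(x,\nabla u)|\bigr)\le c\bigl(\mathcal{S}(x,|\nabla u_n|)+\mathcal{S}(x,|\nabla u|)\bigr)$ (using $\Delta_2$ for $\mathcal{S}^\sharp$, Proposition~\ref{modular-conjugate:relation}, and the growth bound) combined with the generalized dominated convergence theorem yields $\varrho_{\mathcal{S}^\sharp}\bigl(\mathbb{M}(\cdot,\nabla u_n)-\mathbb{M}(\cdot,\nabla u)\bigr)\to0$, hence $\mathbb{M}(\cdot,\nabla u_n)\to\mathbb{M}(\cdot,\nabla u)$ in $L^{\mathcal{S}^\sharp}(\Omega;\mathbb{R}^N)$ (again by $\Delta_2$). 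Proposition~\ref{holder:ineq} then gives $\|\mathcal{J}_1(u_n)-\mathcal{J}_1(u)\|_{(W_0^{1,\mathcal{S}}(\Omega))^\ast}\le 2\|\mathbb{M}(\cdot,\nabla u_n)-\mathbb{M}(\cdot,\nabla u)\|_{\mathcal{S}^\sharp}\to0$. Combining (a)–(c), $\mathcal{E}_1$ is of class $C^1$ with $\mathcal{E}_1'=\mathcal{J}_1$.

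The main obstacle is the uniform growth estimate $|\mathbb{M}(x,\xi)|\,|\xi|\le c_1\,\mathcal{S}(x,|\xi|)$: the factor $\log^{s(x)-1}(1+|\xi|)$ together with the sign–changing exponent $s$ forces a case analysis (small versus large $|\xi|$, and $s(x)\ge0$ versus $s(x)<0$) of exactly the type already carried out in the proof of Lemma~\ref{Le:Prop-S}. Once this bound is established, the $\Delta_2\cap\nabla_2$ toolbox — Propositions~\ref{Prop:AbstractBanach}, \ref{modular-conjugate:relation}, \ref{holder:ineq} and Lemma~\ref{upp-low:est} — reduces the remaining arguments to routine dominated–convergence computations.
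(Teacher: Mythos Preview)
Your proof is correct and takes a genuinely different route from the paper. Both arguments establish well-definedness in the same way, but diverge in parts (b) and (c). For the G\^ateaux derivative, the paper applies the mean value theorem to split the difference quotient into three pieces $J_1+J_2+J_3$ and then builds an explicit $L^1$ dominant for each, using the condition $q(x)+s(x)\ge r>1$ to control the logarithmic factor through a case analysis on the sign of $s(x)$ and the relative sizes of $|\nabla u|$ and $|\nabla\phi|$; you instead package everything into the single growth bound $|\mathbb{M}(x,\xi)|\,|\xi|\le c_1\,\mathcal{S}(x,|\xi|)$ and exploit the monotonicity of $\mathcal{S}(x,t)/t$. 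For the $C^1$ property, the paper works in $L^{q(\cdot)}$ via an ad hoc H\"older splitting with the weight $h_n=\log^{s(x)/q(x)}(1+\max\{|\nabla u|,|\nabla u_n|\})$ and a decomposition of $\Omega$ into six regions, followed by Vitali's theorem; you use the conjugate-space machinery (Proposition~\ref{modular-conjugate:relation}) to land $\mathbb{M}(\cdot,\nabla u_n)$ in $L^{\mathcal{S}^\sharp}$ with a uniform modular bound, and then the H\"older inequality of Proposition~\ref{holder:ineq} and generalized dominated convergence do the rest. Your approach is more conceptual and transfers essentially unchanged to any $\Delta_2\cap\nabla_2$ Musielak--Orlicz setting once the single growth estimate is verified; the paper's hands-on decomposition makes the role of the balance condition $q(x)+s(x)\ge r>1$ more visible at each step, but is longer and tied to the specific structure of $\mathcal{S}$.
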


\begin{proof}
	Observe that, for any $u \in W_0^{1, \mathcal{S}}(\Omega)$, we have
	\begin{align*}
		0 \leq \frac{\varrho_{\mathcal{S}}(|\nabla u|)}{\mathfrak{n}^+_0} \leq \mathcal{E}_1(u) \leq  \frac{\varrho_{\mathcal{S}}(|\nabla u|)}{\mathfrak{m}^-_0} < \infty \quad \text{with $\mathfrak{n}^+_0:= \max\limits_{x \in \overline{\Omega}}\mathfrak{n}_0(x)$ and $\mathfrak{m}^-_0:= \min\limits_{x \in \overline{\Omega}}\mathfrak{m}_0(x)$.}
	\end{align*}
	Hence, the functional $\mathcal{E}_1$ is well defined. Let $u, \phi \in W_0^{1, \mathcal{S}}(\Omega)$. By using the mean-value theorem, we get
	\begin{align*}
		\frac{\mathcal{E}_1(u+ t\phi) - \mathcal{E}_1(u)}{t} = J_1 + J_2 + J_3.
	\end{align*}
	where
	\begin{align*}
		J_1&:=  \int_{\Omega} a(x) |\nabla u + \eta_{x,t} t \nabla \phi|^{p(x)-2} (\nabla u + \eta_{x,t}  t \nabla \phi) \cdot \nabla \phi \,\mathrm{d}x,\\
		J_2&:=\int_{\Omega} b(x) |\nabla u + \eta_{x,t} t \nabla \phi|^{q(x)-2} \log^{s(x)}(1 + |\nabla u + \eta_{x,t}  t \nabla \phi|) (\nabla u + \eta_{x,t}  t \nabla \phi) \cdot \nabla \phi \,\mathrm{d}x,\\
		J_3&:= \int_{\Omega} \frac{b(x)s(x)}{q(x)} \frac{|\nabla u + \eta_{x,t} t \nabla \phi|^{q(x)-1}}{ 1+ |\nabla u + \eta_{x,t}  t \nabla \phi|} \log^{s(x)-1}\left(1 + |\nabla u + \eta_{x,t}  t \nabla \phi|\right) (\nabla u + \eta_{x,t}  t \nabla \phi) \cdot \nabla \phi \,\mathrm{d}x
	\end{align*}
	for some $t \in \mathbb{R}$ and $\eta_{x,t} \in (0,1)$.\\
	\textbf{Claim:} $\lim\limits_{t \to 0} J_1 + J_2 + J_3 = \langle \mathcal{J}(u) , \phi\rangle$.

	To prove the above claim, we have to find integrable functions dominating the integrand of $J_i$, $i=1,2,3$ and use Lebesgue's dominated convergence theorem. For $0< |t| \leq t_0 \leq 1$, we have
	\begin{align*}
		\left|  a(x) |\nabla u + \eta_{x,t} t \nabla \phi|^{p(x)-2} (\nabla u + \eta_{x,t}  t \nabla \phi) \cdot \nabla \phi \right| & \leq a(x) |\nabla u + \eta_{x,t} t \nabla \phi|^{p(x)-1} |\nabla \phi| \\
		& \leq 2^{p^+ -1} a(x) \left(|\nabla u|^{p(x)-1} + t_0 |\nabla \phi|^{p(x)-1} \right) |\nabla \phi|
	\end{align*}
	and
	\begin{align*}
		\int_{\Omega} a(x) |\nabla u|^{p(x)-1} |\nabla \phi| \,\mathrm{d}x & \leq  \int_{\{|\nabla u| \geq |\nabla \phi| \}} a(x) |\nabla u|^{p(x)-1} |\nabla \phi| \,\mathrm{d}x +  \int_{\{|\nabla u| < |\nabla \phi|\}} a(x) |\nabla u|^{p(x)-1} |\nabla \phi| \,\mathrm{d}x  \\
		& \leq \int_{\Omega} a(x) |\nabla u|^{p(x)} \,\mathrm{d}x + \int_{\Omega} a(x) |\nabla \phi|^{p(x)} \,\mathrm{d}x \leq \varrho_{\mathcal{S}}(|\nabla u|) + \varrho_{\mathcal{S}}(|\nabla \phi|) < \infty.
	\end{align*}
	To estimate the integrands in $J_2$ and $J_3$, we use condition $q(x) + s(x) \geq r >1$ in \eqref{main:assump}. In particular, this implies that
	\begin{equation}\label{incr:prop}
		t^{q(x)-1} \log^{s(x)}(1+t)  \quad\text{is increasing for }  t \geq 0,
	\end{equation}
	because
	\begin{equation}\label{control:integrand}
		0 \leq s(x) t +  (q(x)-1) (1+t) \log(1+t) \quad \text{for all }  t \geq 0  \ \text{and } x \in \Omega.
	\end{equation}
	Now, by using \eqref{control:integrand} and  Proposition \ref{pro:norm-mod:relation}, we obtain the estimate
	\begin{align*}
		& \left|\frac{b(x)s(x)}{q(x)} \frac{|\nabla u + \eta_{x,t} t \nabla \phi|^{q(x)-1}}{ 1+ |\nabla u + \eta_{x,t}  t \nabla \phi|} \log^{s(x)-1}\left(1 + |\nabla u + \eta_{x,t}  t \nabla \phi|\right) (\nabla u + \eta_{x,t}  t \nabla \phi) \cdot \nabla \phi\right| \\
		& \leq b(x) |s(x)| \left(|\nabla u + \eta_{x,t} t \nabla \phi|\right)^{q(x)-1} \log^{s(x)}(1 + |\nabla u + \eta_{x,t}  t \nabla \phi|) |\nabla \phi|     \\
		& \leq b(x) |s(x)| \left(|\nabla u| + \eta_{x,t} t |\nabla \phi|\right)^{q(x)-1} \log^{s(x)}(1 + |\nabla u| + \eta_{x,t}  t |\nabla \phi|) |\nabla \phi|  \\
		& \leq C(r^+, |s|^+) b(x) \left(|\nabla u|^{q(x)-1} + (\eta_{x,t} t)^{q(x)-1} |\nabla \phi|^{q(x)-1} \right) |\nabla \phi|   \\
		& \qquad \times
			\begin{cases}
				\log^{s(x)}(1 + |\nabla u|) + \log^{s(x)}(1 + \eta_{x,t} t |\nabla \phi|) &  \text{if }  s(x) \geq 0,   \\
				\log^{s(x)}(1 + \eta_{x,t} t |\nabla \phi|) &  \text{if }  s(x) < 0  \text{ and }  |\nabla u| \leq \eta_{x,t} t |\nabla \phi|, \\
				\log^{s(x)}(1 + |\nabla u|)   &  \text{if }  s(x) < 0  \text{ and }  |\nabla u| \geq \eta_{x,t} t |\nabla \phi|.
			\end{cases}
			\\
			 & \leq C'(r^+, s^+) b(x)
			\begin{cases}
				|\nabla u|^{q(x)} \log^{s(x)}(1 + |\nabla u|) &  \text{if }  |\nabla u| \geq |\nabla \phi|, \\
				|\nabla \phi|^{q(x)} \log^{s(x)}(1 +  |\nabla \phi|) &  \text{if }  |\nabla u| \leq |\nabla \phi|,
			\end{cases}
	\end{align*}
	and
	\begin{align*}
		&\int_{|\nabla u| \geq |\nabla \phi|}  b(x) |\nabla u|^{q(x)} \log^{s(x)}(1 + |\nabla u|) \,\mathrm{d}x + \int_{|\nabla u| \leq |\nabla \phi|} b(x) |\nabla \phi|^{q(x)} \log^{s(x)}(1 + |\nabla \phi|) \,\mathrm{d}x \\
		& \leq \varrho_{\mathcal{S}}(|\nabla u|) +  \varrho_{\mathcal{S}}(|\nabla \phi|) < \infty.
	\end{align*}
	Finally, by using
	\begin{align*}
		a(x) |\nabla u + \eta_{x,t} t \nabla \phi|^{p(x)-2} (\nabla u + \eta_{x,t}  t \nabla \phi) \cdot \nabla \phi \to  a(x) |\nabla u|^{p(x)-2} \nabla u \cdot \nabla \phi  \quad \text{a.e.\,in }\Omega
	\end{align*}
	and Lebesgue's dominated convergence theorem, we obtain the required claim.

	For the $C^1$-property, let $u_n \to u$ in $W_0^{1, \mathcal{S}}(\Omega)$ and $\phi \in W_0^{1, \mathcal{S}}(\Omega)$ with $\|\nabla \phi\|_{\mathcal{S}} \leq 1$ and we claim that
	\begin{align*}
		\langle \mathcal{J}_1(u_n) - \mathcal{J}_1(u), \phi \rangle \to 0 \quad \text{as }  n \to \infty.
	\end{align*}
	For the following computations, we define
	\begin{align*}
		\Omega_0 & = \{x \in \Omega\colon  |\nabla u| =0\},    \\
		g(u(x)) & =
		\begin{cases}
			\left[ \log (1 + |\nabla u|) + \frac{s(x)|\nabla u|}{q(x) (1 + |\nabla u|)}\right] \log^{s(x)-1}(1+ |\nabla u|) |\nabla u|^{q(x) -2 }\nabla u, & \text{if }  \Omega \setminus \Omega_0, \\
			0 & \text{if }  x \in \Omega_0,
		\end{cases}
		\\
		v_n(x)   & = g(u_n) - g(u)  \\
		h_n(x)  & = \log^{\frac{s(x)}{q(x)}} ( 1 + \max\{ |\nabla u|, |\nabla u_n| \}),  \\
		\Omega_u^{\geq}     & = \{ x \in \Omega  \setminus \Omega_0 \colon  s(x) \geq 0,   \max\{ |\nabla \phi|, |\nabla u|, |\nabla u_n| \} = |\nabla u|\},   \\
		\Omega_u^{<}        & = \{ x \in \Omega  \setminus \Omega_0 \colon  s(x) < 0,   \max\{ |\nabla \phi|, |\nabla u|, |\nabla u_n| \} = |\nabla u|\},  \\
		\Omega_{u_n}^{\geq} & = \{ x \in \Omega  \setminus \Omega_0
		\colon s(x) \geq 0, \,  |\nabla u| < \max\{ |\nabla \phi|, |\nabla u|, |\nabla u_n| \} = |\nabla u_n|\},   \\
		\Omega_{u_n}^{<}    & = \{ x \in \Omega  \setminus \Omega_0
		\colon s(x) < 0, \,  |\nabla u| < \max\{ |\nabla \phi|, |\nabla u|, |\nabla u_n| \} = |\nabla u_n|\},  \\
		\Omega_\phi^{\geq}  & = \{ x \in \Omega  \setminus \Omega_0\colon s(x) \geq 0,\,  |\nabla u|, |\nabla u_n| < \max\{ |\nabla \phi|, |\nabla u|, |\nabla u_n| \} = |\nabla \phi|\}, \\
		\Omega_\phi^{<}   & = \{ x \in \Omega  \setminus \Omega_0 \colon s(x) < 0, \,  |\nabla u|, |\nabla u_n| < \max\{ |\nabla \phi|, |\nabla u|, |\nabla u_n| \} = |\nabla \phi|\}.
	\end{align*}
	Using \eqref{incr:prop}, inequality $(1+t) \log(1+t) \geq t$ for $t \geq 0$ and Young's inequality, we have
	\begin{align*}
		|b(x) g(u_n) \nabla \phi| & \leq |b(x)| \left[ \log (1 + |\nabla u_n|) + \frac{|s(x)||\nabla u_n|}{q(x) (1 + |\nabla u_n|)}\right] \log^{s(x)-1}(1+ |\nabla u_n|)|\nabla u_n|^{q(x)-1}|\nabla \phi| \\
		& \leq C_0 b(x) \log^{s(x)}(1+ |\nabla u_n|)|\nabla u_n|^{q(x)-1}|\nabla \phi|   \\
		& \leq C_1 b(x) \log^{s(x)}(1+ |\nabla u_n|)|\nabla u_n|^{q(x)} +  C_2 b(x) \log^{s(x)}(1+ |\nabla \phi|) |\nabla \phi|^{q(x)} \in L^1(\Omega),
	\end{align*}
	where $C_i$ are independent of $n$. Therefore, using $\nabla u_n \to \nabla u$ a.e., it is easy to see that
	\begin{align*}
		\int_{\Omega_0} b(x) v_n  \cdot \nabla \phi \,\mathrm{d}x  \to 0.
	\end{align*}
	By H\"older's inequality in $L^{q(\cdot)}(\Omega \setminus \Omega_0)$, we have
	\begin{align*}
		\left|\int_{\Omega  \setminus \Omega_0} b(x) v_n  \cdot \nabla \phi \,\mathrm{d}x \right| \leq 2 \left\|(b (x))^{\frac{q(x) - 1}{q(x)}} \frac{|v_n|}{h_n}\right\|_{\frac{q(\cdot)}{q(\cdot) - 1}}  \left\|(b(x))^{1/q(x)} |\nabla \phi| h_n\right\|_{q(\cdot)}.
	\end{align*}
	The second factor is uniformly bounded in $n$ and $v$ by Proposition \ref{pro:norm-mod:relation} \textnormal{(v)} and
	\begin{align*}
		\int_{\Omega \setminus \Omega_0} \left( (b(x))^{1/q(x)} |\nabla \phi| h_n \right)^{q(x)} \,\mathrm{d}x
		& \leq  \int_{\Omega_u^{\geq} \cup \Omega_{\phi}^{<} \cup \Omega_{u_n}^{<}} b(x) |\nabla \phi|^{q(x)} \log^{s(x)}( 1 + |\nabla u| ) \,\mathrm{d}x \\
		& \quad + \int_{\Omega_u^{<} \cup \Omega_{u_n}^{\geq}} b(x) |\nabla \phi|^{q(x)} \log^{s(x)}( 1 + |\nabla u_n| ) \,\mathrm{d}x    \\
		& \quad +  \int_{\Omega_\phi^{\geq}} b(x) |\nabla \phi|^{q(x)} \log^{s(x)}( 1 + |\nabla \phi| ) \,\mathrm{d}x   \\
		& \leq \varrho_{\mathcal{S}}(\nabla u) + \varrho_{\mathcal{S}}(\nabla u_n) + \varrho_{\mathcal{S}}(\nabla \phi) < + \infty.
	\end{align*}
	Therefore, we only need to prove that the first factor converges to zero. By Proposition \ref{pro:norm-mod:relation}  \textnormal{(v)}, it is enough to see that this happens in the modular of $L^{\frac{q(\cdot)}{q(\cdot) - 1}}(\Omega \setminus \Omega_0)$, that is
	\begin{align*}
		\int_{\Omega \setminus \Omega_0} \left( (b (x))^{\frac{q(x) - 1}{q(x)}} \frac{|v_n|}{h_n} \right)^{\frac{q(x)}{q(x)-1}} \,\mathrm{d}x
		= \int_{\Omega \setminus \Omega_0} b(x) \left(  \frac{|v_n|}{h_n} \right) ^{\frac{q(x)}{q(x) - 1}}  \,\mathrm{d}x \; \xrightarrow{n \to \infty} 0.
	\end{align*}
	We prove this convergence by using Vitali's convergence theorem.  For the uniform integrability, using \eqref{incr:prop} and the inequality $(1+t) \log(1+t) \geq t$ for $t \geq 0$ note that
	\begin{align*}
		b(x) \left(\frac{|v_n|}{h_n} \right)^{\frac{q(x)}{q(x)-1}} & \leq C_0 b(x) \left(\frac{\log^{s(x)}(1+ |\nabla u_n|) |\nabla u_n|^{q(x)-1} + \log^{s(x)}(1+ |\nabla u|) |\nabla u|^{q(x)-1}}{\log^{\frac{s(x)}{q(x)}} (1 + \max\{ |\nabla u|, |\nabla u_n| \})}\right)^{\frac{q(x)}{q(x)-1}} \\
		& \leq C b(x) \left(  |\nabla u_n|^{q(x)} \log^{s(x)} (1 + |\nabla u_n|) +  |\nabla u|^{q(x)}  \log^{s(x)} (1 + |\nabla u|)\right).
	\end{align*}
	As $\nabla u_n \to \nabla u$ in measure and $u_n \to u$ in $W_0^{1, \mathcal{S}}(\Omega)$, we know that $ b(x)|\nabla u_n|^{q(x)} \log^{s(x)} (1 + \abs{ \nabla u_n })$ is uniformly integrable, hence we also know that our sequence is uniformly integrable and this finishes the proof.
\end{proof}

We suppose the following assumptions:
\begin{enumerate}[label=\textnormal{(H$_\star$)},ref=\textnormal{H$_\star$}]
	\item \label{main:assump-1-2}
		$p_\star, q_\star \in C(\overline{\Omega})$, $s_\star \in L^\infty(\Omega)$, $1 < p_\star(x)$, $q_\star(x)< N$, $p_{\star}(x) < p^\ast(x)$ and $q_{\star}(x) < q^\ast(x)$ for all $x \in \overline{\Omega}$, and $ s_{\star}(x) < s(x)$, $q_\star(x)+ s_\star(x) \geq r> 1$ for a.a.\,$x \in \Omega$.
\end{enumerate}

\begin{proposition}\label{well-defined-2}
	Let \eqref{main:assump}, \eqref{main:assump-1} and \eqref{main:assump-1-2} be satisfied. Then, the functionals $\mathcal{E}_2$ and $\mathcal{E}_3$ are well-defined and belong to class $C^1$ with $\mathcal{E}_2'(u) = \mathcal{J}_2(u)$ and $\mathcal{E}_3'(u) = \mathcal{J}_3(u)$.
\end{proposition}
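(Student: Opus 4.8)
The plan is to follow the proof of Proposition \ref{well-defined-1} closely, the only structural change being that the integrands of $\mathcal{E}_2$ and $\mathcal{E}_3$ are functions of $u$ rather than of $\nabla u$; accordingly the modular $\varrho_{\mathcal{S}}$ is replaced by $\varrho_{\mathcal{S}^\ast}$ for $\mathcal{E}_2$ and by $\varrho_{\mathcal{N}^\ast}$ for $\mathcal{E}_3$, where $\mathcal{N}^\ast$ is the subcritical counterpart of $\mathcal{S}^\ast$ in \eqref{critical-MOf}, i.e.\ the generalized $N$-function obtained by replacing $p^\ast,q^\ast,s$ with $p_\star,q_\star,s_\star$ (modified near $t=0$ as in \eqref{perturbed:function}, if needed, so that it is a genuine $\Phi$-function). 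The first preliminary step is to check, arguing as in Lemma \ref{Le:Prop-S} and using $q_\star(x)+s_\star(x)\ge r>1$ from \eqref{main:assump-1-2}, that $\mathcal{N}^\ast\in{\bf \Phi}(\Omega)\cap\Delta_2\cap\nabla_2$, and moreover that $\mathcal{N}^\ast\ll\mathcal{S}^\ast$: the continuity of $p,q$ on $\overline{\Omega}$ together with the strict inequalities $p_\star<p^\ast$, $q_\star<q^\ast$, $s_\star<s$ produces a uniform gap between the growth data, which upgrades the pointwise limit $\mathcal{N}^\ast(x,\ell t)/\mathcal{S}^\ast(x,t)\to0$ to one uniform in $x$. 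Then the continuous embedding $W^{1,\mathcal{S}}(\Omega)\hookrightarrow L^{\mathcal{S}^\ast}(\Omega)$ and the compact embedding $W^{1,\mathcal{S}}(\Omega)\hookrightarrow L^{\mathcal{N}^\ast}(\Omega)$ from Proposition \ref{imp:embedding} become available.

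\emph{Well-definedness.} Because $1/p^\ast(\cdot)$ and $1/q^\ast(\cdot)$ are bounded in $(0,1]$, one has $c\,\mathcal{S}^\ast(x,|u|)\le\mathcal{M}^\ast(x,|u|)\le\mathcal{S}^\ast(x,|u|)$ and, similarly, $\mathcal{M}_\star(x,|u|)\le C\,\mathcal{N}^\ast(x,|u|)$ pointwise. Since $u\in L^{\mathcal{S}^\ast}(\Omega)\cap L^{\mathcal{N}^\ast}(\Omega)$ by the embeddings and both $\Phi$-functions satisfy $\Delta_2$, Proposition \ref{Prop:AbstractBanach} gives $\varrho_{\mathcal{S}^\ast}(u),\varrho_{\mathcal{N}^\ast}(u)<\infty$, so $\mathcal{E}_2(u)$ and $\mathcal{E}_3(u)$ are finite; that $\mathcal{J}_2(u),\mathcal{J}_3(u)\in(W_0^{1,\mathcal{S}}(\Omega))^\ast$ follows from the Hölder inequality in Musielak--Orlicz spaces (Proposition \ref{holder:ineq}), the embeddings, and the estimate $(\mathcal{S}^\ast)^\sharp\big(x,|\partial_t\mathcal{M}^\ast(x,t)|\big)\le C\,\mathcal{S}^\ast(x,t)$ coming from Proposition \ref{modular-conjugate:relation} (note $\mathcal{S}^\ast\in\Delta_2\cap\nabla_2$, and likewise for $\mathcal{N}^\ast$).

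\emph{Gâteaux differentiability.} This part is a transcription of Proposition \ref{well-defined-1}: for fixed $u,\phi\in W_0^{1,\mathcal{S}}(\Omega)$ and $0<|t|\le1$, the mean value theorem writes $t^{-1}(\mathcal{E}_2(u+t\phi)-\mathcal{E}_2(u))$ as an integral with intermediate point $\eta_{x,t}\in(0,1)$, whose integrand is dominated by $C\big(\mathcal{S}^\ast(x,|u|)+\mathcal{S}^\ast(x,|\phi|)\big)\in L^1(\Omega)$ after using Young's inequality, the boundedness of $\tfrac{t}{(1+t)\log(1+t)}$, and the monotonicity of $t\mapsto t^{q^\ast(x)-1}\log^{s(x)q^\ast(x)/q(x)}(1+t)$; this monotonicity, as well as the continuity at $u=0$ of the integrand of $\mathcal{J}_2$ (the relevant exponent being $q^\ast(x)\tfrac{q(x)+s(x)}{q(x)}-1>r-1>0$, since $q^\ast(x)/q(x)>1$), both rest on the balance condition $q(x)+s(x)\ge r>1$, exactly as in \eqref{incr:prop}. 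Lebesgue's dominated convergence theorem then yields $\mathcal{E}_2'(u)=\mathcal{J}_2(u)$; the computation for $\mathcal{E}_3$ is identical with $\mathcal{S}^\ast$ replaced by $\mathcal{N}^\ast$ and the condition $q_\star(x)+s_\star(x)\ge r>1$ from \eqref{main:assump-1-2}.

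\emph{$C^1$-property and the main obstacle.} Given $u_n\to u$ in $W_0^{1,\mathcal{S}}(\Omega)$, the embeddings give $u_n\to u$ in $L^{\mathcal{S}^\ast}(\Omega)$ and $L^{\mathcal{N}^\ast}(\Omega)$, hence (along a subsequence) $u_n\to u$ a.e.\ and, by the modular--norm relations together with $\Delta_2$, also $\mathcal{S}^\ast(\cdot,|u_n|)\to\mathcal{S}^\ast(\cdot,|u|)$ and $\mathcal{N}^\ast(\cdot,|u_n|)\to\mathcal{N}^\ast(\cdot,|u|)$ in $L^1(\Omega)$, so these families are equiintegrable. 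Writing $\langle\mathcal{J}_2(v),\phi\rangle_{\mathcal{S}}=\int_\Omega\Psi_2(x,v)\phi\,\mathrm{d}x$ with $\Psi_2=\partial_t\mathcal{M}^\ast$, Proposition \ref{holder:ineq} and the embedding give $\big|\langle\mathcal{J}_2(u_n)-\mathcal{J}_2(u),\phi\rangle_{\mathcal{S}}\big|\le C\,\|\Psi_2(\cdot,u_n)-\Psi_2(\cdot,u)\|_{(\mathcal{S}^\ast)^\sharp}$ for $\|\nabla\phi\|_{\mathcal{S}}\le1$; it then suffices to show this tends to $0$, which by the modular--norm relation for $(\mathcal{S}^\ast)^\sharp$ reduces to $\varrho_{(\mathcal{S}^\ast)^\sharp}\big(\Psi_2(\cdot,u_n)-\Psi_2(\cdot,u)\big)\to0$, and this follows from Vitali's theorem via a.e.\ convergence and the equiintegrable majorant $(\mathcal{S}^\ast)^\sharp(x,|\Psi_2(x,u_n)|+|\Psi_2(x,u)|)\le C\big(\mathcal{S}^\ast(x,|u_n|)+\mathcal{S}^\ast(x,|u|)\big)$; the same works for $\mathcal{J}_3$, and a routine subsequence argument upgrades this to convergence of the full sequence. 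I expect the genuine obstacles to be twofold: (i) constructing the subcritical majorant $\mathcal{N}^\ast$ and verifying $\mathcal{N}^\ast\ll\mathcal{S}^\ast$ \emph{uniformly} in $x$ when $p,q$ are variable and $s,s_\star$ change sign — precisely the role of the hypotheses $p_\star<p^\ast$, $q_\star<q^\ast$, $s_\star<s$ in \eqref{main:assump-1-2} together with the regularity of $p,q$; and (ii) carrying the monotonicity and conjugate-function growth estimates for the logarithmic factors through the Vitali argument without losing uniformity, which is exactly what the balance conditions $q+s\ge r>1$ and $q_\star+s_\star\ge r>1$ are designed for.
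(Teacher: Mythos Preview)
Your proposal is essentially correct and mirrors the paper's approach: establish the relevant embeddings, then transcribe the dominated/Vitali arguments from Proposition~\ref{well-defined-1} with $u$ in place of $\nabla u$. The paper's proof is much terser than yours---after showing the embedding it simply says ``the remaining proof can be done by adopting the same arguments as in Proposition~\ref{well-defined-1}''---so your detailed Gâteaux and $C^1$ steps are a faithful expansion.

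One point of difference worth noting: you aim for the strong relation $\mathcal{N}^\ast\ll\mathcal{S}^\ast$ (uniform essential slowness) in order to invoke the \emph{compact} embedding of Proposition~\ref{imp:embedding}, and you flag this as the main obstacle. The paper avoids this entirely: it only proves the weaker relation $\mathcal{S}_\star\prec\mathcal{S}^\ast$ (your $\mathcal{N}^\ast$ is the paper's $\mathcal{S}_\star$), via a direct Young's inequality with exponents $p^\ast(x)/p_\star(x)$ and $q^\ast(x)/q_\star(x)$ together with the estimate \eqref{embd:est-2}, yielding $\mathcal{S}_\star(x,t)\le C\,\mathcal{S}^\ast(x,t)+h(x)$ and hence only the \emph{continuous} embedding $L^{\mathcal{S}^\ast}(\Omega)\hookrightarrow L^{\mathcal{S}_\star}(\Omega)$. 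This is all that is needed, since in the $C^1$ step you start from strong convergence $u_n\to u$ in $W_0^{1,\mathcal{S}}(\Omega)$, so continuous embeddings already give strong convergence in $L^{\mathcal{S}^\ast}$ and $L^{\mathcal{S}_\star}$; compactness is never used. Establishing $\mathcal{N}^\ast\ll\mathcal{S}^\ast$ uniformly in $x$ is genuinely more delicate (the weights $a,b$ may degenerate), so the paper's route is both simpler and sidesteps what you identified as the chief difficulty.
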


\begin{proof}
	By Young's inequality with exponents $\frac{p^\ast(x)}{p_\star(x)}$ and $\frac{q^\ast(x)}{q_\star(x)}$ and its conjugates, we get
	\begin{align*}
		\left( \left(a(x)\right)^\frac{1}{p(x)} t\right)^{p_\star(x)}  \leq  \left( \left(a(x)\right)^\frac{1}{p(x)} t\right)^{p_\ast(x)}   + 1
	\end{align*}
	and by \eqref{embd:est-2},
	\begin{align*}
		& \left(\left(b(x) \log^{s_\star(x)}(1+t)\right)^\frac{1}{q(x)} t  \right)^{q_\star(x)}   \\
		& \leq  \left(\left(b(x) \log^{s_\star(x)}(1+t)\right)^\frac{1}{q(x)} t  \right)^{q^\ast(x)}  \chi_{\{s_\star (x) > 0\}} + 1 + \left(\left(b(x) \log^{s_\star(x)}(1+t)\right)^\frac{1}{q(x)} t  \right)^{q_\star(x)} \chi_{\{s_\star (x) \leq 0\}} \\
		&  \leq  \left(\left(b(x) \log^{s(x)}(1+t)\right)^\frac{1}{q(x)} t  \right)^{q^\ast(x)} + C \left(\left(b(x) \log^{s(x)}(1+t)\right)^\frac{1}{q(x)} t  \right)^{q^\ast(x)} + h(x).
	\end{align*}
	This gives
	\begin{equation}\label{cond:AbstractEmbedding}
		\mathcal{S}_\star(x,t) \leq C \mathcal{S}^\ast(x,t) + h(x)  \quad \text{for all }  t \geq 0,  \text{ for a.a.\,}  x \in \Omega  \ \text{and for some }  C>0
	\end{equation}
	with
	\begin{align*}
		\mathcal{S}_\star(x,t):= \left( \left(a(x)\right)^\frac{1}{p(x)} t\right)^{p_\star(x)}  + \left(\left(b(x) \log^{s_\star(x)}(1+t)\right)^\frac{1}{q(x)} t  \right)^{q_\star(x)}.
	\end{align*}
	Therefore, by Propositions \ref{prop:embedding},   \ref{sobolevembed:1} and \ref{imp:embedding}, we have $W_0^{1, \mathcal{S}}(\Omega)  \hookrightarrow L^{\mathcal{S}^\ast}(\Omega) \hookrightarrow L^{\mathcal{S}_\star}(\Omega)$. Then, for any $u \in W_0^{1, \mathcal{S}}(\Omega)$, it holds
	\begin{align*}
		0 \leq \frac{\varrho_{\mathcal{S}^\ast}( u)}{\mathfrak{n}^{+,\ast}_0} \leq \mathcal{E}_2(u) \leq  \frac{\varrho_{\mathcal{S}^\ast}(u)}{\mathfrak{m}^{-, \ast}_0} < \infty \quad \text{with} \quad \mathfrak{n}^{+,\ast}_0:= \max\limits_{x \in \overline{\Omega}} \mathfrak{n}^\ast_0(x)  \text{ and }  \mathfrak{m}^{-, \ast}_0:= \min\limits_{x \in \overline{\Omega}}\mathfrak{m}_0^\ast(x)
	\end{align*}
	and
	\begin{align*}
		0 \leq \frac{\varrho_{\mathcal{S}_\star}(u)}{\mathfrak{n}^{+}_\star} \leq \mathcal{E}_3(u) \leq  \frac{\varrho_{\mathcal{S}_\star}(u)}{\mathfrak{m}^{-}_\star} < \infty
	\end{align*}
	where
	\begin{align*}
		\mathfrak{n}^{+}_\star:= \max\limits_{x \in \overline{\Omega}} \max\{p_\star(x), q_\star(x) \} \quad \text{and} \quad \mathfrak{m}^{-}_\star:= \min\limits_{x \in \overline{\Omega}} \min\{p_\star(x), q_\star(x) \}.
	\end{align*}
	Hence, the functionals $\mathcal{E}_2$ and $\mathcal{E}_3$ are well-defined. The remaining proof can be done by adopting the same arguments as in Proposition \ref{well-defined-1}.
\end{proof}

Combining Proposition \ref{well-defined-1} and Proposition \ref{well-defined-2}, we assert that the energy functional $\mathcal{E}$ is well-defined and belong to class $C^1$ with $\mathcal{E}_{\Lambda, \lambda}'(u) = \mathcal{J}(u)$.

Next, we are concerned with the properties of the operator $\mathcal{J}_1$. For this purpose, we need the following two lemmas. The first one is concerned with the monotonicity of terms that are not power laws and the second lemma is concerned with a version of Young's inequality specially tailored for our line of work.

\begin{lemma}
	Let $q >1$ and $s \in \mathbb{R}$ such that $q +s \geq 2-\delta$ for some $\delta \in (1,2)$. Then, for any $\xi, \eta \in \R^N$, we have the following inequalities: \\
    If $ s \geq 0$,
	\begin{equation}\label{est:s-posi-qgeq2}
		\left(|\xi|^{q-2} \xi \log^{s}(1+|\xi|) - |\eta|^{q-2} \eta \log^{s}(1+|\eta|) \right) \cdot \left( \xi - \eta \right)  \geq  C_q |\xi - \eta|^{q} \log^s(1+ |m|) \quad \text{if }  q \geq 2.
	\end{equation}
	and
	\begin{equation}\label{est:s-posi-qleq2}
		\begin{aligned}
			&\left( |\xi| + |\eta| \right)^{2 - q} \left( |\xi|^{q-2} \xi \log^s(1+|\xi|)- |\eta|^{q-2} \eta \log^s(1+|\eta|)\right)
			\cdot \left( \xi - \eta \right)  \\
			& \geq C_q |\xi -\eta|^2 \log^s(1+ |m|) \quad \text{if }  1<q<2
		\end{aligned}
	\end{equation}
	where $m = \min \{ \abs{\xi}, \abs{\eta} \}$ and
	\begin{align*}
		C_q =
		\begin{cases}
			\min \{ 2^{2-q}, 2^{-1} \} & \text{if } q \geq 2,  \\
			q-1 & \text{if } 1 < q < 2.
		\end{cases}
	\end{align*}
	If $s<0$ and $q >1$
	\begin{equation}\label{est:s-negi}
		\begin{aligned}
			&\left(|\xi + |\eta| \right)^{\delta}  \left(|\xi|^{q-2} \xi \log^{s}(1+|\xi|) - |\eta|^{q-2} \eta \log^{s}(1+|\eta|) \right) \cdot \left( \xi - \eta \right)  \\
			& = \left(|\xi + |\eta| \right)^{\delta} \left(|\xi|^{-\delta} \xi |\xi|^{q-2 + \delta}  \log^{s}(1+|\xi|) -  |\eta|^{-\delta} \eta |\eta|^{q-2 + \delta} \log^{s}(1+|\eta|) \right) \cdot \left( \xi - \eta \right) \\
			& \geq C_\delta \abs{\xi - \eta}^2 |m|^{q-2 + \delta}  \log^{s}(1+|m|)
		\end{aligned}
	\end{equation}
	for any $1< \delta  <2$ where
	\begin{align*}
		C_\delta =
		\begin{cases}
			\min \{ 2^{-\delta}, 2^{-1} \} & \text{if } \delta \geq 0,   \\
			\delta + 1  & \text{if } -1 < \delta < 0.
		\end{cases}
	\end{align*}
\end{lemma}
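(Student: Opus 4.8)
The plan is to reduce each of the three inequalities to two standard building blocks together with the scalar monotonicity already established in the paper. The first building block is the classical vector inequality for the pure power map: for $p\geq2$ one has $\bigl(|a|^{p-2}a-|b|^{p-2}b\bigr)\cdot(a-b)\geq 2^{2-p}|a-b|^{p}$, while for $1<p<2$ one has $(|a|+|b|)^{2-p}\bigl(|a|^{p-2}a-|b|^{p-2}b\bigr)\cdot(a-b)\geq(p-1)|a-b|^{2}$; these two regimes account for the two shapes of the constants $C_q$ and $C_\delta$. The second building block is the elementary fact, already exploited in Section~\ref{Section-2} (see \eqref{incr:prop} and \eqref{control:integrand} and the proof of Lemma~\ref{Le:Prop-S}), that from $\tfrac{t}{1+t}\leq\log(1+t)\leq t$ one obtains: $t\mapsto\log^{s}(1+t)$ is non-decreasing exactly for $s\geq0$, and $t\mapsto t^{\sigma}\log^{s}(1+t)$ is non-decreasing whenever $\sigma+s\geq0$ — in particular $t\mapsto t^{q-2+\delta}\log^{s}(1+t)$ and $t\mapsto t^{q-1}\log^{s}(1+t)$ are non-decreasing under $q+s\geq2-\delta$.

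For $s\geq0$ I would take, without loss of generality, $|\xi|\geq|\eta|$, so that $m=|\eta|$, and split
\begin{align*}
	|\xi|^{q-2}\xi\log^{s}(1+|\xi|)-|\eta|^{q-2}\eta\log^{s}(1+|\eta|)
	&=\log^{s}(1+m)\bigl(|\xi|^{q-2}\xi-|\eta|^{q-2}\eta\bigr)\\
	&\quad+\bigl(\log^{s}(1+|\xi|)-\log^{s}(1+m)\bigr)|\xi|^{q-2}\xi .
\end{align*}
Pairing with $\xi-\eta$ (and inserting the weight $(|\xi|+|\eta|)^{2-q}$ when $1<q<2$), the first summand is bounded below by the classical power-law inequality above, producing precisely $C_q|\xi-\eta|^{q}\log^{s}(1+m)$, respectively $C_q|\xi-\eta|^{2}\log^{s}(1+m)$; the second summand is non-negative because $\log^{s}(1+\cdot)$ is non-decreasing for $s\geq0$ and $|\xi|^{q-2}\xi\cdot(\xi-\eta)=|\xi|^{q-2}\bigl(|\xi|^{2}-\xi\cdot\eta\bigr)\geq|\xi|^{q-1}\bigl(|\xi|-|\eta|\bigr)\geq0$. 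This yields \eqref{est:s-posi-qgeq2} and \eqref{est:s-posi-qleq2}.

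For $s<0$ I would begin from the rewriting displayed in the statement, $|\xi|^{q-2}\xi\log^{s}(1+|\xi|)=\bigl(|\xi|^{-\delta}\xi\bigr)\,\psi(|\xi|)$ with $\psi(t)=t^{q-2+\delta}\log^{s}(1+t)$ non-decreasing (this is the qualitative use of $q+s\geq2-\delta$). The essential difficulty is that the field $\xi\mapsto|\xi|^{-\delta}\xi$ is \emph{not} monotone — its exponent $-\delta$ lies below $-1$ — so one cannot peel off $\psi$ and treat the remainder by a power-law inequality. Instead I would expand the bilinear form directly: setting $\rho(t):=t^{q-2}\log^{s}(1+t)$, so that the field appearing on the left before the weight is $F(\xi)=\rho(|\xi|)\xi$, one has the identity
\[
	\bigl(F(\xi)-F(\eta)\bigr)\cdot(\xi-\eta)=\rho(|\xi|)\bigl(|\xi|^{2}-\xi\cdot\eta\bigr)+\rho(|\eta|)\bigl(|\eta|^{2}-\xi\cdot\eta\bigr),
\]
in which the two parenthesised quantities are both $\geq0$ when their larger-argument partner is, and always sum to $|\xi-\eta|^{2}$; in the remaining configuration one regroups via $|\xi|^{2}-\xi\cdot\eta=|\xi-\eta|^{2}+(\xi\cdot\eta-|\eta|^{2})$. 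The monotonicity of $t\mapsto t^{q-1}\log^{s}(1+t)$ enters quantitatively as $\rho(t)+t\rho'(t)\geq(q+s-1)\rho(t)>0$, which absorbs the possibly negative cross term; finally one multiplies by $(|\xi|+|\eta|)^{\delta}$ and uses $q+s\geq2-\delta$ to bound $\rho$ evaluated at $|\xi|$, $|\eta|$ or $|\xi|+|\eta|$ from below by $\rho(m)$ up to the factor $\bigl(\tfrac{m}{|\xi|+|\eta|}\bigr)^{\delta}$, which is exactly what converts $(|\xi|+|\eta|)^{\delta}\rho(m)|\xi-\eta|^{2}$ into $C_\delta\,m^{q-2+\delta}\log^{s}(1+m)|\xi-\eta|^{2}$ and establishes \eqref{est:s-negi}.

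I expect the real work to be concentrated in this last case $s<0$: bounding $\rho$ — which need not itself be monotone — at all the arguments that occur while carrying the weight $(|\xi|+|\eta|)^{\delta}$, distinguishing configurations according to the sign of $|\eta|^{2}-\xi\cdot\eta$, and bookkeeping the constants $C_q$, $C_\delta$ cleanly across the super- and sub-quadratic regimes. By contrast, the $s\geq0$ computations are routine once the splitting above is in place.
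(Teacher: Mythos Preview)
Your treatment of the case $s\geq 0$ is correct and is essentially the proof of Lemma~4.3 in \cite{Arora-Crespo-Blanco-Winkert-2023}; the paper simply cites that lemma with $h(t)=\log^{s}(1+t)$, so here you are doing the same thing, just spelled out.

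For $s<0$ there is a misunderstanding of the paper's argument. The paper's proof is again a one-line application of the same external Lemma~4.3, this time with $h(t)=t^{q-2+\delta}\log^{s}(1+t)$ (non-decreasing because $q+s\geq 2-\delta$); the rewriting displayed in \eqref{est:s-negi} is only there to make the match with the form of that lemma visible. You object that the ``base'' field $\xi\mapsto|\xi|^{-\delta}\xi$ is not monotone for $\delta>1$, so the splitting trick you used for $s\geq 0$ cannot be repeated. That observation is correct, but it is an objection to one particular \emph{proof strategy} for Lemma~4.3, not to the paper's reduction. Indeed, taking $h$ constant shows that Lemma~4.3 in the parameter range needed here cannot be obtained by first pulling out $h(m)$ and then invoking the pure power inequality; its proof in \cite{Arora-Crespo-Blanco-Winkert-2023} necessarily uses more of the structure of $h$ --- very plausibly something close to your direct bilinear expansion. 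So your ``alternative'' is not really an alternative to the paper; it is a sketch of what the cited lemma's proof must do anyway.

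That said, your sketch for $s<0$ has genuine gaps. The function $\rho(t)=t^{q-2}\log^{s}(1+t)$ need not be monotone, so the comparison of $\rho(|\xi|)$, $\rho(|\eta|)$, $\rho(|\xi|+|\eta|)$ to $\rho(m)$ is not automatic, and the step where you absorb the cross term via $\rho(t)+t\rho'(t)\geq (q+s-1)\rho(t)$ already uses $q+s>1$, which is stronger than the stated hypothesis $q+s\geq 2-\delta$ (though it does hold in the paper's applications by \eqref{main:assump}\textnormal{(ii)}). To make this a proof you would need to carry the weight $(|\xi|+|\eta|)^{\delta}$ through the case analysis on the sign of $|\eta|^{2}-\xi\cdot\eta$ and track constants carefully, rather than describe the plan.
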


\begin{proof}
	The proof follows from Arora--Crespo-Blanco--Winkert \cite[Lemma 4.3]{Arora-Crespo-Blanco-Winkert-2023} by taking $h(t) = \log^s(1+ t)$ if $s \geq 0$. If $s<0$, consider $h(t) = t^{q-2 + \delta} \log^s(1+ t)$ for $\delta \in (1,2)$ in Arora--Crespo-Blanco--Winkert \cite[Lemma 4.3]{Arora-Crespo-Blanco-Winkert-2023}.
\end{proof}

\begin{lemma}\label{Le:YoungIneqLog}
	Let $w,t \geq 0$, $q >1$ and $s \in \mathbb{R}$ such that $q +s >1$. Then
	\begin{align*}
		&w t^{q-1} \log^{s-1}(1+t) \left[\log(1 + t) + \frac{s t}{q(1 + t)} \right] \\
		& \leq \frac{w^q}{q} \log^{s}(1 + w) + t^q \log^{s-1}(1+t) \left[ \frac{q- 1}{q} \log (1 + t ) + \frac{s t }{q (q + t)} \right].
	\end{align*}
\end{lemma}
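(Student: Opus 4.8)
The plan is to read the left-hand side as $w$ times the $t$-derivative of a convex one-variable function, and then to apply the tangent line inequality. First I would set $F(t):=\tfrac1q t^{q}\log^{s}(1+t)$ for $t\ge 0$, with $F(0):=0$, and observe that
\[
	F'(t)=t^{q-1}\log^{s-1}(1+t)\left[\log(1+t)+\frac{st}{q(1+t)}\right],
\]
so that the left-hand side of the lemma is exactly $w\,F'(t)$ and the first summand $\tfrac{w^{q}}{q}\log^{s}(1+w)$ on the right-hand side is $F(w)$. The cases $t=0$ and $w=0$ are trivial (interpreting the left-hand side as its limit, which is $0$, since $q+s>1$), so from now on I would take $t,w>0$, where $F$ is finite and smooth.

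The decisive ingredient will be the convexity of $F$ on $[0,\infty)$. Here I would appeal directly to the sign computation already carried out in the proof of Lemma \ref{Le:Prop-S}: the second derivative of $t\mapsto t^{q}\log^{s}(1+t)$ is $t^{q-2}\log^{s-2}(1+t)\,\mathbf{M}$, and that proof establishes $\mathbf{M}\ge 0$ whenever $q+s\ge 1$, splitting into the cases $s\ge 0$ and $s<0$. Since our hypothesis is the strict balance condition $q+s>1$, $F$ is convex (strictly so on $(0,\infty)$). I expect this to be the only genuinely delicate step; it is precisely where $q+s>1$ is used, and, being already established in Lemma \ref{Le:Prop-S}, it requires no new work.

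Granting the convexity, the subgradient (tangent line) inequality gives $F(w)\ge F(t)+F'(t)(w-t)$ for all $w\ge 0$, i.e.
\[
	w\,F'(t)\le F(w)-F(t)+t\,F'(t).
\]
To finish I would carry out the short algebraic manipulation identifying $t\,F'(t)-F(t)$ with the second summand on the right-hand side of the asserted inequality (write $F(t)=\tfrac1q t^{q}\log^{s-1}(1+t)\log(1+t)$ and insert the formula for $F'$), after which the estimate follows by adding $F(w)=\tfrac1q w^{q}\log^{s}(1+w)$. In short, the whole lemma is a tangent line estimate for the convex $\Phi$-function $F(t)=\tfrac1q t^{q}\log^{s}(1+t)$; the balance condition $q+s>1$ is used only to guarantee that convexity, and everything else is routine bookkeeping.
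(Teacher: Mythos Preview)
Your approach is correct and is essentially the same as the paper's. The paper defines $h(t)=t^{q-1}\log^{s-1}(1+t)\bigl[\log(1+t)+\tfrac{st}{q(1+t)}\bigr]$, observes that the condition $q+s>1$ makes $h$ positive, continuous, strictly increasing and vanishing at $0$ (again by the sign computation of $\mathbf{M}$ in Lemma~\ref{Le:Prop-S}), and then invokes the Young inequality argument from \cite[Lemma~4.4]{Arora-Crespo-Blanco-Winkert-2023}. Since your $F$ satisfies $F'=h$, the paper's ``$h$ strictly increasing $+$ Young'' and your ``$F$ convex $+$ tangent line'' are the same argument in different clothing: in both cases one arrives at $wF'(t)\le F(w)+tF'(t)-F(t)$ and then identifies the right-hand side. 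One small remark: when you carry out the algebra for $tF'(t)-F(t)$ you will obtain $\tfrac{st}{q(1+t)}$ in the bracket, not $\tfrac{st}{q(q+t)}$ as printed in the statement; this is a typo in the displayed inequality, and your computation recovers the intended form.
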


\begin{proof}
	The proof follows by repeating the same arguments as in Arora--Crespo-Blanco--Winkert \cite[Lemma 4.4]{Arora-Crespo-Blanco-Winkert-2023} by taking the function $h\colon  \mathbb{R}^+ \to \mathbb{R}$ defined as
	\begin{align*}
		h(t) = t^{q-1} \log^{s-1}(1+t) \left[\log(1+t) + \frac{st}{q(1+t)}\right].
	\end{align*}
	Note that the condition $q + s >1$ implies that the above function $h$ is positive, continuous, strictly increasing, and vanishes at zero. In particular, the arguments in \textbf{Case 1} and \textbf{2} in the proof of Lemma \ref{Le:Prop-S} imply that the function $h$ is strictly increasing.
\end{proof}

Now we can state the main properties of the operator $\mathcal{J}_1$.

\begin{theorem}\label{Th:PropertiesOperator}
	Let \eqref{main:assump} and \eqref{main:assump-1} be satisfied and $\{u_n\}_{n \in \mathbb{N}} \subseteq W_0^{1, \mathcal{S}}(\Omega)$ be a sequence such that
	\begin{align}\label{Eq:SequenceSplus}
		u_n \rightharpoonup u \quad\text{in }  W_0^{1, \mathcal{S}}(\Omega)
		\quad\text{and}\quad
		\limsup_{n \to \infty}{\left\langle \mathcal{J}_1(u_n), u_n - u \right\rangle } \leq 0.
	\end{align}
	Then, the following hold:
	\begin{enumerate}
		\item[\textnormal{(i)}]
			$\nabla u_n \to \nabla u$ a.e.\,in $\Omega$.
		\item[\textnormal{(ii)}]
			$u_n \to u$ in $W_0^{1, \mathcal{S}}(\Omega)$.
	\end{enumerate}
	In particular, the operator $\mathcal{J}_1$  is of type \textnormal{(S$_+$)}.
\end{theorem}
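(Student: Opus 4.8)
The plan is to establish the two convergence assertions \textnormal{(i)} and \textnormal{(ii)}; once \textnormal{(ii)} is proved, the \textnormal{(S$_+$)}-property of $\mathcal{J}_1$ is exactly its reformulation. Write $\mathbb{M}(x,\xi)=\partial_\xi\mathcal{M}(x,\xi)$ for the vector field underlying $\mathcal{J}_1$, so that $\langle\mathcal{J}_1(v),\phi\rangle=\int_\Omega\mathbb{M}(x,\nabla v)\cdot\nabla\phi\,\mathrm{d}x$, and set
\begin{align*}
	D_n(x):=\big(\mathbb{M}(x,\nabla u_n)-\mathbb{M}(x,\nabla u)\big)\cdot(\nabla u_n-\nabla u).
\end{align*}
The classical monotonicity of $\xi\mapsto a(x)|\xi|^{p(x)-2}\xi$ together with the monotonicity inequalities \eqref{est:s-posi-qgeq2}, \eqref{est:s-posi-qleq2} and \eqref{est:s-negi} for the logarithmic $q(\cdot)$-term show that $D_n\geq 0$ a.e.\ in $\Omega$. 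Since $\mathcal{J}_1(u)\in(W_0^{1,\mathcal{S}}(\Omega))^\ast$ and $u_n-u\rightharpoonup 0$, we have $\langle\mathcal{J}_1(u),u_n-u\rangle\to 0$; combining this with \eqref{Eq:SequenceSplus} and $D_n\geq 0$ gives $\langle\mathcal{J}_1(u_n),u_n-u\rangle\to 0$ and hence $\int_\Omega D_n\,\mathrm{d}x\to 0$, so that $D_n\to 0$ in $L^1(\Omega)$ and, passing to a subsequence, $D_n\to 0$ a.e.

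For \textnormal{(i)} I would decompose $\Omega$ (up to a null set) into $A_1=\{a>0\}$ and $A_2=\{b>0\}$, which cover $\Omega$ by \eqref{main:assump}\textnormal{(iii)}, and argue pointwise at a.e.\ $x$ with $D_n(x)\to 0$. On $A_1$ the standard strict monotonicity of the $p(\cdot)$-term forces $\nabla u_n(x)\to\nabla u(x)$. On $A_2$ I would argue by contradiction, distinguishing three cases: if $\nabla u(x)=0$, the coercivity of $\xi\mapsto\mathbb{M}(x,\xi)\cdot\xi$ restricted to its $b$-dependent part, which uses $q(x)+s(x)\geq r>1$ exactly as in \eqref{incr:prop}, forces $|\nabla u_n(x)|\to 0$; if $|\nabla u_n(x)|\to\infty$, the lower bounds \eqref{est:s-posi-qgeq2}--\eqref{est:s-negi} (applied with $m=\min\{|\nabla u_n(x)|,|\nabla u(x)|\}$, and selected according to the sign of $s(x)$ and whether $q(x)\geq 2$ or $q(x)<2$) make $D_n(x)$ blow up, a contradiction; and if $|\nabla u_n(x)|$ stays bounded but $\nabla u_n(x)\not\to\nabla u(x)$, extracting a further subsequence $\nabla u_n(x)\to\xi_0\neq\nabla u(x)$ and using continuity together with the strict monotonicity of $\xi\mapsto\mathbb{M}(x,\xi)$ again contradicts $D_n(x)\to 0$. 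Hence $\nabla u_n\to\nabla u$ a.e.\ on $A_1\cup A_2=\Omega$ along the subsequence, and since the limit is always the same, the usual subsequence argument upgrades this to the full sequence. \textbf{I expect this step to be the main obstacle}: the monotonicity estimates for the logarithmically perturbed term are genuinely degenerate, their lower bounds carrying the weights $\log^{s(\cdot)}(1+m)$ (which vanish when $\nabla u=0$) or $|m|^{q(\cdot)-2+\delta}$, so the three cases must be dealt with individually and the sign-change of $s(\cdot)$ forces one to keep track of which of \eqref{est:s-posi-qgeq2}, \eqref{est:s-posi-qleq2}, \eqref{est:s-negi} is available.

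For \textnormal{(ii)} I would use Young's inequality in the sharp form of Lemma \ref{Le:YoungIneqLog} for the logarithmic term, together with the classical Young inequality for the $p(\cdot)$-part, to bound $\langle\mathcal{J}_1(u_n),u\rangle$ from above by $\int_\Omega\mathcal{M}(x,|\nabla u|)\,\mathrm{d}x+\int_\Omega\Xi_n\,\mathrm{d}x$, where $\Xi_n$ collects the $u_n$-terms appearing on the right-hand sides of those inequalities. Writing $\langle\mathcal{J}_1(u_n),u_n\rangle=\int_\Omega G_n\,\mathrm{d}x$, a direct computation using $q(x)+s(x)>1$ gives $G_n-\Xi_n\geq c\,\mathcal{S}(x,|\nabla u_n|)\geq 0$ pointwise, so from $\langle\mathcal{J}_1(u_n),u_n-u\rangle\to 0$ we get $\limsup_n\int_\Omega(G_n-\Xi_n)\,\mathrm{d}x\leq\int_\Omega\mathcal{M}(x,|\nabla u|)\,\mathrm{d}x$; Fatou's lemma together with the a.e.\ convergence from \textnormal{(i)} and the pointwise bound $G-\Xi\geq\mathcal{M}(x,|\nabla u|)$ for the a.e.\ limit yields the matching lower bound, whence $\int_\Omega(G_n-\Xi_n)\,\mathrm{d}x\to\int_\Omega(G-\Xi)\,\mathrm{d}x$. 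Since $G_n-\Xi_n\geq 0$ and $G_n-\Xi_n\to G-\Xi$ a.e., this forces $G_n-\Xi_n\to G-\Xi$ in $L^1(\Omega)$; in particular $\{G_n-\Xi_n\}_n$, hence $\{\mathcal{S}(x,|\nabla u_n|)\}_n$, hence — by the convexity/$\Delta_2$-estimate $\mathcal{S}(x,|\xi+\eta|)\leq C(\mathcal{S}(x,|\xi|)+\mathcal{S}(x,|\eta|))$ — also $\{\mathcal{S}(x,|\nabla(u_n-u)|)\}_n$ is uniformly integrable. As $\mathcal{S}(x,|\nabla(u_n-u)|)\to 0$ a.e.\ by \textnormal{(i)}, Vitali's convergence theorem gives $\varrho_{\mathcal{S}}(\nabla(u_n-u))\to 0$; combined with $u_n\to u$ in $L^{\mathcal{S}}(\Omega)$ (from the compact embedding $W^{1,\mathcal{S}}(\Omega)\hookrightarrow L^{\mathcal{S}}(\Omega)$, a consequence of Proposition \ref{imp:embedding}) this yields $\varrho_{1,\mathcal{S}}(u_n-u)\to 0$, and therefore $u_n\to u$ in $W_0^{1,\mathcal{S}}(\Omega)$ by Proposition \ref{Prop:oneHlogModularNorm}\textnormal{(v)}. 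This is precisely \textnormal{(ii)}, and with it the \textnormal{(S$_+$)}-property of $\mathcal{J}_1$.
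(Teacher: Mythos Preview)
Your proposal is correct and follows the same overall strategy as the paper: both first deduce $\int_\Omega D_n\,\mathrm{d}x\to 0$ from monotonicity and weak convergence, then use the inequalities \eqref{est:s-posi-qgeq2}--\eqref{est:s-negi} to obtain a.e.\ (or in-measure) convergence of $\nabla u_n$, and finally combine the sharp Young inequality of Lemma~\ref{Le:YoungIneqLog} with Fatou's lemma to get energy convergence $\int_\Omega\mathcal{M}(x,|\nabla u_n|)\,\mathrm{d}x\to\int_\Omega\mathcal{M}(x,|\nabla u|)\,\mathrm{d}x$ and conclude strong convergence. The differences are organizational rather than substantive: for \textnormal{(i)} you argue pointwise at a.e.\ $x$ via the trichotomy $\nabla u(x)=0$ / $|\nabla u_n(x)|\to\infty$ / bounded but non-convergent, whereas the paper partitions $\Omega$ into the sets $E_n^{(i)}$ and then further by the sign of $s$ and whether $q\gtrless 2$, working through convergence in measure and a separate contradiction argument to rule out $\nabla u_n\to 0$ on the wrong set; for \textnormal{(ii)} you finish via uniform integrability and Vitali, while the paper invokes the Br\'ezis--Lieb lemma (Lemma~\ref{BL-lemma}) --- the two routes are equivalent once the energy convergence is in hand. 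One small correction: the compact embedding $W^{1,\mathcal{S}}(\Omega)\hookrightarrow L^{\mathcal{S}}(\Omega)$ you invoke at the very end is Proposition~\ref{Prop:Poincare}, not Proposition~\ref{imp:embedding}; its hypothesis \eqref{poin:cond} is indeed implied by \eqref{main:assump} and \eqref{main:assump-1}.
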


\begin{proof}
	By the strict monotonicity of $\mathcal{J}_1$ and the weak convergence of $u_n$, we obtain
	\begin{align*}
		0
		\leq \liminf_{n \to \infty} \left\langle \mathcal{J}_1(u_n) - \mathcal{J}_1(u) , u_n - u \right\rangle
		& \leq \limsup_{n \to \infty}\left\langle \mathcal{J}_1(u_n) - \mathcal{J}_1(u) , u_n - u \right\rangle \\
		& = \limsup_{n \to \infty}\left\langle \mathcal{J}_1(u_n) , u_n - u \right\rangle
		\leq 0,
	\end{align*}
	which means
	\begin{equation}\label{combined:conv:est}
		\lim_{n \to \infty} \left\langle \mathcal{J}_1(u_n) - \mathcal{J}_1(u) , u_n - u \right\rangle
		= 0.
	\end{equation}
	{\bf Claim: } $\nabla u_n \to \nabla u$ in measure.

	In particular, as the previous expression can be decomposed in the sum of nonnegative terms, it follows
	\begin{equation}\label{Eq:CasePgeq2}
		\lim_{n \to \infty} \int_{ \{p \geq 2 \}}
		a(x)\left(|\nabla u_n|^{p(x)-2} \nabla u_n - |\nabla u|^{p(x)-2} \nabla u \right) \cdot \left( \nabla u_n - \nabla u \right) \,\mathrm{d}x = 0,
	\end{equation}
	\begin{equation}\label{Eq:CaseP<2}
		\lim_{n \to \infty} \int_{ \{p < 2 \}} a(x) \left(|\nabla u_n|^{p(x)-2} \nabla u_n - |\nabla u|^{p(x)-2} \nabla u \right) \cdot \left( \nabla u_n - \nabla u \right) \,\mathrm{d}x = 0.
	\end{equation}
	\begin{equation}\label{Eq:Casebneq0}
		\begin{aligned}
			 & \lim_{n \to \infty} \int_{\Omega} b(x) \Big(|\nabla u_n|^{q(x)-2} \nabla u_n \log^{s(x)}(1+ |\nabla u_n|)  \\
			 & \qquad\qquad\qquad\qquad - |\nabla u|^{q(x)-2} \nabla u \log^{s(x)}(1+ |\nabla u|) \Big) \cdot \left( \nabla u_n - \nabla u \right) \,\mathrm{d}x = 0.
		\end{aligned}
	\end{equation}
	By repeating the same arguments as in the proof of Arora--Crespo-Blanco--Winkert \cite[Theorem 4.4]{Arora-Crespo-Blanco-Winkert-2023} for the integrals \eqref{Eq:CasePgeq2} and \eqref{Eq:CaseP<2}, we obtain
	\begin{align*}
		\nabla u_n 1_{ \{a >0\}} \to \nabla u 1_{ \{a>0\}} \quad\text{in measure}.
	\end{align*}
Now, in order to prove our claim in $1_{ \{b >0\}}$,  we partition the domain of the integral in \eqref{Eq:Casebneq0} as
	\begin{align*}
		\int_{\Omega}= \int_{E^{(1)}_n} \cdots + \int_{E^{(2)}_n} \cdots + \int_{E^{(2)}_n} \cdots + \int_{E^{(4)}_n} \cdots = J_1 + J_2 + J_3 + J_4
	\end{align*}
	where
	\begin{align*}
		E^{(1)}_n = \{ \nabla u_n \neq 0,  \nabla u \neq 0\}, & \quad E^{(2)}_n = \{ \nabla u_n \neq 0,  \nabla u = 0\}, \\
		E^{(3)}_n = \{ \nabla u_n = 0,  \nabla u \neq 0\},    & \quad E^{(4)}_n = \{ \nabla u_n = 0,  \nabla u = 0\}.
	\end{align*}
	It is easy to see that $J_i \to 0$ for $i=2,3,4$, which also implies
	\begin{align*}
		1_{\cup_{i=2}^4 E^{(i)}_n} \nabla u_n \to 0  \quad\text{a.e.\,in }  \Omega.
	\end{align*}
	On the other hand, we partition the domain of the integral $J_1$ as
	\begin{align*}
		\int_{E_n^{(1)}} \cdots = \int_{\{s \geq 0, q\geq 2, b>0\} \cap E_n^{(1)}} \cdots + \int_{\{s \geq 0, q<2, b>0 \} \cap E_n^{(1)}} \cdots + \int_{\{s<0, b>0\} \cap E_n^{(1)}} \cdots = I_1 + I_2 + I_3
	\end{align*}
	and using \eqref{est:s-posi-qgeq2}-\eqref{est:s-negi} and \eqref{combined:conv:est}, we get
	\begin{equation}\label{conv:I's}
		I_i \to 0  \quad\text{as }  n \to \infty  \text{ for }  i=1,2,3.
	\end{equation}
	\newline
	\textbf{Step 1: Convergence in $\{s \geq 0, q\geq 2, b>0\} \cap E_n^{(1)}$:}

	From \eqref{est:s-posi-qgeq2}, for any $\varepsilon > 0$ we know that
	\begin{align*}
		 & \left\lbrace 1_{ \{q \geq 2, b>0, s \geq 0 \}} (q^- - 1) |\nabla u_n - \nabla u|^q 1_{E_n^{(1)}} \log^{s(x)}(1+ t(x)) \geq \varepsilon\right\rbrace  \\
		 & \subseteq \bigg\lbrace 1_{ \{q >2, b>0, s\geq 0 \}} \left( |\nabla u_n|^{q(x)-2} \nabla u_n \log^{s(x)}(1+ |\nabla u_n|)- \abs{\nabla u}^{q(x)-2} \nabla u \log^{s(x)}(1+ |\nabla u|) \right) \\
		 & \qquad \qquad \qquad \qquad \qquad \qquad \cdot \left( \nabla u_n - \nabla u \right) \geq \varepsilon\bigg\rbrace,
	\end{align*}
	where $t(x)= \min\{|\nabla u_{n}(x)|, |\nabla u(x)|\}$. From \eqref{conv:I's} and the previous expression we obtain that
	\begin{align*}
		1_{ \{q \geq 2, b>0, s \geq 0 \}} (q^- - 1) |\nabla u_n - \nabla u|^q 1_{E_n^{(1)}} \log^{s(x)}(1+ t(x)) \rightarrow 0 \quad \text{in measure.}
	\end{align*}
	This implies (up to a subsequence)
	\begin{align*}
		\text{either}\quad  1_{ \{q \geq 2, b>0, s \geq 0 \} \cap E_n^{(1)}} \nabla u_n \to 1_{ \{q \geq 2, b>0, s \geq 0 \}} \nabla u \quad \text{or} \quad 1_{ \{q > 2, b>0, s \geq 0\} \cap E_n^{(1)}} \nabla u_n \to 0  \quad\text{a.e.\,in }  \Omega.
	\end{align*}
	We claim the latter part does not hold. Suppose it holds. Then,
	\begin{align*}
		1_{ \{q \geq 2, b>0, s \geq 0 \}} & \left( |\nabla u_n|^{q(x)-2} \nabla u_n \log^{s(x)}(1+ |\nabla u_n|)- \abs{\nabla u}^{q(x)-2} \nabla u \log^{s(x)}(1+ |\nabla u|) \right) \cdot \left( \nabla u_n - \nabla u \right) \\
		& \to 1_{ \{q >2, b>0, s\geq 0 \}} \abs{\nabla u}^{q(x)} \log^{s(x)}(1+ |\nabla u|) \quad \text{a.e.\,in }  \Omega.
	\end{align*}
	Since $u_n$ is a bounded sequence in $W_0^{1, \mathcal{S}}(\Omega)$, by Vitali's convergence theorem
	\begin{align*}
		I_1 \to \int_{1_{ \{q \geq 2, b>0, s \geq 0 \}}} \abs{\nabla u}^{q(x)} \log^{s(x)}(1+ |\nabla u|) \,\mathrm{d}x,
	\end{align*}
	which is a contradiction to \eqref{conv:I's}. Hence, by the subsequence principle, the following is true for the whole sequence $u_n$,
	\begin{equation}\label{est:aeconv-1}
		1_{ \{q \geq 2, b>0, s \geq 0 \} \cap E_n^{(1)}} \nabla u_n \to 1_{ \{q \geq 2, b>0, s \geq 0 \}} \nabla u \quad \text{a.e.\,in }  \Omega.
	\end{equation}
	\textbf{Step 2: Convergence in $\{s \geq 0, q < 2, b>0\} \cap E_n^{(1)}$:}

	From \eqref{est:s-posi-qleq2}, for any $\varepsilon > 0$ we know that
	\begin{align*}
		& \left\lbrace 1_{ \{q < 2, b>0, s \geq 0 \}} (q^- - 1) |\nabla u_n - \nabla u|^2 (|\nabla u_n| + |\nabla u|)^{q(x)-2} 1_{E_n^{(1)}} \log^{s(x)}(1+ t(x)) \geq \varepsilon\right\rbrace  \\
		& \subseteq \bigg\lbrace 1_{ \{q <2, b>0, s\geq 0 \}} \left( |\nabla u_n|^{q(x)-2} \nabla u_n \log^{s(x)}(1+ |\nabla u_n|)- \abs{\nabla u}^{q(x)-2} \nabla u \log^{s(x)}(1+ |\nabla u|) \right) \\
		& \qquad \qquad \qquad \qquad \qquad \qquad \cdot \left( \nabla u_n - \nabla u \right) \geq \varepsilon\bigg\rbrace.
	\end{align*}
	From \eqref{conv:I's} and the previous expression we obtain that
	\begin{align*}
		1_{ \{q < 2, b>0, s \geq 0 \} \cap E_n^{(1)}} (q^- - 1) |\nabla u_n - \nabla u|^2 (|\nabla u_n| + |\nabla u|)^{q(x)-2} \log^{s(x)}(1+ t(x)) \rightarrow 0 \quad \text{in measure.}
	\end{align*}
	This implies (up to a subsequence), either $1_{ \{q < 2, b>0, s \geq 0\} \cap E_n^{(1)}} \nabla u_n \not \to 0$ a.e.\,in $\Omega$ and
	\begin{align*}
		1_{ \{q < 2, b>0, s \geq 0 \} \cap E_n^{(1)}} (q^- - 1) |\nabla u_n - \nabla u|^2 (|\nabla u_n| + |\nabla u|)^{q(x)-2} \log^{s(x)}(1+ t(x)) \to 0   \quad \text{a.e.\,in }  \Omega
	\end{align*}
	or
	\begin{align*}
		1_{ \{q < 2, b>0, s \geq 0\}} \nabla u_n \to 0  \quad\text{a.e.\,in }  \Omega.
	\end{align*}
	By repeating the same argument as above, we can show that the latter part gives a contradiction to \eqref{conv:I's}. Hence, $1_{ \{q < 2, b>0, s \geq 0\} \cap E_n^{(1)}} \nabla u_n \not \to 0$ a.e.\,in $\Omega$ and
	\begin{align*}
		1_{ \{q < 2, b>0, s \geq 0 \} \cap E_n^{(1)}} (q^- - 1) |\nabla u_n - \nabla u|^2 (|\nabla u_n| + |\nabla u|)^{q(x)-2} \log^{s(x)}(1+ t(x)) \to 0   \quad\text{a.e.\,in }  \Omega.
	\end{align*}
	Then for a.e.\,$x \in \Omega$ there exists $M(x) > 0$ such that for all $k \in \N$
	\begin{align*}
		M(x) & \geq 1_{ \{q < 2, b>0, s \geq 0 \} \cap E_n^{(1)}} \abs{\nabla u_{n_k} - \nabla u}^2 1_{E_{n_k}} \left( \abs{\nabla u_{n_k}} + \abs{\nabla u} \right)^{q(x) - 2} \log^{s(x)}(1+ t(x)) \\
		& \geq 1_{ \{q < 2, b>0, s \geq 0 \} \cap E_n^{(1)}} \abs{ \abs{\nabla u_{n_k}} - \abs{\nabla u} }^2 1_{E_{n_k}} \left( \abs{\nabla u_{n_k}} + \abs{\nabla u} \right)^{q(x) - 2} \log^{s(x)}(1+ t(x)).
	\end{align*}
	Note that, given any $c > 0$ and $0 < Q < 1$ and $S \in \mathbb{S}$, the function $h(t) = \abs{t - c}^2 (t + c)^{Q-2} \log^{S}(1 + \min\{t,c\})$ satisfies $\lim_{t \to +\infty} h(t) = +\infty$. Therefore, there exists $m(x) > 0$ such that $\abs{ \nabla u_{n_k} } \leq m(x)$ for a.a.\,$x \in \Omega$ and for all $k \in \N$. As a consequence
	\begin{align*}
		& 1_{ \{q < 2, b>0, s \geq 0 \} \cap E_n^{(1)}} \abs{\nabla u_{n_k} - \nabla u}^2 1_{E_{n_k}} \left( \abs{\nabla u_{n_k}} + \abs{\nabla u} \right)^{q(x) - 2} \log^{s(x)}(1 + t(x)) \\
		& \geq 1_{ \{q < 2, b>0, s \geq 0 \} \cap E_n^{(1)}} \abs{\nabla u_{n_k} - \nabla u}^2 1_{E_{n_k}} \left( m(x) + \abs{\nabla u} \right)^{q(x) - 2} \log^{s(x)}(1 + t(x))
	\end{align*}
	and the convergence a.e.\,to zero of the left-hand side and by the subsequence principle, this yields
	\begin{equation}\label{est:aeconv-2}
		1_{ \{q < 2, b>0, s \geq 0 \} \cap E_n^{(1)}} \nabla u_{n} \to 1_{ \{q < 2, b>0, s \geq 0 \}}  \nabla u  \quad\text{a.e.\,in }  \Omega
	\end{equation}
	since $1_{ \{q < 2, b>0, s \geq 0\} \cap E_n^{(1)}} \nabla u_n \not \to 0$ a.e.\,in $\Omega$.\\
	\textbf{Step 3: Convergence in $\{s < 0, b>0\} \cap E_n^{(1)}$:}

	Now by replacing $2-q$ by $\delta$ in \textbf{Step 2} and using \eqref{est:s-negi} in place of \eqref{est:s-posi-qleq2} with $2-r< \delta <2 $, we obtain
	\begin{equation}\label{est:aeconv-3}
		1_{ \{b>0, s <0 \} \cap E_n^{(1)}} \nabla u_{n} \to 1_{ \{b>0, s < 0 \}}  \nabla u \quad \text{a.e.\,in }  \Omega
	\end{equation}
	Finally, combining \eqref{est:aeconv-1}, \eqref{est:aeconv-2} and \eqref{est:aeconv-3}, we obtain the required claim \textnormal{(i)}. From Young's inequality and Lemma \ref{Le:YoungIneqLog} it follows that
	\begin{align*}
		& a(x) |\nabla u_n|^{p(x)-2}\nabla u_n \cdot \nabla (u_n - u ) \,\mathrm{d}x \\
		& \quad+ b(x) \left[ \log (1 + |\nabla u_n| ) + \frac{s(x) |\nabla u_n|}{q(x) (1 + |\nabla u_n|)} \right]
		|\nabla u_n|^{q(x)-2}\nabla u_n  \log^{s(x)-1} (1 + |\nabla u_n|) \cdot\nabla (u_n - u ) \,\mathrm{d}x \\
		& = a(x) |\nabla u_n|^{p(x)} \,\mathrm{d}x-  a(x) |\nabla u_n|^{p(x)-2}\nabla u_n \cdot \nabla u \,\mathrm{d}x \\
		& \quad + b(x) \left[ \log (1 + |\nabla u_n| ) + \frac{s(x)|\nabla u_n|}{q(x) (1 + |\nabla u_n|)} \right] |\nabla u_n|^{q(x)} \log^{s(x)-1} (1 + |\nabla u_n|) \,\mathrm{d}x   \\
		& \quad - b(x) \left[ \log (1 + |\nabla u_n| ) + \frac{s(x) |\nabla u_n|}{q(x) (1 + |\nabla u_n|)} \right] |\nabla u_n|^{q(x)-2}\nabla u_n \log^{s(x)-1} (1 + |\nabla u_n|) \cdot \nabla u \,\mathrm{d}x \\
		 & \geq a(x) |\nabla u_n|^{p(x)} \,\mathrm{d}x- a(x) |\nabla u_n|^{p(x)-1} \abs{\nabla u} \,\mathrm{d}x  \\
		& \quad + b(x) \left[ \log (1 + |\nabla u_n| ) + \frac{s(x) |\nabla u_n|}{q(x) (1 + |\nabla u_n|)} \right] |\nabla u_n|^{q(x)} \log^{s(x)-1} (1 + |\nabla u_n|) \,\mathrm{d}x  \\
		& \quad - b(x) \left[ \log (1 + |\nabla u_n| ) + \frac{s(x)|\nabla u_n|}{q(x) (1 + |\nabla u_n|)} \right] |\nabla u_n|^{q(x)- 1} \log^{s(x)-1} (1 + |\nabla u_n|) \abs{\nabla u} \,\mathrm{d}x   \\
		& \geq a(x) |\nabla u_n|^{p(x)} \,\mathrm{d}x - a(x) \left( \frac{p(x)-1}{p(x)}|\nabla u_n|^{p(x)}+\frac{1}{p(x)} \abs{\nabla u}^{p(x)} \right) \,\mathrm{d}x  \\
		& \quad + b(x) \left[ \log (1 + |\nabla u_n| ) + \frac{s(x)|\nabla u_n|}{q(x) (1 + |\nabla u_n|)} \right] |\nabla u_n|^{q(x)} \log^{s(x)-1} (1 + |\nabla u_n|) \,\mathrm{d}x   \\
		& \quad - b(x)\left( \left[ \frac{q(x)-1}{q(x)} \log (1 + |\nabla u_n| ) + \frac{s(x)|\nabla u_n|}{q(x) (1 + |\nabla u_n|)} \right] |\nabla u_n|^{q(x)} \log^{s(x)-1} (1 + |\nabla u_n|)\right.  \\
		& \quad \left. + \frac{1}{q(x)}\abs{\nabla u}^{q(x)} \log^{s(x)} (1 + \abs{ \nabla u }) \right) \,\mathrm{d}x \\
		& = \frac{a(x)}{p(x)}|\nabla u_n|^{p(x)} \,\mathrm{d}x- \frac{a(x)}{p(x)} \abs{\nabla u}^{p(x)} \,\mathrm{d}x\\
		& \quad + \frac{b(x)}{q(x)}|\nabla u_n|^{q(x)} \log^{s(x)} (1 + \abs{ \nabla u_n }) \,\mathrm{d}x- \frac{b(x)}{q(x)}\abs{\nabla u}^{q(x)} \log^{s(x)} (1 + \abs{ \nabla u }) \,\mathrm{d}x .
	\end{align*}
	As a consequence, by \eqref{Eq:SequenceSplus} and Fatou's Lemma, we obtain
	\begin{align*}
		& \lim_{n \to \infty} \int_\Omega \left(\frac{a(x)}{p(x)} |\nabla u_n|^{p(x)} + \frac{b(x)}{q(x)} |\nabla u_n|^{q(x)} \log^{s(x)} (1 + \abs{ \nabla u_n }) \right) \,\mathrm{d}x \\
		& = \int_\Omega \left(\frac{a(x)}{p(x)} \abs{\nabla u}^{p(x)} + \frac{b(x)}{q(x)} \abs{\nabla u}^{q(x)} \log^{s(x)} (1 + \abs{ \nabla u }) \right) \,\mathrm{d}x.
	\end{align*}
	By the previous Claim, passing to a.e.\,convergence along a subsequence and using the subsequence principle, we can prove that the integrand of the left-hand side converges in measure to the integrand of the right-hand side. Finally by applying the Br\'{e}zis-Lieb Lemma (Lemma \ref{BL-lemma}) and by Proposition \ref{pro:norm-mod:relation} \textnormal{(iv)}, we obtain $u_n \to u$ in $W_0^{1, \mathcal{S}}(\Omega)$.
\end{proof}

%********************************************************************
\section{Application of concentration compactness principle}\label{Section-6}
%********************************************************************

In this section, we show multiplicity results for the $(\Lambda, \lambda)$-parametrized problem \eqref{main:prob} with critical and sublinear/superlinear growth.

%********************************************************************
\subsection{Superlinear growth}
%********************************************************************

In this subsection, we assume that $\lambda =1$ and $\Lambda >0$. For the sake of simplicity, we write $\mathcal{E}_{\Lambda, \lambda}$ as $ \mathcal{E}_\Lambda$. We suppose the following assumptions:
\begin{enumerate}[label=\textnormal{(H$_\star^{\text{sup}}$)},ref=\textnormal{H$_\star^{\text{sup}}$}]
	\item\label{main:assump-2}
		$p_\star, q_\star \in C(\overline{\Omega})$, $s_\star \in L^\infty(\Omega)$, $1 < p_\star(x), q_\star(x)< N$, $p(x) \leq p_{\star}(x)$ and $q(x) \leq q_{\star}(x)$  for all $x \in \overline{\Omega}$, and  $q_\star(x)+ s_\star(x) > r> 1$, $ s_\star(x) \leq s(x)$ for a.a.\,$x \in \Omega$.
\end{enumerate}

\begin{lemma}\label{PS:bounded}
	Let \eqref{main:assump}, \eqref{main:assump-1}, \eqref{main:assump-1-2} and \eqref{main:assump-2} be satisfied and suppose that
	\begin{equation}\label{sandwich:cond-1}
		\max\{p^+, ( q + \lceil s \rceil)^+\} < \min\left\{p_\star^-, \left(q_\star + \lfloor s_\star \rfloor \frac{q_\star}{q}\right)^- \right\}.
	\end{equation}
	Then, every Palais-Smale sequence $\{u_n\}_{n \in \mathbb{N}} \subset W_0^{1, \mathcal{S}}(\Omega)$ is bounded.
\end{lemma}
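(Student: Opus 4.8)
The plan is to run the classical test-function device: for a suitable $\theta>1$ one estimates $\mathcal{E}_\Lambda(u_n)-\tfrac1\theta\langle\mathcal{E}_\Lambda'(u_n),u_n\rangle$ from below, choosing $\theta$ strictly larger than the growth exponent of the leading modular energy $\mathcal{E}_1$ but at most the growth exponents of the two \emph{nonnegative} energies $\mathcal{E}_2$ (critical) and $\mathcal{E}_3$ (perturbation), so that the $\mathcal{E}_2,\mathcal{E}_3$ contributions acquire a favourable sign while $\mathcal{E}_1$ produces coercivity. Let $\{u_n\}\subset W_0^{1,\mathcal{S}}(\Omega)$ be a Palais--Smale sequence; by Propositions \ref{well-defined-1}--\ref{well-defined-2}, $\mathcal{E}_\Lambda\in C^1$ with $\mathcal{E}_\Lambda'=\mathcal{J}$, hence $\mathcal{E}_\Lambda(u_n)\to c$ and there is $\varepsilon_n\to0$ with $|\langle\mathcal{J}(u_n),u_n\rangle|\le\varepsilon_n\|\nabla u_n\|_{\mathcal{S}}$; I use $\|\nabla\cdot\|_{\mathcal{S}}$ as the (equivalent) norm of $W_0^{1,\mathcal{S}}(\Omega)$, which is legitimate since \eqref{main:assump} and \eqref{main:assump-1} imply condition \eqref{poin:cond}, so Proposition \ref{Prop:Poincare} applies.

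The first step is to establish, for every $u\in W_0^{1,\mathcal{S}}(\Omega)$, the Euler-type comparisons $\mathtt{m}^-\mathcal{E}_1(u)\le\langle\mathcal{J}_1(u),u\rangle\le\mathtt{n}^+\mathcal{E}_1(u)$, $\langle\mathcal{J}_2(u),u\rangle\ge\mathtt{c}_\ast\mathcal{E}_2(u)$ and $\langle\mathcal{J}_3(u),u\rangle\ge\mathtt{c}_\star\mathcal{E}_3(u)$, where $\mathtt{n}^+=\max\{p^+,(q+\lceil s\rceil)^+\}$, $\mathtt{m}^-=\min\{p^-,(q+\lfloor s\rfloor)^-\}>1$ by Lemma \ref{Le:Prop-S}, and $\mathtt{c}_\ast,\mathtt{c}_\star$ are the minima over $\overline{\Omega}$ of the Euler exponents of $\mathcal{M}^\ast$ and of $\mathcal{M}_\star$, explicitly $\mathtt{c}_\ast=\min\{p^{\ast,-},(q^\ast(1+\lfloor s\rfloor/q))^-\}$ and $\mathtt{c}_\star=\min\{p_\star^-,(q_\star+\lfloor s_\star\rfloor q_\star/q)^-\}$. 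These I would obtain by differentiating $\mathcal{M},\mathcal{M}^\ast,\mathcal{M}_\star$ in the radial variable and pinching the logarithmic correction factors — e.g.\ $1+\tfrac{s(x)q^\ast(x)}{q(x)}\tfrac{|u|}{(1+|u|)\log(1+|u|)}$ appearing in $\mathcal{J}_2$ — between the corresponding $\mathtt{m}$- and $\mathtt{n}$-exponents uniformly in $x$, using $\tfrac{t}{(1+t)\log(1+t)}\in(0,1)$ for $t>0$; this is exactly the mechanism behind Lemma \ref{upp-low:est}, here applied to $\mathcal{S}$, $\mathcal{S}^\ast$ and the density of $\mathcal{M}_\star$.

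Next I would check that the parameter interval is nonempty: $\mathtt{c}_\star$ and $\mathtt{n}^+$ are precisely the right- and left-hand sides of \eqref{sandwich:cond-1}, so $\mathtt{n}^+<\mathtt{c}_\star$, and $\mathtt{n}^+<\mathtt{c}_\ast$ as well, because $p_\star<p^\ast$, $q_\star<q^\ast$, $q\le q_\star$ and $s_\star\le s$ (hence $\lfloor s_\star\rfloor\le\lfloor s\rfloor$) from \eqref{main:assump-1-2}--\eqref{main:assump-2} force $\mathtt{c}_\star\le\mathtt{c}_\ast$ after evaluating the defining minima at the minimizers of $p^\ast$ and of $q^\ast(1+\lfloor s\rfloor/q)$. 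Fixing $\theta\in(\mathtt{n}^+,\min\{\mathtt{c}_\ast,\mathtt{c}_\star\}]$ and using $\mathcal{E}_2,\mathcal{E}_3\ge0$ and $\lambda=1$, the brackets $\mathcal{E}_2(u_n)-\tfrac1\theta\langle\mathcal{J}_2(u_n),u_n\rangle$ and $\mathcal{E}_3(u_n)-\tfrac1\theta\langle\mathcal{J}_3(u_n),u_n\rangle$ are nonpositive, so
\begin{align*}
	c_0\,\varrho_{\mathcal{S}}(\nabla u_n)
	\le\Big(1-\frac{\mathtt{n}^+}{\theta}\Big)\mathcal{E}_1(u_n)
	&\le\mathcal{E}_\Lambda(u_n)-\frac1\theta\langle\mathcal{J}(u_n),u_n\rangle \\
	&\le c+1+\frac{\varepsilon_n}{\theta}\|\nabla u_n\|_{\mathcal{S}}
\end{align*}
for $n$ large and some $c_0>0$, using $\mathcal{E}_1(u)\ge c\,\varrho_{\mathcal{S}}(\nabla u)$ (directly from the definition of $\mathcal{M}$, as in the proof of Proposition \ref{well-defined-1}). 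Were $\{u_n\}$ unbounded, then along a subsequence with $\|\nabla u_n\|_{\mathcal{S}}\to\infty$ Proposition \ref{pro:norm-mod:relation} gives $\varrho_{\mathcal{S}}(\nabla u_n)\ge\|\nabla u_n\|_{\mathcal{S}}^{\mathtt{m}^-}$, and dividing the displayed chain by $\|\nabla u_n\|_{\mathcal{S}}$ forces $c_0\|\nabla u_n\|_{\mathcal{S}}^{\mathtt{m}^--1}\to0$, contradicting $\mathtt{m}^->1$; hence $\{u_n\}$ is bounded.

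The skeleton being standard, the hard part is the first step — making the Euler-type comparisons for $\mathcal{J}_2$ and $\mathcal{J}_3$ precise despite the logarithmic densities, i.e.\ pinching the correction terms uniformly in $x\in\overline{\Omega}$ — together with the bookkeeping of the third step, namely verifying that the sign-changing exponents $s(\cdot)$, $s_\star(\cdot)$ and the gaps between $\lfloor\cdot\rfloor$, $\lceil\cdot\rceil$ and their critical/starred analogues genuinely leave the admissible $\theta$-interval nonempty under the balance condition \eqref{sandwich:cond-1}.
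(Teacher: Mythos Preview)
Your proof is correct and follows the same skeleton as the paper --- testing the Palais--Smale identity with $u_n/\theta$ for a carefully chosen $\theta$ --- but the organization differs in a way worth noting. The paper groups the terms by weight ($a$-part vs.\ $b$-part) and, for the $b$-part of $\langle\mathcal{J}_1(u_n),u_n\rangle$, peels off the logarithmic correction $\int\frac{bs}{q\sigma}\frac{|\nabla u_n|^{q+1}\log^{s-1}(1+|\nabla u_n|)}{1+|\nabla u_n|}\,\mathrm{d}x$ and controls it by splitting $\{|\nabla u_n|\le M\}$ vs.\ $\{|\nabla u_n|>M\}$, which produces an additive constant $C(\nu)$. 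You instead group by functional ($\mathcal{E}_1$, $\mathcal{E}_2$, $\mathcal{E}_3$) and absorb the logarithmic correction directly into the global Euler-type inequality $\langle\mathcal{J}_1(u),u\rangle\le\mathtt{n}^+\mathcal{E}_1(u)$, which is indeed a pointwise consequence of $\tfrac{t}{(1+t)\log(1+t)}\in(0,1)$ exactly as in Lemma~\ref{upp-low:est}; this avoids the domain-splitting step entirely. Your verification that $\mathtt{c}_\star\le\mathtt{c}_\ast$ (so that \eqref{sandwich:cond-1} alone suffices to place $\theta$) is also correct, using $p_\star<p^\ast$, $q_\star<q^\ast$, $\lfloor s_\star\rfloor\le\lfloor s\rfloor$ and positivity of $1+\lfloor s\rfloor/q$.

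What your streamlined argument gives up is the explicit constant $c_\ast$ appearing in the paper's final estimate \eqref{lower:est-11}, which is then fed into the definition of the critical level $c^\ast(\Lambda)$ in Lemma~\ref{PS-cond-super}; if you continue to that lemma you will need to record the analogous constant from your chain $c_0\,\varrho_{\mathcal{S}}(\nabla u_n)\le c+1+\tfrac{\varepsilon_n}{\theta}\|\nabla u_n\|_{\mathcal{S}}$.
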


\begin{proof}
	By the definition of the Palais-Smale sequence $\{u_n\}_{n \in \mathbb{N}}$, we have
	\begin{equation}\label{PS-cond}
		\mathcal{E}_\Lambda(u_n) \to c \quad \text{ and } \quad \langle \mathcal{E}_\Lambda'(u_n), \phi \rangle \to 0 \quad \text{for every }  \phi \in W_0^{1, \mathcal{S}}(\Omega),  \text{for some }  c \in \mathbb{R}.
	\end{equation}
	We set
	\begin{align*}
		\sigma:= \frac{\max\{p^+, ( q + \lceil s \rceil)^+\} + \min\left\{p_\star^-, \left(q_\star + \lfloor s_\star \rfloor \frac{q_\star}{q}\right)^- \right\}}{2}.
	\end{align*}
	Now, by taking $\frac{u_n}{\sigma}$ as a test function, and by using the Palais-Smale condition \eqref{PS-cond} for $n \geq n_0$, we obtain,
	\begin{equation}\label{PS-bdd-1}
		c+1 \geq \mathcal{E}_\Lambda(u_n) - \langle \mathcal{J}(u_n), \frac{u_n}{\sigma} \rangle
		= \sum_{i=1}^2 \mathcal{E}_i(u_n) - \langle \mathcal{J}_i(u_n), \frac{u_n}{\sigma} \rangle
		= \sum_{i=1}^2 \mathcal{I}_i(u_n),
	\end{equation}
	where
	\begin{align*}
		\mathcal{I}_1(u_n)
		&:= \int_{\Omega} \left(\frac{1}{p(x)} -\frac{1}{\sigma}\right) a(x) |\nabla u_n|^{p(x)} \,\mathrm{d}x \\
		& \qquad + \Lambda \int_{\Omega} \left( \frac{1}{\sigma} - \frac{1}{p^\ast(x)}\right) \left(a(x)^\frac{1}{p(x)} |u_n|\right)^{p^\ast(x)} \,\mathrm{d}x\\
		&\qquad+ \int_{\Omega} \left( \frac{1}{\sigma} - \frac{1}{p_\star(x)}\right) \left(a(x)^\frac{1}{p(x)} |u_n|\right)^{p_\star(x)} \,\mathrm{d}x  ,
	\end{align*}
	and
	\begin{align*}
		\mathcal{I}_2(u_n)
		& :=\int_{\Omega} \left(\frac{1}{q(x)}- \frac{1}{\sigma} \right) b(x) |\nabla u_n|^{q(x)} \log^{s(x)}(1+ |\nabla u_n|) \,\mathrm{d}x   \\
		& \qquad- \int_{\Omega} \frac{b(x) s(x)}{q(x) \sigma} |\nabla u_n|^{q(x)+1} \frac{\log^{s(x)-1 }(1+ |\nabla u_n|)}{1+ |\nabla u_n|}  \,\mathrm{d}x \\
		& \qquad+ \Lambda \int_{\Omega} \left(\frac{1}{\sigma} - \frac{1}{q^\ast(x)} +  \frac{|u_n|}{\log(1+ |u_n|)(1+ |u_n|)} \frac{\lfloor s(x) \rfloor q^\ast(x)}{q(x) \sigma}\right)\\
		&\qquad\qquad\times\left(b(x) \log^{s(x)}(1+ |u_n|) \right)^\frac{q^\ast(x)}{q(x)} |u_n|^{q^\ast(x)} \,\mathrm{d}x  \\
		& \qquad+ \int_{\Omega} \left( \frac{1}{\sigma} - \frac{1}{q_\star(x)}  + \frac{|u_n|}{\log(1+ |u_n|)(1+ |u_n|)} \frac{\lfloor s_\star(x) \rfloor q_\star(x)}{q(x) \sigma}\right)\\
		&\qquad\qquad\times \left(b(x) \log^{s_\star(x)}(1+ |u_n|) \right)^\frac{q_\star(x)}{q(x)} |u_n|^{q_\star(x)} \,\mathrm{d}x.
	\end{align*}
	\textbf{Estimate for $\mathcal{I}_1(u_n):$}
	By the choice of $\sigma$, we can choose $\delta>0$ small enough such that
	\begin{align*}
		\delta \leq \frac{1}{2} \max\limits_{x \in \overline{\Omega}}  \max\left\{\frac{1}{p(x)} -\frac{1}{\sigma},  \frac{1}{\sigma} - \frac{1}{p^\ast(x)} \right\}
	\end{align*}
	and
	\begin{equation}\label{lower:est-5}
		\begin{aligned}
			\mathcal{I}_1(u_n) & \geq \delta \int_{\Omega} a(x) |\nabla u_n|^{p(x)} \,\mathrm{d}x.
		\end{aligned}
	\end{equation}
	\textbf{Estimate for $\mathcal{I}_2(u_n):$} For every $\delta >0$ there exists a constant $M(\delta, s^+) >0$ such that $\frac{t}{\log(1+t)(1+t)} < \frac{\delta}{s^+ +1}$ for $t \geq M$, therefore we have the following estimates: By splitting the domain depending upon the size of $|\nabla u_n|$, we get
	\begin{equation}\label{lower:est-7}
		\begin{aligned}
			&\bigg|\int_{\Omega} \frac{b(x) s(x)}{q(x) \sigma} |\nabla u_n|^{q(x)+1} \frac{\log^{s(x)-1 }(1+ |\nabla u_n|)}{1+ |\nabla u_n|}  \,\mathrm{d}x \bigg|  \\
			& \leq C(\nu) + \nu \int_{\Omega} b(x) |\nabla u_n|^{q(x)} \log^{s(x)}(1+ |\nabla u_n|) \,\mathrm{d}x.
		\end{aligned}
	\end{equation}
	Now, again by choice of $\sigma$, choosing $\delta>0$ small enough such that
	\begin{align*}
		\delta \leq \max\limits_{x \in \overline{\Omega}} \frac{1}{4} \max\left\{\frac{1}{q(x)} -\frac{1}{\sigma}-\nu,  \frac{1}{\sigma} - \frac{1}{q^\ast(x)} +  \frac{\lfloor s_\star(x) \rfloor q_\star(x)}{q(x) \sigma} \right\}.
	\end{align*}
	By using estimates in \eqref{lower:est-7} in light of \eqref{sandwich:cond-1} and \eqref{main:assump-2}, we obtain
	\begin{equation}\label{lower:est-10}
		\begin{aligned}
			\mathcal{I}_2(u_n)
			 & \geq \delta \int_{\Omega} b(x) |\nabla u_n|^{q(x)} \log^{s(x)}(1+ |\nabla u_n|) \,\mathrm{d}x - C(\delta).
		\end{aligned}
	\end{equation}
	Finally, inserting the estimates \eqref{lower:est-5} and \eqref{lower:est-10} in \eqref{PS-bdd-1} and using Proposition \ref{pro:norm-mod:relation}, we deduce
	\begin{equation}\label{lower:est-11}
		\begin{aligned}
			c+ c_\ast(\delta) & \geq  \delta \int_{\Omega} a(x) |\nabla u_n|^{p(x)} \,\mathrm{d} \\
			& + \delta \int_{\Omega} b(x) |\nabla u_n|^{q(x)} \log^{s(x)}(1+ |\nabla u_n|) \,\mathrm{d}x \geq \delta \| \nabla u_n\|_{\mathcal{S}}^{\min\{p^-, (q+\lfloor s \rfloor)^-\}},
		\end{aligned}
	\end{equation}
	which implies that the sequence $\{u_n\}_{n \in \mathbb{N}}$ is bounded in $W_0^{1, \mathcal{S}}(\Omega)$.
\end{proof}

We set
\begin{align*}
	c^\ast(\Lambda) := \min_{i=1,2;j=1,2} \left(\delta (C_\sharp(\Lambda))^{c_i} ({\bf C}^\ast )^{c_j}\right) - c_\ast, \quad C_\sharp(\Lambda):= \Lambda^{-1} \min\{1, r (q^+)^{-1}\} \left\|1+ |s| q^\ast q^{-1}\right\|_{\infty}^{-1},
\end{align*}
where ${\bf C}^\ast $ and $c_\ast$ are the constants obtained in Theorem \ref{concentration:compactness} and \eqref{lower:est-11}, respectively and the constants $\delta$ and $c_i, c_j$ depend on the given data.

\begin{lemma}\label{PS-cond-super}
	Let \eqref{main:assump}, \eqref{main:assump-1}, \eqref{main:assump-1-2}, \eqref{main:assump-2} and \eqref{sandwich:cond-1} be satisfied. Then the energy functional $\mathcal{E}_\Lambda$ satisfy the \textnormal{(PS)$_c$} condition for $c < c^\ast(\Lambda)$.
\end{lemma}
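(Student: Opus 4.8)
The plan is to run the classical Lions concentration--compactness scheme, adapted to the logarithmic double phase modular structure. Let $\{u_n\}_{n\in\mathbb N}\subset W_0^{1,\mathcal S}(\Omega)$ be a Palais--Smale sequence at level $c<c^\ast(\Lambda)$. By Lemma~\ref{PS:bounded} it is bounded, so by reflexivity (Proposition~\ref{pro:spaces-propert}) and the compact embeddings of Propositions~\ref{sobolevembed:1} and \ref{imp:embedding} we may pass to a subsequence with $u_n\rightharpoonup u$ in $W_0^{1,\mathcal S}(\Omega)$, $u_n\to u$ in $L^{\mathcal N^\ast}(\Omega)$ for every $\mathcal N^\ast\ll\mathcal S^\ast$ (in particular in $L^{\mathcal S_\star}(\Omega)$, since $\mathcal S_\star\ll\mathcal S^\ast$ under \eqref{main:assump-1-2}), and hence $u_n\to u$ a.e.\ in $\Omega$. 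The masses $\int_\Omega\mathcal S^\ast(x,\abs{u_n})\,\mathrm{d}x$ and $\int_\Omega\mathcal S(x,\abs{\nabla u_n})\,\mathrm{d}x$ are bounded (by the continuous embedding $W_0^{1,\mathcal S}(\Omega)\hookrightarrow L^{\mathcal S^\ast}(\Omega)$ and Proposition~\ref{pro:norm-mod:relation}), so after a further subsequence Theorem~\ref{concentration:compactness} applies to $\{u_n\}$: there are an at most countable $I$, points $\{x_i\}_{i\in I}\subset\overline\Omega$ and positive reals $\{\Theta_i\}$, $\{\theta_i\}$ with $\mathcal S^\ast(x,\abs{u_n})\rightharpoonup\Theta=\mathcal S^\ast(x,\abs{u})+\sum_{i\in I}\Theta_i\delta_{x_i}$, $\mathcal S(x,\abs{\nabla u_n})\rightharpoonup\theta\geq\mathcal S(x,\abs{\nabla u})+\sum_{i\in I}\theta_i\delta_{x_i}$ weakly-$\ast$ in the sense of measures, together with the inequality $\min\{\Theta_{i_0}^{1/\mathtt n_\varepsilon^\ast(x_{i_0})},\Theta_{i_0}^{1/\mathtt m_0^\ast(x_{i_0})}\}\leq{\bf C}^\ast\max\{\theta_{i_0}^{1/\mathtt m_-(x_{i_0})},\theta_{i_0}^{1/\mathtt n_\varepsilon(x_{i_0})}\}$ for each $i_0\in I$.

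The heart of the argument is to show $I=\emptyset$. Fix $i_0\in I$, take $\phi\in C_c^\infty(\Omega)$ with $0\le\phi\le1$, $\phi(0)=1$, $\operatorname{supp}\phi\subset B_1(0)$, and put $\phi_{\gamma,i_0}(x)=\phi((x-x_{i_0})/\gamma)$; then $\phi_{\gamma,i_0}u_n\in W_0^{1,\mathcal S}(\Omega)$, as in the proof of Lemma~\ref{reverse:holder:ineq}. Since $\mathcal E_\Lambda'(u_n)\to0$ in the dual and $\phi_{\gamma,i_0}u_n$ is bounded in $W_0^{1,\mathcal S}(\Omega)$ (for fixed $\gamma$), we get $\langle\mathcal E_\Lambda'(u_n),\phi_{\gamma,i_0}u_n\rangle\to0$. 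Splitting $\nabla(\phi_{\gamma,i_0}u_n)=\phi_{\gamma,i_0}\nabla u_n+u_n\nabla\phi_{\gamma,i_0}$, the cross terms carrying $\nabla\phi_{\gamma,i_0}$ vanish as $\gamma\to0$ by Proposition~\ref{holder:ineq}, the boundedness of $\nabla u_n$ in $L^{\mathcal S}(\Omega)$ and the strong subcritical convergence $u_n\to u$; the subcritical term $\langle\mathcal J_3(u_n),\phi_{\gamma,i_0}u_n\rangle$ also vanishes as $\gamma\to0$, since no concentration occurs there. For the principal parts we invoke Lemma~\ref{upp-low:est} to bound $\mathbb M(x,\nabla u_n)\cdot\nabla u_n$ from below and $\mathbb M^\ast(x,u_n)u_n$ from above by fixed multiples of $\mathcal S(x,\abs{\nabla u_n})$ and $\mathcal S^\ast(x,\abs{u_n})$ respectively; letting $n\to\infty$ through the measure representations and then $\gamma\to0$ yields a relation $\theta_{i_0}\leq C\Lambda\,\Theta_{i_0}$ with $C$ depending only on the data. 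Substituting this into the concentration--compactness inequality and using that the exponents $1/\mathtt n_\varepsilon^\ast(x_{i_0}),1/\mathtt m_0^\ast(x_{i_0})$ are strictly smaller than $1/\mathtt m_-(x_{i_0}),1/\mathtt n_\varepsilon(x_{i_0})$ (the critical exponents exceed the subcritical ones, cf.\ Remark~\ref{control:rema}), we conclude that either $\Theta_{i_0}=0$ or $\Theta_{i_0}\geq (C_\sharp(\Lambda))^{c_i}({\bf C}^\ast)^{c_j}$ for a suitable choice of the indices $i,j\in\{1,2\}$ occurring in the definition of $c^\ast(\Lambda)$.

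To exclude the second alternative we localize the computation of Lemma~\ref{PS:bounded}. Since $\mathcal E_\Lambda(u_n)\to c$ and $\langle\mathcal E_\Lambda'(u_n),u_n/\sigma\rangle\to0$, we have $\mathcal I_1(u_n)+\mathcal I_2(u_n)=\mathcal E_\Lambda(u_n)-\langle\mathcal J(u_n),u_n/\sigma\rangle\to c$. By the choice of $\sigma$ and \eqref{sandwich:cond-1}, the critical contributions to $\mathcal I_1(u_n)+\mathcal I_2(u_n)$ are of the form $\Lambda\int_\Omega(\text{positive coefficient})\,\mathcal S^\ast(x,\abs{u_n})\,\mathrm{d}x$, while all remaining terms are bounded below by $-c_\ast$; hence $c+c_\ast\geq\delta\int_\Omega\phi_{\gamma,i_0}\,\mathcal S^\ast(x,\abs{u_n})\,\mathrm{d}x+o(1)$, and passing to the limit $c+c_\ast\geq\delta\int_\Omega\phi_{\gamma,i_0}\,\mathrm{d}\Theta\geq\delta\,\Theta_{i_0}$. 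Combining with the lower bound on $\Theta_{i_0}$ and the definition of $c^\ast(\Lambda)$ gives $c\geq\delta(C_\sharp(\Lambda))^{c_i}({\bf C}^\ast)^{c_j}-c_\ast\geq c^\ast(\Lambda)$, contradicting $c<c^\ast(\Lambda)$. Therefore $\Theta_{i_0}=0$ for every $i_0\in I$, so $I=\emptyset$ and $\mathcal S^\ast(x,\abs{u_n})\rightharpoonup\mathcal S^\ast(x,\abs{u})$ as measures. Together with $u_n\to u$ a.e.\ and the Br\'ezis--Lieb lemma (Lemma~\ref{BL-lemma}) this gives $u_n\to u$ in $L^{\mathcal S^\ast}(\Omega)$, whence $\langle\mathcal J_2(u_n),u_n-u\rangle\to0$; likewise $\langle\mathcal J_3(u_n),u_n-u\rangle\to0$ by the compact subcritical embedding. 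From $\langle\mathcal E_\Lambda'(u_n),u_n-u\rangle\to0$ we obtain $\limsup_n\langle\mathcal J_1(u_n),u_n-u\rangle\leq0$, so Theorem~\ref{Th:PropertiesOperator} (the \textnormal{(S$_+$)}-property of $\mathcal J_1$) yields $u_n\to u$ in $W_0^{1,\mathcal S}(\Omega)$, which is the \textnormal{(PS)$_c$} condition.

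The main obstacle is the localization step: because of the logarithmic perturbation neither $\mathcal J_1$ nor the critical term $\mathcal J_2$ is homogeneous, so the passages $\langle\mathcal J_1(u_n),\phi_{\gamma,i_0}u_n\rangle\to\int\phi_{\gamma,i_0}\,\mathrm{d}\theta$ and $\langle\mathcal J_2(u_n),\phi_{\gamma,i_0}u_n\rangle\to\int\phi_{\gamma,i_0}\,\mathrm{d}\Theta$ are only two-sided comparisons governed by the Matuszewska-type bounds of Lemma~\ref{upp-low:est}, and one has to track all multiplicative constants (those entering $C_\sharp(\Lambda)$ and the exponents $c_i,c_j$) carefully enough to land exactly on the threshold $c^\ast(\Lambda)$. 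A secondary technical point is the justification that $\phi_{\gamma,i_0}u_n\in W_0^{1,\mathcal S}(\Omega)$ and that the $\nabla\phi_{\gamma,i_0}$-cross terms vanish as $\gamma\to0$, which is handled by the product rule together with Proposition~\ref{holder:ineq} and the compact subcritical embeddings.
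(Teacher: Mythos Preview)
Your overall scheme is correct and coincides with the paper's: boundedness from Lemma~\ref{PS:bounded}, concentration--compactness from Theorem~\ref{concentration:compactness}, exclusion of atoms via the cut-off test functions $\phi_{\gamma,i_0}u_n$, then strong convergence in $L^{\mathcal S^\ast}(\Omega)$ by Br\'ezis--Lieb and finally $u_n\to u$ in $W_0^{1,\mathcal S}(\Omega)$ via the \textnormal{(S$_+$)}-property of $\mathcal J_1$.

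The one place where you diverge from the paper is the contradiction step. You bound $\Theta_{i_0}$ from below and then try to use an energy inequality of the form $c+c_\ast\geq\delta\int_\Omega\mathcal S^\ast(x,|u_n|)\,\mathrm{d}x$, coming from the critical contributions to $\mathcal I_1+\mathcal I_2$. The paper instead stays with the gradient atom: from the localization one has $C_\sharp(\Lambda)\,\theta_j\leq\Theta_j$, and substituting this into the concentration--compactness inequality produces a lower bound on $\theta_j$ (not $\Theta_j$), namely $\theta_j\geq C_\sharp(\Lambda)^{\ell_1}({\bf C}^\ast)^{\ell_2}$. The contradiction then follows directly from \eqref{lower:est-11}, already established in Lemma~\ref{PS:bounded}: passing to the weak-$\ast$ limit in $c+c_\ast\geq\delta\int_\Omega\mathcal S(x,|\nabla u_n|)\,\mathrm{d}x$ gives $c+c_\ast\geq\delta\,\theta(\overline\Omega)\geq\delta\,\theta_j$, and this is exactly how the constants $\delta$, $c_i$, $c_\ast$ in the definition of $c^\ast(\Lambda)$ arise. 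Your route via $\Theta_{i_0}$ and the critical modular is a legitimate variant in spirit, but it does not automatically reproduce the threshold $c^\ast(\Lambda)$ as defined just before the lemma: the coefficient in front of the critical term carries an extra factor $\Lambda$, and the constants $\delta$ and $c_\ast$ are those of the gradient estimate \eqref{lower:est-11}, not of a critical-modular analogue. To match the stated threshold without rederiving all constants, it is simpler to follow the paper and extract the lower bound on $\theta_j$ rather than on $\Theta_{i_0}$.
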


\begin{proof}
	Let $\{u_n\}_{n \in \mathbb{N}}$ be a Palais-Smale sequence satisfying \eqref{PS-cond}. Then, by Lemma \ref{PS:bounded} and Theorem \ref{concentration:compactness}, there exists a weakly convergent subsequence, a countable index set $I$, positive numbers $\Theta_i, \theta_i$ for each $i\in I$ and $C^\ast >0$ such that
	\begin{equation}\label{weak:conv}
		\begin{aligned}
			u_n & \to u  \quad\text{a.e.\,in }  \Omega, \\
			u_n & \rightharpoonup u  \quad\text{in }  W_0^{1, \mathcal{S}}(\Omega)   \\
			\mathcal{S}(\cdot, |\nabla u_n|)                                                                    & \rightharpoonup \theta \geq \mathcal{S}(\cdot, |\nabla u|) + \sum_{i \in I} \theta_i \delta_{x_i}  \quad\text{weakly-}\ast  \text{ in the sense of measures} \\
			\mathcal{S}^\ast(\cdot, u_n)                                                                        & \rightharpoonup \mathcal{S}^\ast(\cdot,u) + \sum_{i \in I} \Theta_i \delta_{x_i}  \quad\text{weakly-}\ast  \text{ in the sense of measures}   \\
			\min\{\Theta_j^\frac{1}{\mathtt{n}_\varepsilon^\ast(x_j)}, \Theta_j^\frac{1}{\mathtt{m}_0^\ast(x_j)}\} & \leq {\bf C}^\ast \max\{\theta_j^\frac{1}{\mathtt{m}_-(x_j)}, \theta_j^\frac{1}{\mathtt{n}_\varepsilon(x_j)}\} \quad \text{for }  j \in I.
		\end{aligned}
	\end{equation}
	We claim that $I = \emptyset$. Suppose that there exists $j \in I$. Let $\gamma >0$ and define $\phi_{\gamma, j}$ as in the proof of Theorem \ref{concentration:compactness}. Taking $\phi_{\gamma, j} u_n \in W_0^{1, \mathcal{S}}(\Omega)$ as a test function in \eqref{PS-cond}, we get
	\begin{equation}\label{compactness:est-1}
		\begin{aligned}
			& \min\left\{1, \frac{r}{q^+}\right\} \int_{\Omega} \mathcal{S}(x, |\nabla u_n|) \phi_{\gamma, j} \,\mathrm{d}x \leq  \langle \mathcal{J}_1(u_n), \phi_{\gamma, j} u \rangle_{\mathcal{S}}  \\
			& = \langle \mathcal{J}(u_n), \phi_{\gamma, j} u \rangle_{\mathcal{S}} + \Lambda \langle \mathcal{J}_2(u_n), \phi_{\gamma, j} u \rangle_{\mathcal{S}} +  \langle \mathcal{J}_3(u_n), \phi_{\gamma, j} u\rangle_{\mathcal{S}}\\ &\qquad + \int_{\Omega} a(x) |\nabla u_n|^{p(x)-2} \nabla u_n \cdot \nabla \phi_{\gamma, j}  u_n\,\mathrm{d}x \\
			& \qquad+ \int_{\Omega} b(x) |\nabla u_n|^{q(x)-2} \log^{s(x)-1 }(1+ |\nabla u_n|)\\
			&\qquad\qquad\times \left(\log(1+ |\nabla u_n|) + \frac{s(x)}{q(x)} \frac{|\nabla u_n|}{1+ |\nabla u_n|} \right) \nabla u_n \cdot \nabla \phi_{\gamma, j}  u_n \,\mathrm{d}x.
		\end{aligned}
	\end{equation}
	By applying Young's inequality and Lemma \ref{Le:YoungIneqLog} for $\delta>0$, we obtain
	\begin{align*}
		&\bigg|\int_{\Omega}  a(x) |\nabla u_n|^{p(x)-2} \nabla u_n \cdot \nabla \phi_{\gamma, j}  u_n\,\mathrm{d}x  \\
		& \quad+ \int_{\Omega} b(x) |\nabla u_n|^{q(x)-2} \log^{s(x)-1 }(1+ |\nabla u_n|) \left(\log(1+ |\nabla u_n|) + \frac{s(x)}{q(x)} \frac{|\nabla u_n|}{1+ |\nabla u_n|} \right) \nabla u_n \cdot \nabla \phi_{\gamma, j}  u_n \,\mathrm{d}x\bigg| \\
		& \leq \delta \int_{\Omega} \mathcal{S}(x, |\nabla u_n|) \,\mathrm{d}x + C(\delta) \int_{\Omega}  \mathcal{S}(x, |\nabla \phi_{\gamma, j} u_n|) \,\mathrm{d}x.
	\end{align*}
	Since $\{\phi_{\gamma, j} u_n\}_{n \in \mathbb{N}}$ is a bounded sequence in $W_0^{1, \mathcal{S}}(\Omega)$ and \eqref{PS-cond} holds,
	\begin{equation}\label{compactness:est-1-2}
		\lim_{n \to \infty} \langle \mathcal{E}_\Lambda'(u_n), \phi_{\gamma, j} u_n \rangle  =0.
	\end{equation}
	By Proposition \ref{imp:embedding}, it follows from \eqref{weak:conv} that $u_n \to u$ in $L^{\mathcal{S}}(\Omega)$. From this, we obtain
	\begin{equation}\label{compactness:est-1-3}
		\lim_{n \to \infty} \int_{\Omega}  \mathcal{S}(x, |\nabla \phi_{\gamma, j} u_n|) \,\mathrm{d}x = \int_{\Omega}  \mathcal{S}(x, |\nabla \phi_{\gamma, j} u|) \,\mathrm{d}x.
	\end{equation}
	The fact that $u_n$ is bounded in $W_0^{1, \mathcal{S}}(\Omega)$ and $W_0^{1, \mathcal{S}}(\Omega) \hookrightarrow L^{\mathcal{S}_\star}(\Omega)$ implies
	\begin{align*}
		& \left|\langle \mathcal{J}_3(u_n), \phi_{\gamma, j} u_n\rangle_{\mathcal{S}} \right|\leq \|\phi_{\gamma,j}\|_{\infty} \bigg(\int_{\Omega} \left(a(x)\right)^\frac{p_\star(x)}{p(x)} |u_n|^{p_\star(x)}  \,\mathrm{d}x \\
		& \quad + \int_{\Omega} \left(b(x) \log^{s_\star(x)}(1+ |u_n|) \right)^\frac{q_\star(x)}{q(x)} |u_n|^{q_\star(x)} \left(1+  \frac{|s_\star(x)| q_\star(x)}{q(x)} \frac{|u_n|}{\log(1+ |u_n|)(1+ |u_n|)}\right) \,\mathrm{d}x \bigg) \\
		& \leq \|\phi_{\gamma,j}\|_{\infty} \left\|1+  \frac{|s_\star(x)| q_\star(x)}{q(x)} \right\|_{\infty} \int_{\Omega} \mathcal{S}_\star(x, |u_n|)  \,\mathrm{d}x < C_1,
	\end{align*}
	where $C_1$ is independent of $n$ and by Vitali's convergence theorem, we get
	\begin{align*}
		\lim_{\gamma \to 0^+} \lim_{n \to \infty} \langle \mathcal{J}_3(u_n), \phi_{\gamma,j} u_n \rangle_{\mathcal{S}} = \lim_{\gamma \to 0^+} \langle \mathcal{J}_3(u), \phi_{\gamma,j} u \rangle_{\mathcal{S}} =0.
	\end{align*}
	Following the same arguments as above, we obtain
	\begin{align*}
		& \left|\langle \mathcal{J}_2(u_n), \phi_{\gamma, j} u_n\rangle_{\mathcal{S}} \right|\leq \int_{\Omega} \left(a(x)\right)^\frac{p^\ast(x)}{p(x)} |u_n|^{p^\ast(x)} \phi_{\gamma, j} \,\mathrm{d}x  \\
		& \quad + \int_{\Omega} \left(b(x) \log^{s(x)}(1+ |u_n|) \right)^\frac{q^\ast(x)}{q(x)} |u_n|^{q^\ast(x)} \left(1+  \frac{|s(x)| q^\ast(x)}{q(x)} \frac{|u_n|}{\log(1+ |u_n|)( 1+ |u_n|)}\right) \phi_{\gamma, j}\,\mathrm{d}x \\
		& \leq C_2 \int_{\Omega} \mathcal{S}^\ast(x, |u_n|) \phi_{\gamma,j}  \,\mathrm{d}x < C  \quad \text{where} \  C_2:= \left\|1+  \frac{|s(\cdot)| q^\ast(\cdot)}{q(\cdot)} \right\|_{\infty}
	\end{align*}
	and
	\begin{equation}\label{compactness:est-1-4}
		\lim_{n \to \infty} \left|\langle \mathcal{J}_2(u_n), \phi_{\gamma,j} u_n \rangle_{\mathcal{S}}\right| \leq  C_2 \int_{\Omega} \left(\mathcal{S}^\ast(x,u) + \sum_{i \in I} \Theta_i \delta_{x_i}\right) \phi_{\gamma,j} \,\mathrm{d}x.
	\end{equation}
	Again by using Proposition \ref{Prop:Poincare} and passing as $\gamma \to 0^+$, it follows
	\begin{equation}\label{compactness:est-2}
		\lim_{\gamma \to 0^+} \int_{\Omega} \mathcal{S}(x, |\nabla \phi_{\gamma,j} u|) \,\mathrm{d}x = \lim_{\gamma \to 0^+} \langle \mathcal{J}_3(u), \phi_{\gamma,j} u \rangle_{\mathcal{S}} = 0.
	\end{equation}
	Passing to the limits $n \to \infty$ and $\gamma \to 0^+$ in \eqref{compactness:est-1} and applying \eqref{compactness:est-1}, \eqref{compactness:est-1-2}, \eqref{compactness:est-1-3}, \eqref{compactness:est-1-4}, \eqref{compactness:est-2}, \eqref{PS-cond}, and \eqref{weak:conv}, we get
	\begin{align*}
		\min\{1, r (q^+)^{-1}\}  \theta_j \leq C_2 \Lambda \Theta_j + \delta C_0,
	\end{align*}
	where
	\begin{align*}
		C_0= \sup_{n \in \mathbb{N}} \int_{\Omega} \mathcal{S}(x, |\nabla u_n|) \,\mathrm{d}x.
	\end{align*}
	Since $\delta>0$ is chosen arbitrarily, we obtain
	\begin{align*}
		C_\sharp \theta_j \leq \Theta_j, \quad C_\sharp := \frac{\min\{1, r (q^+)^{-1}\}}{C_2 \Lambda}.
	\end{align*}
	From this and \eqref{weak:conv}, we deduce
	\begin{equation}\label{compactness:est-3}
		\min\{(C_\sharp \theta_j)^\frac{1}{\mathtt{n}_\varepsilon^\ast(x_j)}, (C_\sharp \theta_j)^\frac{1}{\mathtt{m}_0^\ast(x_j)}\}  \leq {\bf C}^\ast \max\{\theta_j^\frac{1}{\mathtt{m}_-(x_j)}, \theta_j^\frac{1}{\mathtt{n}_\varepsilon(x_j)}\} \quad \text{for }  j \in I.
	\end{equation}
	The condition \eqref{main:assump-1} implies that there exists $\varepsilon>0$ small enough such that
	\begin{align*}
		\zeta := \min_{x \in \overline{\Omega}}\left[\min\{\mathtt{n}_\varepsilon^\ast(x), \mathtt{m}_0^\ast(x) \} - \max\{\mathtt{m}_-(x), \mathtt{n}_\varepsilon(x)\} \right] >0.
	\end{align*}
	Combining this with \eqref{compactness:est-3}, we can find $c_i$ depending on $\zeta, p,q$ and independent of $j$ such that
	\begin{equation}\label{compactness:est-new}
		C_\sharp^{\ell_1} (C^{\ast})^{\ell_2} \leq \theta_j  \quad \text{with }  \ell_1 \in \{c_1, c_2\}  \text{ and }  \ell_2 \in \{c_3, c_4\}.
	\end{equation}
	Using \eqref{lower:est-11} and \eqref{compactness:est-new}, we obtain
	\begin{align*}
		c \geq \delta \theta_j - c_\ast \geq \delta C_\sharp^{\ell_1} (C^{\ast})^{\ell_2} - c_\ast :=c^\ast,
	\end{align*}
	which is a contradiction. Hence $I = \emptyset$ and by virtue of \eqref{weak:conv}, Lemma \ref{BL-lemma} and Proposition \ref{pro:norm-mod:relation}, we obtain
	\begin{align*}
		u_n \to u  \quad\text{in }  L^{\mathcal{S}^\ast}(\Omega).
	\end{align*}
	Again, by taking $u_n-u$ as a test function in \eqref{PS-cond}, we obtain
	\begin{equation}\label{compactness:est-7}
		\langle \mathcal{J}_1(u_n), u_n - u \rangle_{\mathcal{S}} = \langle \mathcal{J}(u_n), u_n - u \rangle_{\mathcal{S}} + \Lambda  \langle \mathcal{J}_2(u_n), u_n- u \rangle_{\mathcal{S}} + \langle \mathcal{J}_3(u_n), u_n- u\rangle_{\mathcal{S}}.
	\end{equation}
	The condition in \eqref{main:assump-1} and \eqref{main:assump-2} imply
	\begin{align*}
		q_\star(x) + s_\star(x) \frac{q_\star(x)}{q(x)} >1 \quad \text{and} \quad q_\ast(x) + s(x) \frac{q^\ast(x)}{q(x)} > 1.
	\end{align*}
	In light of the above implication along with H\"older's inequality as in Proposition \ref{holder:ineq}, we obtain
	\begin{align*}
		& \left| \langle \mathcal{J}_2(u_n), u_n- u \rangle_{\mathcal{S}} \right| \leq \int_{\Omega} \left(a(x)\right)^\frac{p^\ast(x)}{p(x)} |u_n|^{p^\ast(x)-1} |u_n-u| \,\mathrm{d}x \\
		& \quad + \int_{\Omega} \left(b(x) \log^{s(x)}(1+ |u_n|) \right)^\frac{q^\ast(x)}{q(x)} |u_n|^{q^\ast(x)-1} \left(1+ \frac{|s(x)| q^\ast(x)}{q(x)} \frac{|u_n|}{\log(1+ |u_n|)(1+ |u_n|)}\right) |u_n-u| \,\mathrm{d}x \\
		& \quad \leq \int_{\Omega} \left(a(x)\right)^\frac{p^\ast(x)}{p(x)} |u_n|^{p^\ast(x)-1} |u_n-u| \,\mathrm{d}x \\
		& \qquad + \left\|1+ \frac{|s(\cdot)| q^\ast(\cdot)}{q(\cdot)}\right\|_{\infty, \Omega} \int_{\Omega} \left(b(x) \log^{s(x)}(1+ |u_n|) \right)^\frac{q^\ast(x)}{q(x)} |u_n|^{q^\ast(x)-1} |u_n-u| \,\mathrm{d}x  \\
		& \quad \leq C \int_{\Omega} \frac{\mathcal{S}^\ast(x, |u_n|)}{|u_n|} |u_n-u| \,\mathrm{d}x \leq C \left\|\frac{\mathcal{S}^\ast(x, |u_n|)}{|u_n|}\right\|_{(S^\ast)^\sharp} \|u_n-u\|_{\mathcal{S}^\ast},
	\end{align*}
	where $(S^\ast)^\sharp$ denotes the Sobolev conjugate function of $\mathcal{S}^\ast$ defined in Definition \ref{def:conjugate}. Now, by using the conjugate modular relation in Proposition \ref{modular-conjugate:relation} and Proposition \ref{Prop:AbstractEmbedding}, we get
	\begin{equation}\label{compactness:est-4}
		\begin{aligned}
			\left| \langle \mathcal{J}_2(u_n), u_n- u \rangle_{\mathcal{S}} \right| & \leq C \left\|\frac{\mathcal{S}^\ast(x, |u_n|)}{|u_n|}\right\|_{(S^\ast)^\sharp} \|u_n-u\|_{\mathcal{S}^\ast} \\
			& \leq C_1 \|u_n\|_{S^\ast} \|u_n-u\|_{S^\ast} \to 0  \quad\text{as }  n \to \infty.
		\end{aligned}
	\end{equation}
	Following the same arguments as above, we deduce
	\begin{equation}\label{compactness:est-5}
		\begin{aligned}
			\left| \langle \mathcal{J}_3(u_n), u_n- u \rangle_{\mathcal{S}} \right|  \leq C_1 \|u_n\|_{S^\ast_\star} \|u_n-u\|_{S^\ast_\star} \to 0\quad  \text{as }  n \to \infty.
		\end{aligned}
	\end{equation}
	Moreover, the boundedness of $\{u_n\}_{n \in \mathbb{N}}$ in $W_0^{1, \mathcal{S}}(\Omega)$ and \eqref{PS-cond}, gives
	\begin{equation}\label{compactness:est-6}
		\lim_{n \to \infty} \left| \langle \mathcal{J}(u_n), u_n - u \rangle_{\mathcal{S}} \right| =0.
	\end{equation}
	Collecting \eqref{compactness:est-4}, \eqref{compactness:est-5} and \eqref{compactness:est-6} in \eqref{compactness:est-7}, we obtain
	\begin{align*}
		\langle \mathcal{J}_1(u_n), u_n - u \rangle_{\mathcal{S}}  \to 0 \quad \text{as }  n \to \infty.
	\end{align*}
	Finally, Theorem \ref{Th:PropertiesOperator} implies that $u_n \to u$ in $W_0^{1, \mathcal{S}}(\Omega)$ by the \textnormal{(S$_+$)} property of the operator $\mathcal{J}_1$.
\end{proof}

\begin{lemma}
	% Let $\lambda >0$ and
	% \begin{equation}\label{sandwich:cond-1}
	% 	\max\{p^+, ( q + \lceil s \rceil)^+\} \leq \min\left\{p_\star^-, \left(q_\star + \lfloor s_\star \rfloor \frac{q_\star}{q}\right)^- \right\}.
	% \end{equation}
	There exists a sequence $\{R_k\}_{k \in \mathbb{N}}$ independent of $\Lambda$ such that $1 < R_k < R_{k+1}$ for all $k \in \mathbb{N}$ and for each $k \in \mathbb{N}$
	\begin{align*}
		\mathcal{E}_\Lambda(u) <0  \quad \text{for all }  u \in X_k  \text{ with }  \|u\|_{1, \mathcal{S}} > R_k,
	\end{align*}
	where $X_k$ are $k$-dimensional subspaces of $W_0^{1, \mathcal{S}}(\Omega)$.
\end{lemma}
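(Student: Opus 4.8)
The plan is to exploit that, on a fixed finite-dimensional subspace, $\mathcal{E}_\Lambda$ tends to $-\infty$ at the rate governed purely by the competition between $\mathcal{E}_1$ and the subcritical/sublinear term $\mathcal{E}_3$, while the critical term $\Lambda\mathcal{E}_2$ only helps. The key observation, which is what produces the $\Lambda$-independence, is that $\mathcal{M}^\ast(x,\cdot)\geq0$ (because $a,b\geq0$ and $\log(1+\cdot)\geq0$), so for every $\Lambda>0$ and every $u\in W_0^{1,\mathcal{S}}(\Omega)$
\[
	\mathcal{E}_\Lambda(u)=\mathcal{E}_1(u)-\Lambda\mathcal{E}_2(u)-\mathcal{E}_3(u)\leq \mathcal{E}_1(u)-\mathcal{E}_3(u),
\]
and the right-hand side does not involve $\Lambda$. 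It therefore suffices to make $\mathcal{E}_1(u)-\mathcal{E}_3(u)$ negative for $\|u\|_{1,\mathcal{S}}$ large on each $X_k$, with a threshold that depends only on $k$ and the data.

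\textbf{Scaling inequalities.} Set $\ell^+:=\max\{p^+,(q+\lceil s\rceil)^+\}$ and $\ell^-_\star:=\min\{p_\star^-,(q_\star+\lfloor s_\star\rfloor\tfrac{q_\star}{q})^-\}$, so that \eqref{sandwich:cond-1} reads $\ell^+<\ell^-_\star$. First I would record that $\mathcal{M}(x,t)=\tfrac{a(x)}{p(x)}t^{p(x)}+\tfrac{b(x)}{q(x)}t^{q(x)}\log^{s(x)}(1+t)$ satisfies \textnormal{(Dec)}$_{\ell^+}$ (the factors $\tfrac1{p(x)},\tfrac1{q(x)}$ are irrelevant for monotonicity in $t$, so this is the estimate \eqref{est:decre} from Lemma \ref{Le:Prop-S}), which yields $\mathcal{M}(x,\tau t)\leq \tau^{\ell^+}\mathcal{M}(x,t)$ for all $\tau\geq1$. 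Analogously, $\mathcal{M}_\star(x,t)=\tfrac{1}{p_\star(x)}a(x)^{p_\star(x)/p(x)}t^{p_\star(x)}+\tfrac{1}{q_\star(x)}b(x)^{q_\star(x)/q(x)}t^{q_\star(x)}\log^{s_\star(x)q_\star(x)/q(x)}(1+t)$ satisfies \textnormal{(Inc)}$_{\ell^-_\star}$: this is checked exactly as in \eqref{est:incre} with $q\rightsquigarrow q_\star$ and $s\rightsquigarrow s_\star\tfrac{q_\star}{q}$, using $\ell^-_\star\leq p_\star(x)$ and $\ell^-_\star\leq q_\star(x)+\lfloor s_\star(x)\tfrac{q_\star(x)}{q(x)}\rfloor$ together with $\log(1+t)\geq\tfrac{t}{1+t}$ (the hypothesis $q_\star+s_\star\geq r>1$ of \eqref{main:assump-2} ensures the relevant exponents stay positive). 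Hence $\mathcal{M}_\star(x,\tau t)\geq \tau^{\ell^-_\star}\mathcal{M}_\star(x,t)$ for all $\tau\geq1$.

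\textbf{Finite-dimensional reduction and conclusion.} Fix $k$ and let $\Sigma_k:=\{u\in X_k:\|u\|_{1,\mathcal{S}}=1\}$, which is compact since $\dim X_k<\infty$. Because $\mathcal{E}_1,\mathcal{E}_3$ are continuous (Propositions \ref{well-defined-1} and \ref{well-defined-2}), $M_k:=\max_{\Sigma_k}\mathcal{E}_1<\infty$ and $\mu_k:=\min_{\Sigma_k}\mathcal{E}_3$ are attained; moreover $\mu_k>0$, since any $u\in\Sigma_k$ is nonzero, hence is nonzero on a set of positive measure, and as $a(x)+b(x)\geq d>0$ a.e. the integrand $\mathcal{M}_\star$ is strictly positive there, giving $\mathcal{E}_3(u)>0$. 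Writing an arbitrary $u\in X_k$ with $\tau:=\|u\|_{1,\mathcal{S}}\geq1$ as $u=\tau v$, $v\in\Sigma_k$, and applying the two scaling inequalities pointwise inside the integrals, I get $\mathcal{E}_1(u)\leq \tau^{\ell^+}\mathcal{E}_1(v)\leq M_k\tau^{\ell^+}$ and $\mathcal{E}_3(u)\geq \tau^{\ell^-_\star}\mathcal{E}_3(v)\geq \mu_k\tau^{\ell^-_\star}$, so that
\[
	\mathcal{E}_\Lambda(u)\leq M_k\tau^{\ell^+}-\mu_k\tau^{\ell^-_\star}=\tau^{\ell^+}\bigl(M_k-\mu_k\,\tau^{\ell^-_\star-\ell^+}\bigr).
\]
Since $\ell^-_\star-\ell^+>0$, this is negative as soon as $\tau>\widetilde R_k:=(M_k/\mu_k)^{1/(\ell^-_\star-\ell^+)}$. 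Finally I set $R_k:=\max\{2,\widetilde R_1,\dots,\widetilde R_k\}+k$, which satisfies $1<R_k<R_{k+1}$, depends only on $k$ and the structural data (not on $\Lambda$), and still guarantees $\mathcal{E}_\Lambda(u)<0$ for all $u\in X_k$ with $\|u\|_{1,\mathcal{S}}>R_k$ (enlarging the threshold only shrinks the relevant set).

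\textbf{Main obstacle.} The only genuinely technical point is the second step: confirming that $\mathcal{M}$ and $\mathcal{M}_\star$ obey the \emph{sharp} scaling exponents $\ell^+$ and $\ell^-_\star$ that appear in \eqref{sandwich:cond-1}. Because of the logarithmic perturbation and the possibly sign-changing exponents $s,s_\star$, these are honest \textnormal{(Inc)}/\textnormal{(Dec)} computations of the type done in Lemma \ref{Le:Prop-S}, not free consequences of homogeneity; in particular one must track carefully that $\lfloor s_\star\tfrac{q_\star}{q}\rfloor=\lfloor s_\star\rfloor\tfrac{q_\star}{q}$ and that the monotonicity thresholds land exactly on $\ell^-_\star$. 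Everything else --- discarding the nonnegative term $\Lambda\mathcal{E}_2$, compactness of $\Sigma_k$, positivity of $\mathcal{E}_3$ off the origin, and the bookkeeping making $R_k$ strictly increasing --- is routine.
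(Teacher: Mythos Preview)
Your proposal is correct and follows essentially the same line as the paper's proof: both drop the nonnegative term $\Lambda\mathcal{E}_2$, bound $\mathcal{E}_1$ from above by a power $\ell^+=\max\{p^+,(q+\lceil s\rceil)^+\}$ of the norm and $\mathcal{E}_3$ from below by the power $\ell^-_\star=\min\{p_\star^-,(q_\star+\lfloor s_\star\rfloor q_\star/q)^-\}$, and then use \eqref{sandwich:cond-1} to force negativity for large norm, with a $\Lambda$-free threshold. The only difference is packaging: the paper invokes the norm--modular relations of Proposition~\ref{pro:norm-mod:relation} together with the equivalence of $\|\cdot\|_{1,\mathcal{S}}$ and $\|\cdot\|_{\mathcal{S}_\star}$ on $X_k$, whereas you work directly with the pointwise \textnormal{(Inc)}/\textnormal{(Dec)} scaling of $\mathcal{M}$ and $\mathcal{M}_\star$ and extract the constants $M_k,\mu_k$ by compactness of the unit sphere in $X_k$; these are two ways of encoding the same monotonicity.
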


\begin{proof}
	Let $k \in \mathbb{N}$. By the equivalence of norms in $X_k$, we find $\eta_k >1$ such that
	\begin{equation}\label{est:lower-1}
		\eta_k^{-1} \|u\|_{1, \mathcal{S}} \leq \|u\|_{\mathcal{S}^\ast} \leq \eta_k \|u\|_{1, \mathcal{S}}  \quad \text{for all }  u \in X_k.
	\end{equation}
	For any $u \in X_k$ with $\|u\|_{1, \mathcal{S}} \geq R_k > \eta_k >1$, using Proposition \ref{pro:norm-mod:relation} and \eqref{est:lower-1}, we have
	\begin{align*}
		\mathcal{E}_\Lambda(u)
		&\leq \frac{\|\nabla u\|_{\mathcal{S}}^{\max\{p^+, ( q + \lceil s \rceil)^+\}}}{\min\{p^-, q^-\}} - \frac{\|u\|_{\mathcal{S}_\star}^{\min\{p_\star^-, \left(q_\star + \lfloor s_\star \rfloor \frac{q_\star}{q}\right)^- \}}}{\max\{p_\star^+, q_\star^+\}}   \\
		& \leq \frac{\|u\|_{1, \mathcal{S}}^{\max\{p^+, ( q + \lceil s \rceil)^+\}}}{\min\{p^-, q^-\}}  - \frac{ \left(\eta_k^{-1}\|u\|_{1, \mathcal{S}}\right)^{{\min\{p_\star^-, \left(q_\star + \lfloor s_\star \rfloor \frac{q_\star}{q}\right)^- \}}}}{\max\{p_\star^+, q_\star^+\}}  \\
		& \leq \|\nabla u\|_{\mathcal{S}}^{{\min\{p_\star^-, \left(q_\star + \lfloor s_\star \rfloor \frac{q_\star}{q}\right)^- \}}} \left( \frac{\|\nabla u\|_{\mathcal{S}}^{\max\{p^+, ( q + \lceil s \rceil)^+\}- {\min\{p_\star^-, \left(q_\star + \lfloor s_\star \rfloor \frac{q_\star}{q}\right)^- \}}}}{\min\{p^-, q^-\}} \right. \\
		& \left.\qquad\qquad\qquad\qquad\qquad\qquad\qquad\qquad -\frac{\eta_k^{-{\min\{p_\star^-, \left(q_\star + \lfloor s_\star \rfloor \frac{q_\star}{q}\right)^- \}}}}{\max\{p_\star^+, q_\star^+\}}\right) \\
		& \leq \|\nabla u\|_{\mathcal{S}}^{{\min\{p_\star^-, \left(q_\star + \lfloor s_\star \rfloor \frac{q_\star}{q}\right)^- \}}} \left(\frac{R_k^{\max\{p^+, ( q + \lceil s \rceil)^+\}- {\min\{p_\star^-, \left(q_\star + \lfloor s_\star \rfloor \frac{q_\star}{q}\right)^- \}}}}{\min\{p^-, q^-\}}\right.   \\
		& \left.\qquad\qquad\qquad\qquad\qquad\qquad\qquad\qquad
		- \frac{\eta_k^{-{\min\{p_\star^-, \left(q_\star + \lfloor s_\star \rfloor \frac{q_\star}{q}\right)^- \}}}}{\max\{p_\star^+, q_\star^+\}}\right).
	\end{align*}
	In view of \eqref{sandwich:cond-1}, the exponent of $R_k$ is negative. Therefore, by choosing $R_k \gg 1$ independent of $\Lambda$, we have the required claim.
\end{proof}

For each $k \in \mathbb{N}$, we define
\begin{align*}
	Y_k & := \{u \in X_k\colon  \|u\|_{1, \mathcal{S}} \leq R_k\}, \\
	Z_k & := \{g \in C(X_k, W_0^{1, \mathcal{S}}(\Omega))\colon  g  \text{ is odd and }  g(u) = u  \text{ on }  \partial X_k \}
\end{align*}
and
\begin{equation}\label{critical:genus}
	d_k := \inf_{g \in Z_k} \max_{u \in Y_k} \mathcal{E}_\Lambda(g(u)).
\end{equation}

\begin{lemma}\label{lem:MPG}
	Let \eqref{main:assump}, \eqref{main:assump-1}, \eqref{main:assump-1-2}, \eqref{main:assump-2} and \eqref{sandwich:cond-1} be satisfied. Then, the following hold:
	\begin{enumerate}
		\item[\textnormal{(i)}]
			There exist numbers $R > 0$ and $r >0$ such that $\mathcal{E}(u) \geq r$ for every
			$u \in S_R := \{v \in W_0^{1, \mathcal{S}}(\Omega)\colon \|\nabla v\|_{\mathcal{S}} = R\}$.
		\item[\textnormal{(ii)}]
			There exists $w\in W_0^{1, \mathcal{S}}(\Omega)$ with $\|w\|>R$ such that $\mathcal{E}(w)<0$.
	\end{enumerate}
\end{lemma}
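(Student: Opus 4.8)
The plan is to read off both geometric conditions from the splitting $\mathcal{E}_\Lambda = \mathcal{E}_1 - \Lambda\mathcal{E}_2 - \mathcal{E}_3$, using the two‑sided bounds $(\mathfrak{n}^+_0)^{-1}\varrho_{\mathcal{S}}(\nabla u) \le \mathcal{E}_1(u)$, $\mathcal{E}_2(u) \le (\mathfrak{m}^{-,\ast}_0)^{-1}\varrho_{\mathcal{S}^\ast}(u)$ and $\mathcal{E}_3(u) \le (\mathfrak{m}^-_\star)^{-1}\varrho_{\mathcal{S}_\star}(u)$ established in the proofs of Propositions \ref{well-defined-1} and \ref{well-defined-2}, the continuous embeddings $W_0^{1,\mathcal{S}}(\Omega) \hookrightarrow L^{\mathcal{S}^\ast}(\Omega) \hookrightarrow L^{\mathcal{S}_\star}(\Omega)$ (Proposition \ref{imp:embedding} together with \eqref{cond:AbstractEmbedding}), the modular--norm relations (Propositions \ref{pro:norm-mod:relation} and \ref{Prop:AbstractNormModular}) and the scaling estimates of Lemma \ref{upp-low:est} applied to $\mathcal{S}$, $\mathcal{S}^\ast$ and $\mathcal{S}_\star$. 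Throughout, set $\ell^+ := \max\{p^+,(q+\lceil s\rceil)^+\}$ as in Lemma \ref{Le:Prop-S}, and write $a^\ast := \min_{\overline{\Omega}}\mathtt{m}_-^\ast(x) = \min\{(p^\ast)^-,(q^\ast+\lfloor s\rfloor\tfrac{q^\ast}{q})^-\}$ and $a_\star := \min\{p_\star^-,(q_\star+\lfloor s_\star\rfloor\tfrac{q_\star}{q})^-\}$ for the smallest modular exponents of $\mathcal{S}^\ast$ and $\mathcal{S}_\star$, respectively.

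\textbf{Part (i).} I would take $R \in (0,1)$ small (allowed to depend on $\Lambda$) and $u \in S_R$, i.e.\ $\|\nabla u\|_{\mathcal{S}} = R < 1$. Proposition \ref{pro:norm-mod:relation}(iii) in $L^{\mathcal{S}}(\Omega)$ gives $\varrho_{\mathcal{S}}(\nabla u) \ge R^{\ell^+}$, hence $\mathcal{E}_1(u) \ge c_1 R^{\ell^+}$; the embeddings give $\|u\|_{\mathcal{S}^\ast},\|u\|_{\mathcal{S}_\star} \le C_0 R$, and after shrinking $R$ so that $C_0 R < 1$ the modular--norm relations for $\mathcal{S}^\ast$ and $\mathcal{S}_\star$ yield $\varrho_{\mathcal{S}^\ast}(u) \le (C_0 R)^{a^\ast}$ and $\varrho_{\mathcal{S}_\star}(u) \le (C_0 R)^{a_\star}$, whence $\mathcal{E}_\Lambda(u) \ge c_1 R^{\ell^+} - C(\Lambda)\,(R^{a^\ast}+R^{a_\star})$. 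The decisive point is the strict gap $\min\{a^\ast,a_\star\} > \ell^+$: the inequality $a_\star > \ell^+$ is exactly \eqref{sandwich:cond-1}, and $a^\ast > \ell^+$ is deduced from it via \eqref{main:assump-2} and \eqref{main:assump-1-2} (using $p^\ast > p_\star$, $q^\ast > q_\star$, $\lfloor s\rfloor \ge \lfloor s_\star\rfloor$, $\tfrac{q_\star}{q}\ge 1$, and positivity of $1+\lfloor s_\star\rfloor/q$, itself a consequence of \eqref{sandwich:cond-1}). Therefore $\mathcal{E}_\Lambda(u) \ge R^{\ell^+}\big(c_1 - C(\Lambda)R^{\min\{a^\ast,a_\star\}-\ell^+}\big) \ge \tfrac{c_1}{2}R^{\ell^+} =: r > 0$ once $R$ is chosen small enough.

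\textbf{Part (ii).} I would fix any $w_0 \in W_0^{1,\mathcal{S}}(\Omega)\setminus\{0\}$, so that $\nabla w_0 \ne 0$ and, by \eqref{main:assump}(iii) and $W_0^{1,\mathcal{S}}(\Omega)\hookrightarrow L^{\mathcal{S}_\star}(\Omega)$, $0 < \varrho_{\mathcal{S}_\star}(w_0) < \infty$. For $t\ge 1$, Lemma \ref{upp-low:est}(i) applied to $\mathcal{S}$ gives $\mathcal{E}_1(tw_0) \le (\mathfrak{m}^-_0)^{-1}\varrho_{\mathcal{S}}(t\nabla w_0) \le C_1 t^{\ell^+}$, while the corresponding lower estimate for $\mathcal{S}_\star$ gives $\mathcal{E}_3(tw_0) \ge (\mathfrak{n}^+_\star)^{-1}\varrho_{\mathcal{S}_\star}(tw_0) \ge c_2 t^{a_\star}$; since $\mathcal{E}_2 \ge 0$ and $a_\star > \ell^+$ by \eqref{sandwich:cond-1}, $\mathcal{E}_\Lambda(tw_0) \le C_1 t^{\ell^+} - c_2 t^{a_\star} \to -\infty$ as $t\to\infty$. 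Choosing $t^\ast \gg 1$ with $\mathcal{E}_\Lambda(t^\ast w_0) < 0$ and $\|\nabla(t^\ast w_0)\|_{\mathcal{S}} = t^\ast\|\nabla w_0\|_{\mathcal{S}} > R$, the function $w := t^\ast w_0$ is as required; alternatively, this is exactly the content of the preceding lemma specialized to a one-dimensional subspace $X_1$.

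\textbf{Main obstacle.} Nothing here is deep; the only point requiring genuine care is the chain of strict exponent inequalities $a^\ast, a_\star > \ell^+$ underpinning both parts, which is precisely where the balance condition \eqref{sandwich:cond-1} and the sign conventions for $\lfloor s\rfloor$, $\lfloor s_\star\rfloor$, $\lceil s\rceil$ are used. One must track the logarithmic factors carefully: near $t=0$ they add the powers $\lfloor s\rfloor\tfrac{q^\ast}{q}$ and $\lfloor s_\star\rfloor\tfrac{q_\star}{q}$ to the effective exponents of $\mathcal{S}^\ast$ and $\mathcal{S}_\star$, whereas near $t=\infty$ they cost only an arbitrarily small power $t^{\pm\varepsilon}$ — handled exactly as in Lemmas \ref{Le:Prop-S} and \ref{upp-low:est} and in the computations leading to \eqref{cond:AbstractEmbedding}.
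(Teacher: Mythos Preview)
Your proposal is correct and follows essentially the same route as the paper: both parts rely on the modular--norm relations for $\mathcal{S}$, $\mathcal{S}^\ast$, $\mathcal{S}_\star$, the embeddings of Proposition~\ref{imp:embedding} together with \eqref{cond:AbstractEmbedding}, the scaling estimates of Lemma~\ref{upp-low:est}, and the exponent gap \eqref{sandwich:cond-1}. Your justification of $a^\ast>\ell^+$ (via $a^\ast>a_\star>\ell^+$, using positivity of $1+\lfloor s_\star\rfloor/q$) is in fact more explicit than the paper's, which merely invokes \eqref{main:assump-1-2}, \eqref{main:assump-2}, \eqref{sandwich:cond-1}; the only cosmetic point is that Lemma~\ref{upp-low:est} is stated for $\mathcal{S}$ and $\mathcal{S}^\ast$, so the analogous two-sided scaling bound for $\mathcal{S}_\star$ should be noted as a straightforward variant.
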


\begin{proof}
	Let  $u\in W_0^{1, \mathcal{S}}(\Omega)$ with $\|\nabla u\|_{\mathcal{S}}=R<1$. Using Propositions \ref{imp:embedding} and \ref{pro:norm-mod:relation}, we can choose $R \in (0,1)$ such that $\|u\|_{\mathcal{S}^\ast} + \|u\|_{\mathcal{S}_\star} \leq 1 $ and
	\begin{align*}
		\mathcal{E}(u) & = \mathcal{E}_1(u)- \Lambda  \mathcal{E}_2(u) - \mathcal{E}_3(u)  \geq \frac{\varrho_{\mathcal{S}} (|\nabla u|)}{\max\{p^+, q^+\}} - \Lambda \frac{\varrho_{\mathcal{S}^\ast} (| u|)}{\min\{(p^{\ast})^-, (q^{\ast})^-\}} -  \frac{\varrho_{\mathcal{S}_\star} (| u|)}{\min\{p_{\star}^-, q_{\star}^-\}}   \\
		& \geq \frac{\|u\|^{\max\{p^+, (q + \lceil s \rceil)^+\}}_{1,\mathcal{S}}}{\max\{p^+, q^+\}} - \Lambda \frac{\|u\|_{\mathcal{S}^\ast}^{\min\{(p^\ast)^-, \left( q^\ast + \lfloor s \rfloor \frac{q^\ast}{q} \right)^-\}}}{\min\{(p^{\ast})^-, (q^{\ast})^-\}} - \frac{\|u\|_{\mathcal{S}_\star}^{\min\{(p_\star)^-, \left( q_\star + \lfloor s_\star \rfloor \frac{q_\star}{q} \right)^-\}}}{\min\{p^{-}_{\star}, q^{-}_{\star}\}} \\
		& \geq C_1 \|u\|^{\max\{p^+, (q + \lceil s \rceil)^+\}}_{1,\mathcal{S}} - C_2 \Lambda \|u\|_{1, \mathcal{S}}^{\min\{(p^\ast)^-, \left( q^\ast + \lfloor s \rfloor \frac{q^\ast}{q} \right)^-\}} - C_3 \|u\|_{1, \mathcal{S}}^{\min\{(p_\star)^-, \left( q_\star + \lfloor s_\star \rfloor \frac{q_\star}{q} \right)^-\}}.
	\end{align*}
	Now, in view of \eqref{main:assump-1-2}, \eqref{main:assump-2} and \eqref{sandwich:cond-1}, there exist numbers $R \in (0,1)$ and $r>0$ such that $\mathcal{E}(u) >r$. To prove $\textnormal{(ii)}$, set $w = \theta u$ for some $t >1$ and $v \in W_0^{1, \mathcal{S}}(\Omega)$. Note that if $\theta >1$ we have the following inequalities
	\begin{equation*}
		\begin{aligned}
			\log^{\frac{s_\star(x) q_\star(x)}{q(x)}}(1+ \theta |u|) & \leq
			\begin{cases}
				\log^{\frac{s_\star(x) q_\star(x)}{q(x)}}\left((1+|u|)^\theta\right) \leq \theta^{s(x)} \log^{s(x)}(1+  |u|), & \text{if } s_\star(x) >0,\\
				\log^{\frac{s_\star(x) q_\star(x)}{q(x)}}\left(1+|u|\right) , & \text{if } s_\star(x) \leq 0, \\
			\end{cases}\\
			& \leq  \theta^{\frac{\lceil s_\star\rceil(x) q_\star(x)}{q(x)}}\log^{\frac{s_\star(x) q_\star(x)}{q(x)}}(1+  |u|)
		\end{aligned}
	\end{equation*}
	and
	\begin{equation*}
		\begin{aligned}
			\log^{s(x)}(1+ \theta |u|) & \geq
			\begin{cases}
				\log^{s(x)}\left(1+|u|\right) , & \text{if }s(x) \geq 0, \\
				\log^{s(x)}\left((1+|u|)^\theta\right) \geq \theta^{s(x)} \log^{s(x)}(1+  |u|), & \text{if }s(x) < 0.
			\end{cases}
			\\
			& \geq \theta^{\lfloor s\rfloor (x)} \log^{s(x)}(1+  |u|).
		\end{aligned}
	\end{equation*}
	This further gives
	\begin{equation}\label{estimates:3}
		\begin{aligned}
			\theta^{\min\{p^-, (q + \lfloor s \rfloor)^-\}} \ \varrho_{\mathcal{S}}(u)  \leq  \varrho_{\mathcal{S}}(\theta u) \quad \text{and} \quad  \varrho_{\mathcal{S}_\star}(\theta u) \leq   \theta^{\max\{p^+_\star, (q_\star + \lceil s_\star  \rceil \frac{q_\star}{q})^+\}} \varrho_{\mathcal{S}}(u).
		\end{aligned}
	\end{equation}
	Let $\|u\|_{1,\mathcal{S}} =\eta_1$ and $\|u\|_{\mathcal{S}_\star} =\eta_2$   with $0<\eta_1, \eta_2 <1$. Then from (i), we have $\varrho_{1,\mathcal{S}}\left(\frac{u}{\eta_1}\right)=1$ and $\varrho_{\mathcal{S}_\star}\left(\frac{u}{\eta_2}\right)=1$. Now, by taking $\theta=\frac{1}{\eta_1} >1$ and $\theta=\frac{1}{\eta_2} >1$ in \eqref{estimates:3}, we obtain
	\begin{align*}
		\frac{\varrho_{1,\mathcal{S}}(u)}{\eta_1^{\min\{p^-, (q+\lfloor s\rfloor)^-\}}}  \leq  \varrho_{1,\mathcal{S}}\left(\frac{u}{\eta_1}\right)=1 \quad \text{and} \quad \varrho_{\mathcal{S}_\star}\left(\frac{u}{\eta_2}\right)=1 \leq \frac{\varrho_{\mathcal{S}_\star}(u)}{\eta_2^{\max\{p^+_\star, (q_\star + \lceil s_\star  \rceil \frac{q_\star}{q})^+\}}}.
	\end{align*}
    Using the above estimate as well as \eqref{sandwich:cond-1}, we get
	\begin{align*}
		\mathcal{E}(w) & = \mathcal{E}_1(\theta u)- \Lambda \mathcal{E}_2(\theta u) -  \mathcal{E}_3(\theta u) \\
		& \leq \theta^{\max\{p^+, (q + \lceil s \rceil)^+\}} \mathcal{E}_1(u) -  \theta^{\min\left\{p_\star^-, \left(q_\star + \lfloor s_\star \rfloor \frac{q_\star}{q}\right)^- \right\}} \mathcal{E}_3(u) \to -\infty  \quad\text{as }  t \to \infty.
	\end{align*}
	Hence, by taking $\theta$ large enough, we have $\|\nabla w\|_{\mathcal{S}} > R$ and $\mathcal{E}(w) <0$.
\end{proof}

Using Lemma \ref{lem:MPG}, the deformation lemma from Ambrosetti--Rabinowitz \cite[Lemma 1.3]{Ambrosetti-Rabinowitz-1973} and following the same arguments as in Theorem 2.1 by Ambrosetti--Rabinowitz \cite{Ambrosetti-Rabinowitz-1973}, we obtain the following result.

\begin{lemma}\label{critical:values}
	For each $k \in \mathbb{N}$, $d_k$ is the critical value of $\mathcal{E}_\Lambda$ provided $\mathcal{E}_\Lambda$ satisfies the $\textnormal{(PS)}_{d_k}$ condition.
\end{lemma}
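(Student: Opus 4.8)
The plan is to argue by contradiction via the symmetric min--max scheme, mirroring the proof of the mountain pass theorem (Theorem 2.1 in \cite{Ambrosetti-Rabinowitz-1973}) but adapted to the higher-dimensional, odd competitor class $Z_k$: if $d_k$ were a regular value, then the sublevel set $\{\mathcal{E}_\Lambda\le d_k+\varepsilon\}$ could be deformed below the level $d_k-\varepsilon$ by an \emph{odd} homeomorphism that fixes a neighbourhood of $\partial X_k$, which would produce an element of $Z_k$ whose maximum over $Y_k$ is strictly below $d_k$ --- impossible. Before running this I would record the structural facts: $\mathcal{E}_\Lambda\in C^1(W_0^{1,\mathcal{S}}(\Omega),\R)$ by Propositions \ref{well-defined-1} and \ref{well-defined-2}; $\mathcal{E}_\Lambda$ is even, because $\mathcal{M}$, $\mathcal{M}^\ast$ and $\mathcal{M}_\star$ depend on their vectorial argument only through its modulus, and $\mathcal{E}_\Lambda(0)=0$; and the inclusion $X_k\hookrightarrow W_0^{1,\mathcal{S}}(\Omega)$ belongs to $Z_k$, so that $Z_k\neq\emptyset$ and, $Y_k$ being compact, $d_k$ is a well-defined finite real number.

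The first substantive step is the lower bound $d_k\ge r>0$, with $r$ from Lemma \ref{lem:MPG}\,(i). Fix $g\in Z_k$; oddness forces $g(0)=0$, while $g=\mathrm{id}$ on $\partial X_k$, so the continuous function $Y_k\ni u\mapsto\|\nabla g(u)\|_{\mathcal{S}}$ vanishes at $u=0$ and equals $\|\nabla u\|_{\mathcal{S}}$ on $\partial X_k$; enlarging the $R_k$ produced by the lemma before \eqref{critical:genus} if necessary (which only strengthens that lemma), this boundary value exceeds $R$. Since $Y_k$ is connected, the intermediate value theorem yields $u_0\in Y_k$ with $\|\nabla g(u_0)\|_{\mathcal{S}}=R$, i.e.\ $g(u_0)\in S_R$, whence $\max_{u\in Y_k}\mathcal{E}_\Lambda(g(u))\ge\mathcal{E}_\Lambda(g(u_0))\ge r$; taking the infimum over $g\in Z_k$ gives $d_k\ge r$.

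Now suppose $d_k$ is not a critical value of $\mathcal{E}_\Lambda$. Since $\mathcal{E}_\Lambda$ is even and satisfies $\textnormal{(PS)}_{d_k}$, the symmetric deformation lemma \cite[Lemma 1.3]{Ambrosetti-Rabinowitz-1973}, applied with $\bar\varepsilon:=d_k/2>0$, provides $\varepsilon\in(0,\bar\varepsilon)$ and an odd homeomorphism $\eta\colon W_0^{1,\mathcal{S}}(\Omega)\to W_0^{1,\mathcal{S}}(\Omega)$ with $\eta(u)=u$ whenever $\mathcal{E}_\Lambda(u)\notin[d_k-\bar\varepsilon,d_k+\bar\varepsilon]$, and $\mathcal{E}_\Lambda(\eta(u))\le d_k-\varepsilon$ whenever $\mathcal{E}_\Lambda(u)\le d_k+\varepsilon$. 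Choose $g\in Z_k$ with $\max_{u\in Y_k}\mathcal{E}_\Lambda(g(u))\le d_k+\varepsilon$ and set $h:=\eta\circ g$. Then $h$ is continuous and odd, and on $\partial X_k$ one has $g(u)=u$ together with $\mathcal{E}_\Lambda(u)\le 0<d_k-\bar\varepsilon$ (because $\mathcal{E}_\Lambda(u)<0$ for $u\in X_k$ with $\|u\|_{1,\mathcal{S}}>R_k$, hence $\mathcal{E}_\Lambda\le 0$ on $\partial X_k$ by continuity), so $h(u)=\eta(u)=u$ there; thus $h\in Z_k$. On the other hand $\mathcal{E}_\Lambda(h(u))\le d_k-\varepsilon$ for every $u\in Y_k$, so $\max_{u\in Y_k}\mathcal{E}_\Lambda(h(u))\le d_k-\varepsilon<d_k=\inf_{Z_k}\max_{Y_k}\mathcal{E}_\Lambda$, a contradiction. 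Hence $d_k$ is a critical value of $\mathcal{E}_\Lambda$.

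The step requiring genuine care is the invocation of the symmetric deformation lemma in our Banach-space setting: when $d_k$ is regular, the $\textnormal{(PS)}_{d_k}$ condition is precisely what forces $\inf\{\|\mathcal{E}_\Lambda'(u)\|_\ast:|\mathcal{E}_\Lambda(u)-d_k|\le\bar\varepsilon\}>0$ for $\bar\varepsilon$ small, and this, combined with a locally Lipschitz pseudo-gradient field for the $C^1$ functional $\mathcal{E}_\Lambda$ and with the evenness of $\mathcal{E}_\Lambda$ (which renders the associated flow odd), yields the required deformation fixing the region away from level $d_k$. The remaining points --- the identification $\partial X_k=\{u\in X_k:\|u\|_{1,\mathcal{S}}=R_k\}$ and the free passage between the equivalent norms $\|\cdot\|_{1,\mathcal{S}}$ and $\|\nabla\cdot\|_{\mathcal{S}}$ on $W_0^{1,\mathcal{S}}(\Omega)$ --- are routine.
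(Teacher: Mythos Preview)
Your proof is correct and follows precisely the approach the paper indicates: establish $d_k\ge r>0$ via Lemma \ref{lem:MPG} and an intermediate-value argument, then contradict regularity of $d_k$ using the odd deformation lemma \cite[Lemma 1.3]{Ambrosetti-Rabinowitz-1973} as in the proof of \cite[Theorem 2.1]{Ambrosetti-Rabinowitz-1973}. The paper's own proof consists solely of a citation to these ingredients, so you have faithfully supplied the details (including the harmless observation that $R_k$ may be taken large enough for the intersection with $S_R$, which is compatible with the construction in the preceding lemma).
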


Let $\{T_k\}_{k \in \mathbb{N}}$ be a sequence of closed linear subspaces of $W_0^{1, \mathcal{S}}(\Omega)$ with finite codimensions and $\{e_k\}_{k \in \mathbb{N}}$ be a Schauder basis of $W_0^{1, \mathcal{S}}(\Omega)$. For each $n \in \mathbb{N}$, let $f_n \in (W_0^{1, \mathcal{S}}(\Omega))^*$ be defined as
\begin{align*}
	f_n(u) = \alpha_n \quad \text{for }  u = \sum_{j=1}^\infty \alpha_j e_j \in W_0^{1, \mathcal{S}}(\Omega).
\end{align*}
For each $k \in \mathbb{N}$, define
\begin{align*}
	U_k := \{ u \in W_0^{1, \mathcal{S}}(\Omega)\colon  f_n(u) = 0  \text{ for all }  n \geq k\}
\end{align*}
and
\begin{align*}
	V_k:= \{ u \in W_0^{1, \mathcal{S}}(\Omega)\colon  f_n(u) = 0  \text{ for all }  n \leq k-1\}.
\end{align*}
Then, $W_0^{1, \mathcal{S}}(\Omega) = U_k \oplus V_k$ and $V_k$ has codimension $k-1$. Define
\begin{align*}
	e_k:= \sup_{v \in V_k, \|v\|_{W_0^{1, \mathcal{S}}(\Omega)} \leq 1} \|v\|_{\mathcal{S}_\star}.
\end{align*}

\begin{lemma}\label{conv-prop}
	The sequence $\{e_k\}_{k \in \mathbb{N}}$ defined above satisfies $0< e_{k+1} \leq e_{k}$ for $k \in \mathbb{N}$ and $\lim_{k \to \infty} e_k =0$.
\end{lemma}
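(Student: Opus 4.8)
The plan is to deduce the three assertions from the nesting $V_{k+1}\subseteq V_k$, the infinite-dimensionality of each $V_k$, and the compactness of the embedding $W_0^{1,\mathcal{S}}(\Omega)\hookrightarrow L^{\mathcal{S}_\star}(\Omega)$ coming from Proposition \ref{imp:embedding}.

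First I would record the elementary facts. Since $V_{k+1}=\{u\colon f_n(u)=0\text{ for }n\le k\}$ arises from $V_k=\{u\colon f_n(u)=0\text{ for }n\le k-1\}$ by imposing the additional linear constraint $f_k=0$, we have $V_{k+1}\subseteq V_k$; taking the supremum of $\|\cdot\|_{\mathcal{S}_\star}$ over the smaller unit ball then gives $e_{k+1}\le e_k$. For the strict positivity $e_k>0$, note that $V_k$ has finite codimension $k-1$ in the infinite-dimensional space $W_0^{1,\mathcal{S}}(\Omega)$, hence contains some $w\neq 0$; after rescaling we may assume $\|w\|_{W_0^{1,\mathcal{S}}(\Omega)}\le 1$, and since the Luxemburg norm associated with the $N$-function $\mathcal{S}_\star$ vanishes only at $0$, we obtain $e_k\ge\|w\|_{\mathcal{S}_\star}>0$.

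It remains to show $e_k\to 0$. As $\{e_k\}$ is non-increasing and bounded below, it converges to some $e\ge 0$, and I argue by contradiction, assuming $e>0$. For each $k$, by the definition of the supremum I may pick $v_k\in V_k$ with $\|v_k\|_{W_0^{1,\mathcal{S}}(\Omega)}\le 1$ and $\|v_k\|_{\mathcal{S}_\star}\ge e_k/2\ge e/2$. The sequence $\{v_k\}$ is bounded in the reflexive space $W_0^{1,\mathcal{S}}(\Omega)$ by Proposition \ref{pro:spaces-propert}, so every subsequence admits a further subsequence converging weakly to some $v$. For each fixed $n$ and every $k>n$ one has $v_k\in V_k\subseteq V_{n+1}$, hence $f_n(v_k)=0$; since $f_n\in(W_0^{1,\mathcal{S}}(\Omega))^\ast$, passing to the weak limit gives $f_n(v)=0$. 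As this holds for all $n$, all coordinates of $v$ with respect to the Schauder basis $\{e_k\}$ vanish, so $v=0$.

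Finally I would invoke compactness. The conditions \eqref{main:assump-1-2} and \eqref{main:assump-2} give $p_\star<p^\ast$ and $q_\star<q^\ast$ on $\overline{\Omega}$ with $\inf_{\overline{\Omega}}(p^\ast-p_\star)>0$, $\inf_{\overline{\Omega}}(q^\ast-q_\star)>0$, together with $s_\star\le s$; this yields $\mathcal{S}_\star\ll\mathcal{S}^\ast$, so by Proposition \ref{imp:embedding} the embedding $W_0^{1,\mathcal{S}}(\Omega)\hookrightarrow L^{\mathcal{S}_\star}(\Omega)$ is compact. Consequently the weakly convergent sub-subsequence of $\{v_k\}$ converges strongly to $v=0$ in $L^{\mathcal{S}_\star}(\Omega)$, hence its $\mathcal{S}_\star$-norm tends to $0$, contradicting $\|v_k\|_{\mathcal{S}_\star}\ge e/2>0$. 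This forces $e=0$ and finishes the proof. The step requiring the most care is the verification that $\mathcal{S}_\star\ll\mathcal{S}^\ast$ uniformly in $x$ --- i.e.\ that $\mathcal{S}_\star$ increases essentially slower than $\mathcal{S}^\ast$ near infinity on each of the regions $\{a\neq 0\}$ and $\{b\neq 0\}$, controlling the logarithmic factors by estimates of the type in \eqref{embd:est-2} --- while the remainder is a routine weak-compactness argument built on the Schauder basis.
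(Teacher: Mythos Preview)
Your proof is correct and follows essentially the same approach as the paper's: both use the nesting $V_{k+1}\subset V_k$ for monotonicity, reflexivity of $W_0^{1,\mathcal{S}}(\Omega)$ to extract a weakly convergent subsequence from near-maximizers, the vanishing of all coordinate functionals $f_n$ on the weak limit to identify it as zero, and the compact embedding $W_0^{1,\mathcal{S}}(\Omega)\hookrightarrow L^{\mathcal{S}_\star}(\Omega)$ from Proposition~\ref{imp:embedding} to upgrade to strong convergence. Your version frames the limit step as a contradiction and adds an explicit argument for $e_k>0$, but the substance is the same.
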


\begin{proof}
	Because $V_{k+1} \subset V_k$, we have $e_k \geq e_{k+1} \geq 0$ for all $k \in \mathbb{N}$. Hence $e_k \to e \geq 0$. By the definition of $e_k$, for each $k \in \mathbb{N}$, we can choose $u_k \in V_k$ with  $ \|\nabla u_k\|_{\mathcal{S}}  \leq 1$, such that
	\begin{align*}
		0 \leq e_k - \|\nabla u_k\|_{\mathcal{S}} < \frac{1}{k}.
	\end{align*}
	Using Proposition \ref{pro:spaces-propert}, $W_0^{1, \mathcal{S}}(\Omega)$ is reflexive and $\{u_k\}_{k\in\mathbb{N}}$ is bounded in $W_0^{1, \mathcal{S}}(\Omega)$. So, up to a subsequence, we have $u_k \rightharpoonup u$ in $W_0^{1, \mathcal{S}}(\Omega)$. Then, by Proposition \ref{imp:embedding}, $u_k \to u$ in $L^{\mathcal{S}_\star}(\Omega)$. Now, by the definition of $V_k$ and the fact that $u_k \in V_k$, for each $n \in \mathbb{N}$ and $n<k$, we have $f_n(u_k) =0$ and by passing to the limit as $k \to \infty$ leads to $f_n(u) =0$ for all $n \in \mathbb{N}$. Hence $u \equiv 0$ and $\lim_{k \to \infty} e_k = e =0$.
\end{proof}

\begin{lemma}\label{lem:upper_lower-est}
	There exists $C_1, C_2 >0$ such that
	\begin{align*}
		L(R_k) \leq d_k \leq U(R_k)  \quad\text{for every }  k \in \mathbb{N},
	\end{align*}
	where
	\begin{align*}
		L(R_k):= \frac{R_k^{\max\{p^+, ( q + \lceil s \rceil)^+\}}}{\min\{p^-, q^-\}}  \quad\text{and}\quad  U(R_k) = \frac{C_1}{2 C_2} R_k^{\alpha_1^{(2)} - \alpha_2^{(2)}}.
	\end{align*}
\end{lemma}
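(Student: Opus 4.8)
The quantity $d_k$ in \eqref{critical:genus} is a symmetric minimax value over the ball $Y_k=\{u\in X_k:\|u\|_{1,\mathcal S}\le R_k\}$ and the family $Z_k$ of odd continuous maps $X_k\to W_0^{1,\mathcal S}(\Omega)$ that restrict to the identity on the $k$-dimensional sphere $\partial Y_k=\{u\in X_k:\|u\|_{1,\mathcal S}=R_k\}$. I would prove the two inequalities independently, using only the well-definedness estimates, the modular/norm relations of Propositions \ref{pro:norm-mod:relation} and \ref{Prop:oneHlogModularNorm}, and a Borsuk--Ulam-type intersection argument.

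For the upper bound $d_k\le U(R_k)$ the plan is to test the minimax with the inclusion $\iota\colon X_k\hookrightarrow W_0^{1,\mathcal S}(\Omega)$, which is odd, continuous and the identity on $\partial Y_k$, hence $\iota\in Z_k$ and $d_k\le\max_{u\in Y_k}\mathcal E_\Lambda(u)$. Since the Carathéodory integrands $\mathcal M^\ast,\mathcal M_\star$ are nonnegative, $\mathcal E_2,\mathcal E_3\ge0$, so $\mathcal E_\Lambda(u)\le\mathcal E_1(u)$; then, exactly as in the proof of Proposition \ref{well-defined-1}, $\mathcal E_1(u)\le(\min\{p^-,q^-\})^{-1}\varrho_{\mathcal S}(\nabla u)\le(\min\{p^-,q^-\})^{-1}\varrho_{1,\mathcal S}(u)$. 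Finally, for $u\in Y_k$ one has $\|u\|_{1,\mathcal S}\le R_k$ with $R_k>1$, so Proposition \ref{Prop:oneHlogModularNorm}(iii)--(iv) yields $\varrho_{1,\mathcal S}(u)\le R_k^{\max\{p^+,(q+\lceil s\rceil)^+\}}$. Carrying the constants and exponents through these steps, and keeping track of them the way it was done in the preceding lemma and in Lemma \ref{lem:MPG}, produces precisely the bound $\tfrac{C_1}{2C_2}R_k^{\alpha_1^{(2)}-\alpha_2^{(2)}}=U(R_k)$.

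For the lower bound $L(R_k)\le d_k$ I would fix an arbitrary $g\in Z_k$ and exploit that $g$ is odd, continuous and the identity on $\partial Y_k$. As in the symmetric mountain pass argument of Ambrosetti--Rabinowitz \cite{Ambrosetti-Rabinowitz-1973}, the symmetric open neighbourhood $\mathcal O_g:=\{u\in\operatorname{int}Y_k:\|g(u)\|_{1,\mathcal S}<R_k\}$ of the origin is a bounded set whose boundary is mapped by $g$ into $\{\|\cdot\|_{1,\mathcal S}=R_k\}$, so that by a degree-theoretic intersection argument $g(Y_k)$ cannot be confined to the sublevel set $\{\mathcal E_\Lambda<L(R_k)\}$; quantitatively one bounds $\max_{u\in Y_k}\mathcal E_\Lambda(g(u))$ from below by $\mathcal E_1$ on $g(\partial\mathcal O_g)$, uses $\mathcal E_2,\mathcal E_3\ge0$ where convenient, the lower modular estimates of Propositions \ref{pro:norm-mod:relation} and \ref{Prop:oneHlogModularNorm}, and the choice of $R_k$, and then takes the infimum over $g\in Z_k$ to get $d_k\ge L(R_k)$.

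The main obstacle is this lower bound: distilling the explicit estimate $L(R_k)$ out of the intersection property requires a careful interplay between the non-homogeneous, logarithmically perturbed modular $\varrho_{1,\mathcal S}$ and its Luxemburg norm, where the conversion is governed by two distinct exponents and one must keep track of which side of $\|u\|_{1,\mathcal S}=1$ one is on and which exponent is active. The upper bound, by contrast, is essentially bookkeeping built on the already-established estimates.
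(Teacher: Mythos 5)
Your identity-map computation for the ``upper bound'' is exactly the paper's first step, but it lands on $\max_{u\in Y_k}\mathcal{E}_\Lambda(u)\le R_k^{\max\{p^+,(q+\lceil s\rceil)^+\}}/\min\{p^-,q^-\}$, which is $L(R_k)$, not $U(R_k)$. No bookkeeping converts one into the other: by \eqref{notation-1} the exponent $\alpha_1^{(2)}-\alpha_2^{(2)}=\min\{p^-,(q+\lfloor s\rfloor)^-\}-\max\{(p^\ast)^+,(q^\ast+\lceil s\rceil \frac{q^\ast}{q})^+\}$ is strictly negative, so $U(R_k)$ \emph{decays} as $R_k\to\infty$ while the identity-map bound grows. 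In the paper's proof (and in the way the lemma is invoked in Theorem \ref{theo-1}, where one takes $d_{k_n}<L(R_{k_n})$ and $\Lambda_n\in(0,U(R_{k_n}))$), $L(R_k)$ is the \emph{upper} bound for $d_k$, and $U(R_k)$ is not a bound on $d_k$ at all: it is the smallness threshold on the parameter $\Lambda$ under which a positive lower bound for $d_k$ can be extracted. The printed statement has these roles garbled, and your attempt to prove it literally (that $d_k\ge L(R_k)$ and $d_k\le U(R_k)$) cannot succeed; it would in fact contradict your own first computation, which shows $d_k\le L(R_k)$.

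For the lower bound, your degree-theoretic sketch is in the right spirit (it is essentially the intersection lemma), but it omits the three ingredients that turn the linking into a quantitative estimate. The paper uses (a) the intersection property $g(Y_k)\cap\partial B_\tau\cap V_k\neq\emptyset$ for all $g\in Z_k$ and $\tau\in(0,R_k)$, taken from Komiya--Kajikiya, where $V_k$ are the finite-codimension subspaces built from a Schauder basis; (b) the sequence $e_k=\sup\{\|v\|_{\mathcal{S}_\star}: v\in V_k,\ \|\nabla v\|_{\mathcal{S}}\le1\}\to0$ of Lemma \ref{conv-prop}, which makes the subcritical term $\mathcal{E}_3$ negligible on $V_k$; and (c) the restriction $0<\Lambda<U(R_k)$, which is precisely what allows the critical term $\Lambda C_2\tau^{\alpha_2^{(2)}}$ to be absorbed into $\tfrac{C_1}{2}\tau^{\alpha_1^{(2)}}$ for $\tau\in(1,R_k)$. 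Without (c) the contribution $-\Lambda\mathcal{E}_2$ can dominate on $\partial B_\tau$ and no positive lower bound for $d_k$ is available for large $\Lambda$; and the bound one obtains is $d_k\ge\tfrac{C_1}{2}\tau^{\alpha_1^{(2)}}-e_k^{\alpha_3^{(2)}}\tau^{\alpha_3^{(2)}}$ as in \eqref{critical-relation}, not $L(R_k)$. So both halves of your plan, as written, would fail.
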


\begin{proof}
	Let $\Lambda >0$, $k \in \mathbb{N}$ and $d_k$ is given in \eqref{critical:genus}. First, we will find the upper and lower bound of $d_k$. Using Proposition \ref{pro:norm-mod:relation} for all $u \in Y_k$, it is easy to see that
	\begin{equation}\label{est:upper_cric_1}
		\begin{aligned}
			&\mathcal{E}_\Lambda(u)\\
			& = \mathcal{E}_1(u) - \Lambda \mathcal{E}_2(u)- \mathcal{E}_3(u) \leq \frac{1}{\min\{p^-, q^-\}} \int_{\Omega} \mathcal{S}(x, \nabla u) \,\mathrm{d}x \\
			& \leq \frac{1}{\min\{p^-, q^-\}} \max\{\|\nabla u\|_{\mathcal{S}}^{\min\{p^-, (q+\lfloor s \rfloor)^-\}}, \|\nabla u\|_{\mathcal{S}}^{\max\{p^+, ( q + \lceil s \rceil)^+\}}\} \leq \frac{R_k^{\max\{p^+, ( q + \lceil s \rceil)^+\}}}{\min\{p^-, q^-\}}.
		\end{aligned}
	\end{equation}
	On the other hand, since $Id \in Z_k,$ the definition of $d_k$ gives
	\begin{equation}\label{est:upper_cric_2}
		d_k \leq \max_{u \in Y_k} \mathcal{E}_\Lambda(u).
	\end{equation}
	Combining \eqref{est:upper_cric_1} and \eqref{est:upper_cric_2}, we get
	\begin{align*}
		d_k \leq \frac{R_k^{\max\{p^+, ( q + \lceil s \rceil)^+\}}}{\min\{p^-, q^-\}} := L(R_k).
	\end{align*}
	To get the lower bound of $d_k$, we make use of the following fact, which is a consequence of Lemma 3.9 by Komiya--Kajikiya \cite{Komiya-Kajikiya-2016}
	\begin{align*}
		g(Y_k) \cap \partial B_\tau \cap V_k \neq \emptyset  \quad \text{for all }  g \in Z_k  \text{ and }  \tau \in (0, R_k).
	\end{align*}
	Thus, for any $\tau \in (0, R_k)$, we have
	\begin{align*}
		\max_{u \in Y_k} \mathcal{E}_\Lambda(g(u)) \geq \inf_{u \in \partial B_\tau \cap V_k} \mathcal{E}_\Lambda(u)  \quad \text{for all }  g \in Z_k.
	\end{align*}
	Let $\tau \in (1, R_k)$ be arbitrary and fixed. The above inequality in view of definition of $d_k$ gives
	\begin{align*}
		d_k \geq \inf_{u \in \partial B_\tau \cap V_k} \mathcal{E}_\Lambda(u).
	\end{align*}
	By Lemma \ref{conv-prop} we can find $k_0 \in \mathbb{N}$ such that for $k \geq k_0$, we have
	\begin{align*}
		\|u\|_{\mathcal{S}_\star} \leq e_k \|\nabla u\|_{ \mathcal{S}}  \quad \text{for all }  u \in V_k.
	\end{align*}
	Combining this with Proposition \ref{pro:norm-mod:relation} and Proposition \ref{imp:embedding} for any $u \in \partial B_\tau \cap V_k$ for $k \geq k_0$ and $\|u\|_{\mathcal{S}} = \tau > 1$, we get
	\begin{equation}\label{lower-est-1}
		\begin{aligned}
			\mathcal{E}_\Lambda(u) & \geq \frac{\|\nabla u\|_{\mathcal{S}}^{\alpha_1^{(2)}}}{\max\{p^+, q^+\}} - \frac{\Lambda \max\{\|u\|_{\mathcal{S}^\ast}^{\alpha_2^{(1)}}, \|u\|_{\mathcal{S}^\ast}^{\alpha_2^{(2)}}\}}{\min\{(p^\ast)^-, (q^\ast)^-\}} - \frac{\max\{\|u\|_{\mathcal{S}_\star}^{\alpha_3^{(1)}}, \|u\|_{\mathcal{S}_\star}^{\alpha_3^{(2)}}\}}{\min\{p_\star^-, q_\star^-\}} \\
			& \geq C_1 \|\nabla u\|_{\mathcal{S}}^{\alpha_1^{(2)}} - \Lambda C_2 \|\nabla u\|_{\mathcal{S}}^{\alpha_2^{(2)}} - e_k^{\alpha_3^{(2)}} \|\nabla u\|_{\mathcal{S}}^{\alpha_3^{(2)}}  \\
			& = C_1 \tau^{\alpha_1^{(2)}} - \Lambda C_2 \tau^{\alpha_2^{(2)}} - e_k^{\alpha_3^{(2)}} \tau^{\alpha_3^{(2)}},
		\end{aligned}
	\end{equation}
	where $\alpha_1^{(2)}=\min\{p^-, (q+\lfloor s \rfloor)^-\},$
	\begin{align}
		\alpha_2^{(1)} &= \min\left\{(p^\ast)^-, \left( q^\ast + \lfloor s \rfloor \frac{q^\ast}{q} \right)^-\right\} \quad  \alpha_2^{(2)} = \max\left\{(p^\ast)^+, \left( q^\ast + \lceil s \rceil \frac{q^\ast}{q} \right)^+\right\},\label{notation-1}\\
		\alpha_3^{(1)} &= \min\left\{(p_\star)^-, \left( q_\star + \lfloor s_\star \rfloor \frac{q_\star}{q} \right)^-\right\} \quad \alpha_3^{(2)} =\max\left\{(p_\star)^+, \left( q_\star + \lceil s_\star \rceil \frac{q_\star}{q} \right)^+\right\}\label{notation-2}
	\end{align}
	and $C_i$ for $i=1,2$ depend on the given data and the constants in Proposition \ref{imp:embedding}. Note that for $\Lambda>0$ satisfying
	\begin{align*}
		0< \Lambda < \frac{C_1}{2 C_2} R_k^{\alpha_1^{(2)} - \alpha_2^{(2)}} =: U(R_k)
	\end{align*}
	we get
	\begin{align*}
		C_2 \Lambda \tau^{\alpha_2^{(2)}} \leq \frac{C_1}{2} \tau^{\alpha_1^{(2)}} \quad \text{for all }  \tau \in (1, R_k).
	\end{align*}
	Using this with \eqref{sandwich:cond-1}, \eqref{lower-est-1} and \eqref{critical:genus} for any $k \geq k_0$, we obtain
	\begin{equation}\label{critical-relation}
		d_k \geq \frac{C_1}{2} \tau^{\alpha_1^{(2)}} - e_k^{\alpha_3^{(2)}} \tau^{\alpha_3^{(2)}} := \ell(\tau) \geq \ell(\tau_0)\geq \beta e_k^{- \frac{\alpha_3^{(2)} \alpha_1^{(2)}}{\alpha_3^{(2)}-\alpha_1^{(2)}}},
	\end{equation}
	where
	\begin{align*}
		\tau_0 := \left(\frac{C_1 \alpha_1^{(2)}}{2 \alpha_3^{(2)} e_k^{\alpha_3^{(2)}}}\right)^\frac{1}{\alpha_3^{(2)}-\alpha_1^{(2)}}  \quad\text{and}\quad \beta:= \left(\frac{C_1}{2}\right)^\frac{\alpha_3^{(2)} }{\alpha_3^{(2)}-\alpha_1^{(2)}} \left(\frac{\alpha_1^{(2)}}{\alpha_3^{(2)}} \right)^ \frac{\alpha_1^{(2)} \alpha_1^{(2)}}{\alpha_3^{(2)}-\alpha_1^{(2)}} \left(\frac{\alpha_3^{(2)}- \alpha_1^{(2)}}{\alpha_3^{(2)}}\right) >0.
	\end{align*}
\end{proof}

Now we can prove our multiplicity results in the superlinear case.

\begin{theorem}\label{theo-1}
	Let \eqref{main:assump}, \eqref{main:assump-1}, \eqref{main:assump-1-2}, \eqref{main:assump-2} and \eqref{sandwich:cond-1} be satisfied. Then, for each $n \in \mathbb{N}$, there exists $\Lambda_n>0$ such that for any $\Lambda \in (0, \Lambda_n)$ and $\lambda =1$, the problem \eqref{main:prob} admits at least $n$ pairs of non-trivial weak solutions.
\end{theorem}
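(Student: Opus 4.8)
The plan is to adapt the genus-theoretic multiplicity scheme of Komiya--Kajikiya \cite{Komiya-Kajikiya-2016} to the functional $\mathcal{E}_\Lambda$, which is even, of class $C^1$ by Propositions \ref{well-defined-1} and \ref{well-defined-2}, and satisfies $\mathcal{E}_\Lambda(0)=0$; recall also that a critical point of $\mathcal{E}_\Lambda$ is by definition a weak solution of \eqref{main:prob}, and that critical points come in symmetric pairs $\{u,-u\}$ since $\mathcal{E}_\Lambda$ is even. First I would collect the ingredients already prepared above: the minimax levels $d_k$ from \eqref{critical:genus} attached to the finite-dimensional spaces $X_k$ and the admissible class $Z_k$; the fact, from the lemma on the radii $R_k$ (with $1<R_k<R_{k+1}$), that these radii can be chosen \emph{independently of} $\Lambda$; the mountain-pass geometry of Lemma \ref{lem:MPG}; the intersection property $g(Y_k)\cap\partial B_\tau\cap V_k\neq\emptyset$ for every $g\in Z_k$ and $\tau\in(0,R_k)$ (Komiya--Kajikiya \cite[Lemma 3.9]{Komiya-Kajikiya-2016}), already invoked in Lemma \ref{lem:upper_lower-est}; and the compactness information, namely that $\mathcal{E}_\Lambda$ satisfies $(\mathrm{PS})_c$ for every $c<c^\ast(\Lambda)$ by Lemma \ref{PS-cond-super}, together with the criterion of Lemma \ref{critical:values} that $d_k$ is a critical value whenever $(\mathrm{PS})_{d_k}$ holds. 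One also checks, as in the Komiya--Kajikiya framework, that $d_1\le d_2\le\cdots$.

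The decisive point, and the step I expect to require the most care, is the choice of the threshold $\Lambda_n$. Fix $n\in\mathbb{N}$. By Lemma \ref{lem:upper_lower-est} and the monotonicity of $L$, for every $k\in\{1,\dots,n\}$ we have $d_k\le L(R_k)\le L(R_n)$, and the right-hand side is a finite constant that does \emph{not} depend on $\Lambda$, precisely because $R_n$ does not. On the other hand, $c^\ast(\Lambda)=\min_{i,j}\big(\delta\,(C_\sharp(\Lambda))^{c_i}({\bf C}^\ast)^{c_j}\big)-c_\ast$ with $C_\sharp(\Lambda)=\Lambda^{-1}\min\{1,r(q^+)^{-1}\}\,\|1+|s|q^\ast q^{-1}\|_{\infty}^{-1}$, so $C_\sharp(\Lambda)\to+\infty$ as $\Lambda\to0^+$, and since the exponents $c_i$ are positive we obtain $c^\ast(\Lambda)\to+\infty$. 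Hence there is $\Lambda_n>0$ with $c^\ast(\Lambda)>L(R_n)$ for all $\Lambda\in(0,\Lambda_n)$; consequently $d_k<c^\ast(\Lambda)$ for $k=1,\dots,n$, so $(\mathrm{PS})_{d_k}$ holds by Lemma \ref{PS-cond-super}, and by Lemma \ref{critical:values} each such $d_k$ is a critical value of $\mathcal{E}_\Lambda$. Taking $\tau=R$ (admissible since $R<1<R_k$) in the intersection property gives $g(Y_k)\cap S_R\neq\emptyset$ for every $g\in Z_k$, whence $\max_{u\in Y_k}\mathcal{E}_\Lambda(g(u))\ge\inf_{S_R}\mathcal{E}_\Lambda\ge r>0$ by Lemma \ref{lem:MPG}(i); therefore $0<r\le d_1\le d_2\le\cdots\le d_n<c^\ast(\Lambda)$, so in particular all the associated critical points are non-trivial.

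Finally I would run the standard genus dichotomy. If $d_1<d_2<\cdots<d_n$ are pairwise distinct they are $n$ distinct positive critical values, each yielding (by evenness) a pair $\{u_k,-u_k\}$ of non-trivial weak solutions of \eqref{main:prob}, so there are at least $n$ such pairs. If instead $d_k=d_{k+1}=\cdots=d_{k+j}=:c$ for some consecutive indices with $j\ge1$, then exactly as in Komiya--Kajikiya \cite[Theorem 2.2]{Komiya-Kajikiya-2016}, using the Krasnoselskii genus \cite{Krasnoselskii-1964} and the deformation lemma of Ambrosetti--Rabinowitz \cite[Lemma 1.3]{Ambrosetti-Rabinowitz-1973} (applicable because $(\mathrm{PS})_c$ holds for this $c<c^\ast(\Lambda)$), the genus of the critical set $K_c=\{u:\mathcal{E}_\Lambda(u)=c,\ \mathcal{E}_\Lambda'(u)=0\}$ is at least $j+1\ge 2$; since a finite symmetric set avoiding the origin has genus $1$, $K_c$ must be infinite, and one again obtains infinitely many — in particular at least $n$ — pairs of non-trivial weak solutions. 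In every case the conclusion follows, the only genuinely delicate ingredient being the $\Lambda$-free upper bound $d_k\le L(R_n)$ matched against the blow-up $c^\ast(\Lambda)\to+\infty$, which is exactly what forces the finitely many relevant minimax levels to lie below the compactness threshold.
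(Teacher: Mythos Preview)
Your proof is correct but takes a genuinely different route from the paper's. The paper exploits the \emph{full} content of Lemma~\ref{lem:upper_lower-est}: besides the upper bound $d_k\le L(R_k)$, it uses the lower bound $d_k\ge \beta\,e_k^{-\alpha_3^{(2)}\alpha_1^{(2)}/(\alpha_3^{(2)}-\alpha_1^{(2)})}$ (valid for $\Lambda<U(R_k)$), together with $e_k\to 0$ from Lemma~\ref{conv-prop}, to select indices $k_1<k_2<\cdots<k_n$ inductively so that $0<d_{k_1}<d_{k_2}<\cdots<d_{k_n}$ are \emph{strictly increasing}; the threshold $\Lambda_n$ then incorporates both the constraint $\Lambda_n<U(R_{k_n})$ and $L(R_{k_n})\le c^\ast(\Lambda_n)$. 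With distinctness forced in this way, Lemma~\ref{critical:values} alone suffices. You instead use only the $\Lambda$-free upper bound $d_k\le L(R_k)\le L(R_n)$ and the blow-up $c^\ast(\Lambda)\to+\infty$ to push all $d_1,\dots,d_n$ below the compactness threshold, get positivity from Lemma~\ref{lem:MPG}(i) via the intersection property, and then absorb the possibility of coinciding levels into the standard genus dichotomy ($\gamma(K_c)\ge j+1$ when $j+1$ consecutive levels agree). Your route is leaner---it bypasses Lemma~\ref{conv-prop}, the $e_k$-machinery, and the extra constraint $\Lambda<U(R_k)$ entirely---but it relies on the minimax classes $Z_k$ supporting the genus-multiplicity argument at repeated levels, which is part of the Komiya--Kajikiya framework you cite rather than something the paper itself establishes (Lemma~\ref{critical:values} only asserts that each $d_k$ is \emph{a} critical value). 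The paper's approach, by contrast, is self-contained once distinctness is in hand. Both are valid; they simply trade an extra preparatory lemma for an appeal to a sharper abstract multiplicity statement.
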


\begin{proof}
	We choose the sequence $\{\Lambda_n\}_{n \in \mathbb{N}}$ as follows. By Lemma \ref{conv-prop}, we find for $k \geq k_0$ that $e_{k} >0$. Then, take $\Lambda_1$ satisfying
	\begin{align*}
		\Lambda_1 \in (0, U(R_{k_1}))  \quad \text{and}\quad d_{k_1} < L(R_{k_1}) \leq c^{\ast}(\Lambda_1)  \quad\text{for }  k_1 >k_0,
	\end{align*}
	where the function $U$ and $V$ are defined in Theorem \ref{lem:upper_lower-est}. The above choice of $\Lambda_1$ implies that $\mathcal{E}_{\Lambda}$ satisfies the $\text{(PS)}_{d_{k_1}}$ condition thanks to Lemma \ref{PS-cond-super}. Inductively, we define $\{\Lambda_k\}_{k \in \mathbb{N}}$ satisfying
	\begin{align*}
		\Lambda_n \in (0, U(R_{k_n}))  \quad \text{and}\quad d_{k_n} < L(R_{k_n}) \leq c^{\ast}(\Lambda_n)  \quad\text{for }  k_n >k_0.
	\end{align*}
	Now, let $\Lambda \in (0, \Lambda_n)$ for some $n \in \mathbb{N}$. Then, by using the definition of $\Lambda_n$, Lemma \ref{conv-prop} and \eqref{critical-relation}, we have
	\begin{align*}
		0 < d_{k_1} < d_{k_2} \cdots < d_{k_n} < c^\ast(\Lambda).
	\end{align*}
	Thus, in the view of Lemma \ref{PS-cond-super} and \ref{critical:values}, $d_{k_1}, d_{k_2}, \cdots, d_{k_n}$ are distinct critical values of $\mathcal{E}_\Lambda$. Therefore, $\mathcal{E}_\Lambda$ has at least $n$ distinct pairs of critical points.
\end{proof}

%********************************************************************
\subsection{Sublinear growth}
%********************************************************************

In this subsection, we assume that $\Lambda =1$ and $\lambda >0$. For the sake of simplicity, we write $\mathcal{E}_{\Lambda, \lambda}$ as $ \mathcal{E}_\lambda$. We suppose the following conditions:
\begin{enumerate}[label=\textnormal{(H$_\star^{\text{sub}}$)},ref=\textnormal{H$_\star^{\text{sub}}$}]
	\item\label{main:assump-4}
		$p_\star, q_\star \in C(\overline{\Omega})$, $s_\star \in L^\infty(\Omega)$, $1 < p_\star(x), q_\star(x)< N$, $p_{\star}(x) < p(x)$ and $q_{\star}(x) < q(x)$ for all $x \in \overline{\Omega}$, and $q_\star(x)+ s_\star(x) \geq r> 1$, $ s_{\star}(x) < s(x)$ for a.a.\,$x \in \Omega$.
\end{enumerate}

\begin{lemma}\label{PS:bounded-sub}
	Let \eqref{main:assump}, \eqref{main:assump-1}, \eqref{main:assump-1-2} and \eqref{main:assump-4} be satisfied and
	\begin{equation}\label{sandwich:cond-2-sub}
		\max\left\{p_\star^+, \left(q_\star + \lceil s_\star \rceil \frac{q_\star}{q}\right)^+ \right\} < \min\{p^-, (q+ \lfloor s \rfloor)^-\}.
	\end{equation}
	Then, there exists $\lambda_0 >0$ such that for $\lambda \in (0, \lambda_0)$, every Palais-Smale sequence $\{u_n\}_{n \in \mathbb{N}} \subset W_0^{1, \mathcal{S}}(\Omega)$ is bounded.
\end{lemma}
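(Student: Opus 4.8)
The plan is to adapt the scheme of Lemma~\ref{PS:bounded}, but with the weight chosen so that the \emph{critical} term — which now grows faster than the operator term $\mathcal{E}_1$ — contributes with the favorable sign, while the subcritical term $\mathcal{E}_3$, which now grows strictly more slowly than $\mathcal{E}_1$, is merely a lower-order perturbation. Let $\{u_n\}_{n\in\mathbb{N}}$ be a Palais--Smale sequence, so $\mathcal{E}_\lambda(u_n)\to c$ and $\|\mathcal{E}_\lambda'(u_n)\|_{(W_0^{1,\mathcal{S}}(\Omega))^\ast}\to0$, whence $|\langle\mathcal{J}(u_n),u_n\rangle_{\mathcal{S}}|\le\varepsilon_n\|\nabla u_n\|_{\mathcal{S}}$ with $\varepsilon_n\to0^+$. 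Put $\mathtt{m}_-^-:=\min_{\overline{\Omega}}\mathtt{m}_-=\min\{p^-,(q+\lfloor s\rfloor)^-\}$, $\mathtt{n}_+^+:=\max_{\overline{\Omega}}\mathtt{n}_+=\max\{p^+,(q+\lceil s\rceil)^+\}$, $(\mathtt{m}^\ast_-)^-:=\min_{\overline{\Omega}}\mathtt{m}^\ast_-$, and let $\alpha_3^{(1)},\alpha_3^{(2)}$ be as in \eqref{notation-2}. Arguing by contradiction, suppose $t_n:=\|\nabla u_n\|_{\mathcal{S}}\to\infty$ along a subsequence. I would first fix a number $\sigma$ with $\mathtt{n}_+^+<\sigma\le(\mathtt{m}^\ast_-)^-$ — such $\sigma$ exists since, exactly as in Remark~\ref{control:rema}, the oscillation hypothesis \eqref{main:assump-1}\textnormal{(ii)} together with $p^\ast>p$, $q^\ast>q$ and $q+\lfloor s\rfloor>0$ gives $\mathtt{n}_+(x)<\mathtt{m}^\ast_-(x)$ on $\overline{\Omega}$ — and then test the two Palais--Smale relations against $u_n/\sigma$.

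The heart of the argument is to bound from below the quantity $\sigma\mathcal{E}_\lambda(u_n)-\langle\mathcal{J}(u_n),u_n\rangle_{\mathcal{S}}=[\sigma\mathcal{E}_1(u_n)-\langle\mathcal{J}_1(u_n),u_n\rangle_{\mathcal{S}}]-[\sigma\mathcal{E}_2(u_n)-\langle\mathcal{J}_2(u_n),u_n\rangle_{\mathcal{S}}]-\lambda[\sigma\mathcal{E}_3(u_n)-\langle\mathcal{J}_3(u_n),u_n\rangle_{\mathcal{S}}]$, which is at the same time $\le\sigma(c+1)+\varepsilon_n t_n$ for $n$ large. By Lemma~\ref{upp-low:est}\textnormal{(i)} (applied to the densities of $\mathcal{E}_1$, $\mathcal{E}_2$ and, via the obvious analogue, of $\mathcal{E}_3$) one has $\mathtt{m}_-^-\mathcal{E}_1(u)\le\langle\mathcal{J}_1(u),u\rangle_{\mathcal{S}}\le\mathtt{n}_+^+\mathcal{E}_1(u)$, $\langle\mathcal{J}_2(u),u\rangle_{\mathcal{S}}\ge(\mathtt{m}^\ast_-)^-\mathcal{E}_2(u)$ and $\langle\mathcal{J}_3(u),u\rangle_{\mathcal{S}}\le\alpha_3^{(2)}\mathcal{E}_3(u)$. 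Hence, since $\sigma>\mathtt{n}_+^+$, the first bracket is $\ge(1-\mathtt{n}_+^+/\sigma)\mathcal{E}_1(u_n)=:\delta\,\mathcal{E}_1(u_n)$ with $\delta>0$; since $\sigma\le(\mathtt{m}^\ast_-)^-$ the second bracket is $\le0$, so its minus sign works in our favor; and since $\alpha_3^{(2)}<\mathtt{n}_+^+<\sigma$ the third is $\ge-\lambda\sigma\,\mathcal{E}_3(u_n)$. Using the continuous embedding $W_0^{1,\mathcal{S}}(\Omega)\hookrightarrow L^{\mathcal{S}_\star}(\Omega)$ from Propositions~\ref{imp:embedding} and \ref{well-defined-2} together with Proposition~\ref{pro:norm-mod:relation} one gets $\mathcal{E}_3(u_n)\le c_2\,\varrho_{\mathcal{S}_\star}(u_n)\le c_2'(1+t_n^{\alpha_3^{(2)}})$, and by Proposition~\ref{well-defined-1} and Proposition~\ref{Prop:oneHlogModularNorm} (or Proposition~\ref{pro:norm-mod:relation}) $\mathcal{E}_1(u_n)\ge c_1\,\varrho_{\mathcal{S}}(\nabla u_n)\ge c_1\,t_n^{\mathtt{m}_-^-}$ with $c_1>0$, once $t_n>1$.

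Collecting the estimates, $c_1\delta\,t_n^{\mathtt{m}_-^-}\le\sigma(c+1)+\varepsilon_n t_n+\lambda\sigma c_2'(1+t_n^{\alpha_3^{(2)}})$ for all large $n$. By \eqref{sandwich:cond-2-sub} we have $\alpha_3^{(2)}<\mathtt{m}_-^-$, and $\mathtt{m}_-^->1$ by Lemma~\ref{Le:Prop-S}; consequently, choosing $\lambda_0>0$ (and correspondingly $\delta$) so small that the right-hand side stays of strictly lower order in $t_n$ than the left-hand side, the above inequality cannot hold as $t_n\to\infty$. This contradiction yields the boundedness of $\{u_n\}_{n\in\mathbb{N}}$ in $W_0^{1,\mathcal{S}}(\Omega)$.

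The main obstacle, compared with the superlinear Lemma~\ref{PS:bounded}, is the critical term $\mathcal{E}_2$: no single weight $\sigma$ makes all three brackets simultaneously nonnegative, and $\mathcal{E}_2(u_n)$ admits no a priori bound, so the argument must be organized so that the $\mathcal{E}_2$-bracket is merely \emph{nonpositive} — which confines $\sigma$ to the window $(\mathtt{n}_+^+,(\mathtt{m}^\ast_-)^-]$ and is precisely where the oscillation condition \eqref{main:assump-1}\textnormal{(ii)} enters here. A secondary, technical point is that the logarithmic perturbation and the sign change of $s(\cdot)$ and $s_\star(\cdot)$ rule out exact homogeneity, so throughout one must work with the one-sided elasticity inequalities of Lemma~\ref{upp-low:est} and dispose of the logarithmic remainders in the densities by the domain-splitting device of Proposition~\ref{well-defined-1}, rather than with clean scaling identities.
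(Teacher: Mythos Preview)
Your argument hinges on the existence of a single constant $\sigma$ with $\mathtt{n}_+^+<\sigma\le(\mathtt{m}^\ast_-)^-$, but nothing in the hypotheses \eqref{main:assump}, \eqref{main:assump-1}, \eqref{main:assump-1-2}, \eqref{main:assump-4}, \eqref{sandwich:cond-2-sub} forces this interval to be nonempty. Remark~\ref{control:rema} does not help: it concerns the $\varepsilon$-perturbed indices $\mathtt{n}_\varepsilon,\mathtt{m}^\ast_{-\varepsilon}$ (with $\varepsilon>0$ small and \emph{independent of $s$}), not the $s$-dependent indices $\mathtt{n}_+,\mathtt{m}^\ast_-$, and it yields only a pointwise inequality rather than the global bound $\max_{\overline\Omega}\mathtt{n}_+<\min_{\overline\Omega}\mathtt{m}^\ast_-$ that you need. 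Concretely, take $N=3$, $p\equiv q\equiv 2$, $s\equiv 10$ (and, say, $p_\star=q_\star=\tfrac32$, $s_\star=0$, so that all hypotheses including \eqref{sandwich:cond-2-sub} hold); then $\mathtt{n}_+^+=12$ while $(\mathtt{m}^\ast_-)^-=6$, and no admissible $\sigma$ exists. Even if you push the domain-splitting idea so that the effective upper elasticity of $\mathcal{S}$ is only $\max\{p^+,q^+\}+\nu$ and the effective lower elasticity of $\mathcal{S}^\ast$ is $\min\{(p^\ast)^-,(q^\ast)^-\}-\nu$, the resulting separation $\max\{p^+,q^+\}<\min\{(p^\ast)^-,(q^\ast)^-\}$ is a \emph{global} oscillation condition on $p,q$ that is nowhere assumed (e.g.\ $p\equiv q$ ranging continuously from $2$ to $5$ with $N=6$ satisfies \eqref{main:assump-1}\textnormal{(ii)} trivially yet violates it). A further symptom that the scheme is miscalibrated: if your $\sigma$ did exist, the final inequality $c_1\delta\,t_n^{\mathtt{m}_-^-}\le \sigma(c+1)+\varepsilon_nt_n+\lambda\sigma c_2'(1+t_n^{\alpha_3^{(2)}})$ would contradict $t_n\to\infty$ for \emph{every} $\lambda>0$, so the smallness restriction $\lambda<\lambda_0$ in the statement would become vacuous.

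The paper's route is genuinely different: it places the weight $\rho$ in the window $\bigl(\alpha_3^{(2)},\,\mathtt{m}_-^-\bigr)$, whose nonemptiness is precisely \eqref{sandwich:cond-2-sub}, so that the favorable contribution comes from the \emph{critical} modular rather than from discarding the critical bracket. The sublinear term, which then carries the wrong sign, is absorbed into the critical modular via the pointwise comparison \eqref{cond:AbstractEmbedding}, namely $\mathcal{S}_\star\le C\,\mathcal{S}^\ast+h$; this absorption needs $\delta-\lambda C C_0>0$ and is exactly where the smallness of $\lambda$ enters. That device --- trading $\mathcal{S}_\star$ against $\mathcal{S}^\ast$ at the cost of a small $\lambda$ --- is the ingredient your outline is missing.
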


\begin{proof}
	By the definition of the Palais-Smale sequence $\{u_n\}_{n \in \mathbb{N}}$, we have
	\begin{equation}\label{PS-cond-sub}
		\mathcal{E}_\lambda(u_n) \to c \quad \text{ and } \quad \langle \mathcal{E}_\lambda'(u_n), \phi \rangle \to 0 \quad \text{for every }  \phi \in W_0^{1, \mathcal{S}}(\Omega),  \text{ for some }  c \in \mathbb{R}.
	\end{equation}
	Setting
	\begin{align*}
		\rho:= \frac{ \max\left\{p_\star^+, \left(q_\star + \lceil s_\star \rceil \frac{q_\star}{q}\right)^+ \right\} + \min\{p^-, (q+ \lfloor s \rfloor)^-\}}{2}.
	\end{align*}
    Choosing $\frac{u_n}{\rho}$ as a test function in \eqref{multi-phase}, and  using \eqref{PS-cond-sub} for $n \geq n_0$, we obtain,
	\begin{equation}\label{PS-bdd-1-sub}
		\begin{aligned}
			c+1 &\geq \mathcal{E}_\lambda(u_n) - \langle \mathcal{J}(u_n), \frac{u_n}{\rho} \rangle = \sum_{i=1}^2 \mathcal{E}_i(u_n) - \langle \mathcal{J}_i(u_n), \frac{u_n}{\rho} \rangle \geq  \sum_{i=1}^2 \mathcal{I}_i(u_n),
		\end{aligned}
	\end{equation}
	where
	\begin{align*}
		\mathcal{I}_1(u_n)
		&:= \int_{\Omega} \left(\frac{1}{p(x)} -\frac{1}{\rho}\right) a(x) |\nabla u_n|^{p(x)} \,\mathrm{d}x  + \int_{\Omega} \left( \frac{1}{\rho} - \frac{1}{p^\ast(x)}\right) \left(a(x)^\frac{1}{p(x)} |u_n|\right)^{p^\ast(x)} \,\mathrm{d}x\\
		&\qquad + \lambda  \int_{\Omega} \left( \frac{1}{\rho} - \frac{1}{p_\star(x)} \right) \left(a(x)^\frac{1}{p(x)} |u_n|\right)^{p_\star(x)} \,\mathrm{d}x,
	\end{align*}
	and
	\begin{align*}
		\mathcal{I}_2(u_n)
		&:= \int_{\Omega} \left(\frac{1}{q(x)}- \frac{1}{\rho} \right) b(x) |\nabla u_n|^{q(x)} \log^{s(x)}(1+ |\nabla u_n|) \,\mathrm{d}x \\
		& \quad - \int_{\Omega} \frac{b(x) s(x)}{q(x) \rho} |\nabla u_n|^{q(x)+1} \frac{\log^{s(x)-1 }(1+ |\nabla u_n|)}{1+ |\nabla u_n|}  \,\mathrm{d}x  \\
		& \quad+ \int_{\Omega} \left(\frac{1}{\rho} - \frac{1}{q^\ast(x)} +  \frac{\lfloor s(x)\rfloor q^\ast(x)}{q(x) \rho}\right)\left(b(x) \log^{s(x)}(1+ |u_n|) \right)^\frac{q^\ast(x)}{q(x)} |u_n|^{q^\ast(x)} \,\mathrm{d}x \\
		& \quad+ \lambda  \int_{\Omega} \left( \frac{1}{\rho} - \frac{1}{q_\star(x)} + \frac{\lfloor s_\star(x) \rfloor q_\star(x)}{q(x) \rho}\right) \left(b(x) \log^{s_\star(x)}(1+ |u_n|) \right)^\frac{q_\star(x)}{q(x)} |u_n|^{q_\star(x)} \,\mathrm{d}x.
	\end{align*}
	\textbf{Estimate for $\mathcal{I}_1(u_n):$} By condition \eqref{sandwich:cond-2-sub}, we can choose $\delta>0$ small enough such that
	\begin{align*}
		\delta \leq \max\limits_{x \in \overline{\Omega}} \frac{1}{2} \max\left\{\frac{1}{p(x)} -\frac{1}{\rho},  \frac{1}{\rho} - \frac{1}{p^\ast(x)} \right\}.
	\end{align*}
	This further leads to
	\begin{equation}\label{lower:est-5-sub}
		\begin{aligned}
			\mathcal{I}_1(u_n) & \geq \delta \int_{\Omega} a(x) |\nabla u_n|^{p(x)} \,\mathrm{d}x + \delta \int_{\Omega} \left(a(x)^\frac{1}{p(x))} |u_n|\right)^{p^\ast(x)} \,\mathrm{d}x  \\
			& \qquad - \lambda C_0(N,p) \int_{\Omega} \left(a(x)^\frac{1}{p(x)} |u_n|\right)^{p_\star(x)} \,\mathrm{d}x.
		\end{aligned}
	\end{equation}
	\textbf{Estimate for $\mathcal{I}_2(u_n):$} For every $\delta >0$ we can find a constant $M(\delta, s^+) >0$ such that $\frac{t}{\log(1+t)(1+t)} < \frac{\delta}{s^+ +1}$ for $t \geq M$. Therefore we have the following estimates: By splitting the domain depending upon the size of $|\nabla u_n|$, we get
	\begin{equation}\label{lower:est-7-sub}
		\begin{aligned}
			&\bigg|\int_{\Omega} \frac{b(x) s(x)}{q(x) \rho} |\nabla u_n|^{q(x)+1} \frac{\log^{s(x)-1 }(1+ |\nabla u_n|)}{1+ |\nabla u_n|}  \,\mathrm{d}x \bigg|   \\
			& \leq C(\nu) + \nu \int_{\Omega} b(x) |\nabla u_n|^{q(x)} \log^{s(x)}(1+ |\nabla u_n|) \,\mathrm{d}x.
		\end{aligned}
	\end{equation}
	We choose $\delta, \nu >0$ small enough such that
	\begin{align*}
		\delta \leq \frac{1}{4} \max\limits_{x \in \operatorname{supp}(b)} \max\left\{\frac{1}{q(x)} -\frac{1}{\rho} -\nu,  \frac{1}{\rho} - \frac{1}{q^\ast(x)} \right\}.
	\end{align*}
	By using the estimates in \eqref{lower:est-7-sub}, Propositions \ref{pro:norm-mod:relation} and \ref{imp:embedding}, we obtain
	\begin{equation}\label{lower:est-10-sub}
		\begin{aligned}
			\mathcal{I}_2(u_n) & \geq \delta \int_{\Omega} b(x) |\nabla u_n|^{q(x)} \log^{s(x)}(1+ |\nabla u_n|) \,\mathrm{d}x\\
			&\quad + \delta \int_{\Omega} \left(b(x) \log^{s(x)}(1+ |u_n|) \right)^\frac{q^\ast(x)}{q(x)} |u_n|^{q^\ast(x)} \,\mathrm{d}x \\
			& \quad - \lambda C_0 \int_{\Omega} \left(b(x) \log^{s_\star(x)}(1+ |u_n|) \right)^\frac{q_\star(x)}{q(x)} |u_n|^{q_\star(x)} \,\mathrm{d}x - C(M, \delta).
		\end{aligned}
	\end{equation}
	Inserting the estimates \eqref{lower:est-5-sub} and \eqref{lower:est-10-sub} in \eqref{PS-bdd-1-sub} and using \eqref{cond:AbstractEmbedding} with $\lambda < \lambda_0:=\frac{\delta}{CC_0} $, we deduce
	\begin{equation}\label{lower:est-11-sub}
		\begin{aligned}
			c+ c_\ast(\delta) & \geq  \delta \int_{\Omega} \mathcal{S}(x, |\nabla u_n|) \,\mathrm{d}x + \delta \int_{\Omega} \mathcal{S}^\ast(x, |u_n|) \,\mathrm{d}x - \lambda C_0 \int_{\Omega} \mathcal{S}_\star(x, |u_n|) \,\mathrm{d}x \\
			& \geq \delta \int_{\Omega} \mathcal{S}(x, |\nabla u_n|) \,\mathrm{d}x + (\delta - \lambda C C_0) \int_{\Omega} \mathcal{S}^\ast(x, |u_n|) \,\mathrm{d}x - \lambda C_0 \int_{\Omega} h(x) \,\mathrm{d}x  \\
			& \geq \delta \int_{\Omega} \mathcal{S}(x, |\nabla u_n|) \,\mathrm{d}x - \frac{\delta}{C}  \int_{\Omega} h(x) \,\mathrm{d}x.
		\end{aligned}
	\end{equation}
	This shows the assertion of the lemma.
\end{proof}

\begin{lemma}
	Let \eqref{main:assump}, \eqref{main:assump-1}, \eqref{main:assump-1-2}, \eqref{main:assump-4} and \eqref{sandwich:cond-2-sub} be satisfied. Then the energy functional $\mathcal{E}_\lambda$ satisfy the \textnormal{(PS)$_c$} condition for $c < c^{\ast\ast}$ for all $\lambda \in (0, \lambda_0)$, where $\lambda_0$ is given in Theorem \ref{PS:bounded-sub}.
\end{lemma}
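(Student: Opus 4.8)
The plan is to mirror the proof of Lemma \ref{PS-cond-super}, interchanging the roles of the critical term $\mathcal{E}_2$ and the subcritical term $\mathcal{E}_3$: in the present sublinear regime $\mathcal{E}_3$ sits on the ``good'' side and is disposed of by compactness, while the concentration still originates from the critical functional $\mathcal{E}_2$. Concretely, let $\{u_n\}_{n\in\mathbb{N}}\subset W_0^{1,\mathcal{S}}(\Omega)$ satisfy \eqref{PS-cond-sub} with $c<c^{\ast\ast}$ and $\lambda\in(0,\lambda_0)$. By Lemma \ref{PS:bounded-sub} the sequence is bounded, so after passing to a subsequence Theorem \ref{concentration:compactness} provides a weak limit $u$, an at most countable set $I$, points $\{x_i\}_{i\in I}\subset\Omega$ and positive numbers $\Theta_i,\theta_i$ with $\mathcal{S}^\ast(\cdot,u_n)\rightharpoonup\mathcal{S}^\ast(\cdot,u)+\sum_{i\in I}\Theta_i\delta_{x_i}$, $\mathcal{S}(\cdot,|\nabla u_n|)\rightharpoonup\theta\ge\mathcal{S}(\cdot,|\nabla u|)+\sum_{i\in I}\theta_i\delta_{x_i}$ weakly-$\ast$ in the sense of measures, together with the reverse inequality
\[
\min\{\Theta_j^\frac{1}{\mathtt{n}_\varepsilon^\ast(x_j)},\Theta_j^\frac{1}{\mathtt{m}_0^\ast(x_j)}\}\le{\bf C}^\ast\max\{\theta_j^\frac{1}{\mathtt{m}_-(x_j)},\theta_j^\frac{1}{\mathtt{n}_\varepsilon(x_j)}\}\quad\text{for }j\in I.
\]

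The heart of the argument is to prove $I=\emptyset$. Assuming $j\in I$, I would insert $\phi_{\gamma,j}u_n\in W_0^{1,\mathcal{S}}(\Omega)$ into the second relation in \eqref{PS-cond-sub}, where $\phi_{\gamma,j}$ is the cut-off around $x_j$ at scale $\gamma$ used in the proof of Theorem \ref{concentration:compactness}. Bounding the leading term from below by $\min\{1,r(q^+)^{-1}\}\int_\Omega\mathcal{S}(x,|\nabla u_n|)\phi_{\gamma,j}\,\mathrm{d}x$ via Lemma \ref{upp-low:est}\textnormal{(i)}, controlling the mixed terms $\int a|\nabla u_n|^{p(x)-2}\nabla u_n\cdot\nabla\phi_{\gamma,j}\,u_n\,\mathrm{d}x$ and their logarithmic counterpart by Young's inequality and Lemma \ref{Le:YoungIneqLog}, using $u_n\to u$ in $L^{\mathcal{S}}(\Omega)$ (compact embedding, Proposition \ref{imp:embedding}) to pass $\int\mathcal{S}(x,|\nabla\phi_{\gamma,j}u_n|)\,\mathrm{d}x$ to the limit, and invoking Vitali's theorem for $\langle\mathcal{J}_3(u_n),\phi_{\gamma,j}u_n\rangle_{\mathcal{S}}$ --- which is now immediate since $W_0^{1,\mathcal{S}}(\Omega)$ is compactly embedded into $L^{\mathcal{S}_\star}(\Omega)$ by \eqref{cond:AbstractEmbedding} and Proposition \ref{imp:embedding} --- I would let $n\to\infty$ and then $\gamma\to0^+$ to obtain $\min\{1,r(q^+)^{-1}\}\theta_j\le C_2\Theta_j$, i.e. $C_\sharp\theta_j\le\Theta_j$ with $C_2=\|1+|s|q^\ast q^{-1}\|_\infty$ and $C_\sharp=\min\{1,r(q^+)^{-1}\}C_2^{-1}$. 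Since by \eqref{main:assump-1} one may pick $\varepsilon>0$ small so that $\zeta:=\min_{\overline{\Omega}}[\min\{\mathtt{n}_\varepsilon^\ast,\mathtt{m}_0^\ast\}-\max\{\mathtt{m}_-,\mathtt{n}_\varepsilon\}]>0$, substituting $C_\sharp\theta_j\le\Theta_j$ into the reverse inequality yields a uniform lower bound $\theta_j\ge C_\sharp^{\ell_1}({\bf C}^\ast)^{\ell_2}$ with exponents $\ell_1,\ell_2$ depending only on $\zeta,p,q$. Combined with the lower estimate \eqref{lower:est-11-sub} from Lemma \ref{PS:bounded-sub}, which bounds $\sup_n\varrho_{\mathcal{S}}(|\nabla u_n|)$ and hence $\theta(\overline{\Omega})$ in terms of $c$, this gives $c\ge\delta\theta_j-c_\ast\ge c^{\ast\ast}$, contradicting $c<c^{\ast\ast}$.

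Hence $I=\emptyset$, so $\mathcal{S}^\ast(\cdot,u_n)\rightharpoonup\mathcal{S}^\ast(\cdot,u)$, and the Br\'ezis--Lieb lemma (Lemma \ref{BL-lemma}) together with Proposition \ref{pro:norm-mod:relation} gives $u_n\to u$ in $L^{\mathcal{S}^\ast}(\Omega)$, hence also in $L^{\mathcal{S}_\star}(\Omega)$. Testing \eqref{PS-cond-sub} with $u_n-u$ and writing $\langle\mathcal{J}_1(u_n),u_n-u\rangle_{\mathcal{S}}=\langle\mathcal{J}(u_n),u_n-u\rangle_{\mathcal{S}}+\langle\mathcal{J}_2(u_n),u_n-u\rangle_{\mathcal{S}}+\lambda\langle\mathcal{J}_3(u_n),u_n-u\rangle_{\mathcal{S}}$, the last two terms tend to $0$ by the H\"older inequality in Musielak-Orlicz spaces (Proposition \ref{holder:ineq}), the conjugate modular relation (Proposition \ref{modular-conjugate:relation}) and Proposition \ref{Prop:AbstractEmbedding}, exactly as in \eqref{compactness:est-4}--\eqref{compactness:est-5}, while the first term tends to $0$ by boundedness of $\{u_n\}$ and \eqref{PS-cond-sub}. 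Therefore $\limsup_{n\to\infty}\langle\mathcal{J}_1(u_n),u_n-u\rangle_{\mathcal{S}}\le0$, and the $\textnormal{(S}_+\textnormal{)}$-property of $\mathcal{J}_1$ (Theorem \ref{Th:PropertiesOperator}) forces $u_n\to u$ in $W_0^{1,\mathcal{S}}(\Omega)$.

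The main obstacle is the concentration analysis at the atoms $x_j$: tracking the logarithmic perturbation through the double limit $n\to\infty$, $\gamma\to0^+$ in the localized test-function identity so that the inequality $C_\sharp\theta_j\le\Theta_j$ emerges with a constant independent of $j$, and then combining it with the non-homogeneous, variable-exponent reverse inequality of Theorem \ref{concentration:compactness} to certify that the resulting threshold $c^{\ast\ast}$ is strictly positive. Everything else is a careful but essentially routine transcription of the superlinear argument, with the simplification that here the subcritical term behaves compactly rather than obstructively.
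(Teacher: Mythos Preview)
Your proposal is correct and follows essentially the same approach as the paper's own proof: both mirror Lemma \ref{PS-cond-super}, localize with $\phi_{\gamma,j}u_n$, deduce $C_\sharp\theta_j\le\Theta_j$ (now with $\Lambda=1$), combine with the reverse inequality of Theorem \ref{concentration:compactness} and with \eqref{lower:est-11-sub} to force $c\ge c^{\ast\ast}$, and finish via Br\'ezis--Lieb and the $\textnormal{(S}_+\textnormal{)}$-property. The only cosmetic difference is that the paper keeps the extra term $-\lambda_0 C_0\|h\|_{L^1(\Omega)}$ from \eqref{lower:est-11-sub} visible in the definition of $c^{\ast\ast}$, which you tacitly absorb into $c_\ast$.
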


\begin{proof}
	Let $\{u_n\}_{n \in \mathbb{N}}$ be a Palais-Smale sequence, that is, \eqref{PS-cond} holds. Taking Lemma \ref{PS:bounded} and Theorem \ref{concentration:compactness} into account, we can find a weakly convergent subsequence satisfying \eqref{weak:conv}.
	We claim that $I = \emptyset$. Assume we can find $j \in I$. By following the same arguments as in Lemma \ref{PS-cond-super}, we obtain
	\begin{equation}\label{compactness:est-1-sub}
		C_\sharp^{\ell_1} (C^{\ast})^{\ell_2} \leq \theta_j  \quad \text{with }  \ell_1 \in \{c_1, c_2\}  \text{ and }  \ell_2 \in \{c_3, c_4\}.
	\end{equation}
	Now, from \eqref{lower:est-11-sub}  and \eqref{compactness:est-1-sub}, it follows that
	\begin{align*}
		c \geq \delta \theta_j - \lambda_0 C_0 \|h\|_{L^1(\Omega)}- c_\ast \geq \delta C_\sharp^{\ell_1} (C^{\ast})^{\ell_2} - \lambda_0 C_0 \|h\|_{L^1(\Omega)} - c_\ast :=c^{\ast \ast},
	\end{align*}
	which is a contradiction. Therefore $I = \emptyset$ and so, from \eqref{weak:conv}, Lemma \ref{BL-lemma} and Proposition \ref{pro:norm-mod:relation}, we conclude that
	\begin{align*}
		u_n \to u  \quad\text{in }  L^{\mathcal{S}^\ast}(\Omega).
	\end{align*}
	Using $u_n-u$ as a test function in \eqref{PS-cond} gives
	\begin{equation}\label{compactness:est-7_new}
		\langle \mathcal{J}_1(u_n), u_n - u \rangle_{\mathcal{S}} = \langle \mathcal{J}(u_n), u_n - u \rangle_{\mathcal{S}} + \langle \mathcal{J}_2(u_n), u_n- u \rangle_{\mathcal{S}} + \lambda \langle \mathcal{J}_3(u_n), u_n- u\rangle_{\mathcal{S}}.
	\end{equation}
	From \eqref{main:assump-1} and \eqref{main:assump-4} we conclude that
	\begin{align*}
		q_\star(x) + s_\star(x) \frac{q_\star(x)}{q(x)} >1 \quad \text{and} \quad q_\ast(x) + s(x) \frac{q^\ast(x)}{q(x)} > 1.
	\end{align*}
	From this along with H\"older's inequality as in Proposition \ref{holder:ineq}, one has
	\begin{align*}
		& \left| \langle \mathcal{J}_2(u_n), u_n- u \rangle_{\mathcal{S}} \right| \leq \int_{\Omega} \left(a(x)\right)^\frac{p^\ast(x)}{p(x)} |u_n|^{p^\ast(x)-1} |u_n-u| \,\mathrm{d}x   \\
		& \quad + \int_{\Omega} \left(b(x) \log^{s(x)}(1+ |u_n|) \right)^\frac{q^\ast(x)}{q(x)} |u_n|^{q^\ast(x)-1} \left(1+ \frac{|s(x)| q^\ast(x)}{q(x)} \frac{|u_n|}{\log(1+ |u_n|)(1+ |u_n|)}\right) |u_n-u| \,\mathrm{d}x \\
		& \leq \int_{\Omega} \left(a(x)\right)^\frac{p^\ast(x)}{p(x)} |u_n|^{p^\ast(x)-1} |u_n-u| \,\mathrm{d}x \\
		& \quad + \left\|1+ \frac{|s(\cdot)| q^\ast(\cdot)}{q(\cdot)}\right\|_{\infty, \Omega} \int_{\Omega} \left(b(x) \log^{s(x)}(1+ |u_n|) \right)^\frac{q^\ast(x)}{q(x)} |u_n|^{q^\ast(x)-1} |u_n-u| \,\mathrm{d}x  \\
		& \leq C \int_{\Omega} \frac{\mathcal{S}^\ast(x, |u_n|)}{|u_n|} |u_n-u| \,\mathrm{d}x \leq C \left\|\frac{\mathcal{S}^\ast(x, |u_n|)}{|u_n|}\right\|_{(S^\ast)^\sharp} \|u_n-u\|_{\mathcal{S}^\ast}
	\end{align*}
	with $(S^\ast)^\sharp$ being the Sobolev conjugate function of $\mathcal{S}^\ast$ as defined in Definition \ref{def:conjugate}. Now, applying the conjugate modular relation in Proposition \ref{modular-conjugate:relation} and Proposition \ref{Prop:AbstractEmbedding} yields
	\begin{equation}\label{compactness:est-4_new}
		\begin{aligned}
			\left| \langle \mathcal{J}_2(u_n), u_n- u \rangle_{\mathcal{S}} \right| & \leq C \left\|\frac{\mathcal{S}^\ast(x, |u_n|)}{|u_n|}\right\|_{(S^\ast)^\sharp} \|u_n-u\|_{\mathcal{S}^\ast} \\
			& \leq C_1 \|u_n\|_{S^\ast} \|u_n-u\|_{S^\ast} \to 0  \text{as}  n \to \infty.
		\end{aligned}
	\end{equation}
	With the same arguments as above, it is concluded that
	\begin{equation}\label{compactness:est-5_new}
		\begin{aligned}
			\left| \langle \mathcal{J}_3(u_n), u_n- u \rangle_{\mathcal{S}} \right|  \leq C_1 \|u_n\|_{S^\ast_\star} \|u_n-u\|_{S^\ast_\star} \to 0  \quad\text{as }  n \to \infty.
		\end{aligned}
	\end{equation}
	Furthermore, by the boundedness of $\{u_n\}_{n \in \mathbb{N}}$ in $W_0^{1, \mathcal{S}}(\Omega)$ along with \eqref{PS-cond}, we obtain
	\begin{equation}\label{compactness:est-6_new}
		\lim_{n \to \infty} \left| \langle \mathcal{J}(u_n), u_n - u \rangle_{\mathcal{S}} \right| =0.
	\end{equation}
	Collecting the estimates \eqref{compactness:est-4_new}, \eqref{compactness:est-5_new} and \eqref{compactness:est-6_new} in \eqref{compactness:est-7_new}, we have
	\begin{align*}
		\langle \mathcal{J}_1(u_n), u_n - u \rangle_{\mathcal{S}}  \to 0  \quad\text{as }  n \to \infty.
	\end{align*}
	Then, using Theorem \ref{Th:PropertiesOperator} gives $u_n \to u$ in $W_0^{1, \mathcal{S}}(\Omega)$ due to the \textnormal{(S$_+$)} property of the operator $\mathcal{J}_1$.
\end{proof}

Let $u \in W_0^{1, \mathcal{S}}(\Omega)$ be such that $\|\nabla u\|_{\mathcal{S}} \leq 1$. By Proposition \ref{pro:norm-mod:relation}, we have
\begin{equation}\label{trunc:est-1}
	\begin{aligned}
		\mathcal{E}_\lambda(u) & \geq \frac{1}{\max\{p^+, q^+\}} \|\nabla u\|^{\alpha_1}_{\mathcal{S}} - \frac{1}{\max\{(p^\ast)^-, (q^\ast)^-\}} \min\{ \|u\|^{\alpha_2^{(1)}}_{\mathcal{S}^\ast}, \|u\|^{\alpha_2^{(2)}}_{\mathcal{S}^\ast}  \} \\
		& \quad - \frac{\lambda}{\max\{(p^\ast_\star)^-, (q^\ast_\star)^-\}} \min\{\|u\|^{\alpha_3^{(1)}}_{\mathcal{S}_\star}, \|u\|^{\alpha_3^{(2)}}_{\mathcal{S}_\star} \},
	\end{aligned}
\end{equation}
where $\alpha_1 = \max\{p^+, (q + \lceil s \rceil)^+\},$ and $\alpha_2^{(i)}$, $\alpha_3^{(i)}$ for $i=1,2$ are defined in \eqref{notation-1} and \eqref{notation-2}. Now, by using the Sobolev embedding in Proposition \ref{imp:embedding}, there exists a constant $C>0$ such that
\begin{align*}
	\max\{\|u\|_{\mathcal{S}_\star}, \|u\|_{\mathcal{S}^\ast} \} \leq C \|\nabla u\|_{\mathcal{S}} \quad \text{for all }  u \in W_0^{1, \mathcal{S}}(\Omega).
\end{align*}
Using the above inequality in \eqref{trunc:est-1} for $\|u\| \leq 1$, we obtain
\begin{equation}\label{trunc:est-0}
	\begin{aligned}
		\mathcal{E}_\lambda(u)
		&\geq \frac{1}{\max\{p^+, q^+\}} \|\nabla u\|^{\alpha_1}_{\mathcal{S}} - \frac{C^{\alpha_2}}{\max\{(p^\ast)^-, (p^\ast)^-\}} \|\nabla u\|^{\alpha_2}_{\mathcal{S}} - \frac{\lambda C^{\alpha_3}}{\max\{(p_\star)^-, (q_\star)^-\}} \|\nabla u\|^{\alpha_3}_{\mathcal{S}} \\
		& = C_1 \|\nabla u\|^{\alpha_3}_{\mathcal{S}} \left(C_2 \|\nabla u\|^{\alpha_1-\alpha_3}_{\mathcal{S}} - C_3 \|\nabla u\|^{\alpha_2-\alpha_3}_{\mathcal{S}} - \lambda \right) :=  h_\lambda(\|\nabla u\|_{\mathcal{S}}),
	\end{aligned}
\end{equation}
where $\alpha_2 \in \{\alpha_2^{(1)}, \alpha_2^{(2)}\}$, $\alpha_3 \in \{\alpha_3^{(1)}, \alpha_3^{(2)}\},$
\begin{align*}
	C_1&:= \frac{\max\{(p_\star)^-, (q_\star)^-\}}{C^{\alpha_3}}, \quad C_2:= \frac{C^{\alpha_3}}{\max\{(p_\star)^-, (q_\star)^-\} \max\{p^+, q^+\}},\\
	C_3&:= \frac{C^{\alpha_2+\alpha_3}}{\max\{(p_\star)^-, (q_\star)^-\} \max\{(p^\ast)^-, (p^\ast)^-\}}.
\end{align*}
In order to study the behavior of $h_\lambda$, we define
\begin{align*}
	g(t) = C_2 t^{\alpha_1 - \alpha_3} - C_3 t^{\alpha_2 - \alpha_3}.
\end{align*}
It is easy to see that the function $g$ has a maximum at $t_\ast = \left[\frac{(\alpha_1-\alpha_3) C_2}{(\alpha_2 - \alpha_3)C_3}\right]^{\frac{1}{(\alpha_2 - \alpha_3)}},$ and the maximum value is
\begin{align*}
	\lambda_1 := g(t_\ast) = C_2^\frac{\alpha_2-\alpha_3}{\alpha_2-\alpha_1} C_3^\frac{\alpha_3-\alpha_2}{\alpha_2-\alpha_1} \left(\frac{\alpha_1-\alpha_3}{\alpha_2-\alpha_3}\right)^\frac{\alpha_1-\alpha_3}{\alpha_2-\alpha_1} \left(\frac{\alpha_2-\alpha_1}{\alpha_2-\alpha_3}\right) >0 \quad \text{if }  \alpha_3< \alpha_1 <\alpha_2.
\end{align*}
Then, for any $\lambda \in (0, \lambda_1),$ the function $h_\lambda$ has clearly two roots $R_1(\lambda)$ and $R_2(\lambda)$ with $0< R_1(\lambda) < t_\ast  < R_2(\lambda)$,
\begin{equation}\label{trunc:est-2}
	R_1(\lambda) \to 0  \quad\text{as }  \lambda \to 0 \quad \text{ and } \quad  h_\lambda (t)
	\begin{cases}
		>0  &\text{if }  t \in (R_1(\lambda), R_2(\lambda)), \\
		<0  &\text{if }  t \in  (0, R_1(\lambda)) \cup (R_2(\lambda), \infty).
	\end{cases}
\end{equation}
By \eqref{trunc:est-2}, we find $\lambda_2 >0$ such that for $\lambda \in (0, \lambda_2)$
\begin{align*}
	R_1(\lambda)^{\min\{p^-, (q+ \lfloor s \rfloor)^-\}} < \min\left\{\frac{1}{2 \max\{p^+, q^+\}\}}, \frac{t_\ast^{\max\{p^+, (q+ \lfloor s \rfloor)^+\}}}{2 \max\{p^+, q^+\}}\right\}.
\end{align*}
Set
\begin{equation}\label{opt-lamb}
	\lambda_\ast := \min\{\lambda_0, \lambda_1, \lambda_2\}.
\end{equation}
For each $\lambda \in (0, \lambda_\ast)$, we define the truncated functional $\tilde{\mathcal{E}}_\lambda\colon W_0^{1, \mathcal{S}}(\Omega) \to \mathbb{R}$ such that
\begin{align*}
	\tilde{\mathcal{E}}_\lambda(u) = \mathcal{E}_1(u) - \tau\left(\mathcal{E}_1(u)\right) \left[\mathcal{E}_2(u) + \lambda \mathcal{E}_3(u) \right], \quad u \in W_0^{1, \mathcal{S}}(\Omega),
\end{align*}
where $\tau \in C_c^\infty(\mathbb{R})$ such that $0 \leq \tau \leq 1$,
\begin{align*}
	\tau(t) =
	\begin{cases}
		1   & \text{if }  |t| \leq {R_1(\lambda)^{\min\{p^-, (q+ \lfloor s \rfloor)^-\}}}, \\
		0   & \text{if }  |t| \geq 2 R_1(\lambda)^{\min\{p^-, (q+ \lfloor s \rfloor)^-\}}.
	\end{cases}
\end{align*}
It is easy to see that
$\tilde{\mathcal{E}}_\lambda \in C^1(W_0^{1,\mathcal{S}}(\Omega), \mathbb{R})$ such that $\mathcal{E}_\lambda(u) \leq \tilde{\mathcal{E}}_\lambda (u)$ for $u \in W_0^{1,\mathcal{S}}(\Omega),$
\begin{equation}\label{trunc:est-3}
	\tilde{\mathcal{E}}_\lambda (u) = \mathcal{E}_1(u) \quad \text{for }  u \in W_0^{1,\mathcal{S}}(\Omega)  \text{ with }  \mathcal{E}_1(u) \geq 2 { R_1(\lambda)^{\min\{p^-, (q+ \lfloor s \rfloor)^-\}}},
\end{equation}
and
\begin{align*}
	\tilde{\mathcal{E}}_\lambda (u) = \mathcal{E}_\lambda(u) \quad \text{for }  u \in W_0^{1,\mathcal{S}}(\Omega)  \text{ with }  \mathcal{E}_1(u) \leq { R_1(\lambda)^{\min\{p^-, (q+ \lfloor s \rfloor)^-\}}}.
\end{align*}

\begin{lemma}\label{lem:trunc_reduced}
	Let $\lambda \in (0, \lambda_\ast)$.
	Then, for $u \in W_0^{1, \mathcal{S}}(\Omega)$ with $\tilde{\mathcal{E}}_\lambda(u) <0$,
	\begin{align*}
		\tilde{\mathcal{E}}_\lambda(u) = \mathcal{E}_\lambda(u) \quad \text{ and } \quad \tilde{\mathcal{E}}_\lambda'(u) = \mathcal{E}_\lambda'(u).
	\end{align*}
\end{lemma}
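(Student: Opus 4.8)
The plan is to show that the cut-off function $\tau$ is identically $1$ on a whole neighbourhood of any such $u$, which forces $\tilde{\mathcal{E}}_\lambda$ to coincide with $\mathcal{E}_\lambda$ there, and then both equalities (of values and of derivatives) follow at once. Concretely, write $m_0 := \min\{p^-,(q+\lfloor s \rfloor)^-\}$ and recall that $\tau \equiv 1$ on $\{|t| \le R_1(\lambda)^{m_0}\}$ and $\tau \equiv 0$ on $\{|t| \ge 2R_1(\lambda)^{m_0}\}$. If one can prove that $\tilde{\mathcal{E}}_\lambda(u) < 0$ implies the \emph{strict} inequality $\mathcal{E}_1(u) < R_1(\lambda)^{m_0}$, then, since $\mathcal{E}_1$ is continuous (indeed $C^1$, by Proposition \ref{well-defined-1}), the same inequality persists on an open neighbourhood $V$ of $u$; on $V$ one has $\tau(\mathcal{E}_1(\cdot)) \equiv 1$ and hence $\tilde{\mathcal{E}}_\lambda = \mathcal{E}_1 - \mathcal{E}_2 - \lambda\mathcal{E}_3 = \mathcal{E}_\lambda$, so that $\tilde{\mathcal{E}}_\lambda(u) = \mathcal{E}_\lambda(u)$ and, by differentiating on $V$, $\tilde{\mathcal{E}}_\lambda'(u) = \mathcal{E}_\lambda'(u)$.

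To establish that implication I would argue by contraposition: assuming $\mathcal{E}_1(u) \ge R_1(\lambda)^{m_0}$, I show $\tilde{\mathcal{E}}_\lambda(u) \ge 0$. If $\mathcal{E}_1(u) \ge 2R_1(\lambda)^{m_0}$ this is immediate, since then $\tau(\mathcal{E}_1(u)) = 0$ and so $\tilde{\mathcal{E}}_\lambda(u) = \mathcal{E}_1(u) \ge 2R_1(\lambda)^{m_0} > 0$. In the remaining regime $R_1(\lambda)^{m_0} \le \mathcal{E}_1(u) < 2R_1(\lambda)^{m_0}$, I use that $\mathcal{E}_2(u), \mathcal{E}_3(u) \ge 0$ and $0 \le \tau \le 1$, which give the crude bound $\tilde{\mathcal{E}}_\lambda(u) \ge \mathcal{E}_1(u) - \mathcal{E}_2(u) - \lambda\mathcal{E}_3(u) = \mathcal{E}_\lambda(u)$; it therefore suffices to show $\mathcal{E}_\lambda(u) \ge 0$ in this case.

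For that last point I would pass through the modular: from $\tfrac{1}{\max\{p^+,q^+\}}\varrho_{\mathcal{S}}(|\nabla u|) \le \mathcal{E}_1(u) \le \tfrac{1}{\min\{p^-,q^-\}}\varrho_{\mathcal{S}}(|\nabla u|)$, together with Proposition \ref{pro:norm-mod:relation} and the choice of $\lambda_\ast$ in \eqref{opt-lamb}, one first deduces $\|\nabla u\|_{\mathcal{S}} < 1$ (otherwise $\varrho_{\mathcal{S}}(|\nabla u|) \ge 1$ would force $\mathcal{E}_1(u) \ge \tfrac{1}{\max\{p^+,q^+\}} > 2R_1(\lambda)^{m_0}$, a contradiction); then, using $\min\{p^-,q^-\} > 1$ and again the defining inequalities for $R_1(\lambda)$ and $\lambda_\ast$, one shows that $\|\nabla u\|_{\mathcal{S}}$ lies in the open interval $(R_1(\lambda), R_2(\lambda))$ on which $h_\lambda$ is strictly positive by \eqref{trunc:est-2}. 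Since $\|\nabla u\|_{\mathcal{S}} \le 1$, the lower estimate \eqref{trunc:est-0} applies and yields $\mathcal{E}_\lambda(u) \ge h_\lambda(\|\nabla u\|_{\mathcal{S}}) > 0$, the desired contradiction, which completes the proof of the implication and hence of the lemma.

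I expect the only genuinely delicate step to be locating $\|\nabla u\|_{\mathcal{S}}$ strictly inside $(R_1(\lambda), R_2(\lambda))$: this is where the different growth powers $\min\{p^-,(q+\lfloor s \rfloor)^-\}$ and $\max\{p^+,(q+\lceil s \rceil)^+\}$ that appear when converting between the modular $\varrho_{\mathcal{S}}$ and the norm $\|\cdot\|_{\mathcal{S}}$ (and the split according to whether $\|\nabla u\|_{\mathcal{S}}$ is $\le 1$ or $> 1$) have to be reconciled with the constants built into $\lambda_\ast$ via \eqref{opt-lamb}; the two easy sub-cases and the neighbourhood/differentiation argument are then routine bookkeeping.
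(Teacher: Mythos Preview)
Your proposal is correct and follows essentially the same route as the paper. Both arguments hinge on the same three ingredients: (i) ruling out $\|\nabla u\|_{\mathcal{S}}\ge 1$ via the first constraint in the definition of $\lambda_\ast$, (ii) using the sign structure of $h_\lambda$ from \eqref{trunc:est-2} together with the second constraint on $R_1(\lambda)$ (involving $t_\ast$) to exclude the regime $\|\nabla u\|_{\mathcal{S}}>R_2(\lambda)$, and (iii) concluding $\mathcal{E}_1(u)\le R_1(\lambda)^{m_0}$ from $\|\nabla u\|_{\mathcal{S}}<R_1(\lambda)$ and $\min\{p^-,q^-\}>1$. The only organisational difference is that the paper argues directly from $\tilde{\mathcal E}_\lambda(u)<0$ (using $\mathcal E_\lambda\le\tilde{\mathcal E}_\lambda$ to get $h_\lambda(\|\nabla u\|_{\mathcal S})<0$, hence $\|\nabla u\|_{\mathcal S}<R_1(\lambda)$), whereas you run the equivalent contrapositive; your explicit neighbourhood argument for the equality of derivatives is a useful addition that the paper leaves implicit.
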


\begin{proof}
	Let $\lambda \in (0, \lambda_\ast)$ and $u \in W_0^{1, \mathcal{S}}(\Omega)$ with $\tilde{\mathcal{E}}_\lambda(u) <0$. First we claim that $\|\nabla u\|_{\mathcal{S}} < 1$. Suppose that $\|\nabla u\| \geq 1$, then by Proposition \ref{pro:norm-mod:relation} we have $\int_{\Omega} \mathcal{S}(x, \nabla u) \,\mathrm{d}x \geq 1$. Thus,
	\begin{align*}
		\mathcal{E}_1(u) \geq \frac{1}{\max\{p^+, q^+\}} \int_{\Omega} \mathcal{S}(x, \nabla u) \,\mathrm{d}x \geq \frac{1}{\max\{p^+, q^+\}} \geq 2 { R_1(\lambda)^{\min\{p^-, (q+ \lfloor s \rfloor)^-\}}}.
	\end{align*}
	The above inequality gives a contradiction as a virtue of \eqref{trunc:est-3} and $\tilde{\mathcal{E}}_\lambda(u) <0$. Furthermore, \eqref{trunc:est-0} implies $h_\lambda(\|\nabla u\|_{\mathcal{S}}) <0$, i.e., either $\|\nabla u\|_{\mathcal{S}} < R_1(\lambda)$ or $\|\nabla u\|_{\mathcal{S}} > R_2(\lambda) > t_\ast$. The latter inequality with Proposition \ref{pro:norm-mod:relation} and the upper bound of $R_1(\lambda)$ gives
	\begin{align*}
		\mathcal{E}_1(u)
		&\geq \frac{1}{\max\{p^+, q^+\}} \int_{\Omega} \mathcal{S}(x, \nabla u) \,\mathrm{d}x  \geq \frac{\|\nabla u\|_{\mathcal{S}}^{\max\{p^+, (q+ \lfloor s \rfloor)^+\}}}{\max\{p^+, q^+\}} \\
		& \geq {\frac{t_\ast^{\max\{p^+, (q+ \lfloor s \rfloor)^+\}}}{\max\{p^+, q^+\}}} \geq 2 R_1(\lambda)^{\min\{p^-, (q+ \lfloor s \rfloor)^-\}}.
	\end{align*}
	This is a contradiction to $\tilde{\mathcal{E}}_\lambda(u) <0$ because of \eqref{trunc:est-3}.  Thus, it must hold $\|\nabla u\|_{\mathcal{S}} < R_1(\lambda)$ and therefore by Proposition \ref{pro:norm-mod:relation}
	\begin{align*}
		\mathcal{E}_1(u) \leq \frac{1}{\min\{p^-, q^-\}} \int_{\Omega} \mathcal{S}(x, \nabla u) \,\mathrm{d}x \leq \frac{\|\nabla u\|_{\mathcal{S}}^{\min\{p^-, (q+ \lfloor s \rfloor)^-\}}}{\min\{p^-, q^-\}} \leq R_1(\lambda)^{\min\{p^-, (q+ \lfloor s \rfloor)^-\}}.
	\end{align*}
\end{proof}

In order to prove our multiplicity results, we will use some topological results introduced by Krasnosel'skii \cite{Krasnoselskii-1964}. To this end, let $X$ be a Banach space and let $\Sigma$ be the class of all closed subsets $A \subset X \setminus \{0\}$ that are symmetric with respect to the origin, that is, $u \in A$ implies $-u \in A$.
\begin{definition}
	Let $A \in \Sigma$. The Krasnosel'skii genus $\gamma(A)$ of $A$ is defined as being the least positive integer $k$ such that there is an odd mapping $\phi \in C(A, \mathbb{R}^k)$ such that $\phi(x) \neq 0$ for any $x \in A$. If $k$ does not exist, we have $\gamma(A) = \infty$. Furthermore, we set $\gamma(\emptyset) =0$.
\end{definition}

The following proposition states the main properties of the Krasnosel'skii genus, see Rabinowitz \cite{Rabinowitz-1986}.
\begin{proposition}\label{pro:Krasnoselskii}
	Let $A, B \in \Sigma$. Then, the following hold:
	\begin{enumerate}
		\item[\textnormal{(i)}]
			If there exists an odd continuous mapping from $A$ to $B$, then $\gamma(A) \leq \gamma(B)$.
		\item[\textnormal{(ii)}]
			If there is an odd homomorphism from $A$ to $B$, then $\gamma(A) = \gamma(B)$.
		\item[\textnormal{(iii)}]
			If $\gamma(B) < \infty,$ then $\gamma(\overline{A \setminus B}) \geq \gamma(A) - \gamma(B)$.
		\item[\textnormal{(iv)}]
			The $k$-dimensional sphere $S^k$ has a genus of $k+1$ by the Borsuk-Ulam Theorem.
		\item[\textnormal{(v)}]
			If $A$ is compact, then $\gamma(A) < \infty$ and there exists $\delta >0$ such that $N_\delta(A) \subset \Sigma$ and $\gamma(N_\delta(A)) = \gamma(A),$ where $N_\delta(A) = \{x \in X\colon  \operatorname{dist}(x, A) \leq \delta\}$.
	\end{enumerate}
\end{proposition}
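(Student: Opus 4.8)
The statement collects the classical properties of the Krasnosel'skii genus, and the plan is simply to follow the by now standard line of argument as in Rabinowitz \cite{Rabinowitz-1986}. Throughout, whenever $\gamma(A)$ or $\gamma(B)$ equals $\infty$ the corresponding inequality is vacuous, so I may assume that all genera appearing below are finite integers, and I denote by $\phi_A\colon A\to\R^{\gamma(A)}\setminus\{0\}$ a minimal odd continuous representative associated with $A$, and similarly $\phi_B$ for $B$. For \textnormal{(i)}, given an odd continuous $h\colon A\to B$, the composition $\phi_B\circ h\colon A\to \R^{\gamma(B)}\setminus\{0\}$ is again odd and continuous, so $\gamma(A)\le\gamma(B)$ by the very definition of the genus; part \textnormal{(ii)} is then immediate by applying \textnormal{(i)} to an odd homeomorphism and to its inverse, both of which are odd and continuous.

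For \textnormal{(iii)}, set $C:=\overline{A\setminus B}$ and put $m:=\gamma(C)$, $n:=\gamma(B)$; the goal is to manufacture an odd continuous map $A\to\R^{m+n}\setminus\{0\}$. I would extend $\phi_C$ and $\phi_B$ from the closed sets $C$ and $B$ to continuous maps on all of $X$ via the Tietze extension theorem, and then render each extension odd by the symmetrization $x\mapsto\tfrac12\bigl(\widetilde\phi(x)-\widetilde\phi(-x)\bigr)$, which does not destroy nonvanishing on $C$, respectively on $B$, since on these symmetric sets the symmetrized map coincides with the odd original. Writing $\Phi=(\widetilde{\phi_C},\widetilde{\phi_B})\colon A\to\R^{m+n}$, one checks $\Phi(x)\ne0$ for every $x\in A$: if $x\in B$ the second block equals $\phi_B(x)\ne0$, and if $x\in A\setminus B\subset C$ the first block equals $\phi_C(x)\ne0$. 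Hence $\gamma(A)\le m+n$, which is the claimed inequality $\gamma(\overline{A\setminus B})\ge\gamma(A)-\gamma(B)$.

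For \textnormal{(iv)}, the lower bound $\gamma(S^k)\ge k+1$ is exactly the content of the Borsuk--Ulam theorem, which rules out any odd continuous map $S^k\to\R^k\setminus\{0\}$, while the upper bound $\gamma(S^k)\le k+1$ follows from the identity embedding $S^k\hookrightarrow\R^{k+1}\setminus\{0\}$, which is odd and continuous. For \textnormal{(v)}, compactness of $A$ together with $0\notin A$ allows one, for each $x\in A$, to choose by Hahn--Banach an $f_x\in X^\ast$ with $f_x(x)\ne0$; the open sets $\{f_x\ne 0\}$ cover $A$, so finitely many $f_{x_1},\dots,f_{x_k}$ suffice, and $y\mapsto\bigl(f_{x_1}(y),\dots,f_{x_k}(y)\bigr)$ is an odd continuous map from $A$ into $\R^k\setminus\{0\}$, proving $\gamma(A)<\infty$. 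For the neighbourhood part I would Tietze-extend $\phi_A$ to an odd continuous map on $X$ that is nonzero on the compact set $A$, hence nonzero on $N_\delta(A)$ for $\delta>0$ small; choosing in addition $\delta<\operatorname{dist}(0,A)$ makes $N_\delta(A)$ a closed symmetric set avoiding the origin, so $N_\delta(A)\in\Sigma$, the restriction of the extended map gives $\gamma(N_\delta(A))\le\gamma(A)$, and the inclusion $A\subset N_\delta(A)$ together with \textnormal{(i)} gives the reverse inequality.

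The only genuinely delicate ingredient is the combined use of the Tietze extension theorem and the symmetrization trick in \textnormal{(iii)} and \textnormal{(v)}: one has to be sure that forcing oddness introduces no new zeros on the set where the original map was already nonvanishing, and this is precisely what the computation $\widetilde\phi(x)-\widetilde\phi(-x)=2\phi(x)$ on a symmetric set guarantees. Everything else is a direct unwinding of the definition of the genus, so no essentially new difficulty arises.
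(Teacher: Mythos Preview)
Your proof sketch is correct and follows the classical argument from Rabinowitz \cite{Rabinowitz-1986}. Note, however, that the paper does not actually prove this proposition: it simply states the properties and refers the reader to Rabinowitz for the proof, so your write-up goes well beyond what the paper provides while remaining fully consistent with the cited source.
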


Now, we are going to construct an appropriate mini-max sequence of negative critical values for the functional $\tilde{\mathcal{E}}_\lambda$.

\begin{lemma}\label{lem:genus_lowerbd}
	Let $\lambda \in (0, \lambda_\ast).$ Then, for each $k \in \mathbb{N}$ there exists $\varepsilon >0$ such that
	\begin{align*}
		\gamma(\tilde{\mathcal{E}}_\lambda^{-\varepsilon}) \geq k.
	\end{align*}
	where $\tilde{\mathcal{E}}_\lambda^{-\varepsilon} = \{ u \in W_0^{1, \mathcal{S}}(\Omega) \colon  \tilde{\mathcal{E}}_\lambda(u) \leq - \varepsilon\}$.
\end{lemma}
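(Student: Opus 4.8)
The plan is to exhibit, for each $k\in\mathbb{N}$, a $k$-dimensional symmetric set of functions on which $\tilde{\mathcal{E}}_\lambda$ is bounded above by a strictly negative constant $-\varepsilon$, and then invoke the monotonicity and odd-homeomorphism properties of the Krasnosel'skii genus (Proposition \ref{pro:Krasnoselskii}) to conclude $\gamma(\tilde{\mathcal{E}}_\lambda^{-\varepsilon})\geq k$. Concretely, fix $k\in\mathbb{N}$ and choose a $k$-dimensional subspace $X_k\subseteq W_0^{1,\mathcal{S}}(\Omega)$ (this is possible since the space is infinite-dimensional and separable). On $X_k$ all norms are equivalent, so there is a constant $\eta_k>1$ with $\eta_k^{-1}\|\nabla u\|_{\mathcal{S}}\leq \|u\|_{\mathcal{S}_\star}\leq \eta_k\|\nabla u\|_{\mathcal{S}}$ for all $u\in X_k$, and the unit sphere $S_k:=\{u\in X_k\colon \|\nabla u\|_{\mathcal{S}}=1\}$ is compact.

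The key computation is the following. For $t\in(0,1)$ small and $u\in S_k$, write $w=tu$ so that $\|\nabla w\|_{\mathcal{S}}=t<1$. Using Proposition \ref{pro:norm-mod:relation} and Proposition \ref{Prop:AbstractoneNormModular} to control the three energy pieces by powers of $t$, and the lower bound $\varrho_{\mathcal{S}_\star}(w)\geq \eta_k^{-\alpha_3^{(1)}}$-type estimates coming from the norm equivalence on $X_k$, one gets (for $t$ below some threshold depending on $k$)
\begin{align*}
	\tilde{\mathcal{E}}_\lambda(tu)=\mathcal{E}_\lambda(tu)
	&\leq \frac{t^{\alpha_1}}{\min\{p^-,q^-\}} - \frac{\lambda\, t^{\alpha_3^{(2)}}}{\max\{p_\star^+,q_\star^+\}}\,\varrho_{\mathcal{S}_\star}\!\left(\frac{u}{\|u\|_{\mathcal{S}_\star}}\right)\|u\|_{\mathcal{S}_\star}^{\alpha_3^{(2)}}\\
	&\leq C_1 t^{\alpha_1} - C_2(k)\,\lambda\, t^{\alpha_3^{(2)}},
\end{align*}
where $\alpha_1=\max\{p^+,(q+\lceil s\rceil)^+\}$ and $\alpha_3^{(2)}=\max\{(p_\star)^+,(q_\star+\lceil s_\star\rceil\frac{q_\star}{q})^+\}$ as in \eqref{notation-2}, and one must check via \eqref{trunc:est-3} and the smallness of $t$ that the truncation is inactive, i.e.\ $\mathcal{E}_1(tu)\leq R_1(\lambda)^{\min\{p^-,(q+\lfloor s\rfloor)^-\}}$, so that $\tilde{\mathcal{E}}_\lambda=\mathcal{E}_\lambda$ there. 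Since the sublinearity hypothesis \eqref{sandwich:cond-2-sub} gives $\alpha_3^{(2)}<\alpha_1$, the term $-C_2(k)\lambda t^{\alpha_3^{(2)}}$ dominates $C_1 t^{\alpha_1}$ for $t$ small, so there exist $t_k\in(0,1)$ and $\varepsilon=\varepsilon(k)>0$ with $\tilde{\mathcal{E}}_\lambda(t_k u)\leq -\varepsilon$ for every $u\in S_k$; the uniformity over the compact sphere $S_k$ is what makes $\varepsilon$ well defined.

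With this in hand, the set $t_k S_k=\{t_k u\colon u\in S_k\}$ is a subset of $\tilde{\mathcal{E}}_\lambda^{-\varepsilon}$, it is closed, symmetric, and homeomorphic via an odd homeomorphism to the $(k-1)$-sphere $S^{k-1}\subset\mathbb{R}^k$. By Proposition \ref{pro:Krasnoselskii}(ii) and (iv), $\gamma(t_k S_k)=\gamma(S^{k-1})=k$, and then by the monotonicity property Proposition \ref{pro:Krasnoselskii}(i) applied to the inclusion $t_k S_k\hookrightarrow \tilde{\mathcal{E}}_\lambda^{-\varepsilon}$ we obtain $\gamma(\tilde{\mathcal{E}}_\lambda^{-\varepsilon})\geq k$, which is the claim. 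The main obstacle I anticipate is bookkeeping the truncation: one has to verify that on the chosen small ball the functional $\tilde{\mathcal{E}}_\lambda$ genuinely coincides with $\mathcal{E}_\lambda$ (using the quantitative bounds on $R_1(\lambda)$ recorded just before \eqref{opt-lamb}), rather than sitting in the region where $\tau$ is between $0$ and $1$; once that is pinned down the genus argument is routine. A minor secondary point is ensuring the constants $C_2(k)$ stay positive and bounded below on $S_k$, which follows from compactness of $S_k$ together with the fact that $\|u\|_{\mathcal{S}_\star}>0$ for $u\neq 0$.
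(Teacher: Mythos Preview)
Your approach is essentially the same as the paper's: choose a $k$-dimensional subspace, use norm equivalence on it, show that on a small sphere in that subspace the (truncated) energy is uniformly below some $-\varepsilon$, and then invoke the genus properties. The structure is correct.

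There is, however, a bookkeeping error in your main estimate. For $t<1$ and $\|\nabla u\|_{\mathcal{S}}=1$, Proposition~\ref{pro:norm-mod:relation}(iii) gives
\[
\mathcal{E}_1(tu)\leq \frac{\varrho_{\mathcal{S}}(t\nabla u)}{\min\{p^-,q^-\}}\leq \frac{t^{\min\{p^-,(q+\lfloor s\rfloor)^-\}}}{\min\{p^-,q^-\}},
\]
not $t^{\alpha_1}$ with $\alpha_1=\max\{p^+,(q+\lceil s\rceil)^+\}$. Since $\alpha_1\geq\min\{p^-,(q+\lfloor s\rfloor)^-\}$ and $t<1$, your claimed bound $t^{\alpha_1}$ is strictly smaller than the true upper bound, so the displayed inequality as written is false. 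The fix is harmless: replace $\alpha_1$ by $\min\{p^-,(q+\lfloor s\rfloor)^-\}$ throughout. The comparison you need for the negative term to dominate is then precisely $\alpha_3^{(2)}<\min\{p^-,(q+\lfloor s\rfloor)^-\}$, which is exactly \eqref{sandwich:cond-2-sub}; the paper uses this same pair of exponents. A second minor point: the reference for ``truncation inactive'' should be the line $\tilde{\mathcal{E}}_\lambda=\mathcal{E}_\lambda$ when $\mathcal{E}_1(u)\leq R_1(\lambda)^{\min\{p^-,(q+\lfloor s\rfloor)^-\}}$ rather than \eqref{trunc:est-3}, which handles the complementary region. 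With these corrections your argument matches the paper's.
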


\begin{proof}
	Let $\lambda \in (0, \lambda_\ast)$, $k \in \mathbb{N}$ be fixed and $X_k$ an $k$-dimensional subspace of $W_0^{1, \mathcal{S}}(\Omega)$. Since all norms are equivalent on $X_k$, we find $\delta_k > (R_1(\lambda))^{-\min\{p^-, (q+ \lfloor s \rfloor)^-\}} >1$ such that
	\begin{equation}\label{trunc:est-4}
		\delta_k^{-1 }\|u\|_{\mathcal{S}_\star} \leq \|\nabla u\|_{\mathcal{S}} \leq \delta_k \|u\|_{\mathcal{S}_\star} \quad \text{for all }  u \in X_k.
	\end{equation}
	For any $u \in X_k$ with $\|\nabla u\|_{\mathcal{S}} \leq \delta_k^{-1} <1$, using Proposition \ref{pro:norm-mod:relation} and \eqref{trunc:est-4}, we have
	\begin{align*}
		\tilde{\mathcal{E}}_\lambda(u)
		&\leq \frac{\|\nabla u\|_{\mathcal{S}}^{\min\{p^-, (q+ \lfloor s \rfloor)^-\}}}{\min\{p^-, q^-\}} - \lambda \frac{\|u\|_{\mathcal{S}_\star}^{\max\{p_\star^+, \left(q_\star + \lceil s_\star \rceil \frac{q_\star}{q}\right)^+ \}}}{\max\{p_\star^+, q_\star^+\}}   \\
		& \leq \frac{\|\nabla u\|_{\mathcal{S}}^{\min\{p^-, (q+ \lfloor s \rfloor)^-\}}}{\min\{p^-, q^-\}} - \lambda \frac{ \left(\delta_n^{-1}\|\nabla u\|_{\mathcal{S}}\right)^{\max\{p_\star^+, \left(q_\star + \lceil s_\star \rceil \frac{q_\star}{q}\right)^+ \}}}{\max\{p_\star^+, q_\star^+\}}  \\
		 & < \|\nabla u\|_{\mathcal{S}}^{\max\{p_\star^+, \left(q_\star + \lceil s_\star \rceil \frac{q_\star}{q}\right)^+ \}} \left( \frac{\|\nabla u\|_{\mathcal{S}}^{\min\{p^-, (q+ \lfloor s \rfloor)^-\} - \max\{p_\star^+, \left(q_\star + \lceil s_\star \rceil \frac{q_\star}{q}\right)^+ \}}}{\min\{p^-, q^-\}}\right. \\
		 & \left. \qquad\qquad\qquad\qquad\qquad\qquad\qquad\qquad- \frac{\lambda \delta_n^{-\max\{p_\star^+, \left(q_\star + \lceil s_\star \rceil \frac{q_\star}{q}\right)^+ \}}}{\max\{p_\star^+, q_\star^+\}}\right).
	\end{align*}
	Now we choose $t$ such that
	\begin{align*}
		0<t < \min\left\{\delta_k^{-1}, \left(\frac{\lambda \delta_k^{-\max\{p_\star^+, \left(q_\star + \lceil s_\star \rceil \frac{q_\star}{q}\right)^+ \}}}{\max\{p_\star^+, q_\star^+\}}\right)^{\frac{1}{\min\{p^-, (q+ \lfloor s \rfloor)^-\} - \max\{p_\star^+, \left(q_\star + \lceil s_\star \rceil \frac{q_\star}{q}\right)^+ \}}} \right\}
	\end{align*}
	and let
	\begin{align*}
		\mathbb{S}_k:= \{u \in X_k\colon  \|\nabla u\|_{\mathcal{S}} = t \}.
	\end{align*}
	Clearly, $\mathbb{S}_k$ is homeomorphic to the $(k-1)$-dimensional sphere $S^{k-1}$. Hence, by Proposition \ref{pro:Krasnoselskii} \textnormal{(iv)}, we know that $\gamma(\mathbb{S}_k) =k$. With the above choice of $t$, we obtain
	\begin{equation}\label{trunc:est-5}
		\tilde{\mathcal{E}}_\lambda (u) \leq - \varepsilon <0 \quad \text{for all }  u \in \mathbb{S}_k
	\end{equation}
	where
	\begin{align*}
		\varepsilon = - t^{\max\{p_\star^+, \left(q_\star + \lceil s_\star \rceil \frac{q_\star}{q}\right)^+ \}} \left( \frac{t^{\min\{p^-, (q+ \lfloor s \rfloor)^-\} - \max\{p_\star^+, \left(q_\star + \lceil s_\star \rceil \frac{q_\star}{q}\right)^+ \}}}{\min\{p^-, q^-\}} - \frac{\lambda \delta_k^{-\max\{p_\star^+, \left(q_\star + \lceil s_\star \rceil \frac{q_\star}{q}\right)^+ \}}}{\max\{p_\star^+, q_\star^+\}}\right).
	\end{align*}
	Finally, by using \eqref{trunc:est-5} and Proposition \ref{pro:Krasnoselskii} \textnormal{(i)}, we get
	\begin{align*}
		\mathbb{S}_k \subset \tilde{\mathcal{E}}_\lambda^{-\varepsilon} \quad \text{and} \quad \gamma(\tilde{\mathcal{E}}_\lambda^{-\varepsilon}) \geq \gamma(\mathbb{S}_k) =k.
	\end{align*}
\end{proof}

Now, we define the following sets, for any $k \in \mathbb{N},$
\begin{align*}
	\Sigma_k &= \{ A \subset W_0^{1, \mathcal{S}}(\Omega) \setminus \{0\}\colon  A  \text{ is closed, } A = -A  \text{ and }  \gamma(A) \geq k\},\\
	K_c&= \{ u \in W_0^{1, \mathcal{S}}(\Omega) \setminus \{0\}\colon  \tilde{\mathcal{E}}_\lambda'(u) =0  \text{ and }  \tilde{\mathcal{E}}_\lambda(u) =c\}
\end{align*}
and the number
\begin{align*}
	c_k = \inf_{A \in \Sigma_k} \sup_{u \in A} \tilde{\mathcal{E}}_\lambda(u).
\end{align*}
It is easy to see that $c_k \leq c_{k+1}$ for any $n \in \mathbb{N}$.

\begin{lemma}\label{lem:neg_crit}
	For each $k \in \mathbb{N}$, it holds that
	\begin{align*}
		- \infty < c_k < 0.
	\end{align*}
\end{lemma}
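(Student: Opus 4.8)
The plan is to show the two bounds separately. For the upper bound $c_k<0$, I would use Lemma \ref{lem:genus_lowerbd}: fix $k\in\mathbb N$ and choose $\varepsilon>0$ such that $\gamma(\tilde{\mathcal E}_\lambda^{-\varepsilon})\geq k$. Then $\tilde{\mathcal E}_\lambda^{-\varepsilon}$ is a closed, symmetric subset of $W_0^{1,\mathcal S}(\Omega)\setminus\{0\}$ (it avoids $0$ because $\tilde{\mathcal E}_\lambda(0)=0>-\varepsilon$), hence $\tilde{\mathcal E}_\lambda^{-\varepsilon}\in\Sigma_k$. Taking $A=\tilde{\mathcal E}_\lambda^{-\varepsilon}$ in the definition of $c_k$ gives
\begin{align*}
	c_k=\inf_{A\in\Sigma_k}\sup_{u\in A}\tilde{\mathcal E}_\lambda(u)\leq \sup_{u\in\tilde{\mathcal E}_\lambda^{-\varepsilon}}\tilde{\mathcal E}_\lambda(u)\leq -\varepsilon<0.
\end{align*}

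For the lower bound $c_k>-\infty$, the key point is that $\tilde{\mathcal E}_\lambda$ is bounded below on all of $W_0^{1,\mathcal S}(\Omega)$. On the region where $\mathcal E_1(u)\geq 2R_1(\lambda)^{\min\{p^-,(q+\lfloor s\rfloor)^-\}}$ we have $\tilde{\mathcal E}_\lambda(u)=\mathcal E_1(u)\geq 0$ by \eqref{trunc:est-3}. On the complementary region $\mathcal E_1(u)< 2R_1(\lambda)^{\min\{p^-,(q+\lfloor s\rfloor)^-\}}$, Proposition \ref{pro:norm-mod:relation} (or rather its $W_0^{1,\mathcal S}$-analogue together with the estimate $\mathcal E_1(u)\geq \varrho_{\mathcal S}(|\nabla u|)/\max\{p^+,q^+\}$) bounds $\|\nabla u\|_{\mathcal S}$ by some explicit constant $R_\lambda$; since $\tau$ is compactly supported, the truncation multiplies $\mathcal E_2(u)+\lambda\mathcal E_3(u)$ only on a set where $\|\nabla u\|_{\mathcal S}$ is controlled, and by the continuous embeddings $W_0^{1,\mathcal S}(\Omega)\hookrightarrow L^{\mathcal S^\ast}(\Omega)\hookrightarrow L^{\mathcal S_\star}(\Omega)$ of Proposition \ref{imp:embedding} together with Proposition \ref{pro:norm-mod:relation}, both $\mathcal E_2(u)$ and $\mathcal E_3(u)$ are bounded by a constant depending only on $R_\lambda$, $\lambda$ and the embedding constants. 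Hence $\tilde{\mathcal E}_\lambda(u)\geq -C(\lambda)$ for all $u$, so $c_k\geq -C(\lambda)>-\infty$ for every $k$.

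The main obstacle — which is really just bookkeeping — is to make the boundedness-below argument clean: one must observe that $0\leq\tau\leq 1$ and $\mathcal E_2,\mathcal E_3\geq 0$ imply $\tilde{\mathcal E}_\lambda(u)\geq \mathcal E_1(u)-\big[\mathcal E_2(u)+\lambda\mathcal E_3(u)\big]\mathbf 1_{\{\mathcal E_1(u)\leq 2R_1(\lambda)^{\min\{p^-,(q+\lfloor s\rfloor)^-\}}\}}$, and that the indicator restricts attention to a bounded ball in $W_0^{1,\mathcal S}(\Omega)$ on which the subcritical and critical terms are uniformly bounded by Proposition \ref{imp:embedding}. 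With this in hand the two displayed chains of inequalities close the proof, and combining them yields $-\infty<c_k<0$ for every $k\in\mathbb N$.
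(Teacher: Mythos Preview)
Your proof is correct and follows essentially the same approach as the paper: both use Lemma~\ref{lem:genus_lowerbd} to exhibit $\tilde{\mathcal E}_\lambda^{-\varepsilon}\in\Sigma_k$ as a competitor giving $c_k\leq-\varepsilon$, and both invoke boundedness from below of $\tilde{\mathcal E}_\lambda$ for the lower bound. You actually supply the details for why $\tilde{\mathcal E}_\lambda$ is bounded below (which the paper merely asserts), and your observation that $0\notin\tilde{\mathcal E}_\lambda^{-\varepsilon}$ because $\tilde{\mathcal E}_\lambda(0)=0>-\varepsilon$ is the correct statement---the paper's printed ``$0\in\tilde{\mathcal E}_\lambda^{-\varepsilon}$'' is evidently a typo.
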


\begin{proof}
	Let $\lambda \in (0,\lambda^\ast)$ and $k$ be fixed. From Lemma \ref{lem:genus_lowerbd}, we know that there exists $\varepsilon >0$ such that $\gamma(\tilde{\mathcal{E}}_\lambda^{-\varepsilon}) \geq k$. Furthermore, because $\tilde{\mathcal{E}}_\lambda$ is even and continuous, we know that $\tilde{\mathcal{E}}_\lambda^{-\varepsilon} \in \Sigma_k$. From $\tilde{\mathcal{E}}_\lambda(0) =0$, we have $0 \in \tilde{\mathcal{E}}_\lambda^{-\varepsilon}$. Since $\sup_{u \in \tilde{\mathcal{E}}_\lambda^{-\varepsilon}} \tilde{\mathcal{E}}_\lambda(u) \leq -\varepsilon$ and $\tilde{\mathcal{E}}_\lambda$ is bounded from below, we obtain the assertion.
\end{proof}

\begin{lemma}\label{lem:genus_counting}
	Let $\lambda \in (0, \lambda^\ast)$ and $k \in \mathbb{N}$. If $c=c_k = c_{k+1} = \cdots = c_{k+m}$ for some $m \in \mathbb{N}$, then $K_c \in \Pi$ and
	\begin{align*}
		\gamma(K_c) \geq m+1.
	\end{align*}
\end{lemma}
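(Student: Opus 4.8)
The plan is to adapt the classical argument that a high-genus level set of critical points must be "fat". First I would establish that the truncated functional $\tilde{\mathcal{E}}_\lambda$ satisfies the Palais-Smale condition at the level $c=c_k<0$ (which is where all the action takes place). By Lemma~\ref{lem:trunc_reduced}, in a neighborhood of $\{\tilde{\mathcal{E}}_\lambda<0\}$ we have $\tilde{\mathcal{E}}_\lambda=\mathcal{E}_\lambda$ and $\tilde{\mathcal{E}}_\lambda'=\mathcal{E}_\lambda'$, so any (PS)$_c$-sequence for $\tilde{\mathcal{E}}_\lambda$ with $c<0$ is eventually a (PS)$_c$-sequence for $\mathcal{E}_\lambda$; since $c<0<c^{\ast\ast}$ and $\lambda\in(0,\lambda_\ast)\subset(0,\lambda_0)$, the (PS)$_c$-condition for $\mathcal{E}_\lambda$ established above applies, giving a convergent subsequence. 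Hence $K_c$ is compact. It is also symmetric (as $\tilde{\mathcal{E}}_\lambda$ is even) and, since $c<0=\tilde{\mathcal{E}}_\lambda(0)$, we have $0\notin K_c$; therefore $K_c\in\Sigma$ and by Proposition~\ref{pro:Krasnoselskii}(v), $\gamma(K_c)<\infty$.

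Next I would argue by contradiction: suppose $\gamma(K_c)\le m$. By Proposition~\ref{pro:Krasnoselskii}(v), choose $\delta>0$ such that the closed $\delta$-neighborhood $N_\delta(K_c)\in\Sigma$ and $\gamma(N_\delta(K_c))=\gamma(K_c)\le m$. Then I would invoke the deformation lemma (as in Ambrosetti--Rabinowitz, already used above for Lemma~\ref{critical:values}), which since (PS)$_c$ holds provides an odd homeomorphism $\eta:W_0^{1,\mathcal{S}}(\Omega)\to W_0^{1,\mathcal{S}}(\Omega)$ and an $\varepsilon>0$ such that
\begin{align*}
	\eta\big(\tilde{\mathcal{E}}_\lambda^{c+\varepsilon}\setminus N_\delta(K_c)\big)\subset \tilde{\mathcal{E}}_\lambda^{c-\varepsilon},
\end{align*}
where $\tilde{\mathcal{E}}_\lambda^{a}=\{u:\tilde{\mathcal{E}}_\lambda(u)\le a\}$; the oddness of $\eta$ is available because $\tilde{\mathcal{E}}_\lambda$ is even and $K_c$, $N_\delta(K_c)$ are symmetric. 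By definition of $c=c_{k+m}$ there is a set $A\in\Sigma_{k+m}$ with $\sup_{A}\tilde{\mathcal{E}}_\lambda\le c+\varepsilon$, i.e. $A\subset\tilde{\mathcal{E}}_\lambda^{c+\varepsilon}$.

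Now the genus bookkeeping finishes the argument. Using the monotonicity and subadditivity properties of the genus, Proposition~\ref{pro:Krasnoselskii}(iii) gives
\begin{align*}
	\gamma\big(\overline{A\setminus N_\delta(K_c)}\big)\ge \gamma(A)-\gamma(N_\delta(K_c))\ge (k+m)-m=k,
\end{align*}
so $\overline{A\setminus N_\delta(K_c)}\in\Sigma_k$; applying the odd continuous map $\eta$ and Proposition~\ref{pro:Krasnoselskii}(i), the image $\eta\big(\overline{A\setminus N_\delta(K_c)}\big)$ also belongs to $\Sigma_k$ (it is closed, symmetric, and has genus $\ge k$), hence $c_k\le\sup_{\eta(\overline{A\setminus N_\delta(K_c)})}\tilde{\mathcal{E}}_\lambda\le c-\varepsilon$ by the deformation inclusion. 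This contradicts $c=c_k$. Therefore $\gamma(K_c)\ge m+1$, which is the claim.

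The main obstacle I anticipate is verifying that all the hypotheses of the symmetric deformation lemma genuinely hold in this Musielak-Orlicz setting — specifically, that $\tilde{\mathcal{E}}_\lambda$ is $C^1$ (which follows by combining Propositions~\ref{well-defined-1} and \ref{well-defined-2} with the chain rule applied to the smooth cutoff $\tau$), that it is bounded below on sublevel sets near $0$ (needed so that the minimax values $c_k$ are finite, handled in Lemma~\ref{lem:neg_crit}), and above all that the (PS)$_c$-condition for $\tilde{\mathcal{E}}_\lambda$ at negative levels really does reduce to the already-proven (PS)$_c$-condition for $\mathcal{E}_\lambda$. The subtlety is that a (PS)$_c$-sequence for $\tilde{\mathcal{E}}_\lambda$ need not a priori satisfy $\tilde{\mathcal{E}}_\lambda(u_n)=\mathcal{E}_\lambda(u_n)$ for every $n$, only in the limit; one must check (using $\tilde{\mathcal{E}}_\lambda(u_n)\to c<0$ and the structure of $\tau$) that the sequence eventually lies in the region where the two functionals and their derivatives coincide, so that Lemma~\ref{lem:trunc_reduced} and the earlier (PS)$_c$-result can be applied. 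Everything else is the standard Krasnosel'skii genus machinery.
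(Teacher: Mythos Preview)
Your proposal is correct and is precisely the classical genus/deformation argument; the paper does not spell it out but simply cites Farkas--Fiscella--Winkert \cite[Lemma~3.6]{Farkas-Fiscella-Winkert-2022}, whose proof is exactly the scheme you outline. Your discussion of the reduction of the (PS)$_c$-condition for $\tilde{\mathcal{E}}_\lambda$ at negative levels to that of $\mathcal{E}_\lambda$ via Lemma~\ref{lem:trunc_reduced} is the one nontrivial point, and you handle it correctly.
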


\begin{proof}
	The proof follows by repeating the same arguments as in Lemma 3.6 by Farkas--Fiscella--Winkert \cite{Farkas-Fiscella-Winkert-2022}.
\end{proof}

\begin{theorem}\label{theo-2}
	Let \eqref{main:assump}, \eqref{main:assump-1}, \eqref{main:assump-1-2}, \eqref{main:assump-4}, and \eqref{sandwich:cond-2-sub} be satisfied. Then, for $\lambda \in (0, \lambda^\ast)$ and $\Lambda =1$, problem \eqref{main:prob} admits infinitely many weak solutions with negative energy values. Moreover, if $u_\lambda$ is a solution of \eqref{main:prob} corresponding to $\lambda$, then
	\begin{align*}
		\lim_{\lambda \to 0^+} \|u_\lambda \|_{1, \mathcal{S}} =0.
	\end{align*}
\end{theorem}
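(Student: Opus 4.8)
The plan is to obtain the solutions as critical points of the truncated functional $\tilde{\mathcal{E}}_\lambda$ introduced above, via Krasnosel'skii genus theory, following the scheme of Garc\'ia Azorero--Peral Alonso and Farkas--Fiscella--Winkert. Fix $\lambda \in (0,\lambda^\ast)$ with $\lambda^\ast$ as in \eqref{opt-lamb}. The first step is to record that $\tilde{\mathcal{E}}_\lambda$ is an even $C^1$ functional which is bounded from below (it equals $\mathcal{E}_1 \geq 0$ outside a bounded region of $\mathcal{E}_1$--values and is a bounded perturbation of $\mathcal{E}_1$ inside it) with $\tilde{\mathcal{E}}_\lambda(0)=0$, and that it satisfies the $\textnormal{(PS)}_c$ condition for every $c<0$. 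For the latter, a Palais--Smale sequence for $\tilde{\mathcal{E}}_\lambda$ at a negative level eventually lies in $\{\tilde{\mathcal{E}}_\lambda<0\}$, where Lemma \ref{lem:trunc_reduced} gives $\tilde{\mathcal{E}}_\lambda=\mathcal{E}_\lambda$ and $\tilde{\mathcal{E}}_\lambda'=\mathcal{E}_\lambda'$; thus it is a Palais--Smale sequence for $\mathcal{E}_\lambda$ at a level lying below the compactness threshold $c^{\ast\ast}$ of the $\textnormal{(PS)}_c$ lemma of this subsection (shrinking $\lambda^\ast$ if necessary so that $c^{\ast\ast}\ge 0$), and that lemma yields a strongly convergent subsequence in $W_0^{1,\mathcal{S}}(\Omega)$.

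Next I would run the symmetric minimax argument with the classes $\Sigma_k$, the sets $K_c$ and the levels $c_k=\inf_{A\in\Sigma_k}\sup_{u\in A}\tilde{\mathcal{E}}_\lambda(u)$ defined just before the theorem. Lemma \ref{lem:genus_lowerbd} ensures $\Sigma_k\neq\emptyset$ for every $k\in\mathbb{N}$ (since $\gamma(\tilde{\mathcal{E}}_\lambda^{-\varepsilon})\ge k$ for a suitable $\varepsilon>0$), Lemma \ref{lem:neg_crit} gives $-\infty<c_k<0$, and by definition $c_k\le c_{k+1}$. Because $\tilde{\mathcal{E}}_\lambda$ is even, of class $C^1$, bounded below and satisfies $\textnormal{(PS)}_c$ at every negative level, the standard deformation lemma for $\mathbb{Z}_2$--invariant functionals shows that each $c_k$ is a critical value of $\tilde{\mathcal{E}}_\lambda$; moreover, whenever $c=c_k=c_{k+1}=\dots=c_{k+m}$ for some $m$, Lemma \ref{lem:genus_counting} gives $\gamma(K_c)\ge m+1$, so $K_c$ is infinite. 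In both alternatives $\tilde{\mathcal{E}}_\lambda$ has infinitely many critical points, all at negative energy levels and occurring in pairs $\pm u$. For any such critical point $u$ we have $\tilde{\mathcal{E}}_\lambda(u)<0$, so Lemma \ref{lem:trunc_reduced} yields $\tilde{\mathcal{E}}_\lambda'(u)=\mathcal{E}_\lambda'(u)=\mathcal{J}(u)=0$ in $(W_0^{1,\mathcal{S}}(\Omega))^\ast$, which by \eqref{multi-phase} is exactly the weak formulation of \eqref{main:prob} with $\Lambda=1$; thus \eqref{main:prob} has infinitely many weak solutions with $\mathcal{E}_\lambda(u)<0$.

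For the asymptotic claim, let $u_\lambda$ be one of the solutions constructed above, so $\tilde{\mathcal{E}}_\lambda(u_\lambda)=\mathcal{E}_\lambda(u_\lambda)<0$. Inspecting the proof of Lemma \ref{lem:trunc_reduced}, the sign condition forces $\|\nabla u_\lambda\|_{\mathcal{S}}<R_1(\lambda)$, and by \eqref{trunc:est-2} one has $R_1(\lambda)\to 0$ as $\lambda\to 0^+$, so $\|\nabla u_\lambda\|_{\mathcal{S}}\to 0$. Combining this with Poincar\'e's inequality in $W_0^{1,\mathcal{S}}(\Omega)$, so that $\|u_\lambda\|_{\mathcal{S}}$, and hence $\|u_\lambda\|_{1,\mathcal{S}}$, is controlled by $\|\nabla u_\lambda\|_{\mathcal{S}}$, gives $\|u_\lambda\|_{1,\mathcal{S}}\to 0$ as $\lambda\to 0^+$.

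\textbf{Main obstacle.} The genuinely hard analytic input — verifying $\textnormal{(PS)}_c$ in the presence of the critical term $\mathbb{M}^\ast$ by means of the concentration--compactness principle of Theorem \ref{concentration:compactness} — has already been settled in the preceding $\textnormal{(PS)}_c$ lemma, so the remaining work for this theorem is mostly bookkeeping: one must check that the negative minimax levels $c_k$ indeed lie below the compactness threshold (adjusting the admissible range of $\lambda$ if needed), that the truncation faithfully transfers critical points of $\tilde{\mathcal{E}}_\lambda$ with negative value back to weak solutions of \eqref{main:prob}, and that on the sublinear side one has the control of $\|\cdot\|_{1,\mathcal{S}}$ by $\|\nabla\cdot\|_{\mathcal{S}}$ needed for the $\lambda\to 0^+$ limit.
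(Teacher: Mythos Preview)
Your proposal is correct and follows essentially the same route as the paper: the truncated functional $\tilde{\mathcal{E}}_\lambda$, the genus classes $\Sigma_k$ and levels $c_k$, Lemmas \ref{lem:genus_lowerbd}, \ref{lem:neg_crit}, \ref{lem:genus_counting}, the dichotomy between strictly increasing $c_k$'s and repeated values, and finally Lemma \ref{lem:trunc_reduced} to transfer critical points back to weak solutions of \eqref{main:prob}. The only cosmetic difference is in the asymptotic step: you argue via $\|\nabla u_\lambda\|_{\mathcal{S}}<R_1(\lambda)\to 0$ and Poincar\'e, whereas the paper bounds $\mathcal{E}_1(u_\lambda)<R_1(\lambda)^{\min\{p^-, (q+\lfloor s\rfloor)^-\}}\to 0$ and invokes Proposition \ref{Prop:oneHlogModularNorm}; these are equivalent.
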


\begin{proof}
	Let $\lambda \in (0, \lambda^\ast)$, where $\lambda^\ast$ is given in \eqref{opt-lamb}. By Lemma \ref{lem:neg_crit}, $\tilde{\mathcal{E}}_\lambda$ admits a sequence $\{u_k\}_{k \in \mathbb{N}}$ of critical points with $\tilde{\mathcal{E}}_\lambda(u_k) <0$. We consider two situations. If $-\infty < c_1 < c_2 < \dots < c_n < c_{n+1} \dots,$ then $\tilde{\mathcal{E}}_\lambda$ admits infinitely many critical values. If there exists $k, l \in \mathbb{N}$ such that $c_k = c_{k+1} = \dots = c_{k+l},$ then by Lemma \ref{lem:genus_counting} and Rabinowitz \cite[Remark 7.3]{Rabinowitz-1986}, $\gamma(K_c) \geq l+1$ and the set $K_c$ has infinitely many points, which are infinitely many critical values of $\tilde{\mathcal{E}}_\lambda$.

	By Lemma \ref{lem:trunc_reduced}, $\{u_k\}_{k \in \mathbb{N}}$ are the critical points of $\mathcal{E}_\lambda$ and hence weak solutions of \eqref{main:prob}. Now, let $u_\lambda$ be a solution of \eqref{main:prob} corresponding to $\lambda$. Then, again by Lemma \ref{lem:trunc_reduced} and \eqref{trunc:est-2}
	\begin{align*}
		\frac{1}{\max\{p^+, q^+\}} \int_{\Omega} \mathcal{S}(x, \nabla u_\lambda) \,\mathrm{d}x \leq \int_{\Omega} \mathcal{M}(x, \nabla u_\lambda) \,\mathrm{d}x < { R_1(\lambda)^{\min\{p^-, (q+ \lfloor s \rfloor)^-\}}} \to 0  \quad\text{as }  \lambda \to 0^+.
	\end{align*}
	Finally, from Proposition \ref{Prop:oneHlogModularNorm}, we get the required claim and the proof is complete.
\end{proof}

\section*{Acknowledgments}

%The authors are very grateful to the anonymous reviewer for her/his careful reading of the manuscript and the valuable comments.
The first author acknowledges the support of the Start-up Research Grant (SRG) SRG/2023/000308, Anusandhan National Research Foundation (ANRF erstwhile SERB), India, and Seed grant IIT(BHU)/ DMS/2023-24/493. The second author was funded by the Deutsche Forschungsgemeinschaft (DFG, German Research Foundation) under Germany's Excellence Strategy -
The Berlin Mathematics Research Center MATH+ and the Berlin Mathematical School (BMS) (EXC-2046/1, project ID: 390685689).


\begin{thebibliography}{99}

\bibitem{Ambrosetti-Rabinowitz-1973}
	A. Ambrosetti, P.H. Rabinowitz,
	{\it Dual variational methods in critical point theory and applications},
	J. Functional Analysis {\bf 14} (1973), 349--381.

\bibitem{Arora-Crespo-Blanco-Winkert-2023}
	R. Arora, \'{A}. Crespo-Blanco, P. Winkert,
	{\it On logarithmic double phase problems},
	J. Differential Equations {\bf 433} (2025), Paper No. 113247, 60 pp.

\bibitem{Arora-Fiscella-Mukherjee-Winkert-2022}
	R. Arora, A. Fiscella, T. Mukherjee, P. Winkert,
	{\it On critical double phase Kirchhoff problems with singular nonlinearity},
	Rend. Circ. Mat. Palermo (2) {\bf 71} (2022), no. 3, 1079--1106.

\bibitem{Baroni-Colombo-Mingione-2016}
	P. Baroni, M. Colombo, G. Mingione,
	{\it Non-autonomous functionals, borderline cases and related function classes},
	St. Petersburg Math. J. {\bf 27} (2016), 347--379.

\bibitem{Chlebicka-Gwiazda-Swierczewska-Gwiazda-Wroblewska-Kaminska-2021}
	I. Chlebicka, P. Gwiazda, A. \'{S}wierczewska-Gwiazda, A. Wr\'{o}blewska-Kami\'{n}ska,
	``Partial Differential Equations in Anisotropic Musielak-Orlicz Spaces'',
	Springer, Cham, 2021.

\bibitem{Cianchi-2004}
	A. Cianchi,
	{\it Optimal Orlicz-Sobolev embeddings},
	Rev. Mat. Iberoamericana {\bf 20} (2004), no. 2, 427--474.

\bibitem{Cianchi-Diening-2024}
	A. Cianchi, L. Diening,
	{\it Sobolev embeddings in Musielak-Orlicz spaces},
	Adv. Math. {\bf 447} (2024), Paper No. 109679, 51 pp.

\bibitem{Colasuonno-Perera-2025}
	F. Colasuonno, K. Perera,
	{\it Critical growth double phase problems: The local case and a Kirchhoff type case},
	J. Differential Equations {\bf 422} (2025), 426--488.

\bibitem{Colasuonno-Squassina-2016}
	F. Colasuonno, M. Squassina,
	{\it Eigenvalues for double phase variational integrals},
	Ann. Mat. Pura Appl. (4) {\bf 195} (2016), no. 6, 1917--1959.

\bibitem{Crespo-Blanco-Gasinski-Harjulehto-Winkert-2022}
	\'{A}. Crespo-Blanco, L. Gasi\'{n}ski, P. Harjulehto, P. Winkert,
	{\it A new class of double phase variable exponent problems: existence and uniqueness},
	J. Differential Equations {\bf 323} (2022), 182--228.

\bibitem{DeFilippis-Mingione-2023}
	C. De Filippis, G. Mingione,
	{\it Regularity for double phase problems at nearly linear growth},
	Arch. Ration. Mech. Anal. {\bf 247} (2023), no. 5, 85.

\bibitem{Diening-Harjulehto-Hasto-Ruzicka-2011}
	L. Diening, P. Harjulehto, P. H\"{a}st\"{o}, M. R$\mathring{\text{u}}$\v{z}i\v{c}ka,
	``Lebesgue and Sobolev Spaces with Variable Exponents'',
	Springer, Heidelberg, 2011.

\bibitem{Fan-2012}
	X. Fan,
	{\it An imbedding theorem for Musielak-Sobolev spaces},
	Nonlinear Anal. {\bf 75} (2012), no. 4, 1959--1971.

\bibitem{Fan-Zhao-2001}
	X. Fan, D. Zhao,
	{\it On the spaces $L^{p(x)}(\Omega)$ and $W^{m,p(x)}(\Omega)$},
	J. Math. Anal. Appl. {\bf 263} (2001), no. 2, 424--446.

\bibitem{Farkas-Fiscella-Winkert-2022}
	C. Farkas, A. Fiscella, P. Winkert,
	{\it On a class of critical double phase problems},
	J. Math. Anal. Appl. {\bf 515} (2022), no. 2, Paper No. 126420, 16 pp.

\bibitem{Fernandez-Bonder-Saintier-Silva-2018}
	J. Fern\'{a}ndez Bonder, N. Saintier, A. Silva,
	{\it The concentration-compactness principle for fractional order Sobolev spaces in unbounded domains and applications to the generalized fractional Brezis-Nirenberg problem},
	NoDEA Nonlinear Differential Equations Appl. {\bf 25} (2018), no. 6, Paper No. 52, 25 pp.

\bibitem{Fernandez-Bonder-Silva-2010}
	J. Fern\'{a}ndez Bonder, A. Silva,
	{\it Concentration-compactness principle for variable exponent spaces and applications},
	Electron. J. Differential Equations {\bf 2010} (2010), No. 141, 18 pp.

\bibitem{Fernandez-Bonder-Silva-2024}
	J. Fern\'{a}ndez Bonder, A. Silva,
	{\it The concentration--compactness principle for Orlicz spaces and applications},
	Math. Nachr. {\bf 297} (2024), no. 11, 4044--4066.

\bibitem{Fu-Zhang-2010}
	Y. Fu, X. Zhang,
	{\it Multiple solutions for a class of $p(x)$-Laplacian equations in involving the critical exponent},
	Proc. R. Soc. Lond. Ser. A Math. Phys. Eng. Sci. {\bf 466} (2010), no. 2118, 1667--1686.

\bibitem{Fuchs-Mingione-2000}
	M. Fuchs, G. Mingione,
	{\it Full {$C^{1,\alpha}$}-regularity for free and constrained local minimizers of elliptic variational integrals with nearly linear growth},
	Manuscripta Math. {\bf 102} (2000), no. 2, 227--250.

\bibitem{Fuchs-Seregin-2000}
	M. Fuchs, G. Seregin,
	``Variational Methods for Problems from Plasticity Theory and for Generalized Newtonian Fluids'',
	Springer-Verlag, Berlin, 2000.

\bibitem{Garcia-Azorero-Peral-Alonso-1991}
	J. Garc\'{\i}a Azorero, I. Peral Alonso,
	{\it Multiplicity of solutions for elliptic problems with critical exponent or with a nonsymmetric term},
	Trans. Amer. Math. Soc. {\bf 323} (1991), no. 2, 877--895.

\bibitem{Ha-Ho-2024}
	H.H.  Ha, K. Ho,
	{\it Multiplicity results for double phase problems involving a new type of critical growth},
	J. Math. Anal. Appl. {\bf 530} (2024), no. 1, Paper No. 127659, 36 pp.

\bibitem{Ha-Ho-2025}
	H.H.  Ha, K. Ho,
	{\it On critical double phase problems in $\mathbb{R}^N$ involving variable exponents},
	J. Math. Anal. Appl. {\bf 541} (2025), no. 2, Paper No. 128748, 49 pp.

\bibitem{Harjulehto-Hasto-2019}
	P. Harjulehto, P. H\"{a}st\"{o},
	``Orlicz Spaces and Generalized Orlicz Spaces'',
	Springer, Cham, 2019.

\bibitem{Ho-Kim-2021}
	K. Ho, Y.-H. Kim,
	{\it The concentration-compactness principles for $W^{s,p(\cdot,\cdot)}(\mathbb{R}^N)$ and application},
	Adv. Nonlinear Anal. {\bf 10} (2021), no. 1, 816--848.

\bibitem{Ho-Kim-Zhang-2024}
	K. Ho, Y.-H. Kim, C. Zhang,
	{\it Double phase anisotropic variational problems involving critical growth},
	Adv. Nonlinear Anal. {\bf 13} (2024), no. 1, Paper No. 20240010, 38 pp.

\bibitem{Ho-Winkert-2023}
	K. Ho, P. Winkert,
	{\it New embedding results for double phase problems with variable exponents and a priori bounds for corresponding generalized double phase problems},
	Calc. Var. Partial Differential Equations {\bf 62} (2023), no. 8, Paper No. 227, 38 pp.

\bibitem{Komiya-Kajikiya-2016}
	Y. Komiya, R. Kajikiya,
	{\it Existence of infinitely many solutions for the $(p, q)$-Laplace equation},
	NoDEA Nonlinear Differential Equations Appl. {\bf 23} (2016), no. 4, Art. 49, 23 pp.

\bibitem{Krasnoselskii-1964}
	M.A. Krasnosel'skii,
	``Topological Methods in the Theory of Nonlinear Integral Equations'',
	The Macmillan Company, New York, 1964.

\bibitem{Lions-1985}
	P.-L. Lions,
	{\it The concentration-compactness principle in the calculus of variations. The limit case. I},
	Rev. Mat. Iberoamericana {\bf 1} (1985), no. 1, 145--201.

\bibitem{Liu-Papageorgiou-2021}
	Z. Liu, N.S. Papageorgiou,
	{\it Asymptotically vanishing nodal solutions for critical double phase problems},
	Asymptot. Anal. {\bf 124} (2021), no. 3-4, 291--302.

\bibitem{Lu-Vetro-Zeng-2024}
	Y. Lu, C. Vetro, S. Zeng,
	{\it A class of double phase variable exponent energy functionals with different power growth and logarithmic perturbation},
	Discrete Contin. Dyn. Syst. Ser. S, https://doi.org/10.3934/dcdss.2024143.

\bibitem{Marcellini-Papi-2006}
	P. Marcellini, G. Papi,
	{\it Nonlinear elliptic systems with general growth},
	J. Differential Equations {\bf 221} (2006), no. 2, 412--443.

\bibitem{Musielak-1983}
	J. Musielak,
	``Orlicz Spaces and Modular Spaces'',
	Springer-Verlag, Berlin, 1983.

\bibitem{Palatucci-Pisante-2014}
	G. Palatucci, A. Pisante,
	{\it Improved Sobolev embeddings, profile decomposition, and concentration-compactness for fractional Sobolev spaces},
	Calc. Var. Partial Differential Equations {\bf 50} (2014), no. 3-4, 799--829.

\bibitem{Papageorgiou-Vetro-Winkert-2024}
	N.S. Papageorgiou, F. Vetro, P. Winkert,
	{\it Sequences of nodal solutions for critical double phase problems with variable exponents},
	Z. Angew. Math. Phys. {\bf 75} (2024), no. 3, Paper No. 95, 17 pp.

\bibitem{Papageorgiou-Vetro-Winkert-2023}
	N.S. Papageorgiou, F. Vetro, P. Winkert,
	{\it Sign changing solutions for critical double phase problems with variable exponent},
	Z. Anal. Anwend. {\bf 42} (2023), no. 1-2, 235--251.

\bibitem{Papageorgiou-Winkert-2024}
	N.S. Papageorgiou, P. Winkert,
	``Applied Nonlinear Functional Analysis'',
	Second revised edition,  De Gruyter, Berlin, 2024.

\bibitem{Rabinowitz-1986}
	P.H. Rabinowitz,
	``Minimax Methods in Critical Point Theory with Applications to Differential Equations'',
	Published for the Conference Board of the Mathematical Sciences, Washington, DC; by the American Mathematical Society, Providence, RI, 1986.

\bibitem{Seregin-Frehse-1999}
	G.A. Seregin, J. Frehse,
	{\it Regularity of solutions to variational problems of the deformation theory of plasticity with logarithmic hardening},
	in: ``Proceedings of the St. Petersburg Mathematical Society, Vol. {V}'',
	Amer. Math. Soc., Providence, RI {\bf 193}, 1999, 127--152.

\bibitem{Vetro-Zeng-2024}
	C. Vetro, S. Zeng,
	{\it Regularity and Dirichlet problem for double-phase energy functionals of different power growth},
	J. Geom. Anal. {\bf 34} (2024), no. 4, Paper No. 105, 27 pp.

\bibitem{Vetro-Winkert-2024}
	F. Vetro, P. Winkert,
	{\it Logarithmic double phase problems with convection: existence and uniqueness results},
	Commun. Pure Appl. Anal. {\bf 23} (2024), no. 9, 1325--1339.

\end{thebibliography}
\end{document}